\newtheorem{theorem}{Theorem}[section]
\newtheorem{lemma}[theorem]{Lemma}
\newtheorem{proposition}[theorem]{Proposition}
\newtheorem{corollary}[theorem]{Corollary}
\newtheorem{question}[theorem]{Question}
\theoremstyle{definition}
\newtheorem{definition}[theorem]{Definition}
\newtheorem{example}[theorem]{Example}
\theoremstyle{remark}
\newtheorem{remark}[theorem]{Remark}
\numberwithin{equation}{section}
\DeclareMathOperator{\Hom}{Hom}
\DeclareMathOperator{\End}{End}
\DeclareMathOperator{\GL}{GL}
\DeclareMathOperator{\Mat}{Mat}
\renewcommand\top{\mathrm{top}\,\!}
\newcommand\ZZ{{\mathbb Z}}
\newcommand\QQ{{\mathbb Q}}
\newcommand\RR{{\mathbb R}}
\newcommand\CC{{\mathbb C}}
\newcommand\FF{{\mathbb F}}
\renewcommand\SS{\mathfrak{S}}
\newcommand{\spann}{\operatorname{span}_{\ZZ}}
  \newcommand\rad{\operatorname{rad}}
\newcommand{\leftast}[1]{\left.^*{#1}\right.}
\newcommand\C{\mathcal{C}}
\DeclareMathOperator{\id}{id}
\DeclareMathOperator{\im}{im}
\DeclareMathOperator{\FPdim}{\mathsf{FPdim}}
\DeclareMathOperator{\opp}{{opp}}
\DeclareMathOperator{\rank}{{rank}}
\DeclareMathOperator{\adj}{{adj}}
\DeclareMathOperator{\Ann}{{Ann}}
\DeclareMathOperator{\LKer}{{LKer}}
\newcommand\groth{{G_0(A)}}
\newcommand\xx{\mathbf{x}}
\newcommand\sss{\mathbf{s}}
\newcommand\ppp{\mathbf{p}}
\newcommand\nnn{\mathbf{n}}
\newcommand\GG{\mathbf{G}}
\newcommand\NN{\mathbb{N}}
\newcommand{\card}[1]{\# #1}
\newcommand\arxiv[1]{\href{http://www.arxiv.org/abs/#1}{\texttt{arXiv:#1}}}
\newenvironment{verlong}{}{}
\newenvironment{vershort}{}{}
\newenvironment{noncompile}{}{}
\begin{document}
\title{Critical groups for Hopf algebra modules}

\author{Darij Grinberg}
\address{School of Mathematics,
University of Minnesota,
206 Church St. SE,
Minneapolis, MN 55455}
\email{darijgrinberg@gmail.com}

\author{Jia Huang}
\address{Department of Mathematics and Statistics, 
University of Nebraska at Kearney,
Kearney, NE 68849}
\email{huangj2@unk.edu}

\author{Victor Reiner}
\address{School of Mathematics,
University of Minnesota,
206 Church St. SE,
Minneapolis, MN 55455}
\email{reiner@math.umn.edu}

\date{13 June 2026 (corrected version)}

\subjclass[2010]{05E10, 16T05, 16T30, 15B48, 20C20}

\thanks{Third author supported by NSF grants DMS-1148634, 1601961.}

\keywords{Hopf algebra, chip-firing, sandpile, critical group, Brauer character, modular representation, McKay, M-matrix, Taft Hopf algebra}

\maketitle

\begin{abstract} 
This paper considers an invariant of modules
over a finite-dimensional Hopf algebra, called the critical
group.  This generalizes the critical groups of
complex finite group representations studied in
\cite{BenkartKlivansReiner, Gaetz}.
A formula is given for the cardinality of 
the critical group generally,
and the critical group for the regular representation
is described completely.
A key role in the formulas is played by the greatest common
divisor of the dimensions of the indecomposable projective
representations.
\end{abstract}

\section{Introduction}

Every connected finite graph has an interesting
isomorphism invariant, called its \emph{critical} or \emph{sandpile group}.
This is a finite abelian group, defined as the cokernel of
the (reduced) Laplacian matrix of the graph. Its cardinality is
the number of spanning trees in the graph, and it has
distinguished coset representatives related to
the notion of \emph{chip-firing} on
graphs (\cite{chipfiring}, \cite{sandpile-ag}).
In recent work
motivated by the classical McKay correspondence,
a similar critical group was defined by Benkart, Klivans
and the third author \cite{BenkartKlivansReiner} (and studied
further by Gaetz \cite{Gaetz}) for
complex representations of a finite group.
They showed that the critical group of such a representation
has many properties in common with that of a graph.

The current paper was motivated in trying to understand the role
played by semisimplicity for the group representations.  In fact,
we found that much of the theory generalizes not only to
arbitrary finite group representations in any characteristic,
but even to representations of finite-dimensional Hopf algebras\footnote{And even
further to finite tensor categories, 
although we will not emphasize this;  
see Remark~\ref{tensor-cat-remark} below.}.

Thus we start in Section~\ref{algebras-and-modules-section}
by reviewing modules $V$ for a Hopf algebra $A$ which is finite-dimensional
over an algebraically closed field $\FF$.
This section also defines the critical group $K(V)$ as follows:
if $n:=\dim V$, and if $A$ has $\ell+1$ simple modules, 
then the cokernel of the map $L_V$ on the Grothendieck group
$G_0(A) \cong \ZZ^{\ell+1}$ which multiplies by $n-[V]$ has abelian group
structure $\ZZ \oplus K(V)$.

To develop this further, in Section~\ref{eigenspaces-section} 
we show that the vectors in $\ZZ^{\ell+1}$ giving the
dimensions of the simple and indecomposable projective $A$-modules
are left- and right-nullvectors for
the map $L_V$.  In the case of a group algebra $A=\FF G$ 
for a finite group $G$, we extend results from
\cite{BenkartKlivansReiner} and show
that the columns in the \emph{Brauer character tables} for the
simple and indecomposable projective modules give
complete sets of left- and right-eigenvectors for $L_V$.

Section \ref{left-regular-section} uses this to prove the
following generalization of a result of Gaetz \cite[Ex. 9]{Gaetz}. 
Let $d:=\dim A$, and let
$\gamma$ be the greatest common divisor of the dimensions of the $\ell+1$
indecomposable projective $A$-modules.

\begin{theorem}
\label{regular-rep-theorem}
If $\ell=0$ then $K(A)=0$, else
$
K(A) \cong \left( \ZZ/\gamma\ZZ \right)
             \oplus \left( \ZZ/d\ZZ \right)^{\ell-1}.
$
\end{theorem}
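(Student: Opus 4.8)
The plan is to compute the cokernel of the map $L_A$ on $G_0(A) \cong \ZZ^{\ell+1}$ given by multiplication by $d - [A]$, where $d = \dim A$. The key structural input is that, by the results of Section~\ref{eigenspaces-section}, the vector $\ppp = (\dim P_0, \dots, \dim P_\ell)$ of dimensions of the indecomposable projectives is a left-nullvector of $L_V$ for every module $V$, and the vector $\sss = (\dim S_0, \dots, \dim S_\ell)$ of dimensions of the simples is a right-nullvector. For the regular representation there is extra structure: in $G_0(A)$ one has $[A] = \sum_i (\dim P_i)\,[S_i]$, so the matrix of multiplication by $[A]$ in the basis of simples is the rank-one matrix $\sss^{\!\top}\ppp$ (the outer product), since $[S_j][A] = [A]$ weighted appropriately — more precisely $[A]\cdot[S_j] = (\dim S_j)[A]$ in the Grothendieck ring when $A$ is the regular module, so the matrix of $L_A$ is $d\,I - \sss^{\!\top}\ppp$ up to transpose conventions. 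Thus I would first write $L_A = dI - M$ with $M$ an explicit rank-one integer matrix satisfying $M\sss^{\!\top} = d\,\sss^{\!\top}$ and $\ppp M = d\,\ppp$, using $\ppp\cdot\sss^{\!\top} = \sum_i (\dim P_i)(\dim S_i) = \dim A = d$.

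Next I would compute the Smith normal form of $L_A = dI - \sss^{\!\top}\ppp$ over $\ZZ$. The cokernel splits off one free $\ZZ$ summand (this is forced by the general structure already established, since $n - [V]$ always has a kernel), and one divides out by it; the content of the theorem is the torsion part. The determinant of $dI - \sss^{\!\top}\ppp$ on the quotient lattice should work out — by the matrix-determinant lemma $\det(dI - \sss^{\!\top}\ppp) = d^\ell(d - \ppp\sss^{\!\top}) = d^\ell \cdot 0 = 0$, consistent with the rank drop, and the product of the nonzero invariant factors is $d^{\ell-1}\gamma$, which already predicts the answer $(\ZZ/\gamma\ZZ)\oplus(\ZZ/d\ZZ)^{\ell-1}$. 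To pin down the actual invariant factors rather than just their product, I would exhibit an explicit change of basis: choose a basis of $\ZZ^{\ell+1}$ adapted to the sublattice spanned by $\sss^{\!\top}$, respectively a basis adapted to $\ppp$, so that in suitable coordinates $L_A$ becomes block-diagonal with a $(\ell-1)\times(\ell-1)$ block equal to $d$ times the identity, a zero block (the free part), and a $1\times 1$ interaction block whose entry is controlled by $\gcd$ relations among the $\dim P_i$ — this is where $\gamma = \gcd_i(\dim P_i)$ enters. Using that $\gcd$ of the entries of $\ppp$ is $\gamma$ and that $\ppp\sss^{\!\top} = d$, one extracts a primitive vector out of $\ppp$ and writes $\ppp = \gamma\,\ppp'$ with $\ppp'$ primitive, reducing the interaction block to an entry divisible by $\gamma$ but not more.

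The main obstacle I anticipate is the bookkeeping in the simultaneous basis change: I need one lattice basis that makes $\sss^{\!\top}$ a standard basis vector and another that makes $\ppp'$ a standard basis vector, and then I must track how multiplication by the rank-one operator $\sss^{\!\top}\ppp$ looks in these mismatched coordinates without accidentally introducing spurious common factors or losing the precise value $\gamma$. A clean way to organize this is to first handle the free summand by quotienting $\ZZ^{\ell+1}$ by $\ZZ\sss^{\!\top}$ (using that $\sss$ is primitive — each $A$ has a simple module of dimension coprime to the others in the relevant sense, or at worst one uses that $\gcd$ of the $\dim S_i$ divides $1$ because the trivial module or a $1$-dimensional simple exists... this needs care and is really the crux), and then on the quotient $\ZZ^\ell$ the operator is $dI$ minus a rank-one map, which has Smith normal form $\mathrm{diag}(d,\dots,d,\gamma')$ where $\gamma'$ is determined by the image of $\ppp$ in the quotient. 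Verifying $\gamma' = \gamma$ — i.e., that passing to the quotient by $\sss^{\!\top}$ does not change the $\gcd$ of the relevant coordinates of $\ppp$, which follows from $\ppp\sss^{\!\top} = d$ being divisible by $\gamma$ in a compatible way — is the delicate point. Finally, the case $\ell = 0$ (only one simple module, so $A$ is local, $G_0(A) \cong \ZZ$, and $L_A$ is the $1\times 1$ zero matrix with cokernel $\ZZ$, i.e. $K(A) = 0$) is immediate and I would dispatch it first.
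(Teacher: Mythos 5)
Your high-level strategy matches the paper's: realize $M_A$ as a rank-one outer product of $\ppp$ and $\sss$, observe $\sss^T \ppp = d$, and reduce to a Smith normal form computation for $L_A = dI_{\ell+1} - \ppp\sss^T$. But there are two gaps, one of which you have misplaced and one of which you have left open.

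First, you have misidentified the crux. The primitivity of $\sss$ is immediate: the trivial module $\epsilon$ is $1$-dimensional, so one entry of $\sss$ is $1$. The actual Hopf-theoretic input you skipped over is \emph{why} the McKay matrix is $M_A = \ppp\sss^T$. You correctly note $[S_j \otimes A] = (\dim S_j)[A]$ via $S_j \otimes A \cong A^{\oplus \dim S_j}$, but you still need $[A : S_i] = \dim P_i$ to convert $(M_A)_{i,j} = (\dim S_j)[A:S_i]$ into $\sss_j\ppp_i$. For a general finite-dimensional algebra $[A:S_i] = (C\sss)_i$, and $(C\sss)_i = \ppp_i$ is \emph{not} automatic — it is Corollary~\ref{cor.AA.cartan}(ii), which rests on Proposition~\ref{prop.AA} ($\dim\Hom_A(V,A) = \dim V$), a genuinely Hopf-algebraic fact proved via the one-dimensionality of the space of integrals. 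Your ``up to transpose conventions'' hedge is exactly where this is hiding.

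Second, you never pin down the invariant factors. Knowing the product is $\gamma d^{\ell-1}$ (which you get from the eigenvalues, all equal to $d$ except one $0$) does not give the group: a priori the torsion could be $\ZZ/\gamma d\ZZ \oplus (\ZZ/d\ZZ)^{\ell-2}$, etc. Your proposed simultaneous-basis-change route is workable but, as you yourself note, delicate, and you leave the key step open. The paper (Lemma~\ref{lem.coker}) resolves this much more cheaply via gcds of minors: after reducing to $\gamma = 1$, it shows $g_1 = 1$ (the last column of $L$ together with $L_{1,1}$ generates $1$, since $d$ and all $\ppp_i$ lie in the ideal they generate), and $d \mid g_2$ (every $2\times 2$ minor vanishes mod $d$ because $L \equiv -\ppp\sss^T$ is rank one mod $d$), which together with ``each $d_i$ divides $d$'' (from $L\xx = d\xx$ on $\sss^\perp$) forces $(d_1,\ldots,d_\ell) = (1,d,\ldots,d)$. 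You would do well to adopt this argument in place of the explicit basis change.
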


Section ~\ref{Lorenzini-Gaetz-section}
proves the following formula for 
$\# K(V)$, analogous to one for critical groups of graphs.

\begin{theorem}
\label{Lorenzini-style-formula}
Assume $K(V)$ is finite, so that $L_V$ has nullity one.  If
the characteristic polynomial of $L_V$ factors as 
$\det(xI-L_V)=x\prod_{i=1}^\ell (x-\lambda_i)$, then 
$
\card{K(V)} = \left| \frac{\gamma}{d} 
                     (\lambda_1 \lambda_2 \cdots \lambda_\ell)\right|.
$
\end{theorem}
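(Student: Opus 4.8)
The plan is to recast the statement as an identity for the integer matrix $L_V$ acting on $\groth \cong \ZZ^{\ell+1}$, and then to evaluate $\operatorname{tr}(\adj L_V)$ in two different ways; this is the Hopf-algebraic analogue of the classical argument that a connected graph on $n$ vertices has $\tfrac1n\lambda_1\cdots\lambda_{n-1}$ spanning trees. Since $K(V)$ is finite, $\coker L_V \cong \ZZ \oplus K(V)$ has free rank $1$, so $\rank L_V = \ell$; fixing a Smith normal form $L_V = P\,\operatorname{diag}(d_1,\dots,d_\ell,0)\,Q$ with $P,Q \in \GL_{\ell+1}(\ZZ)$ and $d_1 \mid \dots \mid d_\ell$, one reads off $\card{K(V)} = d_1\cdots d_\ell$.

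First I would analyze the adjugate. From $L_V \cdot \adj L_V = \adj L_V \cdot L_V = (\det L_V)I = 0$ together with $\rank L_V = \ell$, the matrix $\adj L_V$ has rank $1$, with columns in $\ker L_V$ and rows in $\ker L_V^{\mathsf{T}}$. By Section~\ref{eigenspaces-section} the vectors $\sss = (\dim S_0,\dots,\dim S_\ell)$ and $\ppp = (\dim P_0,\dots,\dim P_\ell)$ span these two one-dimensional kernels; say $\sss$ spans $\ker L_V^{\mathsf{T}}$ and $\ppp$ spans $\ker L_V$ (the opposite assignment is treated identically). Since $A$ has a one-dimensional counit module, $\gcd_i \dim S_i = 1$, so $\sss$ is a primitive lattice vector, while $\ppp' := \gamma^{-1}\ppp$ is, by the definition of $\gamma$, the primitive generator of $\ker L_V \cap \ZZ^{\ell+1}$. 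Hence $\adj L_V = \mu\,\ppp'\sss^{\mathsf{T}}$ for a unique $\mu \in \ZZ$. Passing the Smith normal form through the adjugate — using $\adj\operatorname{diag}(d_1,\dots,d_\ell,0) = \operatorname{diag}(0,\dots,0,d_1\cdots d_\ell)$ and that the last column of $Q^{-1}$ and the last row of $P^{-1}$ are primitive vectors spanning $\ker L_V$ and $\ker L_V^{\mathsf{T}}$ — gives $\adj L_V = \pm(d_1\cdots d_\ell)\,\ppp'\sss^{\mathsf{T}}$, and therefore $|\mu| = d_1\cdots d_\ell = \card{K(V)}$.

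Then I would compute $\operatorname{tr}(\adj L_V)$ twice. On one hand, $(\adj L_V)_{ii}$ is the $i$-th principal $\ell\times\ell$ minor of $L_V$, so $\operatorname{tr}(\adj L_V)$ equals the sum of all principal $\ell\times\ell$ minors of $L_V$, which is the elementary symmetric function $e_\ell$ of the eigenvalues $0,\lambda_1,\dots,\lambda_\ell$ of $L_V$; every monomial of $e_\ell$ other than $\lambda_1\cdots\lambda_\ell$ contains the eigenvalue $0$, so $\operatorname{tr}(\adj L_V) = \lambda_1\lambda_2\cdots\lambda_\ell$. On the other hand, $\operatorname{tr}(\mu\,\ppp'\sss^{\mathsf{T}}) = \mu\,(\sss\cdot\ppp') = \tfrac{\mu}{\gamma}(\sss\cdot\ppp) = \tfrac{\mu}{\gamma}\sum_i (\dim S_i)(\dim P_i) = \tfrac{\mu}{\gamma}\,d$, using $\sss\cdot\ppp = \dim A = d$ from the standard decomposition $[A] = \sum_i (\dim S_i)[P_i]$ of the regular representation. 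Equating the two values gives $\lambda_1\cdots\lambda_\ell = \tfrac{d}{\gamma}\mu$, hence $\card{K(V)} = |\mu| = \left|\tfrac{\gamma}{d}(\lambda_1\cdots\lambda_\ell)\right|$. (In particular $\lambda_1\cdots\lambda_\ell \neq 0$, as it must be since $\card{K(V)} \geq 1$; this is forced here by $\sss\cdot\ppp \neq 0$.)

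The step I expect to be the main obstacle is the identification $|\mu| = \card{K(V)}$: while it is immediate that $\adj L_V$ is a rank-one integer multiple of $\ppp'\sss^{\mathsf{T}}$, pinning down the precise value of $|\mu|$ requires the primitivity bookkeeping ($\gcd_i \dim S_i = 1$ and $\ppp' = \gamma^{-1}\ppp$ primitive) together with a careful passage of the Smith normal form of $L_V$ through $\adj$. Once that is in place, the two trace computations are routine given the null-vector statement of Section~\ref{eigenspaces-section} and the standard decomposition of the regular representation, and the theorem follows by combining them.
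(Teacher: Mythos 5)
Your argument is correct, and it arrives at the same identity as the paper by a genuinely more self-contained route. The paper's proof is very short: it simply cites Lorenzini's result (\cite[Prop.\ 2.1]{Lorenzini}, restated here as Proposition~\ref{Lorenzini-prop}), which already says that for a rank-$\ell$ integer matrix $L$ of size $\ell+1$ with primitive right- and left-nullvectors $\nnn, \nnn'$ one has $\card{K} = \left|\frac{1}{\nnn^T\nnn'}\lambda_1\cdots\lambda_\ell\right|$; it then substitutes $\nnn = \frac{1}{\gamma}\ppp$, $\nnn' = \sss$ (justified by Propositions~\ref{left-nullvector-prop}, \ref{right-nullvector-prop}, the primitivity of $\sss$ from $\dim\epsilon = 1$, and the primitivity of $\frac{1}{\gamma}\ppp$) and uses $\sss^T\ppp = d$ from Proposition~\ref{prop.important-dot-product}. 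Your proposal, by contrast, reproves the content of Lorenzini's proposition from scratch via the adjugate: you observe that $\rank L_V = \ell$ forces $\adj L_V$ to be a rank-one integer matrix with columns in $\ker L_V$ and rows in $\ker L_V^T$, pin down $\adj L_V = \pm(d_1\cdots d_\ell)\,\ppp'\sss^T$ by passing the Smith normal form through $\adj$, and then compute $\operatorname{tr}(\adj L_V)$ both as $e_\ell$ of the eigenvalues (giving $\lambda_1\cdots\lambda_\ell$) and as $\pm(d_1\cdots d_\ell)\sss^T\ppp' = \pm\card{K(V)}\cdot\frac{d}{\gamma}$. All the steps check out, including the integrality of the scalar $\mu$ (which uses $\gcd(\ppp') = 1$), the primitivity of the last column of $Q^{-1}$ and the last row of $P^{-1}$, and the coefficient extraction from the characteristic polynomial. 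The trade-off is that the paper's version is shorter and delegates the linear algebra to a reference, while yours is self-contained and makes the adjugate/trace mechanism explicit; either is acceptable, and you have correctly identified the Smith-form bookkeeping needed to fix $|\mu|$ as the only nontrivial point.
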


Section~\ref{Lorenzini-Gaetz-section} makes this
much more explicit in the case of a group algebra
$\FF G$ for a finite group $G$,
generalizing another result of Gaetz \cite[Thm. 3(i)]{Gaetz}.
Let $p\ge0$ be the characteristic of the field $\FF$.
Let $p^a$ be the order of the $p$-Sylow subgroups of $G$
(with $p^a$ to be understood as $1$ if $p = 0$),
and denote by $\chi_V(g)$ the \emph{Brauer character} value for $V$ on
a $p$-regular element $g$ in $G$;
see Section~\ref{eigenspaces-section} for definitions.

\begin{corollary}
\label{Gaetz-formula}
For any $\FF G$-module $V$ of dimension $n$ with $K(V)$ finite,
one has 
$$
\card{K(V)} = \frac{p^a}{\card{G}} 
\displaystyle\prod_{g \neq e} \left(n-\chi_V(g)\right),
$$
where the product runs through a set of representatives $g$ for
the non-identity $p$-regular $G$-conjugacy classes.
In particular, the quantity on the right is a positive integer.
\end{corollary}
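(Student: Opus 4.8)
The plan is to specialize Theorem~\ref{Lorenzini-style-formula} to the group algebra $A = \FF G$. There $d = \dim_\FF \FF G = |G|$, so its conclusion reads $\card{K(V)} = \tfrac{\gamma}{|G|}\,\bigl|\lambda_1 \cdots \lambda_\ell\bigr|$ (using $\gamma > 0$). Two further inputs are needed: an identification of the nonzero eigenvalues $\lambda_1, \dots, \lambda_\ell$ of $L_V$ with the numbers $n - \chi_V(g)$ as $g$ runs over the non-identity $p$-regular conjugacy classes, and the equality $\gamma = p^a$. Granting both, the formula of the corollary follows up to sign, and a short positivity argument supplies the sign and the final ``positive integer'' claim.

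For the eigenvalues I would invoke the description recalled in Section~\ref{eigenspaces-section}: the columns of the Brauer character tables of the simple and of the indecomposable projective $\FF G$-modules form complete systems of right- and left-eigenvectors for $L_V$. Concretely, the Brauer character homomorphism $[M] \mapsto (\chi_M(g))_{[g]}$ identifies $G_0(\FF G) \otimes_\ZZ \CC$ with the product ring $\CC^{\ell+1}$ indexed by the $p$-regular classes of $G$, and since $L_V$ is multiplication by $n - [V]$ it becomes, under this identification, the diagonal operator with diagonal entries $n - \chi_V(g)$. The entry attached to the class of $e$ is $n - \chi_V(e) = 0$; as $K(V)$ is finite, the nullity-one hypothesis of Theorem~\ref{Lorenzini-style-formula} forces the other $\ell$ entries to be nonzero, and these are exactly the $\lambda_i$. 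Hence $\lambda_1 \cdots \lambda_\ell = \prod_{g \neq e}(n - \chi_V(g))$, the product over a transversal of the non-identity $p$-regular classes (this is well defined since $\chi_V$ is a class function on $p$-regular elements).

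The substantive step is $\gamma = p^a$, i.e.\ that the gcd of the dimensions of the indecomposable projective $\FF G$-modules $P_i$ is the order of a Sylow $p$-subgroup. I would argue in two halves. First, $p^a \mid \gamma$: restricting any $P_i$ to a Sylow $p$-subgroup $Q$ gives a projective $\FF Q$-module (because $\FF G$ is free as an $\FF Q$-module), hence a free one (because $\FF Q$ is local, $Q$ being a $p$-group and $\cha \FF = p$), so $|Q| = p^a$ divides $\dim_\FF P_i$. Second, no prime $q \neq p$ divides $\gamma$: fixing such a $q$ with $q \mid |G|$ and a Sylow $q$-subgroup $G_q$, the algebra $\FF G_q$ is semisimple, so the trivial module is $\FF G_q$-projective, hence $\FF G \otimes_{\FF G_q} \FF \cong \FF[G/G_q]$ is a projective $\FF G$-module of dimension $[G : G_q]$, a number coprime to $q$; since every projective $\FF G$-module is a direct sum of copies of the $P_i$ (Krull--Schmidt), its dimension is divisible by $\gamma$, so $\gamma \mid [G:G_q]$ and $q \nmid \gamma$. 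Thus $\gamma$ is a power of $p$, divisible by $p^a$, and it divides $\dim_\FF \FF G = |G|$, whence $\gamma = p^a$. (When $p = 0$ this says $\gamma = 1$, already visible from the trivial module.)

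Combining the two inputs with Theorem~\ref{Lorenzini-style-formula} gives $\card{K(V)} = \bigl|\tfrac{p^a}{|G|}\prod_{g \neq e}(n - \chi_V(g))\bigr|$. To remove the bars: each eigenvalue $n - \chi_V(g)$ of the integer matrix $L_V$ is either $0$ or has strictly positive real part, since $\chi_V(g)$ is a sum of $n$ roots of unity and so $\operatorname{Re} \chi_V(g) \le n$, with equality only if $\chi_V(g) = n$. The spectrum of $L_V$ is stable under complex conjugation, so the product of its nonzero eigenvalues is a product of positive reals together with factors $z\bar z = |z|^2 > 0$, hence is positive; therefore $\tfrac{p^a}{|G|}\prod_{g \neq e}(n - \chi_V(g))$ is a positive rational number, equal to the order of the finite group $K(V)$, and in particular a positive integer. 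I expect the equality $\gamma = p^a$ — in essence the observation that Sylow $q$-subgroups for $q \neq p$ produce projective $\FF G$-modules whose dimension is prime to $q$ — to be the crux; everything else is bookkeeping around Theorem~\ref{Lorenzini-style-formula} and the Brauer-character picture.
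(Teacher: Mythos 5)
Your overall structure matches the paper's: you specialize Theorem~\ref{Lorenzini-style-formula} to $A=\FF G$, use Propositions~\ref{left-eigenvector-prop}/\ref{right-eigenvector-prop} to read off the eigenvalues of $L_V$ as $n-\chi_V(g)$, and feed in $\gamma = p^a$. That last equality is exactly Proposition~\ref{group-algebra-gcd} in the paper, but your proof of it is genuinely different from theirs. To show $\gamma$ is a $p$-power, the paper multiplies $\ppp^T = \sss^T C$ on the right by the adjugate matrix $\adj(C)$ to extract $\gamma \mid \det(C)$, then invokes Brauer's theorem that $\det(C)$ is a $p$-power, and finally uses $\gamma \mid \sss^T\ppp = \card{G}$ to cap $\gamma$ at $p^a$. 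You instead argue prime-by-prime: for each $q \neq p$ you exhibit the explicit projective $\FF G$-module $\FF G \otimes_{\FF G_q} \FF \cong \FF[G/G_q]$ (induced from the trivial module of a Sylow $q$-subgroup $G_q$), whose dimension $[G:G_q]$ is coprime to $q$, forcing $q \nmid \gamma$. Both are valid; your route is more elementary and self-contained in that it bypasses the nontrivial Brauer theorem on Cartan determinants and replaces it by a concrete construction, at the cost of being slightly longer. Your lower bound $p^a \mid \gamma$ via restriction to a Sylow $p$-subgroup and freeness over the local ring $\FF Q$ is the same mechanism the paper cites. One further difference worth noting: the paper treats the final positivity claim (removing the absolute value from Theorem~\ref{Lorenzini-style-formula}) as immediate, whereas you supply a clean argument — each nonzero eigenvalue $n - \chi_V(g)$ has positive real part since $\chi_V(g)$ is a sum of $n$ roots of unity, and the spectrum of the integer matrix $L_V$ is conjugation-stable, so the product is a positive real. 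This fills in a small gap the paper leaves to the reader.
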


The question of when the abelian group $K(V)$ is \emph{finite},
as opposed to having a free part, occupies
Section~\ref{M-matrix-section}.  
The crucial condition is a generalization of \emph{faithfulness}
for semisimple finite group representations:
one needs the $A$-module $V$ to
be \emph{tensor-rich} in the sense that
every simple $A$-module occurs in at least one
of its tensor powers $V^{\otimes k}$.  In fact, we show
that tensor-richness implies something much stronger about the
map $L_V$:  its submatrix $\overline{L_V}$ obtained by
striking out the row and column indexed by the trivial
$A$-module turns out to be a \emph{nonsingular $M$-matrix},
that is, the inverse $\left(\overline{L_V}\right)^{-1}$ 
has all nonnegative entries.  

\begin{theorem}
\label{tensor-rich-equivalences-theorem}
The following are equivalent for an $A$-module $V$.
\begin{enumerate}
\item[(i)] $\overline{L_V}$ is a nonsingular $M$-matrix. 
\item[(ii)] $\overline{L_V}$ is nonsingular.
\item[(iii)] $L_V$ has rank $\ell$, so nullity $1$.
\item[(iv)] $K(V)$ is finite.
\item[(v)] $V$ is tensor-rich.
\end{enumerate}
\end{theorem}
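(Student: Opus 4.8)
The plan is to show that all five conditions are equivalent to a single combinatorial property: that the directed graph $\Gamma$ with vertex set the simple $A$-modules and an edge $S_j \to S_i$ whenever $S_i$ is a composition factor of $V \otimes S_j$ is strongly connected. Write $N$ for the matrix of multiplication by $[V]$ on $G_0(A) \cong \ZZ^{\ell+1}$ in the basis of simples, so that $L_V = nI - N$, the matrix $N$ is entrywise nonnegative (its entries being composition-factor multiplicities), and $\overline{L_V} = nI - \overline{N}$, where $\overline{N}$ is $N$ with the trivial-module row and column struck out. Observe at the outset that tensor-richness of $V$ is exactly the assertion that every vertex of $\Gamma$ is reachable from the trivial module $S_0$, since $[V^{\otimes k}] = N^k[S_0]$.

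The technical core is a structural lemma: \emph{for any $A$-module $V$, the graph $\Gamma$ is a disjoint union of strongly connected components, and for each such component $C$ the block $N[C]$ has Perron eigenvalue $n$.} To prove it I will use Section~\ref{eigenspaces-section}: the dimension vectors of the simple modules and of the indecomposable projective modules furnish a strictly positive left nullvector and a strictly positive right nullvector of $L_V$ (the argument below is insensitive to which is which), equivalently strictly positive left and right eigenvectors of $N$ for the eigenvalue $n$, which in particular forces $n = \rho(N)$. Restricting the left-eigenvector identity to the indices of an SCC $C$ shows $\rho(N[C]) \le n$ with equality iff no edge of $\Gamma$ leaves $C$; restricting the right-eigenvector identity gives the same bound with equality iff no edge enters $C$. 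Hence an SCC is a sink iff it is a source. A non-isolated SCC $C$ would be neither a sink nor a source, yet it would access some terminal SCC $C^\ast \ne C$; but $C^\ast$, being a sink, is also a source, so it cannot be accessed from outside---a contradiction. Therefore every SCC of $\Gamma$ is isolated, which is the claimed decomposition, and each block, being a source, has $\rho(N[C]) = n$. Since each $N[C]$ is then irreducible with spectral radius $n$, Perron--Frobenius gives $\dim\ker(nI - N[C]) = 1$, so by block-diagonality the nullity of $L_V$ equals the number of components of $\Gamma$.

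Granting the lemma, the equivalences are short. For (iii)$\Leftrightarrow$(v): tensor-richness says $S_0$ reaches every vertex of $\Gamma$, which, since $\Gamma$ is a disjoint union of SCCs, holds iff $\Gamma$ has a single component, and by the lemma this count is the nullity of $L_V$. For (iii)$\Leftrightarrow$(iv): since $\coker(L_V) \cong \ZZ \oplus K(V)$ and the free rank of the cokernel of a square integer matrix equals its nullity, $K(V)$ is finite iff $L_V$ has nullity one. For (i)$\Leftrightarrow$(ii)$\Leftrightarrow$(iii): $\overline{L_V}$ is always a $Z$-matrix with $\overline{N} \ge 0$; if $\Gamma$ has a single component then $N$ is irreducible, so deleting the $S_0$-index strictly drops the spectral radius, and $\overline{L_V} = nI - \overline{N}$ with $n > \rho(\overline{N})$ is a nonsingular $M$-matrix; if $\Gamma$ has a component $C$ not containing $S_0$, then by block-diagonality $n \in \operatorname{spec}(\overline{N})$, so $\overline{L_V}$ is singular. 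Thus (i)--(v) are all equivalent to ``$\Gamma$ strongly connected.''

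The main obstacle is the structural lemma---specifically the graph-theoretic passage from ``every sink is a source, and conversely'' to ``every SCC is isolated'', together with the careful importing from Section~\ref{eigenspaces-section} of the fact that the two dimension vectors are positive nullvectors on the two sides. The rest is elementary: linear algebra over $\ZZ$, Perron--Frobenius theory for nonnegative and irreducible matrices, and bookkeeping with composition-factor multiplicities. One should also dispatch separately the degenerate case $\ell = 0$, where $\overline{L_V}$ is the empty matrix (vacuously a nonsingular $M$-matrix) and every module is tensor-rich.
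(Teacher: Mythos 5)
Your proposal is correct, but it takes a genuinely different and more structural route than the paper's. The paper establishes (v)$\Rightarrow$(i) directly via an inductive/telescoping inequality argument (Proposition~\ref{avalanche-finite-criteria}) combined with the Plemmons criterion (Proposition~\ref{prop.nonsingular-Mmatrix-is-nonsingular}) that a $Z$-matrix $Q$ with $Qx>0$ for some $x>0$ is a nonsingular $M$-matrix; and it establishes (iii)$\Rightarrow$(v) by contraposition via an elementary kernel-splitting lemma (Lemma~\ref{lem.matrices.large-kernel}) that produces a second nullvector by restricting the right-nullvector $\ppp$ to the block defined by the non-richness. You instead prove a stronger structural result: because $M_V$ admits strictly positive left and right eigenvectors $\sss,\ppp$ for the same eigenvalue $n$, its digraph is a disjoint union of strongly connected components, each block having spectral radius exactly $n$, so that $M_V$ is block-diagonal up to permutation. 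This is a genuine refinement of the paper's Lemma~\ref{lem.matrices.large-kernel} (which only shows nullity $\geq 2$ under a block-triangular decomposition), and it lets you read off all five equivalences from Perron--Frobenius theory in one stroke, with the nullity of $L_V$ equal to the number of components. The trade-off: the paper's argument is more self-contained and elementary (avoiding irreducibility, strict dominance of principal submatrices, and the subinvariance theorem), while yours gives a cleaner conceptual picture and a sharper nullity formula as a by-product. One small gap to patch if you write this up fully: the ``equality iff $C$ is a sink/source'' step of your structural lemma invokes the subinvariance theorem, which requires $N[C]$ irreducible; for a singleton SCC with no self-loop you need a separate one-line check (using $n\sss_i>0$ directly) to exclude that case, which is easy but should be said.
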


The question of which $A$-modules $V$ are
tensor-rich is answered completely for group algebras $A=\FF G$ 
via a result of Brauer in Section~\ref{group-algebra-section}.
We suspect that the many questions on
finite-dimensional Hopf algebras raised here 
(Questions~\ref{quest.left-eigenvectors.A},
\ref{gcd-question},
\ref{Sylow-subalgebra-question},
\ref{tensor-rich-questions})
have good answers in general, 
not just for group algebras.

\subsection{Notations and standing assumptions}
\label{subsect.conventions}
Throughout this paper, $\FF$ will be an algebraically closed field,
and $A$ will be a finite-dimensional algebra over $\FF$.
Outside of 
Section~\ref{subsect.findim-alg}, we will further 
assume that $A$ is a Hopf algebra.
We denote by $\dim V$
the dimension of an $\FF$-vector space $V$.
Only finite-dimensional $A$-modules $V$ will
be considered.
All tensor products are over $\FF$.  

Vectors $v$ in $R^m$ for various rings $R$ are regarded as column
vectors, with $v_i$ denoting their $i^{th}$ coordinate.  
The  $(i,j)$ entry of a matrix $M$ will be denoted $M_{i,j}$.
(Caveat lector: Most of the matrices appearing in this paper belong
to $\ZZ^{m \times m'}$ or $\CC^{m \times m'}$, even when they are
constructed from $\FF$-vector spaces. In particular, the rank of such
a matrix is always understood to be its rank over $\QQ$ or $\CC$.)

Let $S_1,S_2,\ldots,S_{\ell+1}$ (resp., $P_1,P_2,\ldots,P_{\ell+1}$)
be the inequivalent simple (resp., indecomposable projective)
$A$-modules, with $\top(P_i):=P_i/\rad P_i=S_i$.
Define two vectors $\sss$ and $\ppp$ in $\ZZ^{\ell+1}$ as follows:
\[
\begin{aligned}
\sss&:=[ \dim(S_1),\ldots,\dim(S_{\ell+1}) ]^T,\\
\ppp&:=[ \dim(P_1),\ldots,\dim(P_{\ell+1}) ]^T.
\end{aligned}
\]

\section{Finite-dimensional Hopf algebras}
\label{algebras-and-modules-section}

\subsection{Finite-dimensional algebras}
\label{subsect.findim-alg}

Let $A$ be a finite-dimensional algebra over an algebraically closed 
field $\FF$.  Unless explicitly mentioned otherwise, we will only
consider left $A$-modules $V$, with $\dim V:=\dim_\FF V$ finite,
and all tensor products $\otimes$ will be over the field $\FF$.
We recall several facts about such modules; see, e.g., 
Webb \cite[Chap. 7]{Webb} and particularly \cite[Thm. 7.3.9]{Webb}.
The left-regular $A$-module $A$ has a decomposition
\begin{equation}
\label{left-regular-decomposition}
A
\cong \bigoplus_{i=1}^{\ell+1} P_i^{\dim{S_i}}.
\end{equation}
For an $A$-module $V$, if $[V: S_i]$ denotes the multiplicity of $S_i$ as a 
composition factor of $V$, then  
\begin{equation}
\label{eq.alg-reps.V:Si}
[V:S_i] = \dim \Hom_A ( P_i, V ).
\end{equation}

There are two \emph{Grothendieck groups}, $G_0(A)$ and $K_0(A)$:
\begin{itemize}
\item The first one, $G_0(A)$, is defined as the 
quotient of the free abelian group on the set of
all isomorphism classes $[V]$ of $A$-modules $V$,
subject to the relations $[U]-[V]+[W]$ for each
short exact sequence $0 \to U \to V \to W \to 0$
of $A$-modules. This group has a $\ZZ$-module basis consisting of
the classes $[S_1],\ldots,[S_{\ell+1}]$, due to the Jordan-H\"older
theorem.
\item The second one, $K_0(A)$, is defined as the
quotient of the free abelian group on the set of all isomorphism
classes $[V]$ of {\bf projective} $A$-modules $V$,
subject to the relations $[U]-[V]+[W]$ for each
direct sum decomposition $V=U \oplus W$
of $A$-modules. This group has a $\ZZ$-module basis consisting
of the classes $[P_1],\ldots,[P_{\ell+1}]$, due to the 
Krull-Remak-Schmidt theorem.
\end{itemize}

Note that \eqref{left-regular-decomposition} 
implies the following.

\begin{proposition} 
\label{K0-expansion-of-regular-rep}
For a finite-dimensional algebra $A$ over an algebraically
closed field $\FF$, in $K_0(A)$, the class $[A]$
of the left-regular $A$-module has the expansion
$
[A] =\sum_{i=1}^{\ell+1} (\dim S_i) [P_i].
$
\end{proposition}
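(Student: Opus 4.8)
The plan is to deduce the claimed expansion of $[A]$ in $K_0(A)$ directly from the direct-sum decomposition \eqref{left-regular-decomposition} of the left-regular module. This is essentially a formality once one has the right framework in place.

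First I would recall that $K_0(A)$ is, by definition, the Grothendieck group built from projective $A$-modules with relations coming from direct sum decompositions; in particular, for any two projective modules $U$ and $W$ one has $[U \oplus W] = [U] + [W]$ in $K_0(A)$, and hence by induction $\left[ \bigoplus_{j} M_j \right] = \sum_j [M_j]$ for any finite family of projective modules $M_j$. Next I would observe that the left-regular module $A$ is itself projective (it is free of rank one), and so are all the $P_i$ and all their direct sums, so the class $[A]$ makes sense in $K_0(A)$ and the additivity just mentioned applies to the decomposition \eqref{left-regular-decomposition}. Applying it to $A \cong \bigoplus_{i=1}^{\ell+1} P_i^{\dim S_i}$ gives $[A] = \sum_{i=1}^{\ell+1} (\dim S_i)\,[P_i]$, which is exactly the assertion.

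One small point worth spelling out is why $\left[ P_i^{\dim S_i} \right] = (\dim S_i)\,[P_i]$: this is the $j$-fold additivity applied to the direct sum of $j := \dim S_i$ copies of $P_i$, using that $P_i$ is projective. I would also note that isomorphic modules have the same class in $K_0(A)$, so passing from $A$ to $\bigoplus_i P_i^{\dim S_i}$ along the isomorphism in \eqref{left-regular-decomposition} does not change the class; this is immediate from the construction of $K_0(A)$ as a quotient of the free abelian group on \emph{isomorphism classes} of projective modules.

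There is no real obstacle here — the content is entirely in the decomposition \eqref{left-regular-decomposition}, which is quoted from Webb and may be assumed. The only thing one must be a little careful about is not to conflate $K_0(A)$ with $G_0(A)$: the identity $[A] = \sum_i (\dim S_i)[P_i]$ lives in $K_0(A)$ and uses the direct-sum relations, not the short-exact-sequence relations, so I would make sure the statement and its one-line proof both refer to $K_0(A)$ throughout.
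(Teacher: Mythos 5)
Your proof is correct and takes the same route as the paper: the paper introduces the proposition with the single remark that it is implied by the decomposition \eqref{left-regular-decomposition}, and you simply spell out why (additivity of classes under direct sums in $K_0(A)$, applied to $A \cong \bigoplus_i P_i^{\dim S_i}$). Your caution about distinguishing $K_0(A)$ from $G_0(A)$ is well placed but adds nothing beyond what the paper already implicitly relies on.
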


The two bases of $G_0(A)$ and $K_0(A)$ give rise to
group isomorphisms $G_0(A) \cong \ZZ^{\ell+1} \cong K_0(A)$.
There is also a 
$\ZZ$-bilinear pairing $K_0(A) \times G_0(A) \rightarrow \ZZ$
induced from $\langle [P], [S] \rangle :=\dim \Hom_A(P,S)$. This
is a perfect pairing 
since the $\ZZ$-basis elements satisfy
\[
\langle [P_i], [S_j] \rangle =\dim \Hom_A(P_i,S_j)= [S_j:S_i]
= \delta_{i,j}
\]
(where \eqref{eq.alg-reps.V:Si} was used for the second equality). 
\begin{verlong}
More generally,
\begin{equation}
\langle [P_i], [V] \rangle = \dim \Hom_A (P_i, V) = [V : S_i]
\label{eq.alg-reps.V:Si2}
\end{equation}
for any $A$-module $V$.
\end{verlong}
There is also a $\ZZ$-linear map $K_0(A) \rightarrow G_0(A)$ 
which sends the class $[P]$ of a projective $A$-module $P$
in $K_0(A)$ to the class $[P]$
in $G_0(A)$.  This map is expressed in the usual bases 
by the \emph{Cartan matrix} $C$ of $A$; this is the integer
$\left(\ell+1\right)\times\left(\ell+1\right)$-matrix having entries
\begin{equation}
\label{Cartan-matrix-definition}
C_{i,j}:= [ P_j: S_i]=\dim \Hom_A(P_i,P_j).
\end{equation}
If one chooses orthogonal idempotents $e_i$ in $A$
for which $P_i \cong Ae_i$ as $A$-modules, then
one can reformulate
\begin{equation}
\label{Cartan-matrix-reformulation}
C_{i,j}= \dim \Hom_A(P_i,P_j)
= \dim \Hom_A \left(A e_i, A e_j\right)
= \dim \left(e_i A e_j\right)
\end{equation}
where the last equality used the isomorphism
$\Hom_A\left(Ae, V\right) \cong eV$ sending $\varphi \mapsto \varphi(e)$,
for any $A$-module $V$ and any idempotent $e$ of $A$;
see, e.g., \cite[Prop. 7.4.1 (3)]{Webb}.

Taking dimensions of both sides in
\eqref{left-regular-decomposition}
identifies the dot product of $\sss$ and $\ppp$.

\begin{proposition} 
\label{prop.important-dot-product}
If $A$ is a finite-dimensional algebra over an algebraically
closed field, then $\sss^T \ppp=\dim(A)$.
\end{proposition}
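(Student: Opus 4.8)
The plan is to read the identity straight off the decomposition \eqref{left-regular-decomposition} of the left-regular module, which was already recorded above. First I would recall that as $A$-modules (hence, in particular, as $\FF$-vector spaces) one has $A \cong \bigoplus_{i=1}^{\ell+1} P_i^{\dim S_i}$. Taking $\FF$-dimensions of both sides and using that $\dim$ is additive over direct sums, I obtain
\[
\dim(A) = \sum_{i=1}^{\ell+1} (\dim S_i)(\dim P_i).
\]
The right-hand side is precisely the dot product $\sss^T \ppp$ of the vectors $\sss = [\dim S_1,\ldots,\dim S_{\ell+1}]^T$ and $\ppp = [\dim P_1,\ldots,\dim P_{\ell+1}]^T$, so the claim follows at once.

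The only point deserving a word of care is that \eqref{left-regular-decomposition} really is an honest $A$-module isomorphism (so that the underlying $\FF$-vector spaces genuinely have the same dimension), not merely an identity in the Grothendieck group; but this is exactly the content of the cited structure theorem \cite[Thm. 7.3.9]{Webb}, so there is no obstacle here. An alternative, essentially equivalent route would be to invoke Proposition~\ref{K0-expansion-of-regular-rep}, namely $[A] = \sum_{i=1}^{\ell+1} (\dim S_i)[P_i]$ in $K_0(A)$, and then apply the linear functional $[P] \mapsto \dim P$ on $K_0(A)$; this yields the same sum. Either way the proof is a one-line dimension count, and I do not expect any genuine difficulty.
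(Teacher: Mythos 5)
Your proof is correct and follows exactly the paper's own argument: take $\FF$-dimensions on both sides of the decomposition $A \cong \bigoplus_{i=1}^{\ell+1} P_i^{\dim S_i}$ from \eqref{left-regular-decomposition} and observe that the right-hand side is the dot product $\sss^T\ppp$.
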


\begin{verlong}
\begin{proof}
From $\sss = [ \dim(S_1),\ldots,\dim(S_{\ell+1}) ]^T$ and
$\ppp = [ \dim(P_1),\ldots,\dim(P_{\ell+1}) ]^T$, we obtain
\begin{align*}
\sss^T \ppp
&= \sum_{i=1}^{\ell+1} \dim\left(S_i\right) \dim\left(P_i\right)
= \dim \left(\bigoplus_{i=1}^{\ell+1} P_i^{\dim{S_i}}\right)
= \dim A
\qquad \left(\text{by \eqref{left-regular-decomposition}}\right) .
\qedhere
\end{align*}
\end{proof}
\end{verlong}

On the other hand, the definition \eqref{Cartan-matrix-definition}
of the Cartan matrix $C$ immediately yields the following:

\begin{proposition} 
\label{prop.AA.cartan.2}
If $A$ is a finite-dimensional algebra over an algebraically
closed field, then $\ppp^T=\sss^T C$.
\end{proposition}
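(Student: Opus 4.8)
The plan is to unwind both sides of the claimed identity $\ppp^T = \sss^T C$ coordinatewise and recognize the resulting equation as a restatement of the composition-series decomposition of the projective modules $P_j$. Concretely, for each index $j$, the $j$-th coordinate of $\ppp^T$ is simply $\dim(P_j)$, while the $j$-th coordinate of $\sss^T C$ is $\sum_{i=1}^{\ell+1} (\dim S_i) C_{i,j}$. Using the definition \eqref{Cartan-matrix-definition} of the Cartan matrix, namely $C_{i,j} = [P_j : S_i]$, this latter sum becomes $\sum_{i=1}^{\ell+1} (\dim S_i)\, [P_j : S_i]$.

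So the proof reduces to the assertion that $\dim(P_j) = \sum_{i=1}^{\ell+1} (\dim S_i)\,[P_j : S_i]$ for every $j$. This is the standard fact that the dimension of a finite-dimensional module equals the sum of the dimensions of its composition factors, counted with multiplicity; it follows by induction on the length of a composition series of $P_j$, using that $\dim$ is additive on short exact sequences. First I would state the coordinatewise reduction, then invoke (or spell out in one line) this additivity fact applied to a composition series of $P_j$. One can also phrase the whole thing without indices by noting that there is a $\ZZ$-linear map $\mathrm{edim} \colon G_0(A) \to \ZZ$ sending $[V] \mapsto \dim V$ (well-defined precisely because $\dim$ is additive on short exact sequences), that $\mathrm{edim}([S_i]) = \dim S_i = \sss_i$, and that the composite $K_0(A) \xrightarrow{C} G_0(A) \xrightarrow{\mathrm{edim}} \ZZ$ sends $[P_j]$ to $\dim P_j = \ppp_j$; comparing the two expressions in the bases then gives $\ppp^T = \sss^T C$.

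There is essentially no obstacle here: the only thing one must be slightly careful about is the matter of which side the transpose lands on and whether $C$ acts by left or right multiplication, i.e. getting $\sss^T C$ rather than $C^T \sss$ or $C \sss$. This is purely bookkeeping given the paper's stated conventions (vectors are columns, $C$ is the matrix of the map $K_0(A) \to G_0(A)$ in the chosen bases, so that $[P_j] \in K_0(A)$ maps to the vector $C e_j$ whose $i$-th entry is $C_{i,j} = [P_j : S_i]$), and it is already consistent with the formula $C_{i,j} = [P_j : S_i]$ recorded in \eqref{Cartan-matrix-definition}. So I would keep the proof to two or three sentences: reduce to the coordinate identity $\dim(P_j) = \sum_i (\dim S_i)[P_j : S_i]$, cite additivity of dimension along composition series, and remark that the placement of the transpose matches \eqref{Cartan-matrix-definition}.
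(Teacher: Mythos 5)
Your proof is correct and is exactly the argument the paper has in mind: the paper states Proposition~\ref{prop.AA.cartan.2} as following ``immediately'' from the definition \eqref{Cartan-matrix-definition} of the Cartan matrix, and your coordinatewise computation $(\sss^T C)_j = \sum_i (\dim S_i)[P_j:S_i] = \dim P_j = \ppp_j$ is precisely that unwinding, together with the standard additivity of $\dim$ along composition series.
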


\subsection{Hopf algebras}
\label{Hopf-algebra-subsection}

Let $A$ be a finite-dimensional 
\emph{Hopf algebra} over an algebraically closed field $\FF$, with 
\begin{itemize}
\item counit $\epsilon:A\to\FF$, 
\item coproduct $\Delta: A \to A \otimes A$,
\item antipode $\alpha:A\to A$.
\end{itemize}
%
%

\begin{example}
Our main motivating example is the \emph{group algebra}
$A=\FF G=\{\sum_{g \in G} c_g g: c_g \in \FF\}$, 
for a finite group $G$, with $\FF$ of arbitrary characteristic.
For $g$ in $G$, the corresponding basis
element $g$ of $\FF G$ has 
\[
\begin{aligned}
\epsilon(g)&=1,\\
\Delta(g)&=g \otimes g,\\
\alpha(g)&=g^{-1}.
\end{aligned}
\]
\end{example}

\begin{example}
\label{Taft-algebra-example}
For integers $m, n>0$ with $m$ dividing $n$,
the \emph{generalized Taft Hopf algebra} $A=H_{n,m}$
is discussed in Cibils~\cite{Cibils} and in Li and Zhang \cite{LiZhang}.
As an algebra, it is a skew group ring \cite[Example 4.1.6]{Montgomery}
$$
H_{n,m} =\FF[\ZZ/n\ZZ] \ltimes \FF[x]/(x^m)
$$
for the cyclic group $\ZZ/n\ZZ=\{e,g,g^2,\ldots,g^{n-1}\}$
acting on coefficients in a truncated polynomial algebra $\FF[x]/(x^m)$,
via $gx g^{-1}=\omega^{-1} x$, with
$\omega$ a primitive $n^{th}$ root of unity in $\FF$.
That is, the algebra $H_{n,m}$ is the 
quotient of the free associative $\FF$-algebra on two generators
$g,x$, subject to the relations $g^n=1, x^m=0$ and $xg = \omega gx$.
It has dimension $mn$, with $\FF$-basis 
$\{g^i x^j : 0 \leq i < n \text{ and }0 \leq j <m\}$.

The remainder of its Hopf structure is determined by these choices:
\[
\begin{array}{rclcrcl}
\epsilon(g)&=&1,&            &\epsilon(x)&=&0,\\
\Delta(g)&=&g \otimes g,&    &\Delta(x)&=&1\otimes x + x \otimes g,\\
\alpha(g)&=&g^{-1},& &\alpha(x)&=&-\omega^{-1}g^{-1}x.
\end{array}
\]
\end{example}

\begin{example}
\label{asymmetric-Nichols-Hopf-algebra-example}
Radford defines in \cite[Exercise 10.5.9]{Radford}
a further interesting Hopf algebra,
which we will denote $A(n,m)$.
Let $n > 0$ and $m \geq 0$ be integers such that $n$ is
even and $n$ lies in $\FF^\times$.
Fix a primitive $n^{th}$ root of unity $\omega$ in $\FF$.
As an algebra, $A(n,m)$ is again a skew group ring 
$$
A(n,m) =\FF[\ZZ/n\ZZ] \ltimes \bigwedge_\FF[x_1,\ldots,x_m],
$$
for the cyclic group $\ZZ/n\ZZ=\{e,g,g^2,\ldots,g^{n-1}\}$
acting this time on coefficients in an exterior algebra 
$\bigwedge_\FF[x_1,\ldots,x_m]$,
via $gx_i g^{-1}=\omega x_i$.
That is, $A(n,m)$
is the quotient of the 
free associative $\FF$-algebra
on $g,x_1,\ldots,x_m$, subject to relations 
$g^n=1, x_i^2=0, x_i x_j=-x_j x_i,$ and $gx_i g^{-1}= \omega x_i$.
It has dimension $n 2^m$ and an $\FF$-basis
$
\left\{
g^i x_J:  0 \leq i < n, \  J \subseteq \{1,2,\ldots,m\}
\right\}
$
where $x_J:=x_{j_1} x_{j_2} \cdots x_{j_k}$ if $J=\{j_1 < j_2<\cdots< j_k\}$. 
The remainder of its Hopf structure is determined by these choices:
\begin{align*}
\epsilon\left(g\right) &= 1,
\qquad
& \epsilon\left(x_i\right) &= 0, \\
\Delta\left(g\right) &= g \otimes g,
\qquad
& \Delta\left(x_i\right) &= 1 \otimes x_i + x_i \otimes g^{n/2}, \\
\alpha\left(g\right) &= g^{-1},
\qquad
& \alpha\left(x_i\right) &= - x_i g^{n/2}.
\end{align*}
In the special case where $n=2$, the Hopf algebra $A(2,m)$ is
the \emph{Nichols Hopf algebra} of dimension $2^{m+1}$ defined
in Nichols \cite{Nichols}; see also 
Etingof \emph{et al.} \cite[Example 5.5.7]{Tensor}.
\end{example}

\begin{example}
\label{restricted-Lie-algebra}
When $\FF$ has characteristic $p$, a \emph{restricted 
Lie algebra} is a Lie algebra $\frak{g}$ over $\FF$, together with
a \emph{$p$-operation} $x \mapsto x^{[p]}$ on $\frak g$ satisfying certain
properties; see Montgomery \cite[Defn. 2.3.2]{Montgomery}.  
The \emph{restricted universal enveloping algebra}
$\frak{u}(\frak{g})$ is then the quotient of the usual
universal enveloping algebra $\frak{U}(\frak{g})$ by the two-sided
ideal generated by all elements $x^p-x^{[p]}$ for $x$ in $\frak g$.  
Since this two-sided ideal is also a Hopf ideal,
the quotient $\frak{u}(\frak{g})$ becomes a Hopf algebra over $\FF$.
The dimension of $\frak{u}(\frak{g})$ is $p^{\dim \frak g}$,
as it has a PBW-style $\FF$-basis of monomials 
$\{x_1^{i_1} x_2^{i_2} \cdots x_m^{i_m}\}_{0 \leq i_j <p}$
corresponding to a choice of ordered $\FF$-basis $(x_1,\ldots,x_m)$ 
of $\frak{g}$.
\end{example}

We return to discussing general finite-dimensional
Hopf algebras $A$ over $\FF$.

The counit $\epsilon: A \rightarrow \FF$ gives rise to
the $1$-dimensional \emph{trivial $A$-module} $\epsilon$, which is the
vector space $\FF$ on which $A$ acts through $\epsilon$.
Furthermore, for each $A$-module $V$, we can define its subspace of
\emph{$A$-fixed points}:
\[
V^A:=\{v \in V: av=\epsilon(a)v \text{ for all }a \in A\}.
\]

The coproduct $\Delta$ gives rise to the \emph{tensor product} 
$V\otimes W$ of two $A$-modules $V$ and $W$, defined via 
$a(v\otimes w) := \sum a_1 v \otimes a_2 w$,
using the Sweedler notation $\Delta(a)=\sum a_1 \otimes a_2$ for $a\in A$
(see, e.g., \cite[Sect. 2.1]{Radford} for an introduction to
the Sweedler notation).
With this definition, the canonical isomorphisms 
\begin{equation}
\label{canonical-triv-tensor-isomorphisms}
\epsilon\otimes V \cong V \cong V\otimes \epsilon
\end{equation}
are $A$-module isomorphisms.  The following lemma appears,
for example, as \cite[Prop. 7.2.2]{DascalescuEtAl}.

\begin{lemma}
\label{lem.VtimesA}
Let $V$ be an $A$-module.
\begin{enumerate}
\item[(i)] Then, $V \otimes A \cong A^{\oplus \dim V}$
as $A$-modules.
\item[(ii)] Also, $A \otimes V \cong A^{\oplus \dim V}$
as $A$-modules.
\end{enumerate}
\end{lemma}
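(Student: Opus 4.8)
The plan is to reduce both statements to the trivial observation that the $A$-module $V_{\mathrm{tr}} \otimes A$ is free of rank $\dim V$, where $V_{\mathrm{tr}}$ denotes the vector space underlying $V$ equipped with the trivial $A$-action (via $\epsilon$) and $A$ acts on $V_{\mathrm{tr}} \otimes A$ through the second tensor factor by left multiplication: picking an $\FF$-basis $v_1, \ldots, v_n$ of $V$, the summands $\FF v_i \otimes A$ are copies of the left-regular module, so $V_{\mathrm{tr}} \otimes A \cong A^{\oplus \dim V}$, and symmetrically $A \otimes V_{\mathrm{tr}} \cong A^{\oplus \dim V}$. Thus it suffices to produce $A$-module isomorphisms $V \otimes A \cong V_{\mathrm{tr}} \otimes A$ and $A \otimes V \cong A \otimes V_{\mathrm{tr}}$ relating the diagonal action to the one-sided one. (Of course, one may instead simply cite \cite[Prop. 7.2.2]{DascalescuEtAl}, as the text above does; the argument below just records the isomorphisms.)

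For (ii), I would check that $\Phi \colon A \otimes V_{\mathrm{tr}} \to A \otimes V$, $\Phi(a \otimes v) := \sum a_1 \otimes a_2 v$ (Sweedler notation, diagonal action on the target), is $A$-linear — a one-line computation since $\Delta$ is an algebra map — and that $\Psi \colon A \otimes V \to A \otimes V_{\mathrm{tr}}$, $\Psi(c \otimes w) := \sum c_1 \otimes \alpha(c_2) w$, is a two-sided inverse; the identities $\Psi \Phi = \id$ and $\Phi \Psi = \id$ are short Sweedler computations using only coassociativity, the counit axiom, and the antipode axioms $\sum \alpha(a_1) a_2 = \epsilon(a) 1 = \sum a_1 \alpha(a_2)$.

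For (i), the naturally $A$-linear map is $\phi \colon V_{\mathrm{tr}} \otimes A \to V \otimes A$, $\phi(v \otimes b) := \sum b_1 v \otimes b_2$ (again diagonal action on the target), and its inverse turns out to be $\phi^{-1}(w \otimes c) = \sum \alpha^{-1}(c_1) w \otimes c_2$; the verification now uses the antipode identities $\sum \alpha^{-1}(c_2) c_1 = \epsilon(c) 1 = \sum c_2 \alpha^{-1}(c_1)$ for $A^{\mathrm{cop}}$, which follow from those for $A$ since $\alpha$ is an anti-automorphism. This is legitimate because the antipode of a finite-dimensional Hopf algebra is bijective (a standard fact; see, e.g., \cite{Montgomery}). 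Conceptually, (i) is just (ii) applied to the Hopf algebra $A^{\mathrm{cop}}$ — which has the same underlying algebra, hence the same modules and the same left-regular module as $A$ — transported along the flip $V \otimes A \to A \otimes V$, which carries the diagonal $A$-action to the diagonal $A^{\mathrm{cop}}$-action.

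The only real obstacle is bookkeeping: one must notice that the maps which are visibly $A$-linear run from the one-sided module to the diagonal module and not the reverse, so that checking bijectivity is exactly where the antipode (and, for (i), its inverse) enters. Once one recalls that a finite-dimensional Hopf algebra has a bijective antipode, there is nothing delicate left.
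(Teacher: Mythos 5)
Your proposal is correct and matches the standard argument: the paper itself simply cites \cite[Prop.\ 7.2.2]{DascalescuEtAl} for this lemma, and the explicit isomorphisms you record --- $v\otimes b\mapsto\sum b_1 v\otimes b_2$ with inverse via $\alpha^{-1}$ for (i), and $a\otimes v\mapsto\sum a_1\otimes a_2 v$ with inverse via $\alpha$ for (ii), together with the observation that (i) is (ii) for $A^{\mathrm{cop}}$ --- are precisely the maps underlying that reference. The one minor point worth noting is that equipping $V$ with the trivial action and then taking the diagonal tensor action is the same (by the counit axiom) as regarding $V$ as a bare vector space and letting $A$ act only through the $A$ factor, so your $V_{\mathrm{tr}}\otimes A\cong A^{\oplus\dim V}$ is indeed the obvious free module.
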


The antipode $\alpha: A \rightarrow A$ of the Hopf algebra $A$ is
bijective,
since $A$ is finite-dimensional; see, e.g., \cite[Thm. 2.1.3]{Montgomery},
\cite[Thm. 7.1.14 (b)]{Radford}, \cite[Prop. 4]{Pareigis},
or \cite[Prop. 5.3.5]{Tensor}.  
Hence $\alpha$ is an algebra and coalgebra \emph{anti-automorphism}.
\begin{verlong}
In particular, as an algebra, $A \cong A^{\opp}$.
\end{verlong}
For each $A$-module $V$, the antipode gives rise to two
$A$-module structures on $\Hom_\FF(V,\FF)$:
the \emph{left-dual} $V^*$  and the \emph{right-dual} $\leftast{V}$ of $V$.
They are defined as follows:  For $a \in A$, $f \in \Hom_\FF(V,\FF)$ and
$v \in V$, we set
\[
(af)(v) := 
\begin{cases}
f(\alpha(a)v), &\text{ when regarding }f\text{ as an element of }V^*,\\
f(\alpha^{-1}(a)v), &\text{ when regarding }f\text{ as an element of }\leftast{V}. \\
\end{cases}
\]

The following two facts are straightforward exercises in the definitions.
\begin{lemma}
\label{lem.epsilonstar}
We have $A$-module isomorphisms
$\epsilon^* \cong \leftast{\epsilon} \cong \epsilon$.
\end{lemma}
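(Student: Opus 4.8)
The plan is to unwind the definitions of the left- and right-dual $A$-module structures on the one-dimensional space underlying $\epsilon$, and to observe that in both cases the $A$-action collapses to scalar multiplication by the counit. The single nontrivial input is the standard Hopf-algebra identity $\epsilon\circ\alpha=\epsilon$, which I would either cite or derive in one line: applying the algebra map $\epsilon$ to the antipode axiom $\sum \alpha(a_1)a_2=\epsilon(a)1$, using that $\epsilon$ is multiplicative and then the counit axiom $\sum a_1\epsilon(a_2)=a$, gives $\epsilon(\alpha(a)) = \epsilon\!\left(\alpha\!\left(\textstyle\sum a_1\epsilon(a_2)\right)\right) = \sum \epsilon(\alpha(a_1))\epsilon(a_2) = \epsilon(a)$. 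Since $\alpha$ is bijective (as recalled above, because $A$ is finite-dimensional), substituting $\alpha^{-1}(a)$ for $a$ also yields $\epsilon\circ\alpha^{-1}=\epsilon$.

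Next I would identify the underlying vector spaces. Both $\epsilon^*$ and $\leftast{\epsilon}$ are, as $\FF$-vector spaces, just $\Hom_\FF(\FF,\FF)$, which is one-dimensional with basis the identity map $\id_\FF$. So there is a canonical vector-space isomorphism $\epsilon\to\epsilon^*$ (and likewise $\epsilon\to\leftast{\epsilon}$) sending $1\mapsto\id_\FF$, and it remains only to check $A$-linearity. For $\epsilon^*$: given $a\in A$ and $v\in\epsilon=\FF$, one computes $(a\cdot\id_\FF)(v)=\id_\FF(\alpha(a)v)=\alpha(a)v=\epsilon(\alpha(a))v=\epsilon(a)v$, where the third equality uses that $A$ acts on $\epsilon$ through $\epsilon$ and the fourth uses $\epsilon\circ\alpha=\epsilon$. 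Hence $a\cdot\id_\FF=\epsilon(a)\,\id_\FF$, i.e.\ the $A$-module structure on $\epsilon^*$ is the trivial one, so $\epsilon^*\cong\epsilon$. The computation for $\leftast{\epsilon}$ is word-for-word the same with $\alpha$ replaced by $\alpha^{-1}$ and the identity $\epsilon\circ\alpha^{-1}=\epsilon$ used in place of $\epsilon\circ\alpha=\epsilon$, giving $\leftast{\epsilon}\cong\epsilon$.

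There is essentially no obstacle here — the statement is, as the paper says, a straightforward exercise in the definitions; the only slightly non-formal ingredient is the identity $\epsilon\circ\alpha=\epsilon$ (together with its $\alpha^{-1}$-analogue), and everything else is bookkeeping with one-dimensional spaces. If one wanted to package the argument differently, one could instead invoke the general fact that for any $A$-module $V$ the left dual $V^*$ of the trivial module behaves well under the canonical isomorphisms \eqref{canonical-triv-tensor-isomorphisms}, but the direct computation above is shorter and fully self-contained.
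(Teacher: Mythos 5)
Your proof is correct and follows essentially the same route as the paper: reduce to the identity $\epsilon\circ\alpha=\epsilon$ (and its $\alpha^{-1}$-analogue), then observe that under the canonical identification of $\Hom_\FF(\FF,\FF)$ with $\FF$, the dual action collapses to the trivial action. The only cosmetic difference is the justification of $\epsilon\circ\alpha=\epsilon$ — you derive it directly from the antipode and counit axioms, whereas the paper simply cites that $\alpha$ is a coalgebra anti-homomorphism; both are standard and equivalent.
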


\begin{lemma}
\label{lem.Vstarstar}
Let $V$ be an $A$-module. We have canonical $A$-module isomorphisms
$\leftast{(V^*)} \cong V \cong (\leftast{V})^*$.
\end{lemma}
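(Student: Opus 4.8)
The plan is to construct explicit $A$-module isomorphisms in both directions and check they are well-defined, i.e. $A$-linear. For the first claimed isomorphism $\leftast{(V^*)} \cong V$: as $\FF$-vector spaces, $\leftast{(V^*)}$ is $\Hom_\FF(\Hom_\FF(V,\FF),\FF)$, the double dual of $V$, which is canonically isomorphic to $V$ via the evaluation map $\coev\colon v \mapsto (f \mapsto f(v))$ since $V$ is finite-dimensional. So the only thing to verify is that this classical linear isomorphism intertwines the $A$-actions, where the $A$-action on $V^*$ uses the antipode $\alpha$ and the $A$-action on $\leftast{(V^*)}$ uses $\alpha^{-1}$. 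Concretely, for $a \in A$, $v \in V$, $f \in V^*$, one computes $\bigl(a \cdot \coev(v)\bigr)(f) = \coev(v)\bigl(\alpha^{-1}(a) f\bigr) = \bigl(\alpha^{-1}(a) f\bigr)(v) = f\bigl(\alpha(\alpha^{-1}(a)) v\bigr) = f(av) = \coev(av)(f)$, where the second-to-last step uses the action on $V^*$ (which is via $\alpha$). This shows $\coev$ is $A$-linear, hence an isomorphism $V \xrightarrow{\sim} \leftast{(V^*)}$. The symmetric isomorphism $V \cong (\leftast{V})^*$ is proved by the same computation with the roles of $\alpha$ and $\alpha^{-1}$ swapped: the action on $(\leftast{V})^*$ is via $\alpha$, the action on $\leftast{V}$ is via $\alpha^{-1}$, and again the two occurrences of the antipode cancel because $\alpha(\alpha^{-1}(a)) = a = \alpha^{-1}(\alpha(a))$.

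In more detail, the key steps in order are: (1) recall that for finite-dimensional $V$ the evaluation/coevaluation map $\coev_V\colon V \to V^{**}$, $v \mapsto \ev_v$ where $\ev_v(f) = f(v)$, is an $\FF$-linear isomorphism; (2) unwind the two dual-module structures from the definition in the excerpt --- the left-dual uses $\alpha$, the right-dual uses $\alpha^{-1}$ --- so that $\leftast{(V^*)}$ carries the action $(a f^{**})(g) = f^{**}(\alpha^{-1}(a) g)$ with $(\alpha^{-1}(a)g)(v) = g(\alpha(\alpha^{-1}(a))v)$; (3) substitute to see the net action of $a$ on $V^{**}$ matches the action of $a$ on $V$ transported through $\coev_V$; (4) conclude $\coev_V$ is an isomorphism of $A$-modules; (5) repeat verbatim with $\leftast{(-)}$ and $(-)^*$ interchanged to get $V \cong (\leftast{V})^*$. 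The word ``canonical'' in the statement is exactly the assertion that the map is $\coev_V$, independent of any choices.

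I expect essentially no obstacle here: this is the standard fact that in a category with duals the double dual is naturally isomorphic to the identity, specialized to $\Rep(A)$, and the only content is the bookkeeping that the left- and right-duals are ``inverse'' operations precisely because the antipode is used one way for one and the inverse antipode the other way. If there is any subtlety, it is purely notational --- keeping straight which of $\alpha$, $\alpha^{-1}$ acts on which space --- and the excerpt itself flags the result as ``a straightforward exercise in the definitions,'' so a one- or two-line verification along the lines above suffices. No finiteness hypothesis beyond $\dim V < \infty$ is needed, and that is already part of the standing assumptions; bijectivity of $\alpha$ (recorded just above the lemma in the excerpt) is what makes $\leftast{V}$ even defined and is used implicitly when we write $\alpha^{-1}$.
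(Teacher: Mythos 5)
Your proof is correct and follows essentially the same route as the paper's: both take the canonical linear map $V \to V^{**}$, $v \mapsto (f \mapsto f(v))$, and verify $A$-equivariance via the chain $(a\phi(v))(f) = \phi(v)(\alpha^{-1}(a)f) = (\alpha^{-1}(a)f)(v) = f(\alpha(\alpha^{-1}(a))v) = f(av) = \phi(av)(f)$, with the second isomorphism obtained by swapping the roles of $\alpha$ and $\alpha^{-1}$. (Minor terminological note: the map $v \mapsto \ev_v$ is usually called the canonical embedding into the double dual rather than ``coevaluation,'' which in tensor-category language denotes a morphism $\FF \to V \otimes V^*$ --- but this does not affect the argument.)
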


For any two $A$-modules $V$ and $W$, we define an $A$-module structure
on
$\Hom_\FF(V,W)$ via 
\[
(a\varphi)(v) := \sum a_1 \varphi(\alpha(a_2)v)
\]
for all $a \in A$, $\varphi \in \Hom_\FF(V,W)$ and $v \in V$.
The following result appears, for example, as \cite[Lemma 2.2]{Witherspoon}.

\begin{lemma}
\label{lem.hom-tensor-iso}
Let $V$ and $W$ be two $A$-modules.
Then, we have an $A$-module isomorphism
\begin{equation}
\label{hom-tensor-iso}
\Phi: W \otimes V^* \overset{\cong}{\rightarrow} \Hom_\FF(V,W)
\end{equation}
sending $w \otimes f$ to the linear map $\varphi \in \Hom_\FF(V,W)$
that is defined by
$\varphi(v)=f(v)w$ for all $v\in V$.

In particular, when $W=\epsilon$, this shows $V^* \cong\Hom_\FF(V,\epsilon)$.
\end{lemma}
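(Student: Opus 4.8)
The plan is to establish the isomorphism in three steps --- well-definedness of $\Phi$, its bijectivity as a map of $\FF$-vector spaces, and its compatibility with the two $A$-module structures --- and then to read off the special case $W=\epsilon$ from \eqref{canonical-triv-tensor-isomorphisms}. First I would observe that the assignment $(w,f) \mapsto \bigl(v \mapsto f(v)\,w\bigr)$ is $\FF$-bilinear as a map $W \times V^* \to \Hom_\FF(V,W)$, so it factors uniquely through the tensor product $W \otimes V^*$ and produces the $\FF$-linear map $\Phi$ described in the statement.

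Next I would check that $\Phi$ is an $\FF$-linear isomorphism. Since $\dim\bigl(W \otimes V^*\bigr) = \dim(W)\dim(V) = \dim\Hom_\FF(V,W)$, it suffices to prove injectivity. Writing an element of $W \otimes V^*$ as $\sum_i w_i \otimes f_i$ with the $f_i$ linearly independent in $V^*$, and choosing $v_j \in V$ with $f_i(v_j) = \delta_{i,j}$ (possible because finitely many linearly independent functionals on a vector space can be simultaneously dualized), one finds that $\Phi\bigl(\sum_i w_i \otimes f_i\bigr)$ sends $v_j$ to $w_j$; hence $\Phi\bigl(\sum_i w_i \otimes f_i\bigr) = 0$ forces every $w_j = 0$, so $\Phi$ is injective.

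The heart of the argument is the third step, $A$-linearity. Fix $a \in A$, write $\Delta(a) = \sum a_1 \otimes a_2$ in Sweedler notation, and set $\varphi := \Phi(w \otimes f)$, so $\varphi(v) = f(v)\,w$. Using the tensor-product action on $W \otimes V^*$ together with the definition of the $A$-action on the left dual $V^*$, the map $\Phi\bigl(a \cdot (w\otimes f)\bigr) = \Phi\bigl(\sum a_1 w \otimes a_2 f\bigr)$ sends $v$ to $\sum (a_2 f)(v)\,a_1 w = \sum f\bigl(\alpha(a_2)v\bigr)\, a_1 w$. On the other hand, the stated $A$-action on $\Hom_\FF(V,W)$ gives $(a\cdot\varphi)(v) = \sum a_1\,\varphi\bigl(\alpha(a_2)v\bigr) = \sum f\bigl(\alpha(a_2)v\bigr)\, a_1 w$. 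The two agree, and since both sides are $\FF$-linear in $w \otimes f$ and additive in $a$, this shows $\Phi$ is an $A$-module homomorphism, hence an $A$-module isomorphism. For the final assertion, taking $W = \epsilon$ yields $\Hom_\FF(V,\epsilon) \cong \epsilon \otimes V^* \cong V^*$, the last isomorphism being \eqref{canonical-triv-tensor-isomorphisms}.

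I do not anticipate a genuine obstacle here; the only place demanding care is the Sweedler-notation bookkeeping in the third step, where one must apply the definition of the dual-module action (with $\alpha$, since $V^*$ is the \emph{left} dual, not $\alpha^{-1}$) and the given action on $\Hom_\FF(V,W)$ in exactly the right order. It is worth noting that this computation uses neither coassociativity nor the antipode axioms --- only the fact, which we are granted, that these formulas do define $A$-module structures.
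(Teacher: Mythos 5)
Your proof is correct and follows essentially the same route as the paper's (given in the appendix as Lemma~\ref{lem.hom-tensor-iso.gen}): the $A$-equivariance computation is the identical Sweedler-notation calculation, and bijectivity is a linear-algebra triviality handled by the paper via an explicit inverse $h \mapsto \sum_i h(v_i) \otimes v_i^*$ and by you via injectivity plus a dimension count. Your closing observation that the equivariance check uses only the given module structures, not the coassociativity or antipode axioms directly, is accurate and worth noticing.
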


Next, we shall use a result that is proven in Schneider
\cite[Lemma 4.1]{Schneider}\footnote{Schneider makes various
assumptions that are not used in the proof.}

\begin{lemma}\label{lem:AHom}
Let $V$ and $W$ be two $A$-modules. Then,
$\Hom_A (V, W) = \Hom_\FF (V, W)^A$.
\end{lemma}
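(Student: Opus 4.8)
The plan is to prove both inclusions directly from the explicit $A$-module structure on $\Hom_\FF(V,W)$, namely $(a\varphi)(v) = \sum a_1 \varphi(\alpha(a_2)v)$, together with the counit axiom $\sum \epsilon(a_1)a_2 = a = \sum a_1 \epsilon(a_2)$, coassociativity $\sum (a_1)_1 \otimes (a_1)_2 \otimes a_2 = \sum a_1 \otimes a_2 \otimes a_3$, and the antipode axioms $\sum a_1 \alpha(a_2) = \epsilon(a)1_A = \sum \alpha(a_1)a_2$.

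First I would do the easy inclusion $\Hom_A(V,W) \subseteq \Hom_\FF(V,W)^A$. If $\varphi$ is $A$-linear, then for all $a \in A$ and $v \in V$ we have $(a\varphi)(v) = \sum a_1 \varphi(\alpha(a_2)v) = \sum a_1 \alpha(a_2)\varphi(v) = \epsilon(a)\varphi(v)$, using $A$-linearity to pull $\alpha(a_2)$ out of $\varphi$ and then the antipode axiom. Hence $a\varphi = \epsilon(a)\varphi$ for all $a$, i.e.\ $\varphi \in \Hom_\FF(V,W)^A$.

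For the reverse inclusion, suppose $\varphi \in \Hom_\FF(V,W)^A$, so that $\sum a_1 \varphi(\alpha(a_2)v) = \epsilon(a)\varphi(v)$ for all $a \in A$ and $v \in V$. Since this identity is $\FF$-linear in the pair $(a,v)$, it remains valid when $\sum a \otimes v$ is replaced by an arbitrary element of $A \otimes V$; I would feed in the element $\sum b_1 \otimes b_2 v$ for a fixed $b \in A$, $v \in V$. On the right-hand side the counit axiom gives $\sum \epsilon(b_1)\varphi(b_2 v) = \varphi(bv)$. On the left-hand side, coassociativity rewrites $\sum (b_1)_1 \varphi(\alpha((b_1)_2)\, b_2 v)$ as $\sum b_1 \varphi(\alpha(b_2)\, b_3 v)$, and applying the antipode axiom $\sum \alpha(b_2)b_3 = \epsilon(b_2)1_A$ in the last two Sweedler slots collapses the element $\sum b_1 \otimes \alpha(b_2)b_3 \in A \otimes A$ to $b \otimes 1_A$; feeding this through the well-defined linear map $A \otimes A \to W,\ x \otimes y \mapsto x \cdot \varphi(y \cdot v)$ yields $b\varphi(v)$. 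Comparing the two sides gives $\varphi(bv) = b\varphi(v)$, so $\varphi$ is $A$-linear.

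The whole argument is a routine Sweedler-notation computation; the one place I would be careful is the step that substitutes $\sum b_1 \otimes b_2 v$ into the fixed-point identity, keeping straight which tensor slot acts on $W$ and which acts inside the argument of $\varphi$ on $V$ — stating it via the linear map $A \otimes A \to W$ above makes this rigorous. I do not expect any genuine obstacle beyond this bookkeeping.
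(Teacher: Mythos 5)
Your proof is correct and is essentially the same as the paper's (which is taken from Schneider). The forward inclusion is identical, and the backward inclusion performs the same chain of manipulations — expanding with the counit, applying the fixed-point hypothesis, invoking coassociativity, and collapsing $\sum \alpha(b_2)b_3$ to $\epsilon(b_2)1_A$ via the antipode axiom — merely organized as "substitute into the hypothesis and simplify both sides" instead of "start from $\varphi(bv)$ and work to $b\varphi(v)$."
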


The next four results are proven in Appendix~\ref{Hopf-appendix}.

\begin{lemma}\label{lem:AHom2}
Let $V$ and $W$ be two $A$-modules. Then,
$\Hom_A (V, W) \cong \Hom_A \left(W^* \otimes V, \epsilon\right)$.
\end{lemma}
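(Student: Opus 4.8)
The plan is to reduce the claimed isomorphism $\Hom_A(V,W) \cong \Hom_A(W^*\otimes V, \epsilon)$ to a chain of isomorphisms already assembled in the excerpt. First I would rewrite the left-hand side using Lemma~\ref{lem.hom-tensor-iso}: that lemma gives an $A$-module isomorphism $\Phi: W\otimes V^* \xrightarrow{\cong} \Hom_\FF(V,W)$, so applying the functor $(-)^A$ of $A$-fixed points yields $\Hom_\FF(V,W)^A \cong (W\otimes V^*)^A$. By Lemma~\ref{lem:AHom}, the left side of this is exactly $\Hom_A(V,W)$, so we obtain $\Hom_A(V,W) \cong (W\otimes V^*)^A$.

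Next I would handle the right-hand side. By Lemma~\ref{lem.hom-tensor-iso} applied with $W=\epsilon$ (stated explicitly in that lemma), for any $A$-module $U$ one has $U^* \cong \Hom_\FF(U,\epsilon)$; taking $U = W^*\otimes V$ gives $(W^*\otimes V)^* \cong \Hom_\FF(W^*\otimes V, \epsilon)$. Then Lemma~\ref{lem:AHom} again identifies $\Hom_A(W^*\otimes V, \epsilon) = \Hom_\FF(W^*\otimes V, \epsilon)^A$, but it is cleaner to instead run the earlier computation in reverse with the pair $(W^*\otimes V, \epsilon)$ in place of $(V,W)$: that gives $\Hom_A(W^*\otimes V,\epsilon) \cong (\epsilon \otimes (W^*\otimes V)^*)^A \cong ((W^*\otimes V)^*)^A$, using the canonical isomorphism \eqref{canonical-triv-tensor-isomorphisms}. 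So it remains to produce an $A$-module isomorphism $W\otimes V^* \cong (W^*\otimes V)^*$.

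This last isomorphism is where the real (but still routine) work lies, and I expect it to be the main obstacle — not because it is deep, but because one must track the antipode carefully through the dual and tensor constructions. The standard fact is that for finite-dimensional $A$-modules $X$ and $Y$ there is a canonical $A$-module isomorphism $(X\otimes Y)^* \cong Y^* \otimes X^*$ (the left-dual of a tensor product reverses order), which follows from $\alpha$ being an algebra and coalgebra anti-automorphism together with finite-dimensionality. Applying this with $X = W^*$ and $Y = V$ gives $(W^*\otimes V)^* \cong V^* \otimes (W^*)^*$, and then Lemma~\ref{lem.Vstarstar} (or rather its ingredient $\leftast{(W^*)}\cong W$; here I would want $(W^*)^{*}$, so I should be careful about left- versus right-duals and may instead invoke that $A$ is finite-dimensional so $W \mapsto W^{**}$ is naturally isomorphic to the identity) identifies $(W^*)^* \cong W$, yielding $(W^*\otimes V)^* \cong V^*\otimes W$. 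Finally the braiding-free swap $V^*\otimes W \cong W\otimes V^*$ is \emph{not} automatic for a general Hopf algebra, so to avoid this I would instead apply the order-reversal isomorphism in the other direction — compute $(V\otimes W^*)^* \cong W \otimes V^*$ directly from $\alpha$ being an anti-homomorphism — and correspondingly set up the right-hand side of the theorem as $\Hom_A(V\otimes W^*, \epsilon)$ or adjust which dual is used.

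Concretely, then, the proof I would write is: (1) invoke Lemmas~\ref{lem:AHom} and~\ref{lem.hom-tensor-iso} to get $\Hom_A(V,W)\cong (W\otimes V^*)^A$; (2) establish the order-reversing dual isomorphism $(X\otimes Y)^*\cong Y^*\otimes X^*$ for finite-dimensional $A$-modules, citing it as a standard Hopf-algebra fact (e.g.\ in the spirit of \cite{Radford} or \cite{Witherspoon}) and supplying the one-line verification that it is $A$-linear using that $\alpha$ is an anti-homomorphism of algebras and coalgebras; (3) combine this with Lemma~\ref{lem.Vstarstar} and \eqref{canonical-triv-tensor-isomorphisms} to rewrite $(W\otimes V^*)^A$ as $((W^*\otimes V)^*)^A$; and (4) apply Lemmas~\ref{lem:AHom} and~\ref{lem.hom-tensor-iso} once more, in reverse, to recognize $((W^*\otimes V)^*)^A \cong \Hom_A(W^*\otimes V,\epsilon)$. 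Since this lemma is slated for the appendix, I would expect to spell out step (2) in full there rather than merely cite it.
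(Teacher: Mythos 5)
Your plan as written cannot be completed, because the $A$-module isomorphism it needs in step (3) does not exist for a general Hopf algebra. To pass from $(W\otimes V^*)^A$ to $\bigl((W^*\otimes V)^*\bigr)^A$ via $(-)^A$, you would need $W\otimes V^* \cong (W^*\otimes V)^*$ as $A$-modules. The order-reversal isomorphism gives $(W^*\otimes V)^* \cong V^* \otimes (W^*)^*$, so you are left needing both a braiding $V^*\otimes ? \cong ?\otimes V^*$ (which you correctly note is unavailable) \emph{and} an identification $(W^*)^* \cong W$. This second identification is what you cannot have: for the left-dual, $(W^*)^* = W^{**}$ is the double \emph{left}-dual, built from $\alpha^2$, and it is not naturally isomorphic to $W$ unless $\alpha^2$ is of a special form (e.g., $A$ involutory or pivotal). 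Lemma~\ref{lem.Vstarstar} only identifies the \emph{mixed} double duals $\leftast{(W^*)}$ and $(\leftast{W})^*$ with $W$. Your proposed workaround of computing $(V\otimes W^*)^* \cong W\otimes V^*$ directly is actually false as an $A$-module isomorphism for the same reason: unwinding the $A$-action, the candidate map intertwines $a_1$ with $\alpha^2(a_1)$, not with $a_1$. And swapping the dual to $\leftast{W}$, as you also contemplate, changes the statement — you would then be proving $\Hom_A(V,W)\cong\Hom_A(V\otimes\leftast{W},\epsilon)$, which is not the lemma.

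The paper's proof sidesteps this entirely by refusing to force the isomorphism of $\Hom_A$-spaces through an $A$-module isomorphism of the "internal" objects. It defines the obvious $\FF$-linear isomorphism $\phi:\Hom_\FF(V,W)\to (W^*\otimes V)^*$, $\phi(f)(g\otimes v)=g(f(v))$, explicitly notes that this $\phi$ is \emph{not} $A$-equivariant, and then proves by a direct Hopf-algebra computation (using only the antipode axiom, coassociativity, and the counit axiom) that $f$ is $A$-linear if and only if $\phi(f)$ lands in $\Hom_A(W^*\otimes V,\epsilon)$. That equivalence is the real content, and it is exactly what your Lemma-chaining approach would have to establish anyway — but it cannot be reached by stacking the existing module-level isomorphisms, because the required module-level isomorphism is absent.
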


\begin{lemma}\label{lem:tensor-dual}
Let $U$ and $V$ be $A$-modules.
Then, $(U\otimes V)^* \cong V^*\otimes U^*$ and
$\leftast{(U\otimes V)} \cong \leftast{V} \otimes \leftast{U}$.
\end{lemma}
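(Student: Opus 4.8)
The plan is to prove the first isomorphism $(U\otimes V)^*\cong V^*\otimes U^*$ by exhibiting an explicit natural map and checking that it is $A$-linear, and then to deduce the second isomorphism $\leftast{(U\otimes V)}\cong\leftast{V}\otimes\leftast{U}$ formally from the first. For the first isomorphism I would define a map $\Psi\colon V^*\otimes U^*\to(U\otimes V)^*$ on pure tensors by sending $g\otimes h$ (with $g\in V^*$ and $h\in U^*$) to the linear functional on $U\otimes V$ given by $u\otimes v\mapsto h(u)\,g(v)$. This $\Psi$ is an isomorphism of $\FF$-vector spaces by an elementary dimension count — with respect to dual bases it carries a basis of $V^*\otimes U^*$ to the dual basis of $U\otimes V$ — so the entire content of the statement is that $\Psi$ intertwines the two $A$-actions.

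Verifying $A$-linearity of $\Psi$ is a short Sweedler-notation computation, and it is the one place where care is needed. Writing $\Delta(a)=\sum a_1\otimes a_2$, the action on $V^*\otimes U^*$ is $a(g\otimes h)=\sum a_1g\otimes a_2h$, so, unwinding the definition $(bf)(w)=f(\alpha(b)w)$ of the left-dual action, the functional $\Psi\bigl(a(g\otimes h)\bigr)$ sends $u\otimes v$ to $\sum h(\alpha(a_2)u)\,g(\alpha(a_1)v)$. On the other side, $\bigl(a\,\Psi(g\otimes h)\bigr)(u\otimes v)=\Psi(g\otimes h)\bigl(\alpha(a)(u\otimes v)\bigr)=\Psi(g\otimes h)\Bigl(\sum\alpha(a)_1u\otimes\alpha(a)_2v\Bigr)$, and now one invokes that $\alpha$ is a coalgebra anti-homomorphism, i.e.\ $\Delta(\alpha(a))=\sum\alpha(a_2)\otimes\alpha(a_1)$; substituting this turns the right-hand side into $\sum h(\alpha(a_2)u)\,g(\alpha(a_1)v)$, which agrees with the left-hand side. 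Hence $\Psi$ is an $A$-module isomorphism. The subtle point is precisely the reversal of the two tensor legs produced by the anti-comultiplicativity of $\alpha$; this is also the reason $\Psi$ must pair $h$ with the $U$-slot and $g$ with the $V$-slot, and the reason the answer is $V^*\otimes U^*$ rather than $U^*\otimes V^*$.

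For the second isomorphism I would avoid redoing this computation. One could simply rerun the same argument with $\alpha^{-1}$ in place of $\alpha$, which is legitimate since the antipode is bijective and $\alpha^{-1}$ is again a coalgebra anti-automorphism. More cleanly, by Lemma~\ref{lem.Vstarstar} one has, for any $A$-modules $X$ and $Y$, that $X\cong Y$ if and only if $X^*\cong Y^*$ (apply the functor $\leftast{(-)}$ to an isomorphism $X^*\to Y^*$ and use $\leftast{(X^*)}\cong X$). Now $\bigl(\leftast{(U\otimes V)}\bigr)^*\cong U\otimes V$ by Lemma~\ref{lem.Vstarstar}, while $\bigl(\leftast{V}\otimes\leftast{U}\bigr)^*\cong(\leftast{U})^*\otimes(\leftast{V})^*\cong U\otimes V$ by the first isomorphism together with Lemma~\ref{lem.Vstarstar}; hence $\leftast{(U\otimes V)}\cong\leftast{V}\otimes\leftast{U}$. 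The main obstacle in the whole argument is really just the bookkeeping in the middle paragraph — correctly tracking the flip coming from $\Delta\circ\alpha=(\alpha\otimes\alpha)\circ\Delta^{\mathrm{op}}$ — with nothing deeper going on.
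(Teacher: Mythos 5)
Your proof of the first isomorphism is the same as the paper's: both define the map $g\otimes h\mapsto(u\otimes v\mapsto h(u)g(v))$, observe that it is a vector-space isomorphism, and verify $A$-equivariance by a Sweedler-notation computation that hinges on $\alpha$ being a coalgebra anti-homomorphism, $\Delta(\alpha(a))=\sum\alpha(a_2)\otimes\alpha(a_1)$. That computation is carried out correctly, and you are right that the flip produced by anti-comultiplicativity is what forces the order $V^*\otimes U^*$.

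For the second isomorphism, the paper simply reruns the same computation with $\alpha^{-1}$ in place of $\alpha$ (as you also note is possible); your alternative — deducing $\leftast{(U\otimes V)}\cong\leftast{V}\otimes\leftast{U}$ formally from the first isomorphism plus the biduality $\leftast{(X^*)}\cong X\cong(\leftast{X})^*$ of Lemma~\ref{lem.Vstarstar} — is a genuinely different and somewhat cleaner route. It avoids re-verifying the equivariance calculation, at the cost of invoking the (contravariant) duality functor and the cancellation $(\leftast{U})^*\otimes(\leftast{V})^*\cong U\otimes V$; since the paper works with finite-dimensional modules and a finite-dimensional $A$, all of those identifications are available, so your derivation is valid. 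Either approach is fine; yours localizes the one Hopf-algebraic computation to a single spot, while the paper's duplicates it with $\alpha^{-1}$ but keeps the two halves of the lemma symmetric.
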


\begin{lemma}
\label{lem:tensor-dual-four-isos}
For $A$-modules $U$, $V$, and $W$, one has isomorphisms 
\begin{eqnarray}
\label{eq:TensorDual1}
\Hom_A(U\otimes V, W) \xrightarrow\sim \Hom_A(U, W\otimes V^*),\\
\label{eq:TensorDual2}
\Hom_A(V^*\otimes U, W) \xrightarrow\sim \Hom_A(U, V\otimes W),\\
\label{eq:TensorDual3}
\Hom_A(U\otimes \leftast{V}, W) \xrightarrow\sim \Hom_A(U, W\otimes V),\\
\label{eq:TensorDual4}
\Hom_A(V\otimes U, W) \xrightarrow\sim \Hom_A(U, \leftast{V}\otimes W).
\end{eqnarray}
\end{lemma}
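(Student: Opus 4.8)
The four isomorphisms should all follow from the same package of tools assembled above: Lemma~\ref{lem:AHom2} (which rewrites $\Hom_A(X,Y)$ as $\Hom_A(Y^*\otimes X,\epsilon)$), Lemma~\ref{lem:tensor-dual} (duals of tensor products), Lemma~\ref{lem.Vstarstar} (double dual), and Lemma~\ref{lem.epsilonstar} together with the associativity and unit isomorphisms \eqref{canonical-triv-tensor-isomorphisms} for $\otimes$. The plan is to reduce every $\Hom_A$ to the form $\Hom_A(-,\epsilon)$, manipulate the first argument using the dual/associativity rules, and then recognize the result as another $\Hom_A(-,\epsilon)$ coming from one of the desired sides.

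Concretely, for \eqref{eq:TensorDual1} I would write
\[
\Hom_A(U\otimes V, W)
\cong \Hom_A\bigl(W^*\otimes (U\otimes V),\ \epsilon\bigr)
\cong \Hom_A\bigl((W^*\otimes U)\otimes V,\ \epsilon\bigr),
\]
using Lemma~\ref{lem:AHom2} and associativity of $\otimes$. On the other side, again by Lemma~\ref{lem:AHom2},
\[
\Hom_A(U,\ W\otimes V^*)
\cong \Hom_A\bigl((W\otimes V^*)^*\otimes U,\ \epsilon\bigr)
\cong \Hom_A\bigl(((V^*)^*\otimes W^*)\otimes U,\ \epsilon\bigr),
\]
using Lemma~\ref{lem:tensor-dual} for $(W\otimes V^*)^*\cong (V^*)^*\otimes W^*$. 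Now I would like to match $((V^*)^*\otimes W^*)\otimes U$ with $(W^*\otimes U)\otimes V$; this is not literally the same word, so the step that needs care is replacing $(V^*)^*$ by $V$ and commuting the tensor factors past each other. Commuting tensor factors is \emph{not} available for a general (noncocommutative) Hopf algebra, so instead I would route the computation so that $V$ (or $\leftast V$, $V^*$) always ends up on the same side. A cleaner organization: prove \eqref{eq:TensorDual3} first, since there $\leftast V$ pairs with $V$ and Lemma~\ref{lem.Vstarstar} gives $\leftast{(V^*)}\cong V$ and $(\leftast V)^*\cong V$ on the nose; then \eqref{eq:TensorDual1}, \eqref{eq:TensorDual2}, \eqref{eq:TensorDual4} follow by substituting $V\rightsquigarrow V^*$, $V\rightsquigarrow\leftast V$, and combining with Lemma~\ref{lem:tensor-dual} and Lemma~\ref{lem.Vstarstar}, after which all the $(V^*)^*$'s and $\leftast{(\leftast V)}$'s collapse.

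Alternatively — and this may be the slicker route — I would bypass the ``reduce to $\epsilon$'' trick for some of these by exhibiting the isomorphism directly at the level of $\Hom_\FF$: Lemma~\ref{lem.hom-tensor-iso} identifies $\Hom_\FF(V,W)\cong W\otimes V^*$ $A$-equivariantly, and Lemma~\ref{lem:AHom} says $\Hom_A(X,Y)=\Hom_\FF(X,Y)^A$. So $\Hom_A(U\otimes V,W)=\Hom_\FF(U\otimes V,W)^A\cong (W\otimes(U\otimes V)^*)^A$, while $\Hom_A(U,W\otimes V^*)=\Hom_\FF(U,W\otimes V^*)^A\cong((W\otimes V^*)\otimes U^*)^A$; then Lemma~\ref{lem:tensor-dual} plus associativity identify the two $A$-modules whose fixed points are being taken, giving the isomorphism. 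Either way, the only real obstacle is bookkeeping: making sure every rebracketing is an honest $A$-module isomorphism (associativity of $\otimes$ is fine; commutativity is forbidden), and that each passage between $V$, $V^*$, $\leftast V$ uses exactly Lemma~\ref{lem.Vstarstar} or Lemma~\ref{lem:tensor-dual} and never an unjustified swap. I would present \eqref{eq:TensorDual3} in full and then indicate that \eqref{eq:TensorDual1}, \eqref{eq:TensorDual2}, \eqref{eq:TensorDual4} are obtained from it (and from each other) by the substitutions above together with Lemmas~\ref{lem.Vstarstar} and \ref{lem:tensor-dual}.
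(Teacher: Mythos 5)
Your tool inventory is the right one, and you correctly flag the two central dangers — that $(V^*)^*$ does \emph{not} simplify to $V$ (only $\leftast{(V^*)}$ and $(\leftast{V})^*$ do, by Lemma~\ref{lem.Vstarstar}), and that tensor factors cannot be commuted. But your proposed reorganization does not survive that second warning. Equations \eqref{eq:TensorDual1} and \eqref{eq:TensorDual3} are related by the substitution $V\rightsquigarrow\leftast{V}$ or $V\rightsquigarrow V^*$, and so are \eqref{eq:TensorDual2} and \eqref{eq:TensorDual4}; but no substitution into $V$ carries \eqref{eq:TensorDual1} or \eqref{eq:TensorDual3} into \eqref{eq:TensorDual2} or \eqref{eq:TensorDual4}, because in the first pair the $V$-factor sits to the \emph{right} of $U$ in the source and the \emph{right} of $W$ in the target, while in the second pair it sits to the \emph{left} in both, and substitution cannot move it across $U$. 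Your ``alternative route'' via Lemma~\ref{lem.hom-tensor-iso} and Lemma~\ref{lem:AHom} is exactly the paper's proof of \eqref{eq:TensorDual1}: it yields $\bigl(W\otimes V^*\otimes U^*\bigr)^A$ on both sides, matching on the nose. But if you try the same route on \eqref{eq:TensorDual2}, the left side becomes $\bigl(W\otimes(V^*\otimes U)^*\bigr)^A\cong\bigl(W\otimes U^*\otimes (V^*)^*\bigr)^A$ while the right side becomes $\bigl(V\otimes W\otimes U^*\bigr)^A$, and these are not identifiable without exactly the two forbidden moves: a cyclic swap, and $(V^*)^*\cong V$.

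So the proposal genuinely lacks a workable argument for \eqref{eq:TensorDual2} (and hence \eqref{eq:TensorDual4}). The paper handles this separately using the ``reduce to $\epsilon$'' device from Lemma~\ref{lem:AHom2}, arranged so that no reordering or illegal double-dual collapse is needed:
$\Hom_A(V^*\otimes U, W) \cong \Hom_A(W^*\otimes V^*\otimes U, \epsilon)$, then recognize $W^*\otimes V^* \cong (V\otimes W)^*$ by Lemma~\ref{lem:tensor-dual} (this is the step where the $V^*$ stays put, adjacent to the newly-prepended $W^*$), and finally $\Hom_A((V\otimes W)^*\otimes U, \epsilon)\cong\Hom_A(U, V\otimes W)$ by Lemma~\ref{lem:AHom2} again. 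You already tried the $\epsilon$-trick for \eqref{eq:TensorDual1} and saw it fail; the point is that it succeeds for \eqref{eq:TensorDual2} precisely because there the dual appears on the side where prepending $W^*$ by Lemma~\ref{lem:AHom2} places it, while \eqref{eq:TensorDual1} needs the $\Hom_\FF$-level argument instead. The two pairs really do require two different mechanisms, and your plan to derive everything from a single one of the four is where the gap lies.
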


\begin{proposition} \phantomsection \label{prop.AA}
Any $A$-module $V$ has $\dim \Hom_A\left(V, A\right) = \dim V$.
\end{proposition}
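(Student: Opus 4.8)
The plan is to reduce the identity to the one-dimensionality of the space of integrals of $A$; everything else is formal bookkeeping with the module structures introduced above.

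First I would rewrite the $\FF$-vector space $\Hom_\FF(V,A)$ as an $A$-module. Lemma~\ref{lem.hom-tensor-iso}, applied with $W=A$, yields an $A$-module isomorphism $\Hom_\FF(V,A)\cong A\otimes V^*$; and since $\dim V^*=\dim V$, Lemma~\ref{lem.VtimesA}(ii) (applied to the $A$-module $V^*$) turns this into an $A$-module isomorphism $\Hom_\FF(V,A)\cong A^{\oplus\dim V}$.

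Next I would pass to $A$-fixed points on both sides. On the left, Lemma~\ref{lem:AHom} identifies $\Hom_\FF(V,A)^A=\Hom_A(V,A)$; on the right, the operation $W\mapsto W^A$ visibly commutes with finite direct sums, so $\bigl(A^{\oplus\dim V}\bigr)^A\cong\bigl(A^A\bigr)^{\oplus\dim V}$. Since the isomorphism of Lemma~\ref{lem.hom-tensor-iso} is one of $A$-modules, it respects $A$-fixed points, and we obtain $\dim\Hom_A(V,A)=(\dim V)\cdot\dim\bigl(A^A\bigr)$. (One could instead bypass $\Hom_\FF$ and argue by adjunction: putting $U=\epsilon$ and $W=A$ in \eqref{eq:TensorDual4}, then using $V\otimes\epsilon\cong V$, Lemma~\ref{lem.VtimesA}(i), and $\Hom_A(\epsilon,A)\cong A^A$, one gets $\Hom_A(V,A)\cong\Hom_A(\epsilon,\leftast{V}\otimes A)\cong\Hom_A(\epsilon,A)^{\oplus\dim V}$; this reaches the same point.)

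The main obstacle is the final step, $\dim\bigl(A^A\bigr)=1$ --- the one genuinely Hopf-theoretic ingredient. The space $A^A=\{t\in A:\ at=\epsilon(a)t\text{ for all }a\in A\}$ is exactly the space of left integrals of the finite-dimensional Hopf algebra $A$, which is one-dimensional by the classical Larson--Sweedler uniqueness theorem for integrals; see, e.g., \cite[Thm. 2.1.3]{Montgomery} or \cite[\S 10.2]{Radford}. (Specializing the Proposition to $V=\epsilon$ in fact recovers this statement, so it cannot be avoided.) Substituting $\dim\bigl(A^A\bigr)=1$ into the formula above finishes the proof; the only other point requiring attention is making sure Lemma~\ref{lem.hom-tensor-iso} is applied as an $A$-module isomorphism and that its target carries the $A$-module structure for which Lemma~\ref{lem:AHom} is stated.
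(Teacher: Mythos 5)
Your proof is correct and follows essentially the same chain of isomorphisms as the paper's:
\[
\Hom_A(V,A) = \Hom_\FF(V,A)^A \cong (A \otimes V^*)^A \cong \left(A^{\oplus \dim V}\right)^A \cong \left(A^A\right)^{\oplus \dim V},
\]
using Lemma~\ref{lem.hom-tensor-iso} with $W=A$, Lemma~\ref{lem.VtimesA}(ii), and Lemma~\ref{lem:AHom}. The one place you diverge is the last ingredient $\dim\left(A^A\right)=1$: you invoke the Larson--Sweedler uniqueness theorem for left integrals (an external citation), whereas the paper derives it self-containedly by specializing the formula $\dim\Hom_A(V,A)=\dim\left(A^A\right)\dim V$ to $V=A$, combining with the standard isomorphism $\Hom_A(A,A)\cong A$, and cancelling $\dim A$ from $\dim A = \dim\left(A^A\right)\dim A$. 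Both are legitimate; the paper's route avoids importing the integral-uniqueness theorem, while yours makes the connection to the classical theory explicit. Your remark that specializing to $V=\epsilon$ recovers the statement $\dim\left(A^A\right)=1$ is accurate, but note it does not show the fact is unavoidable as an independent input --- the paper's bootstrap via $V=A$ shows precisely how it falls out of the same machinery.
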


Proposition~\ref{prop.AA} implies the following two Hopf algebra facts,
to be compared with the two ``transposed'' algebra facts, Propositions~\ref{K0-expansion-of-regular-rep} and \ref{prop.AA.cartan.2}.

\begin{corollary} \label{cor.AA.cartan}
Let $A$ be a finite-dimensional Hopf algebra over an algebraically
closed field $\FF$. Let $P_i$, $S_i$, $\ppp$, $\sss$ and $C$ be as in
Subsection~\ref{subsect.findim-alg}.
\begin{enumerate}
\item[(i)] The class $[A]$ of the left-regular $A$-module expands in $G_0(A)$
as 
$
[A] =\sum_{i=1}^{\ell+1} (\dim P_i) [S_i].
$
\item[(ii)] The Cartan matrix $C$ has $C \sss = \ppp$. 
\end{enumerate}
\end{corollary}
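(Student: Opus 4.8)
The plan is to derive Corollary~\ref{cor.AA.cartan} from Proposition~\ref{prop.AA} in exactly the way that Propositions~\ref{K0-expansion-of-regular-rep} and~\ref{prop.AA.cartan.2} were derived from the dimension count \eqref{left-regular-decomposition} and the definition of the Cartan matrix, but now with the roles of ``simple'' and ``projective'' interchanged. The key observation is that Proposition~\ref{prop.AA} says $\dim \Hom_A(V, A) = \dim V$ for every $A$-module $V$, which plays the part dual to the identity $[V : S_i] = \dim \Hom_A(P_i, V)$ from \eqref{eq.alg-reps.V:Si}: the first measures how often $A$ appears ``covariantly'' in a module via homomorphisms out of it, while the second measures composition-factor multiplicities via homomorphisms from projectives.

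For part~(i), I would compute the coefficient of $[S_i]$ in the expansion of $[A]$ in $G_0(A)$. Since $\{[S_1], \ldots, [S_{\ell+1}]\}$ is a $\ZZ$-basis of $G_0(A)$ and the pairing $\langle [P_j], [S_i]\rangle = \delta_{i,j}$ is perfect, the coefficient of $[S_i]$ in $[A]$ equals $\langle [P_i], [A]\rangle = \dim \Hom_A(P_i, A)$. By Proposition~\ref{prop.AA} applied to $V = P_i$, this is $\dim P_i$. Hence $[A] = \sum_{i=1}^{\ell+1} (\dim P_i)[S_i]$ in $G_0(A)$, which is part~(i). (Alternatively, one could use the extended pairing identity \eqref{eq.alg-reps.V:Si2} available in the long version, but invoking \eqref{eq.alg-reps.V:Si} for $V = A$ directly also works: $[A : S_i] = \dim \Hom_A(P_i, A) = \dim P_i$.)

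For part~(ii), I would combine part~(i) with the two descriptions of $[A]$ that come from the other Grothendieck group. By \eqref{left-regular-decomposition}, as an $A$-module $A \cong \bigoplus_{i=1}^{\ell+1} P_i^{\dim S_i}$, so under the natural map $K_0(A) \to G_0(A)$ sending $[P]$ to its class in $G_0(A)$ — the map given in the standard bases by the Cartan matrix $C$ — the class $[A] = \sum_i (\dim S_i)[P_i] \in K_0(A)$ (Proposition~\ref{K0-expansion-of-regular-rep}) maps to $C\sss$ expressed in the $[S_i]$-basis. On the other hand, this image is exactly $[A]$ in $G_0(A)$, which by part~(i) has coordinate vector $\ppp$. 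Therefore $C\sss = \ppp$. A cleaner route avoiding the regular module entirely: for each $j$, the class $[P_j]$ in $G_0(A)$ expands as $\sum_i C_{i,j}[S_i]$ by the definition \eqref{Cartan-matrix-definition} of $C$, so taking dimensions gives $\dim P_j = \sum_i C_{i,j} \dim S_i = (C\sss)_j$, i.e.\ $C\sss = \ppp$ directly; then part~(i) is the statement that $[A] = \sum_j (\dim S_j)[P_j]$ maps to $\sum_j (\dim S_j)\sum_i C_{i,j}[S_i] = \sum_i \ppp_i [S_i]$, consistent with part~(i).

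There is essentially no obstacle here — the result is a formal consequence of Proposition~\ref{prop.AA} together with perfectness of the $K_0$--$G_0$ pairing and the definition of the Cartan matrix. The only thing to be careful about is bookkeeping with transposes: Proposition~\ref{prop.AA.cartan.2} states $\ppp^T = \sss^T C$ (i.e.\ $C^T \ppp = \sss$) for a general finite-dimensional algebra, whereas part~(ii) here asserts $C\sss = \ppp$, which is genuinely a Hopf-algebraic statement (it fails for general algebras) and is the ``transpose'' companion in the sense the remark before the corollary indicates. So I would make sure to track which side of $C$ the vectors $\sss$ and $\ppp$ sit on, and to flag that part~(ii) uses the Hopf structure precisely through Proposition~\ref{prop.AA} (which in turn rests on Lemma~\ref{lem:tensor-dual-four-isos} and the self-duality features of $A$).
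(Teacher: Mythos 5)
Your main argument for both parts is correct and essentially matches the paper's proof: part~(i) is the observation that $[A:S_i] = \dim\Hom_A(P_i,A) = \dim P_i$ via \eqref{eq.alg-reps.V:Si} and Proposition~\ref{prop.AA}; your first route for part~(ii) (tracing $[A]=\sum_j \sss_j[P_j] \in K_0(A)$ through the Cartan map and comparing with part~(i)) is the same entrywise computation the paper performs, just packaged at the level of the Grothendieck groups.

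However, your ``cleaner route avoiding the regular module entirely'' for part~(ii) contains exactly the transposition slip you warned yourself about. From $C_{i,j}=[P_j:S_i]$ you correctly get $[P_j]=\sum_i C_{i,j}[S_i]$ and hence $\dim P_j=\sum_i C_{i,j}\dim S_i$; but $\sum_i C_{i,j}\sss_i=(C^T\sss)_j$, not $(C\sss)_j$. So this argument re-derives Proposition~\ref{prop.AA.cartan.2}, the general-algebra identity $\ppp=C^T\sss$, \emph{not} the Hopf-specific assertion $C\sss=\ppp$. The two coincide only when $C$ is symmetric — which the paper immediately points out can fail (Example~\ref{asymmetric-Nichols-Cartan-example}). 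Notice that this ``cleaner route'' uses no Hopf input whatsoever, which should have been a red flag: it cannot possibly yield a statement that ``fails for general algebras,'' as you correctly observe at the end. Relatedly, your parenthetical ``$\ppp^T = \sss^T C$ (i.e.\ $C^T\ppp=\sss$)'' is another transposition error; taking transposes of $\ppp^T=\sss^T C$ gives $\ppp=C^T\sss$, not $\sss=C^T\ppp$. Delete the ``cleaner route'' and the faulty parenthetical and your proof is sound.
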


\begin{proof}
The assertion in (i) follows by noting that for each 
$i=1,2,\ldots,\ell+1$, one has
$$
\left[A : S_i\right] = \dim \Hom_A (P_i, A) = \dim\left(P_i\right),
$$
where the first equality applied  \eqref{eq.alg-reps.V:Si}
and the second equality applied Proposition~\ref{prop.AA} with $V = P_i$.

This then helps to deduce assertion (ii), since 
\begin{noncompile}
\begin{equation}
\bigoplus_{j=1}^{\ell+1} P_j^{\dim{S_j}}
= \bigoplus_{i=1}^{\ell+1} P_i^{\dim{S_i}}
\cong A
\qquad \left(\text{by \eqref{left-regular-decomposition}}\right) .
\label{pf.cor.AA.cartan.decomp}
\end{equation}
\end{noncompile}
for each $i =1,2,\ldots \ell+1$, one has
$$
\left(C \sss\right)_i
= \sum_{j=1}^{\ell+1} C_{ij} \sss_j
= \sum_{j=1}^{\ell+1} \left[P_j : S_i\right] \dim S_j
= \left[ \bigoplus_{j=1}^{\ell+1} P_j^{\dim{S_j}} : S_i \right]
= \left[ A : S_i \right]
= \ppp_i 
$$
where the second-to-last equality used \eqref{left-regular-decomposition},
and the last equality is assertion (i).
Thus, $C \sss = \ppp$.
\end{proof}

Note that Corollary~\ref{cor.AA.cartan} (ii) follows from
Proposition~\ref{prop.AA.cartan.2} whenever the Cartan matrix $C$ is
symmetric. However, $C$ is not always symmetric, as illustrated by the following example.

\begin{example}
\label{asymmetric-Nichols-Cartan-example}
Consider Radford's Hopf algebra $A=A(n,m)$  from 
Example~\ref{asymmetric-Nichols-Hopf-algebra-example}, 
whose algebra structure is the skew group ring $\FF[\ZZ/n\ZZ] \ltimes \bigwedge_\FF[x_1,\ldots,x_m]$. In this case, it is not hard to see that the radical of $A$ is the two-sided ideal $I$
generated by $x_1,\ldots,x_m$, with $A/I \cong \FF[\ZZ / n \ZZ]$, and that $A$ has a system of orthogonal primitive idempotents
$
\left\{ e_k:=\frac{1}{n} \sum_{i=0}^{n-1} \omega^{ki} g^i \right\}_{k=0,1,\ldots,n-1},
$
where the subscript $k$ can be regarded as an element of $\ZZ/n\ZZ$.
This gives $n$ 
indecomposable projective $A$-modules
$\{P_k\}_{k=0,1,\ldots,n-1}$ with $P_k \cong Ae_k$, 
whose corresponding simple $A$-modules $\{S_k\}_{k=0,1,\ldots,n-1}$  
are the simple modules for the cyclic group algebra 
$A/I \cong \FF[\ZZ / n \ZZ]$, regarded as $A$-modules by
inflation.  

We compute here the Cartan matrix $C$ for $A$,
using the formulation $C_{i, j}=\dim \left(e_i A e_j\right)$
from \eqref{Cartan-matrix-reformulation}.  Recall 
that $A$ has $\FF$-basis
$
\left\{
g^k x_J:  0 \leq k < n, \  J \subseteq \{1,2,\ldots,m\}
\right\}.
$
Using the fact that the $e_0,\ldots,e_{n-1}$ are orthogonal idempotents,
and easy calculations such as
$e_i g^k = \omega^{-ki} e_i$
and 
$x_J e_j = e_{j-\#J} x_J$,
one concludes that
$$
e_i \left( g^k x_J \right) e_j 
= \omega^{-ki} e_i x_J e_j 
= \omega^{-ki} e_i e_{j-\#J} x_J
=
\begin{cases} 
\omega^{-ki} e_i x_J, & \text{if }i \equiv j-\#J \bmod{n}, \\
0,&  \text{otherwise.}
\end{cases}
$$
Therefore 
$
C_{i, j}
=\dim \left(e_i A e_j\right)
= \#\{ J \subseteq \{1,2,\ldots,m\}: \#J \equiv j-i \bmod{n} \}.
$
This matrix $C$ will not be symmetric in general; e.g. for
$n = 4$ and $m=1$, if one indexes rows and columns by $e_0,e_1,e_2,e_3$,
then
$
C=\left[\begin{smallmatrix}
 1 & 1 & 0 & 0\\
 0 & 1 & 1 & 0\\
 0 & 0 & 1 & 1\\
 1 & 0 & 0 & 1\\
\end{smallmatrix}\right].
$
\end{example}



\subsection{The Grothendieck ring and the critical group}

The group $\groth$ also has an associative (not necessarily commutative)
augmented $\ZZ$-algebra structure:
\begin{itemize}
\item the multiplication is induced from $[V]\cdot [W]:=[V \otimes W]$
(which is well-defined, since the tensor bifunctor over $\FF$ is exact,
and is associative since tensor products are associative),
\item the unit element is $1=[\epsilon]$, 
the class of the trivial $A$-module $\epsilon$, and
\item the augmentation (algebra) map 
$\groth \rightarrow \ZZ$ is induced
from $[V] \mapsto \dim(V)$. 
\end{itemize}
In many examples that we consider, $A$ will be cocommutative,
so that $V \otimes W \cong W \otimes V$, and hence $\groth$ is also
commutative.  However, Lemma~\ref{lem:tensor-dual} shows that there is
a ring homomorphism $\groth \to \groth^{\opp}$ sending each $[V]$
to $[V^*]$.
Lemma~\ref{lem.Vstarstar} furthermore shows that this homomorphism is
an isomorphism. Thus, $\groth \cong \groth^{\opp}$ as rings.
Consequently, when discussing
constructions involving $\groth$ that involve multiplication on the right,
we will omit the discussion of the same construction on the left.

The kernel $I$ of the augmentation map, defined by
the short exact sequence
\begin{equation}
\label{augmentation-ideal-sequence}
0 \to I \longrightarrow  \groth \longrightarrow  \ZZ \to 0,
\end{equation}
is the (two-sided) \emph{augmentation ideal} of $\groth$.
Recalling that the vector $\sss$ gave the dimensions of the
simple $A$-modules, then 
under the additive isomorphism $\groth \cong \ZZ^{\ell+1}$,
the augmentation map $\groth \cong \ZZ^{\ell+1} \to \ZZ$
corresponds to the map $\xx \mapsto \sss^T \xx$ that takes 
dot product with $\sss$.  Therefore
the augmentation ideal $I \subset \groth$ corresponds
to the perp sublattice 
$$
I=\sss^\perp:=\{ \xx \in \ZZ^{\ell+1} : \sss^T \xx = 0\}.
$$

We come now to our main definition.

\begin{definition}
Given an $A$-module $V$ of dimension $n$, define its
\textit{critical group}
as the quotient (left-)$\groth$-module of $I$
modulo the principal (left-)ideal generated by $n-[V]$:
\[
K(V):= \left. I \middle/ \groth(n-[V]) \right. .
\]
\end{definition}

We are interested in the abelian group structure of
$K(V)$, which has some useful matrix reformulations.
First, note that the short exact sequence of abelian groups 
\eqref{augmentation-ideal-sequence}
is split, since $\ZZ$ is free abelian.  This gives
a direct decomposition
$
\groth = \ZZ \oplus I
$
as abelian groups,
which then induces a decomposition
\[
\left. \groth \middle/ 
 \groth(n-[V]) 
\right.= \ZZ \oplus K(V).
\]

Second, note that in the ordered $\ZZ$-basis $([S_1],\ldots,[S_{\ell+1}])$
for $\groth$, one expresses multiplication on the right by $[V]$
via the \emph{McKay matrix} 
$M=M_V$ in $\ZZ^{(\ell+1) \times (\ell+1)}$
where
$
M_{i,j}=[S_j \otimes V: S_i].
$
Consequently multiplication on the right by $n-[V]$
is expressed by the matrix $L_V:=n I_{\ell+1}-M_V$.
Thus the abelian group structure of $K(V)$
can alternately be described in terms of the cokernel of $L_V$:
\begin{align}
\ZZ \oplus K(V) 
     &\cong \ZZ^{\ell+1} / \im L_V,
\label{eq.K(V).def2}     \\
K(V) &\cong \sss^\perp / \im L_V.
\label{eq.K(V).def3}
\end{align}

We will sometimes be able to reformulate $K(V)$ further as the
cokernel of an $\ell \times \ell$ submatrix of $L_V$ 
(see the discussion near the end of Section~\ref{M-matrix-section}).
For this and other purposes, it is important to 
know about the left- and right-nullspaces of $L_V$, explored next.

\section{Left and right eigenspaces}
\label{eigenspaces-section}

A goal of this section is to record the observation that, for any $A$-module $V$, 
the vectors $\sss$ and $\ppp$ introduced earlier
are always left- and right-eigenvectors for $M_V$, both
having eigenvalue $n=\dim(V)$,
and hence left- and right-nullvectors for $L_V=nI_{\ell+1}-M_V$.
When $A=\FF G$ is the
group algebra of a finite group $G$,
we complete this to a full set of left- and right-eigenvectors
and eigenvalues: the eigenvalues of $M_V$ turn out to be 
the \emph{Brauer character values}
$\chi_V(g)$, while the left- and right-eigenvectors are the
\emph{columns of the Brauer character table} for the simple $A$-modules
and indecomposable projective $A$-modules, respectively.  This interestingly
generalizes a well-known story from the McKay correspondence
in characteristic zero; see \cite[Prop. 5.3, 5.6]{BenkartKlivansReiner}.

Let us first establish terminology: 
a \emph{right-eigenvector} (resp. \emph{left-eigenvector})
of a matrix $U$ is a vector $v$ such that $Uv = \lambda v$ 
(resp. $v^T U = \lambda v^T$) for some scalar $\lambda$;
notions of left- and right-nullspaces and
left- and right-eigenspaces should be interpreted similarly.

We fix an $A$-module $V$ throughout Section~\ref{eigenspaces-section};
we set $n = \dim(V)$.

\subsection{Left-eigenvectors}
Left-eigenvectors of $M_V$ and $L_V$ will arise from the simple $A$-modules.

\begin{proposition}
\label{left-nullvector-prop}
The vector $\sss$ is a left-eigenvector for $M_V$ with eigenvalue $n$, and
a left-nullvector for $L_V$.
\end{proposition}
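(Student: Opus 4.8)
The plan is to show that $\sss^T M_V = n \sss^T$, which immediately gives $\sss^T L_V = \sss^T (n I_{\ell+1} - M_V) = n \sss^T - n \sss^T = 0$, i.e. $\sss$ is a left-nullvector for $L_V$. To prove the eigenvector claim, recall that $M_V$ represents right multiplication by $[V]$ on $G_0(A)$ in the basis $([S_1], \ldots, [S_{\ell+1}])$, and that the augmentation map $[W] \mapsto \dim W$ corresponds under $G_0(A) \cong \ZZ^{\ell+1}$ to taking dot product with $\sss$. The key point is simply that the augmentation is a ring homomorphism: $\dim(W \otimes V) = \dim(W) \dim(V)$ for any $A$-module $W$.

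Concretely, I would compute the $j$-th entry of the row vector $\sss^T M_V$. We have
\[
\left(\sss^T M_V\right)_j = \sum_{i=1}^{\ell+1} \sss_i (M_V)_{i,j} = \sum_{i=1}^{\ell+1} \dim(S_i) \, [S_j \otimes V : S_i].
\]
The sum $\sum_{i=1}^{\ell+1} \dim(S_i) \, [S_j \otimes V : S_i]$ is exactly $\dim(S_j \otimes V)$, since the dimension of any $A$-module equals the sum over composition factors of (multiplicity times dimension of that simple). Hence
\[
\left(\sss^T M_V\right)_j = \dim(S_j \otimes V) = \dim(S_j) \dim(V) = n \, \dim(S_j) = n \, \sss_j,
\]
so $\sss^T M_V = n \sss^T$. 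Equivalently, one can phrase this more structurally: the augmentation map $\groth \to \ZZ$, $\xx \mapsto \sss^T \xx$, is a ring homomorphism, so it sends the operator "right multiplication by $[V]$" to "multiplication by $\dim V = n$"; in matrix terms this is precisely the identity $\sss^T M_V = n \sss^T$.

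I do not anticipate a genuine obstacle here — the only thing to be slightly careful about is the bookkeeping of which index of $M_V$ is summed over (the definition has $(M_V)_{i,j} = [S_j \otimes V : S_i]$, so $\sss$ pairs against the first index $i$, matching the fact that $\sss^\perp = I$ is the augmentation ideal and $\sss$ acts on the left). Once the identity $\sss^T M_V = n\sss^T$ is in hand, the nullvector statement for $L_V$ is immediate from $L_V = n I_{\ell+1} - M_V$, and the proof is complete.
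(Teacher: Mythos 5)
Your proof is correct and takes essentially the same approach as the paper: you compute the $j$-th entry of $\sss^T M_V$, recognize $\sum_i \dim(S_i)\,[S_j\otimes V:S_i]$ as $\dim(S_j\otimes V) = \dim(S_j)\dim(V)$, and conclude. The paper writes the same chain of equalities in the opposite order, starting from $n\sss_j = \dim(S_j)\dim(V)$; the structural remark about the augmentation being a ring homomorphism is a nice restatement but not a different argument.
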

\begin{proof}
Letting $M:=M_V$, for each $j=1,2,\ldots,\ell+1$, one has
\[
n \sss_j
=\dim(S_j) \dim(V) 
 = \dim (S_j \otimes V)
 = \sum_{i=1}^{\ell+1} [S_j \otimes V: S_i] \dim(S_i)
 = \sum_{i=1}^{\ell+1}\dim(S_i) M_{i,j} 
 = (\sss^T M)_j.
\qedhere
\]
\end{proof}

The full left-eigenspace decomposition for $M_V$ and $L_V$,  
when $A=\FF G$ is a group algebra, requires the notions of $p$-regular elements and Brauer characters, recalled here.

\begin{definition}
Recall that for a finite group $G$ and a field $\FF$ of characteristic $p\ge0$, an element $g$ in $G$ 
is \emph{$p$-regular} if its multiplicative order lies in $\FF^\times$.
That is, $g$ is $p$-regular if it has
order coprime to $p$ when $\FF$ has characteristic $p > 0$,
and \emph{every} $g$ in $G$ is $p$-regular when $\FF$ has characteristic $p=0$.  
Let $p^a$ be the order of the $p$-Sylow subgroups of $G$,
so that $\card{G}=p^a q$ with $\gcd(p,q)=1$.
(In characteristic zero, set $p^a := 1$ and $q:=\# G$.)
The order of any $p$-regular element of $G$ divides $q$.

To define Brauer characters for $G$, one first
fixes a (cyclic) group isomorphism $\lambda \mapsto \widehat{\lambda}$ between
the $q^{th}$ roots of unity in the algebraic closure
$\overline{\FF}$ of $\FF$ and
the  $q^{th}$ roots of unity in $\CC$.  Then for
each $\FF G$-module $V$ of dimension $n$, and each $p$-regular element $g$ in $G$,
the \emph{Brauer character} value $\chi_V(g) \in \CC$ can be defined as follows.
Since $g$ is $p$-regular, it will act semisimply on $V$ by Maschke's theorem,
and have eigenvalues $\lambda_1,\lambda_2,\ldots,\lambda_n$ in 
$\overline{\FF}$ which are $q^{th}$ roots
of unity when acting on $V$ 
(or, strictly speaking, when $1 \otimes g$
acts on $\overline{\FF} \otimes_\FF V$).  This lets one define
$
\chi_V(g) := \sum_{i=1}^{n} \widehat{\lambda_i},
$
using the isomorphism fixed earlier.
This $\chi_V(g)$ depends only on the conjugacy class of $g$ (not on
$g$ itself), and so is also called the \textit{Brauer character value}
of $V$ at the conjugacy class of $g$.
\end{definition}

Brauer showed  \cite[Theorem 9.3.6]{Webb} that the 
number $\ell+1$ of simple $\FF G$-modules is the same as the number
of $p$-regular conjugacy classes.
He further showed 
that the map sending $V \longmapsto \chi_V$ induces a ring isomorphism
from the Grothendieck ring $G_0(A)$ to 
$\CC^{\ell+1}$, where $\CC^{\ell+1}$ is the ring of $\CC$-valued class functions 
on the $\ell+1$ distinct $p$-regular $G$-conjugacy classes, under
pointwise addition and multiplication; see \cite[Prop. 10.1.3]{Webb}. 
One has the accompanying notion of the
\emph{Brauer character table} for $G$, 
an invertible $(\ell+1) \times (\ell+1)$
matrix \cite[Theorem 10.2.2]{Webb}
having columns indexed by the $p$-regular conjugacy classes of $G$,
rows indexed by the simple $\FF G$-modules $S_i$, 
and entry $\chi_{S_i}(g_j)$ in the row for $S_i$ and column indexed by 
the conjugacy class of $g_j$.

\begin{definition}
Given a $p$-regular element $g$ in $G$, let $\sss(g)=[ \chi_{S_1}(g),\ldots,\chi_{S_{\ell+1}}(g)]^T$
be the Brauer character values of the simple $\FF G$-modules at $g$, that is,
the column indexed by the conjugacy class of $g$ in the
Brauer character table of $G$.
In particular, $\sss(e)=\sss$, where $e$ is the identity element of $G$.
\end{definition}

\begin{proposition}
\label{left-eigenvector-prop}
For any $p$-regular element $g$ in $G$, the vector $\sss(g)$ is a left-eigenvector for $M_V$ and $L_V$,
with eigenvalues $\chi_V(g)$ and $n-\chi_V(g)$, respectively.
\end{proposition}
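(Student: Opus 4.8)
The plan is to deduce this directly from Brauer's theorem, recalled above, that the map $V \longmapsto \chi_V$ induces a \emph{ring} isomorphism $G_0(A) \xrightarrow{\sim} \CC^{\ell+1}$ onto the $\CC$-valued class functions on the $p$-regular classes, combined with the fact that $M_V$ is, by its very definition, the matrix of right-multiplication by $[V]$ on $G_0(A) = \groth$ in the ordered $\ZZ$-basis $([S_1],\ldots,[S_{\ell+1}])$.

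First I would record, for each $j \in \{1,2,\ldots,\ell+1\}$, the identity in $\groth$
\[
[S_j]\cdot[V] = [S_j \otimes V] = \sum_{i=1}^{\ell+1} [S_j \otimes V : S_i]\,[S_i] = \sum_{i=1}^{\ell+1} (M_V)_{i,j}\,[S_i],
\]
which is nothing but the definition of the McKay matrix $M_V$ together with the Jordan--H\"older expansion. Next I would apply the ring homomorphism $\groth \to \CC^{\ell+1}$ to this identity and then evaluate the resulting class functions at the $p$-regular element $g$. Since this homomorphism sends each $[W]$ to $\chi_W$ and is multiplicative, the left-hand side becomes $\chi_{S_j}(g)\,\chi_V(g)$ while the right-hand side becomes $\sum_{i=1}^{\ell+1} (M_V)_{i,j}\,\chi_{S_i}(g)$. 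Recalling $\sss(g)_i = \chi_{S_i}(g)$, this is precisely $(\sss(g)^T M_V)_j = \chi_V(g)\,\sss(g)_j$ for all $j$, i.e. $\sss(g)^T M_V = \chi_V(g)\,\sss(g)^T$. The statement for $L_V$ then follows immediately from $L_V = n I_{\ell+1} - M_V$, since $\sss(g)^T L_V = n\,\sss(g)^T - \chi_V(g)\,\sss(g)^T = (n - \chi_V(g))\,\sss(g)^T$; and as the Brauer character table is invertible, the column $\sss(g)$ is nonzero, so these are genuine eigenvectors. Taking $g = e$ recovers Proposition~\ref{left-nullvector-prop}.

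The computation is entirely routine, and I do not anticipate a real obstacle. The only point deserving a word of care is the multiplicativity $\chi_{S_j \otimes V}(g) = \chi_{S_j}(g)\,\chi_V(g)$: this is part of the ring-homomorphism statement, but can also be seen by hand, since $g$ acts semisimply on $S_j$ and on $V$ by Maschke's theorem and hence on $S_j \otimes V$ with eigenvalues the pairwise products of the respective eigenvalues, and the fixed group isomorphism $\lambda \mapsto \widehat\lambda$ between $q^{th}$ roots of unity in $\overline{\FF}$ and in $\CC$ is multiplicative. In short, this proposition is the ``Brauer-character upgrade'' of Proposition~\ref{left-nullvector-prop}.
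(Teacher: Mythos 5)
Your proof is correct and follows essentially the same route as the paper: both apply the multiplicativity of the Brauer character map to the Jordan--H\"older expansion $[S_j\otimes V]=\sum_i (M_V)_{i,j}[S_i]$ and read off $(\sss(g)^T M_V)_j = \chi_V(g)\,\sss(g)_j$. The only difference is that you spell out the ring-homomorphism justification and the nonvanishing of $\sss(g)$ more explicitly, which the paper leaves implicit.
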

\begin{proof}
Generalize the calculation from Proposition~\ref{left-nullvector-prop},
using the fact that $[V] \mapsto \chi_V$ is a ring map:
\[
\chi_V(g) \cdot \sss(g)_j
=\chi_V(g) \chi_{S_j}(g)
 = \chi_{S_j \otimes V}(g)
 = \sum_{i=1}^{\ell+1} [S_j \otimes V: S_i] \chi_{S_i}(g)
 = \sum_{i=1}^{\ell+1} \sss(g)_i M_{i,j} 
 = (\sss(g)^T M)_j.
\qedhere
\]
\end{proof}

\subsection{Right-eigenvectors}
Right-eigenvectors for $L_V$ and $M_V$ will come from the indecomposable
projective $A$-modules, as we will see below
(Proposition~\ref{right-nullvector-prop} and, for group algebras, the
stronger Proposition~\ref{right-eigenvector-prop}). First, we shall
show some lemmas.

\begin{lemma}\label{lem:DualSym}
For any $A$-module $V$, and $i,j\in\left\{1,2,\ldots,\ell+1\right\}$,
one has
\begin{equation}\label{eq.lem:DualSym.1}
[ V \otimes P_j^* : S_i ] = [\,\leftast{\!P_i} \otimes V : S_j ]. 
\end{equation}
In particular, taking $V = \epsilon$ gives a ``dual symmetry'' for the Cartan matrix $C$ of $A$:
\begin{equation}\label{eq2}
[ P_j^* : S_i ] = [\,\leftast{\!P_i} : S_j ].
\end{equation}
\end{lemma}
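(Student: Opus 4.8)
The plan is to interpret both multiplicities as dimensions of $\Hom_A$-spaces, via $[V:S_i] = \dim\Hom_A(P_i, V)$ from \eqref{eq.alg-reps.V:Si}, and then to shuttle the factors across the tensor sign using the adjunction isomorphisms of Lemma~\ref{lem:tensor-dual-four-isos} until the two sides coincide. Concretely, the left-hand side is $[V\otimes P_j^* : S_i] = \dim\Hom_A(P_i,\, V\otimes P_j^*)$, and the right-hand side is $[\,\leftast{\!P_i}\otimes V : S_j] = \dim\Hom_A(P_j,\, \leftast{\!P_i}\otimes V)$. So it suffices to produce an $\FF$-linear (indeed $A$-module, though we only need vector-space) isomorphism $\Hom_A(P_i,\, V\otimes P_j^*) \cong \Hom_A(P_j,\, \leftast{\!P_i}\otimes V)$.

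First I would apply \eqref{eq:TensorDual1} with $(U,V,W) \leftarrow (P_i, P_j^*, V)$, but read in reverse: it gives $\Hom_A(P_i,\, V\otimes (P_j^*)^{**})$... rather, more cleanly, start from \eqref{eq:TensorDual1} in the form $\Hom_A(U\otimes V', W)\cong \Hom_A(U, W\otimes (V')^*)$ and choose $U = P_i$, $V' = \leftast{\!P_j}$, $W = V$, using $(\leftast{\!P_j})^* \cong P_j$ from Lemma~\ref{lem.Vstarstar}; this identifies $\Hom_A(P_i,\, V\otimes P_j^*)$... wait — I should instead use \eqref{eq:TensorDual3}, which reads $\Hom_A(U\otimes \leftast{V''}, W)\cong \Hom_A(U, W\otimes V'')$. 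Taking $U = P_i$, $V'' = P_j^*$, $W = V$ turns the left side into $\Hom_A(P_i\otimes \leftast{(P_j^*)},\, V) \cong \Hom_A(P_i,\, V\otimes P_j^*)$, and $\leftast{(P_j^*)}\cong P_j$ by Lemma~\ref{lem.Vstarstar}, so $\Hom_A(P_i,\, V\otimes P_j^*) \cong \Hom_A(P_i\otimes P_j,\, V)$. Symmetrically, applying \eqref{eq:TensorDual4} in the form $\Hom_A(V''\otimes U, W)\cong \Hom_A(U,\, \leftast{V''}\otimes W)$ with $V'' = P_i$, $U = P_j$, $W = V$ gives $\Hom_A(P_i\otimes P_j,\, V) \cong \Hom_A(P_j,\, \leftast{\!P_i}\otimes V)$. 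Chaining the two isomorphisms yields exactly what is needed, and taking dimensions gives \eqref{eq.lem:DualSym.1}. The special case \eqref{eq2} is then immediate on setting $V=\epsilon$ and using $\epsilon\otimes P_j^* \cong P_j^*$ and $\leftast{\!P_i}\otimes\epsilon \cong \leftast{\!P_i}$ from \eqref{canonical-triv-tensor-isomorphisms}.

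The main thing to get right — and the only real obstacle — is bookkeeping: matching up the precise slots $(U,V,W)$ in the four isomorphisms of Lemma~\ref{lem:tensor-dual-four-isos} with the correct choices of left- versus right-dual, and invoking Lemma~\ref{lem.Vstarstar} (the canonical $\leftast{(V^*)}\cong V\cong(\leftast V)^*$) at the two points where a double dual appears. There is no analytic or structural difficulty beyond this; once the adjunctions are lined up, both sides collapse to $\dim\Hom_A(P_i\otimes P_j,\, V)$, which makes the symmetry transparent. An alternative, essentially equivalent, route is to use Lemma~\ref{lem:AHom2} to rewrite $\Hom_A(P_i,\, V\otimes P_j^*) \cong \Hom_A((V\otimes P_j^*)^*\otimes P_i,\, \epsilon) \cong \Hom_A(P_j\otimes V^*\otimes P_i,\,\epsilon)$ via Lemma~\ref{lem:tensor-dual}, and likewise $\Hom_A(P_j,\, \leftast{\!P_i}\otimes V)$ down to a $\Hom_A(-,\epsilon)$ of a triple tensor product, then compare; but the adjunction chain above is cleaner, so that is the one I would write up.
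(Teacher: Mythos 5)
Your proof is correct and is in substance the same as the paper's: both reduce \eqref{eq.lem:DualSym.1} to the chain of isomorphisms $\Hom_A(P_i,\,V\otimes P_j^*)\cong\Hom_A(P_i\otimes P_j,\,V)\cong\Hom_A(P_j,\,\leftast{\!P_i}\otimes V)$ and then take dimensions via \eqref{eq.alg-reps.V:Si}, and both use \eqref{eq:TensorDual4} for the second link. The only divergence is the first link, where the paper applies \eqref{eq:TensorDual1} directly with the middle slot set to $P_j$ (not $P_j^*$), giving $\Hom_A(P_i\otimes P_j,\,V)\cong\Hom_A(P_i,\,V\otimes P_j^*)$ in one step; your write-up first tries the wrong instantiation and then detours through \eqref{eq:TensorDual3} together with $\leftast{(P_j^*)}\cong P_j$ from Lemma~\ref{lem.Vstarstar}, which is valid but costs one extra lemma.
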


\begin{proof}
The result follows upon taking dimensions
in the following consequence of \eqref{eq:TensorDual1} and \eqref{eq:TensorDual4}:
\[ 
\Hom_A (P_i, V \otimes P_j^*) \cong \Hom_A ( P_i\otimes P_j, V )
\cong \Hom_A (P_j, \leftast{\!P_i}\otimes V).  \qedhere
\]
\end{proof}

\begin{lemma}
\label{lem:TensorProj}
The following equality holds in the Grothendieck group $G_0(A)$
for any $[V] \in G_0(A)$:
\[ [V\otimes P_j^*] = \sum_{i=1}^{\ell+1}
[ S_i\otimes V : S_j ] [P_i^*], \quad \forall j\in\left\{1,2,\ldots,\ell+1\right\}.\]
\end{lemma}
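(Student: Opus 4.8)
The plan is to verify the claimed identity in $G_0(A)$ by testing it against the basis $[P_1],\dots,[P_{\ell+1}]$ of $K_0(A)$ under the perfect pairing $\langle\cdot,\cdot\rangle\colon K_0(A)\times G_0(A)\to\ZZ$ recalled in Section~\ref{algebras-and-modules-section}, for which $\langle[P_k],[W]\rangle=\dim\Hom_A(P_k,W)=[W:S_k]$ by \eqref{eq.alg-reps.V:Si}. Since the pairing is perfect, two elements of $G_0(A)$ agree once they pair equally with every $[P_k]$. Pairing the left-hand side $[V\otimes P_j^*]$ with $[P_k]$ gives $[V\otimes P_j^*:S_k]$, and pairing the right-hand side gives $\sum_{i=1}^{\ell+1}[S_i\otimes V:S_j]\,[P_i^*:S_k]$, so the lemma reduces to the scalar identity
\[
[V\otimes P_j^*:S_k]=\sum_{i=1}^{\ell+1}[S_i\otimes V:S_j]\,[P_i^*:S_k]
\qquad\text{for all }j,k .
\]

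Next I would rewrite both multiplicities in terms of right-duals using Lemma~\ref{lem:DualSym}. Equation~\eqref{eq.lem:DualSym.1}, with the free index renamed from $i$ to $k$, gives $[V\otimes P_j^*:S_k]=[\,\leftast{\!P_k}\otimes V:S_j]$, and equation~\eqref{eq2}, applied with its indices relabelled to $[P_i^*:S_k]=[\,\leftast{\!P_k}:S_i]$, lets us rewrite each summand. After these substitutions the identity to be proven becomes
\[
[\,\leftast{\!P_k}\otimes V:S_j]=\sum_{i=1}^{\ell+1}[\,\leftast{\!P_k}:S_i]\,[S_i\otimes V:S_j].
\]

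Finally, this last identity is simply the composition-series expansion of $\leftast{\!P_k}$ transported through the ring structure of $G_0(A)$: writing $[\,\leftast{\!P_k}]=\sum_{i}[\,\leftast{\!P_k}:S_i]\,[S_i]$ in $G_0(A)$, multiplying on the right by $[V]$ yields $[\,\leftast{\!P_k}\otimes V]=\sum_{i}[\,\leftast{\!P_k}:S_i]\,[S_i\otimes V]$ by distributivity alone (no commutativity of $G_0(A)$ is needed), and then applying the additive functional ``multiplicity of $S_j$'' — namely $\langle[P_j],-\rangle$ — to both sides gives exactly the displayed equality. I do not anticipate a genuine obstacle; the only thing requiring care is the index bookkeeping in the two applications of Lemma~\ref{lem:DualSym}, in particular keeping track of which of $i,j,k$ is the free index being renamed and remembering that it is the right-dual $\leftast{(-)}$, not the left-dual, that appears after the rewriting.
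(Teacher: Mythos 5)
Your proof is correct and is essentially the paper's own argument: it uses the same two facts from Lemma~\ref{lem:DualSym} (equations \eqref{eq.lem:DualSym.1} and \eqref{eq2}) together with the expansion of $[\,\leftast{\!P_k}\otimes V]$ via the ring structure of $G_0(A)$, just organized slightly differently by framing the reduction to multiplicities as an application of the perfect pairing with $K_0(A)$ rather than invoking multiplicities directly.
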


\begin{proof}
By \eqref{eq.lem:DualSym.1}, the multiplicity of $S_k$ in the left hand side is
\begin{equation}
\label{start-of-inner-prod-calculation}
[ V\otimes P_j^* : S_k ]
= [\,\leftast{\!P_k} \otimes V : S_j ].
\end{equation}
However, one also has in $G_0(A)$ that
$$
[\leftast{\!P_k} \otimes V]
= [\leftast{\!P_k}] \cdot [V]
= \sum_i [\leftast{\!P_k} : S_i] [S_i] \cdot [V]
= \sum_i [\leftast{\!P_k} : S_i] [S_i \otimes V] 
$$
and substituting this into \eqref{start-of-inner-prod-calculation} gives
$$
[ V\otimes P_j^* : S_k ]
= \sum_i [ S_i\otimes V: S_j ]
[\,\leftast{\!P_k} : S_i]
=  \sum_i [S_i \otimes V:S_j] 
  [\ P_i^* : S_k]
$$
where we have used \eqref{eq2} in the last equality.
One can then recognize this last expression as the 
multiplicity of $S_k$ in the right hand side of the desired equation.
\end{proof}

\begin{lemma}\label{lem:dual-proj-ind}
Any indecomposable projective $A$-module $P_i$
has its left-dual $P_i^*$ and right-dual $\leftast{\!P_i}$ also indecomposable projective.
Consequently, $P_1^*,\ldots,P_{\ell+1}^*$ form a permutation of $P_1,\ldots,P_{\ell+1}$.
\end{lemma}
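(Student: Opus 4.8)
The plan is to prove that the left-dual of an indecomposable projective module is again indecomposable projective, and then dualize; the statement about a permutation follows because left-duality is an involution up to isomorphism (Lemma~\ref{lem.Vstarstar}).

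First I would show that $P_i^*$ is projective. The cleanest route is to use the characterization of projectivity via the functor $\Hom_A(-,\epsilon)$ together with the adjunctions from Lemma~\ref{lem:tensor-dual-four-isos}. Recall that $P$ is projective if and only if the functor $\Hom_A(P,-)$ is exact. Given a short exact sequence $0\to U\to V\to W\to0$ of $A$-modules, applying Lemma~\ref{lem:AHom2} (or directly \eqref{eq:TensorDual2}) converts $\Hom_A(P_i^*, -)$ into $\Hom_A(\,\leftast{(P_i^*)}\otimes -, \epsilon)$, hence, via Lemma~\ref{lem.Vstarstar}, into $\Hom_A(P_i \otimes -, \epsilon)$. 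But tensoring with $P_i$ on the left is exact (the tensor bifunctor over $\FF$ is exact), and $P_i \otimes V \cong A^{\oplus \dim V}$-type reasoning is not even needed: since $P_i$ is projective, $P_i\otimes(-)$ sends short exact sequences to short exact sequences of projective-ish modules — more precisely, $\Hom_A(P_i\otimes -,\epsilon)$ is exact because $P_i\otimes-$ is exact and $\Hom_A(P_i\otimes X,\epsilon) \cong \Hom_A(X, \leftast{P_i})$... wait, the cleaner observation is simply: $\Hom_A(P_i^* , V)\cong \Hom_A(V^*\otimes P_i,\epsilon)$ is wrong-handed, so instead I would use Lemma~\ref{lem:AHom2} in the form $\Hom_A(P_i^*,V)\cong\Hom_A(V^*\otimes P_i^*... )$; the correct adjunction to invoke is \eqref{eq:TensorDual4} with $U=\epsilon$, giving $\Hom_A(V\otimes\epsilon, ?)$, so in practice I would just cite \cite[Prop. 7.2.2]{DascalescuEtAl}-style facts: over a Hopf algebra, $P$ projective implies $P^*$ projective, because $A$ is a Frobenius algebra and duality is exact and preserves the property of being a direct summand of a free module. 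Concretely: write $P_i$ as a direct summand of $A^{\oplus k}$; then $P_i^*$ is a direct summand of $(A^{\oplus k})^* \cong (A^*)^{\oplus k}$, and $A^*\cong A$ as $A$-modules since $A$ is a (symmetric/Frobenius) Hopf algebra — so $P_i^*$ is a direct summand of a free module, hence projective.

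Next I would show $P_i^*$ is indecomposable. Duality $V\mapsto V^*$ is an additive contravariant functor that is an anti-equivalence of the category of finite-dimensional $A$-modules with itself (its quasi-inverse is $\leftast{(-)}$, by Lemma~\ref{lem.Vstarstar}), so it preserves indecomposability: a direct sum decomposition $P_i^* \cong X\oplus Y$ would dualize (apply $\leftast{(-)}$) to $P_i \cong \leftast{X}\oplus\leftast{Y}$, contradicting indecomposability of $P_i$ unless one summand is zero. Hence each $P_i^*$ is indecomposable projective, so $P_i^* \cong P_{\sigma(i)}$ for a unique $\sigma(i)\in\{1,\ldots,\ell+1\}$. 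Since $\leftast{(P_i^*)}\cong P_i$ and $\leftast{(-)}$ likewise sends indecomposable projectives to indecomposable projectives (same argument, or because it is quasi-inverse to $(-)^*$), the assignment $i\mapsto\sigma(i)$ is a bijection, i.e. $P_1^*,\ldots,P_{\ell+1}^*$ is a permutation of $P_1,\ldots,P_{\ell+1}$. Finally, $\leftast{P_i}$: by Lemma~\ref{lem.Vstarstar}, $\leftast{P_i}\cong \leftast{(P_j^{**})}$ is not needed — just note $(\leftast{P_i})^*\cong P_i$, so $\leftast{P_i}$ is, up to iso, the preimage $P_{\sigma^{-1}(i)}$, again indecomposable projective.

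The main obstacle is the first point — verifying that duality preserves projectivity — and the key fact making it work is that a finite-dimensional Hopf algebra is a Frobenius algebra, so $A^*\cong A$ as left $A$-modules (equivalently, Lemma~\ref{lem.VtimesA} says $V\otimes A\cong A^{\oplus\dim V}$, and applying this ideology to duals); everything else is formal nonsense about the anti-equivalence $(-)^*$. I would state the Frobenius property explicitly with a citation (e.g. \cite[Thm. 2.1.3]{Montgomery} or \cite[Prop. 5.3.5]{Tensor} for bijectivity of the antipode, which underlies it) rather than reprove it.
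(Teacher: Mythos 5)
Your proof is correct in substance, and the second half (indecomposability of $P_i^*$ and the permutation statement) follows exactly the paper's route: $(-)^*$ is an additive anti-equivalence with quasi-inverse $\leftast{(-)}$ by Lemma~\ref{lem.Vstarstar}, so it preserves indecomposability, and $\leftast{(V^*)}\cong V$ gives the bijection. Where you diverge from the paper is the projectivity of $P_i^*$: the paper simply cites \cite[Prop.\ 6.1.3]{Tensor}, whereas you give a ``summand of a free module'' argument, which is a perfectly legitimate alternative provided one nails down the key claim $A^*\cong A$.

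That claim, however, is stated a bit loosely, and the gap is worth naming. The Frobenius property of a finite-dimensional Hopf algebra $A$ (Larson--Sweedler) says that the \emph{ordinary algebra dual} of the left regular module --- the space $\Hom_\FF(A,\FF)$ with the action $(a f)(b)=f(ba)$ --- is isomorphic to $A$ as a left $A$-module. But the left-dual $A^*$ appearing in Lemma~\ref{lem:dual-proj-ind} is the \emph{Hopf-theoretic} dual, whose action $(a f)(b)=f(\alpha(a)b)$ is twisted by the antipode. These are genuinely different module structures on the same vector space. They \emph{are} isomorphic --- the map $f\mapsto f\circ\alpha$ intertwines them, because $f(\alpha(a)\alpha(b))=f(\alpha(ba))$, and this map is bijective precisely because $\alpha$ is bijective for finite-dimensional $A$ --- but this identification needs to be made explicit to close the argument, since the Frobenius property as usually stated does not directly address the Hopf-theoretic dual. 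Also, ``(symmetric/Frobenius)'' is not interchangeable: finite-dimensional Hopf algebras are always Frobenius but not always symmetric (the Taft algebras are standard counterexamples), so only the Frobenius property is available in general. Finally, the first paragraph of your proposal contains several aborted false starts before arriving at the summand-of-free argument; these add nothing and should be cut.
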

\begin{proof}
Lemma~\ref{lem.Vstarstar} shows that
$V\mapsto V^*$ is an equivalence of categories
from the category of (finite-dimensional) $A$-modules
to its opposite category.
Since we furthermore have $A$-module isomorphisms 
$\left(\bigoplus_i V_i\right)^* \cong \bigoplus V_i^*$
(for finite direct sums) and similarly for
right-duals,
we thus see that
indecomposability is preserved under taking left-duals.
It is well-known \cite[Prop. 6.1.3]{Tensor} that 
projectivity is preserved under taking left-duals.
Thus $P_i^*$ is also indecomposable projective and so is $\leftast{\!P_i}$ by the same argument.
Then $P_1^*,\ldots,P_{\ell+1}^*$ form a permutation of $P_1,\ldots,P_{\ell+1}$ since $\leftast{(V^*)}\cong V$.
\end{proof}

Since $\dim (P_i) = \dim (P_i^*)$, the definition of
the vector $\ppp$ can be rewritten as
\[
\ppp:=\left[ \dim (P_1), \ldots,\dim (P_{\ell+1}) \right]^T
     =\left[ \dim(P^*_1), \ldots,\dim(P^*_{\ell+1}) \right]^T.
\]

\begin{proposition}
\label{right-nullvector-prop}
This $\ppp$ is a right-eigenvector for $M_V$ with eigenvalue $n$, and
a right-nullvector for $L_V$.
\end{proposition}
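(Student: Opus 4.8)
The plan is to mimic the proof of Proposition~\ref{left-nullvector-prop}, but working with the right-dual projectives and the multiplicity identity from Lemma~\ref{lem:TensorProj} in place of the direct composition-factor count. Concretely, I want to show that $M_V \ppp = n \ppp$, i.e. that for each $j$ one has $\sum_{i=1}^{\ell+1} M_{i,j} \dim(P_i) = n \dim(P_j)$. Recall $M_{i,j} = [S_j \otimes V : S_i]$.

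First I would fix $j$ and apply Lemma~\ref{lem:TensorProj}, which gives in $G_0(A)$ the identity
\[
[V \otimes P_j^*] = \sum_{i=1}^{\ell+1} [S_i \otimes V : S_j]\, [P_i^*].
\]
This is almost what I want, but the roles of the summation index and $j$ are swapped relative to $M_{i,j}$. To fix this, I would instead use the transposed form: replacing $j$ by a running index and reading off the coefficient, Lemma~\ref{lem:TensorProj} tells us that the expansion of $[V \otimes P_i^*]$ in the basis $\{[P_k^*]\}$ has $[S_j \otimes V : S_i]$... so actually the cleanest route is: apply Lemma~\ref{lem:TensorProj} with the index named so that the left-hand side is $[V \otimes P_k^*]$ and the coefficient of $[P_j^*]$ on the right is $[S_j \otimes V : S_k] = M_{j,k}$; wait — I need to be careful here, so let me instead take dimensions directly. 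Applying the augmentation (dimension) map $\dim$ to the identity $[V \otimes P_j^*] = \sum_{i} [S_i \otimes V : S_j][P_i^*]$ yields
\[
n \dim(P_j^*) = \dim(V)\dim(P_j^*) = \dim(V \otimes P_j^*) = \sum_{i=1}^{\ell+1} [S_i \otimes V : S_j]\,\dim(P_i^*).
\]
Now I would invoke Lemma~\ref{lem:dual-proj-ind}: the modules $P_1^*,\ldots,P_{\ell+1}^*$ are a permutation of $P_1,\ldots,P_{\ell+1}$, say $P_i^* \cong P_{\sigma(i)}$ for a permutation $\sigma$, and $\dim(P_i^*) = \dim(P_i)$ in any case. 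The subtlety — and the one genuine obstacle — is that the index appearing in $M_{i,j} = [S_j \otimes V : S_i]$ in the displayed sum is $[S_i \otimes V : S_j]$, not $[S_j \otimes V : S_i]$; these are the entries of $M_V$ but with rows and columns interchanged. So the computation above actually verifies that $\ppp$ (indexed appropriately by the dual projectives) is a right-eigenvector of the \emph{transpose} $M_V^T$, equivalently a left-eigenvector of $M_V$. That is not the claim. The resolution is that one must instead start from the "mirror" of Lemma~\ref{lem:TensorProj} — the version obtained by replacing left-duals with right-duals throughout, which the paper's convention (the remark that $\groth \cong \groth^{\opp}$ via $[V]\mapsto[V^*]$, so one may omit left/right-handed duplicates) licenses us to assume — giving $[\leftast{P_j} \otimes V] = \sum_i [S_i \otimes V : S_j] \cdots$; alternatively, apply Lemma~\ref{lem:TensorProj} to get the coefficient of $[P_j^*]$ in $[V\otimes P_k^*]$ and sum $\dim$ against the correct index. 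Either way, the key identity I will land on is
\[
\sum_{i=1}^{\ell+1} M_{i,j}\, \dim(P_i) = \sum_{i=1}^{\ell+1} [S_j \otimes V : S_i]\,\dim(P_i) = \dim(S_j \otimes V) \ \text{-type count} = n \dim(P_j),
\]
which I would justify cleanly by: the multiplicity of $S_j$ (via $\Hom_A(P_j,-)$ and \eqref{eq.alg-reps.V:Si}) combined with Corollary~\ref{cor.AA.cartan}(i) (which says $[A] = \sum_i (\dim P_i)[S_i]$ in $G_0(A)$, i.e. $\dim\Hom_A(\,\cdot\,, A)$-counts recover dimensions) applied to $A \otimes (\text{something})$, using Lemma~\ref{lem.VtimesA} that $A \otimes V \cong A^{\oplus \dim V}$.

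Putting it together, the cleanest self-contained argument is: compute $\dim \Hom_A(P_j \otimes V^{*}, A)$ in two ways. On one hand, by Lemma~\ref{lem.VtimesA}(i), $P_j \otimes V^* \otimes A \cong A^{\oplus \dim(P_j \otimes V^*)}$... no — more directly, $\dim\Hom_A(P_j \otimes V^*, A) = \dim(P_j \otimes V^*) = n\dim(P_j)$ by Proposition~\ref{prop.AA}. On the other hand, by adjunction \eqref{eq:TensorDual1} this equals $\dim \Hom_A(P_j, A \otimes V)$, and since $A \otimes V \cong A^{\oplus n}$ by Lemma~\ref{lem.VtimesA}(ii), this is $n\dim\Hom_A(P_j, A) = n\dim(P_j)$ — consistent but circular. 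So the non-circular route genuinely needs the McKay-matrix expansion: I expand $[A \otimes V]$ wait, that is not finite-rank-helpful. The honest main step, then, is exactly Lemma~\ref{lem:TensorProj} together with the dimension/augmentation map and Lemma~\ref{lem:dual-proj-ind}; I expect the only real obstacle to be bookkeeping the permutation $\sigma$ from Lemma~\ref{lem:dual-proj-ind} and the row/column transpose so that the final equality reads as $M_V \ppp = n\ppp$ rather than $M_V^T \ppp = n\ppp$, and the second assertion ($L_V \ppp = (nI - M_V)\ppp = 0$) is then immediate.

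\begin{proof}
Write $M:=M_V$. By Lemma~\ref{lem:dual-proj-ind}, the modules $P_1^*,\ldots,P_{\ell+1}^*$ are a permutation of $P_1,\ldots,P_{\ell+1}$; combined with $\dim(P_i) = \dim(P_i^*)$, this shows $[\dim(P_1^*),\ldots,\dim(P_{\ell+1}^*)]^T$ is a rearrangement of $\ppp$, and it suffices to prove the eigenvector identity in the reindexed form. Fix $j \in \{1,\ldots,\ell+1\}$. Applying the ring isomorphism $\groth \cong \groth^{\opp}$, $[V]\mapsto[V^*]$, to Lemma~\ref{lem:TensorProj} (equivalently, using the right-dual version of that lemma, obtained by the same argument with $\leftast{(-)}$ in place of $(-)^*$), we obtain in $G_0(A)$ the identity
\[
[\leftast{P_j} \otimes V] = \sum_{i=1}^{\ell+1} [S_j \otimes V : S_i]\,[\leftast{P_i}]
= \sum_{i=1}^{\ell+1} M_{i,j}\,[\leftast{P_i}].
\]
Now apply the augmentation (dimension) homomorphism $\groth \to \ZZ$, $[W]\mapsto \dim W$. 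Since $\dim(\leftast{P_j}\otimes V) = \dim(\leftast{P_j})\dim(V) = n\dim(P_j)$ and $\dim(\leftast{P_i}) = \dim(P_i)$, this gives
\[
n\dim(P_j) = \sum_{i=1}^{\ell+1} M_{i,j}\,\dim(P_i) = (M\ppp)_j.
\]
Since this holds for all $j$, we conclude $M_V\ppp = n\ppp$, so $\ppp$ is a right-eigenvector for $M_V$ with eigenvalue $n$. Consequently $L_V\ppp = (nI_{\ell+1} - M_V)\ppp = n\ppp - n\ppp = 0$, i.e.\ $\ppp$ is a right-nullvector for $L_V$.
\end{proof}
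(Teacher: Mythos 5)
There is a genuine gap here, and its root is a matrix--vector indexing error at the very start of your plan. You write that proving $M_V\ppp = n\ppp$ means showing $\sum_{i=1}^{\ell+1} M_{i,j}\dim(P_i) = n\dim(P_j)$ for each $j$. That is the $j$-th entry of $M_V^T\ppp$, not of $M_V\ppp$: the correct expansion is $(M_V\ppp)_j = \sum_{i=1}^{\ell+1} M_{j,i}\,\ppp_i$. Because of this, you convinced yourself that Lemma~\ref{lem:TensorProj} gives the ``transposed'' statement and went hunting for a mirror lemma. In fact Lemma~\ref{lem:TensorProj} already gives exactly the right thing: taking dimensions of $[V\otimes P_j^*] = \sum_i [S_i\otimes V:S_j][P_i^*]$ and using $M_{j,i}=[S_i\otimes V:S_j]$, $\dim(P_i^*)=\ppp_i$, you get $n\ppp_j = \dim(V\otimes P_j^*) = \sum_i M_{j,i}\ppp_i = (M_V\ppp)_j$, which is the paper's proof verbatim.

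The ``right-dual version of Lemma~\ref{lem:TensorProj}'' you substitute, namely $[\leftast{P_j}\otimes V] = \sum_i [S_j\otimes V:S_i]\,[\leftast{P_i}]$, is not a consequence of the anti-isomorphism $[W]\mapsto[W^*]$ of $\groth$ -- applying that map to Lemma~\ref{lem:TensorProj} replaces $V$ by $V^*$ (or, after substituting $\leftast V$ for $V$, leaves $\leftast V$ inside the multiplicities), so the coefficients do not come out as $[S_j\otimes V:S_i]$. Worse, your claimed identity is false in general: upon taking dimensions it would give $M_V^T\ppp = n\ppp$, i.e.\ $\ppp$ would be a \emph{left}-eigenvector of $M_V$. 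Together with Lemma~\ref{lem:TensorProj} (which does give $M_V\ppp=n\ppp$), this would force $\ppp$ to be a two-sided positive eigenvector, hence proportional to $\sss$ by the uniqueness of the positive left-eigenvector. But for $\FF_3\SS_5$ one has $\sss=(1,1,4,4,6)^T$ and $\ppp=(6,6,9,9,6)^T$, which are not proportional. Finally, your last displayed equality $\sum_i M_{i,j}\dim(P_i) = (M\ppp)_j$ repeats the original indexing error. Fix the expansion of $(M\ppp)_j$ and your first instinct -- Lemma~\ref{lem:TensorProj} plus the augmentation map -- works with no mirroring needed.
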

\begin{proof}
Letting $M:=M_V$, for each $j=1,2,\ldots,\ell+1$, 
using Lemma~\ref{lem:TensorProj} one has
\begin{align*}
n \ppp_j
&= \dim(V) \dim(P_j^*)
 = \dim \left(V \otimes P^*_j\right)
 = \sum_{i=1}^{\ell+1} [S_i \otimes V: S_j] \dim(P^*_i)
 = \sum_{i=1}^{\ell+1} M_{j,i} \ppp_i
 = (M \ppp)_j.
\qedhere
\end{align*}
\end{proof}

In the case of a group algebra $A=\FF G$, one has
the analogous result to Proposition~\ref{left-eigenvector-prop}.

\begin{definition}
For a $p$-regular $g$ in $G$, let 
$\ppp^*(g)=[ \chi_{P^*_1}(g),\ldots,\chi_{P^*_{\ell+1}}(g)]^T$
be the Brauer character values of the (left-duals of the) 
indecomposable projective $A$-modules $P^*_i$ at $g$.
Note that $\ppp^*(g)$ is
a re-ordering of the column indexed by $g$ in the 
$(\ell+1) \times (\ell+1)$
table of Brauer characters of the indecomposable projective $\FF G$-modules,
whose $(i,j)$-entry is $\chi_{P_i}(g_j)$.
In particular, $\ppp^*(e)=\ppp$, where $e$ is the identity in $G$.
Note that this indecomposable projective Brauer character
table is also an invertible matrix \cite[Theorem 10.2.2]{Webb}.

\end{definition}

\begin{proposition}
\label{right-eigenvector-prop}
This $\ppp^*(g)$ is a right-eigenvector for $M_V$ and $L_V$,
with eigenvalues $\chi_V(g)$ and $n-\chi_V(g)$.
\end{proposition}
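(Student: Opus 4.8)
The plan is to mirror the proof of Proposition~\ref{right-nullvector-prop}, but now tracking Brauer character values instead of dimensions. The key point is that $\ppp = \ppp^*(e)$ arose in Proposition~\ref{right-nullvector-prop} by applying Lemma~\ref{lem:TensorProj} and then taking $\FF$-dimensions (equivalently, evaluating Brauer characters at $g=e$). Since Lemma~\ref{lem:TensorProj} is an identity in $G_0(A)$, I can instead apply the Brauer character ring homomorphism $[W]\mapsto \chi_W$ to both sides and evaluate at an arbitrary $p$-regular $g$, just as Proposition~\ref{left-eigenvector-prop} generalized Proposition~\ref{left-nullvector-prop}.

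Concretely, first I would fix $j\in\{1,2,\ldots,\ell+1\}$ and recall from Lemma~\ref{lem:TensorProj} that, with $[V]$ being the class of our fixed $A$-module,
\[
[V\otimes P_j^*] = \sum_{i=1}^{\ell+1} [S_i\otimes V : S_j]\, [P_i^*]
\]
in $G_0(A)$. Applying the ring homomorphism $V\mapsto \chi_V$ (which Brauer showed exists, as recalled before Proposition~\ref{left-eigenvector-prop}) and evaluating at the $p$-regular element $g$, together with $\chi_{V\otimes P_j^*}(g)=\chi_V(g)\chi_{P_j^*}(g)$, gives
\[
\chi_V(g)\,\ppp^*(g)_j
= \chi_V(g)\,\chi_{P_j^*}(g)
= \chi_{V\otimes P_j^*}(g)
= \sum_{i=1}^{\ell+1} [S_i\otimes V : S_j]\,\chi_{P_i^*}(g)
= \sum_{i=1}^{\ell+1} M_{j,i}\,\ppp^*(g)_i
= (M\ppp^*(g))_j,
\]
where $M:=M_V$ and $M_{j,i}=[S_i\otimes V:S_j]$ by definition of the McKay matrix. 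This shows $M_V\,\ppp^*(g)=\chi_V(g)\,\ppp^*(g)$. Since $L_V = nI_{\ell+1}-M_V$, we get $L_V\,\ppp^*(g) = (n-\chi_V(g))\,\ppp^*(g)$, which is the claim.

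The only genuine content beyond bookkeeping is the identity $\chi_{V\otimes W}(g)=\chi_V(g)\chi_W(g)$ for $p$-regular $g$ — i.e.\ that $V\mapsto \chi_V$ really is a \emph{ring} homomorphism $G_0(A)\to\CC^{\ell+1}$, not merely additive — but this is exactly the result of Brauer cited in the excerpt (\cite[Prop. 10.1.3]{Webb}), so it may be quoted. One subtlety worth a sentence: the indexing of $\ppp^*(g)$ by the left-duals $P_i^*$ rather than by the $P_i$ themselves. By Lemma~\ref{lem:dual-proj-ind} the modules $P_1^*,\ldots,P_{\ell+1}^*$ are a permutation of $P_1,\ldots,P_{\ell+1}$, so $\ppp^*(g)$ is indeed a reordering of the column of the indecomposable projective Brauer character table indexed by $g$, as asserted in the definition; the computation above goes through verbatim regardless of the ordering, since we only ever sum over all $i$. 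So I anticipate no real obstacle — the proof is a direct transcription of Proposition~\ref{right-nullvector-prop} with $\dim$ replaced by $\chi_{(-)}(g)$.
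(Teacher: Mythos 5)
Your proof is correct and is essentially the paper's proof: it generalizes the computation of Proposition~\ref{right-nullvector-prop} by applying the Brauer character ring homomorphism to the $G_0(A)$-identity of Lemma~\ref{lem:TensorProj} and evaluating at $g$, yielding the same chain of equalities. The remark about the permutation of the $P_i^*$ via Lemma~\ref{lem:dual-proj-ind} is a reasonable observation but is not needed for the argument, exactly as you note.
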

\begin{proof}
Generalize the calculation from Proposition~\ref{right-nullvector-prop}
using the fact that $[V] \mapsto \chi_V$ is a ring map:
\[
\chi_V(g) \cdot \ppp^*(g)_j
=\chi_V(g) \chi_{P^*_j}(g)
 = \chi_{V \otimes P^*_j}(g) 
 = \sum_{i=1}^{\ell+1} [S_i \otimes V: S_j] \chi_{P^*_i}(g)
 = \sum_{i=1}^{\ell+1} M_{j,i} \ppp^*(g)_i 
 = (M \ppp^*(g))_j. \qedhere
\]
\end{proof}

\begin{remark}
Note that since the Brauer character tables for the simple $\FF G$-modules
and for the indecomposable projective $\FF G$-modules are both invertible,
Propositions~\ref{left-eigenvector-prop} and~\ref{right-eigenvector-prop} 
yield full bases for $\CC^{\ell+1}$ consisting of 
right-eigenvectors for $M_V$ or $L_V$, and of left-eigenvectors for
$M_V$ or $L_V$.

\begin{question} \label{quest.left-eigenvectors.A}
Are there analogues of 
Propositions~\ref{left-eigenvector-prop}, ~\ref{right-eigenvector-prop} for all
finite-dimensional Hopf algebras? 
\end{question}

In particular, what plays the role of $p$-regular elements,
and Brauer characters?
\end{remark}

\begin{verlong}
One little step towards resolving
Question~\ref{quest.left-eigenvectors.A} is to observe that if
$a$ is a group-like element of a finite-dimensional $\FF$-Hopf algebra
$A$ (that is, $\Delta(a) = a \otimes a$ and $\epsilon(a) = 1$), then
the vector \newline
$\left[ \operatorname{Tr}_{S_1}\left(a\right),
\operatorname{Tr}_{S_2}\left(a\right),
\ldots,
\operatorname{Tr}_{S_{\ell+1}}\left(a\right) \right]^T
\in \FF^{\ell+1}$ (where $\operatorname{Tr}_W \left(a\right)$ stands
for the trace of the action of $a$ on an $A$-module $W$) is an
eigenvector (with eigenvalue $\operatorname{Tr}_V \left(a\right)$)
for the matrix $\left(M_V\right)_\FF$ obtained by mapping the matrix
$M_V \in \ZZ^{\left(\ell+1\right)\times\left(\ell+1\right)}$ into
$\FF^{\left(\ell+1\right)\times\left(\ell+1\right)}$. This gives us
some eigenvectors for this matrix $\left(M_V\right)_\FF$; but we do
not know whether they can be lifted to eigenvectors of
$M_V$, and how far they are away from yielding the diagonalization of
$\left(M_V\right)_\FF$ (if such a diagonalization even exists).
\end{verlong}

\begin{remark}
\label{tensor-cat-remark}
It is perhaps worth noting that 
many of the previous results which we have stated for 
a finite-dimensional Hopf algebra $A$,
including Propositions~\ref{left-nullvector-prop} and \ref{right-nullvector-prop}
on $\sss$ and $\ppp$ as left- and
right-nullvectors for $L_V$, 
hold in somewhat higher generality.
One can replace the category of $A$-modules with 
a \emph{finite tensor category} $\C$,
replace $G_0(A)$ with the Grothendieck ring $G_0(\C)$ of $\C$,
and replace the assignment $V \mapsto \dim V$ for $A$-modules $V$ with
the \emph{Frobenius-Perron dimension} 
as an algebra morphism $\FPdim: G_0(\C) \rightarrow \RR$; 
see \cite[Chapters 1--4]{Tensor}. Most of our arguments mainly use the
existence of left- and right-duals $V^*$ and $\leftast{V}$ for objects $V$ 
in such a category $\C$, and properties of $\FPdim$.

In fact, we feel that, in the same way that
Frobenius-Perron dimension $\FPdim(V)$ is an interesting
real-valued invariant of an object in a tensor category,
whenever $\FPdim(V)$ happens to be an integer, the
critical group $K(V)$ is another interesting invariant
taking values in abelian groups.
\end{remark}

\section{Proof of Theorem~\ref{regular-rep-theorem}}
\label{left-regular-section}

We next give the structure of the critical group $K(A)$
for the left-regular representation $A$.
We start with a description of its McKay matrix $M_A$
using the Cartan matrix $C$, and the vectors $\sss, \ppp$
from Subsection~\ref{subsect.findim-alg}.

\begin{proposition} \label{prop.MA}
Let $A$ be a finite-dimensional Hopf algebra over an algebraically
closed field $\FF$. Then the McKay matrix $M_A$ of the
left-regular representation $A$ takes the form
$$
M_A = C \sss \sss^T = \ppp \sss^T.
$$
\end{proposition}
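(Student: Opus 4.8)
The plan is to compute the entries of $M_A$ directly from the definition and then recognize the resulting matrix as the claimed rank-one product. Recall that $M_A$ is defined by $(M_A)_{i,j} = [S_j \otimes A : S_i]$, the multiplicity of $S_i$ as a composition factor of $S_j \otimes A$.

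The key input is Lemma~\ref{lem.VtimesA}(i), applied with $V = S_j$: it gives an $A$-module isomorphism $S_j \otimes A \cong A^{\oplus \dim S_j}$. Taking composition multiplicities of $S_i$ on both sides yields
\[
(M_A)_{i,j} = [S_j \otimes A : S_i] = (\dim S_j) \cdot [A : S_i] = \sss_j \cdot [A : S_i].
\]
Now I would invoke Corollary~\ref{cor.AA.cartan}(i), which states $[A] = \sum_{i} (\dim P_i)[S_i]$ in $G_0(A)$, i.e. $[A : S_i] = \dim P_i = \ppp_i$. Substituting this in gives $(M_A)_{i,j} = \ppp_i \sss_j$, which is precisely the $(i,j)$ entry of the outer product $\ppp \sss^T$. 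Hence $M_A = \ppp \sss^T$.

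Finally, to obtain the first equality $M_A = C\sss\sss^T$, I would simply apply Corollary~\ref{cor.AA.cartan}(ii), namely $C\sss = \ppp$, so that $C\sss\sss^T = \ppp\sss^T = M_A$. There is no serious obstacle here: the proof is essentially an assembly of Lemma~\ref{lem.VtimesA} and the two parts of Corollary~\ref{cor.AA.cartan}. The only point requiring a little care is matching the handedness of the tensor factor in the definition of the McKay matrix with the correct part (i) of Lemma~\ref{lem.VtimesA} (the module $S_j \otimes A$, with $A$ on the right), rather than accidentally using part (ii).
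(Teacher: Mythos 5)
Your proof is correct and follows essentially the same approach as the paper's: both apply Lemma~\ref{lem.VtimesA}(i) with $V = S_j$ to reduce $(M_A)_{i,j}$ to $(\dim S_j)[A:S_i]$, then invoke Corollary~\ref{cor.AA.cartan}(i) to identify $[A:S_i]$ with $\dim P_i$, and finally Corollary~\ref{cor.AA.cartan}(ii) for the equality $C\sss = \ppp$. Your cautionary note about the handedness of the tensor factor is also well placed.
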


\begin{proof}
For every $A$-module $V$ and any
$i \in \left\{1, 2, \ldots, \ell+1\right\}$, we obtain from\footnote{Instead 
of using
Lemma~\ref{lem.VtimesA} (i) here, we could also have used the weaker
result that $[V \otimes A] = \dim(V)[A]$ in $G_0(A)$; this weaker
result has the advantage of being generalizable to tensor categories
\cite[Prop. 6.1.11]{Tensor}.}
Lemma~\ref{lem.VtimesA}(i) the equality
\begin{equation}
\left[V \otimes A : S_i\right]
= \left[A^{\oplus \dim V} : S_i \right]
= (\dim V) \left[A : S_i\right].
\label{pf.prop.MA.1}
\end{equation}
Now, we can compute the entries of the McKay matrix $M_A$:
$$
(M_A)_{i,j}
= [S_j \otimes A:S_i] 
 = \dim(S_j) [A:S_i]                          
= \dim(S_j) \dim\left(P_i\right) = \sss_j \ppp_i
 = \left( \ppp \sss^T \right)_{i,j} 
$$
using \eqref{pf.prop.MA.1} in the second equality,
and Corollary~\ref{cor.AA.cartan} (i) in the third.
Thus $M_A = \ppp \sss^T$ and then $\ppp \sss^T=C \sss \sss^T$,
since $\ppp = C \sss$ from Corollary~\ref{cor.AA.cartan} (ii).
\end{proof}

We will deduce the description of $K(A)$ from
Proposition~\ref{prop.MA} and the following lemma from linear algebra:

\begin{lemma}
\label{lem.coker}
Let $\sss$ and $\ppp$ be column vectors in $\ZZ^{\ell+1}$ with
$\ell \geq 1$ and $\sss_{\ell+1}=1$.
(In this lemma, $\sss$ and $\ppp$ are not required to be the vectors
from Subsection~\ref{subsect.conventions}.) 
Set $d:=\sss^T \ppp$ and assume that $d \neq 0$.
Let $\gamma:=\gcd(\ppp)$.
Then the matrix $L := d I_{\ell+1} - \ppp \sss^T$
has cokernel
\[
\ZZ^{\ell+1} / \im L
\cong \ZZ \oplus \left(\ZZ / \gamma \ZZ\right)
      \oplus \left(\ZZ / d \ZZ\right)^{\ell-1} .
\]
\end{lemma}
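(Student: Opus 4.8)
The plan is to compute the cokernel of $L = d I_{\ell+1} - \ppp\sss^T$ by putting $L$ into a convenient form over $\ZZ$ using unimodular row and column operations, exploiting the hypothesis $\sss_{\ell+1}=1$. First I would use $\sss_{\ell+1}=1$ to clear the last column of $\sss^T$: writing $\sss^T = [\sss_1,\ldots,\sss_\ell, 1]$, subtract $\sss_j$ times the last column of $L$ from the $j$-th column for $j=1,\ldots,\ell$. The matrix $\ppp\sss^T$ has $j$-th column $\sss_j \ppp$, so after this operation the off-diagonal contribution of $\ppp\sss^T$ in columns $1,\dots,\ell$ is killed, and what remains is $d I_\ell$ in the top-left $\ell\times\ell$ block (from the $dI_{\ell+1}$ term), together with correction terms $-\sss_j(dI_{\ell+1}-\ppp\sss^T)_{\cdot,\ell+1}$ added in. Keeping careful track, the $(\ell+1)$-st column stays $d e_{\ell+1} - \ppp$, and column $j$ ($j\le\ell$) becomes $d e_j - \sss_j(d e_{\ell+1}-\ppp)$. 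So modulo the image, the last column lets us replace $\ppp$ by $d e_{\ell+1}$, i.e. the relation column $j$ reads $d e_j \equiv \sss_j\, d\, e_{\ell+1}$.

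Next I would clean this up: add $\sss_j$ times column $j$ to column $\ell+1$... actually it is cleaner to row-reduce. The key structural observation is that after the column operations, $\im L$ is spanned by the vectors $d e_j - \sss_j \ppp'$ where $\ppp' := \ppp - d e_{\ell+1}$ has last coordinate $\ppp_{\ell+1} - d$, together with $d e_{\ell+1} - \ppp$. I would then note $\gcd$ of $\ppp$ equals $\gamma$ and argue that the Smith normal form of $L$ is $\operatorname{diag}(1, \gamma, d, d, \ldots, d, 0)$ with $\ell-1$ copies of $d$. To see this rigorously, I would compute the greatest common divisors $\delta_k$ of the $k\times k$ minors of $L$: the Smith normal form entries are $\delta_k/\delta_{k-1}$. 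Here one uses that $L$ has rank $\ell$ (since $\ppp\sss^T$ has rank one with eigenvalue $d$ on $\ppp$, the matrix $L=dI-\ppp\sss^T$ has $\ppp$ in its kernel and is nonsingular on a complement, as $d\ne 0$), so $\delta_{\ell+1}=0$ and $\delta_\ell\ne 0$.

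For the minor computation: the $k\times k$ minors of $d I_{\ell+1} - \ppp\sss^T$ are, by multilinearity/the matrix determinant lemma applied to submatrices, of the form $d^{k}$ or $d^{k-1}$ times a linear combination of entries of $\ppp$ and $\sss$. More precisely a principal-type $k\times k$ submatrix has determinant $d^{k-1}(d - \sum_{i\in I}\ppp_i\sss_i)$ or $d^k$, and mixed submatrices give $\pm d^{k-1}\ppp_a \sss_b$. Using $\sss_{\ell+1}=1$, among the $k\times k$ minors one finds $d^{k-1}\ppp_i$ for every $i$ (take rows $=$ some $(k-1)$-subset of $\{1,\dots,\ell+1\}$ avoiding $i$ together with row $i$, and an appropriate column set including column $\ell+1$), so $\gcd$ of the $k\times k$ minors divides $d^{k-1}\gcd(\ppp)=d^{k-1}\gamma$; one also gets the pure minor $d^k$. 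Carefully, I expect $\delta_1 = 1$ (since some entry of $L$ is a unit — e.g. if all $\sss_i\ppp_i$ and off-diagonal terms conspire one still has $\gcd$ of all entries dividing combinations forcing it to $1$; this needs the $\ell+1$ row to produce $\ppp_i$ and the diagonal to produce $d-\ppp_{\ell+1}$, and $\gcd(d,\ppp_{\ell+1}, \ppp_i,\ldots)$; a cleaner route is to exhibit the unimodular reduction directly as in the first paragraph, avoiding minors).

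Honestly, the cleanest write-up avoids minors entirely: after the first paragraph's column operations, perform the analogous row operations using the $(\ell+1)$-st row (but that row is $d e_{\ell+1}^T - \ppp_{\ell+1}\sss^T$, no unit there in general), so instead I would directly diagonalize the reduced matrix $R$ whose columns are $\{d e_j - \sss_j \ppp : j \le \ell\} \cup \{d e_{\ell+1} - \ppp\}$ — wait, that is just $L$ again. The honest approach: after clearing, the presentation matrix for $\coker L$ becomes (in suitable bases) the block form $\begin{pmatrix} d I_\ell & -\ppp''\\ 0 & 0\end{pmatrix}$ plus a unit somewhere, but extracting the $\gamma$ requires one more gcd step on $\ppp$. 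The main obstacle, which I would spend the most care on, is exactly this final extraction: showing the relations $\{d e_j\}_{j\le\ell}$ together with $\ppp \equiv 0 \pmod{(\text{stuff})}$ collapse to $\ZZ \oplus \ZZ/\gamma \oplus (\ZZ/d)^{\ell-1}$ and not, say, to something with a different torsion shape — this is where $\gcd(\ppp)=\gamma$ and the condition $\sss^T\ppp = d$ interact, and I would handle it by choosing an integer vector $\mathbf{c}$ with $\mathbf{c}^T\ppp = \gamma$, using it to do a unimodular change of basis sending $\ppp$ to $\gamma e_1$, after which the computation becomes transparent.
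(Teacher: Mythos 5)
Your overall direction is sound — both of your proposed routes (gcd of $k\times k$ minors, and direct unimodular row/column reduction using $\sss_{\ell+1}=1$) can be made to work, and they match the two proofs the paper gives — but the execution has several concrete errors, and the argument is not completed.

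First, your column operation formula is wrong, and interestingly the truth is cleaner than what you wrote. Column $j$ of $L$ is $d e_j - \sss_j\ppp$, and column $\ell+1$ is $d e_{\ell+1}-\ppp$; subtracting $\sss_j$ times the latter from the former makes the $\ppp$ terms \emph{cancel exactly}, giving $d e_j - d\sss_j e_{\ell+1}$, not $d e_j - \sss_j(d e_{\ell+1}-\ppp)$. Consequently the ``key structural observation'' that $\im L$ is spanned by $\{d e_j - \sss_j\ppp'\}_{j\le\ell}$ together with $d e_{\ell+1}-\ppp$ is false (one can check it already fails for $\ell=1$, $\sss=(1,1)$, $\ppp=(2,3)$: your generators have nonzero determinant, whereas $\im L$ has rank $1$). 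If you carry the correct reduction one step further — add $\sss_j$ times row $j$ to row $\ell+1$ for $j=1,\ldots,\ell$ — the bottom row vanishes entirely because $d-\ppp_{\ell+1}-\sum_{j\le\ell}\sss_j\ppp_j = d - \sss^T\ppp = 0$, leaving the presentation matrix
\[
\begin{pmatrix} d I_\ell & -\ppp'' \\ 0 & 0 \end{pmatrix},
\qquad \ppp'' := (\ppp_1,\ldots,\ppp_\ell)^T .
\]
The cokernel is then $\ZZ\oplus\ZZ^\ell/\bigl(d\ZZ^\ell+\ZZ\ppp''\bigr)\cong\ZZ\oplus(\ZZ/\gcd(d,\gcd\ppp'')\ZZ)\oplus(\ZZ/d\ZZ)^{\ell-1}$, and the one remaining subtlety — which you correctly flagged as the crux — is to identify $\gcd(d,\gcd\ppp'')=\gamma$. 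This follows from $d\equiv\ppp_{\ell+1}\pmod{\gcd\ppp''}$ (expand $d=\sss^T\ppp$ with $\sss_{\ell+1}=1$). This finished version is precisely the paper's alternative elementary proof.

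Second, on the minors route: your stated Smith normal form $\operatorname{diag}(1,\gamma,d,\ldots,d,0)$ has $\ell+2$ entries, one too many; the correct form is $\operatorname{diag}(\gamma,d,\ldots,d,0)$ with $\ell-1$ copies of $d$. Relatedly, $\delta_1$ is \emph{not} $1$ in general; it equals $\gamma$. The paper's main proof circumvents this by first reducing to the case $\gamma=1$ (replace $\ppp$ by $\frac{1}{\gamma}\ppp$, which divides every $\delta_k$ by $\gamma$), after which $\delta_1=1$ is what you want to prove, and one only needs $\delta_1=1$, $d\mid\delta_2$, and the \emph{a priori} observation that the torsion is $d$-torsion (because $L\xx=d\xx$ for $\xx\in\sss^\perp$) — a lighter load than computing $\delta_k$ outright. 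Your closing suggestion of a unimodular change sending $\ppp$ to $\gamma e_1$ is exactly the engine behind the elementary proof, but it is only a sketch in your write-up; as it stands, the proposal is a collection of partially correct starts with an unfinished ending rather than a proof.
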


\begin{verlong}
We shall now give a proof of Lemma~\ref{lem.coker} using the Smith
normal form of a matrix. For a second, more elementary proof, see
Section~\ref{sect.smith}.

\begin{proof}[First proof of Lemma~\ref{lem.coker}.]
\end{verlong}

\begin{vershort}
\begin{proof}
\end{vershort}
Note that
$
\sss^T L=d \sss^T - \sss^T \ppp \sss^T=d\sss^T-d\sss^T=0.
$
This has two implications.  One is that $L$ is singular,
so its Smith normal form has diagonal entries $(d_1,d_2,\ldots,d_{\ell},0)$,
with $d_i$ dividing $d_{i+1}$ for each $i$.
Hence
$
\ZZ^{\ell+1}/\im L \cong \ZZ \oplus \left( \bigoplus_{i=1}^\ell
\ZZ/d_i\ZZ \right),
$
and our goal is to show that $(d_1,d_2,\ldots,d_\ell)=(\gamma,d,d,\ldots,d)$.

The second implication is that $\im L \subset \sss^\perp$,
which we claim lets us reformulate the cokernel of $L$ as follows:
\begin{equation}
\label{cokernel-of-L-reformulation}
\ZZ^{\ell+1}/\im L \cong \ZZ \oplus \sss^\perp/\im(L),
\qquad \left( \text{ so that }
\sss^\perp/\im(L) \cong \bigoplus_{i=1}^\ell \ZZ/d_i\ZZ \,\, \right). 
\end{equation}
To see this claim, note that $\xx \mapsto \sss^T \xx$ gives
a surjection $\ZZ^{\ell+1} \rightarrow \ZZ$, since $\sss_{\ell+1}=1$,
and hence a short exact sequence
$
0 \rightarrow \sss^\perp \rightarrow \ZZ^{\ell+1} \rightarrow \ZZ \rightarrow 0.
$
The sequence splits since $\ZZ$ is a free (hence projective) $\ZZ$-module,
and then the resulting direct sum decomposition $\ZZ^{\ell+1} = \ZZ
\oplus \sss^\perp$
induces the claimed decomposition in \eqref{cokernel-of-L-reformulation}.

Note furthermore that the abelian group 
$\sss^\perp/\im(L)$ is all $d$-torsion, since
for any $\xx$ in $\sss^\perp$, one has that $\im(L)$ contains
$
L\xx = d\xx- \ppp \sss^T \xx = d\xx.
$
Therefore each of $(d_1,d_2,\ldots,d_{\ell})$ must divide $d$.

Note that $\gamma=\gcd(\ppp)$ must divide $d=\sss^T \ppp$,
and hence we may assume without loss of generality that $\gamma=1$,
after replacing $\ppp$ with $\frac{1}{\gamma}\ppp$:  this
has the effect of replacing $d$ with $\frac{d}{\gamma}$, replacing
$\gamma$ with $1$,
replacing $L$ with $\frac{1}{\gamma}L$,
and $(d_1,d_2,\ldots,d_{\ell},0)$ with
$\frac{1}{\gamma}(d_1,d_2,\ldots,d_\ell,0)$
(since the Smith normal form of $\frac{1}{\gamma}L$ is obtained from
that of $L$ by dividing all entries by $\gamma$).

Once we have assumed $\gamma=1$, our goal is to show
$(d_1,d_2,\ldots,d_\ell)=(1,d,d,\ldots,d)$.
However, since we have seen that each $d_i$ divides $d$, it only
remains to show
that $d_1=1$, and $d$ divides each of $(d_2,d_3,\ldots,d_\ell)$.
To this end, recall (e.g., \cite[\S12.3 Exer. 35]{DummitFoote})
that if one defines $g_k$ as the gcd of all $k \times k$ minor
subdeterminants of $L$, then $d_k=\frac{g_k}{g_{k-1}}$.
Thus it remains only to show that $g_1=1$ and that
$g_2$ is divisible by $d$.

To see that $g_1=1$, we claim $1$ lies in the ideal $I$ of $\ZZ$
generated by the last column
$
[ -\ppp_1, -\ppp_2, \ldots, -\ppp_\ell,  d-\ppp_{\ell+1}]^T
$
of $L$
together with the $(1,1)$-entry $L_{1,1}=d-\ppp_1 \sss_1$.  To see
this claim, note that
$d= (d-\ppp_1 \sss_1) + \sss_1 \cdot \ppp_1$ lies in $I$,  hence
$\ppp_{\ell+1} = d - (d-\ppp_{\ell+1})$ lies in $I$,
and therefore $1=\gcd(\ppp_1,\ppp_2,\ldots,\ppp_{\ell+1})$ also lies in $I$.

To see $d$ divides $g_2$, we need only to show that
each $2 \times 2$ minor subdeterminant of $dI-\ppp \sss^T$ 
vanishes modulo $d$.  This holds, since working modulo $d$,
one can replace  $dI-\ppp \sss^T$ by  $-\ppp \sss^T$,
a rank one matrix.
\end{proof}

We can now prove Theorem~\ref{regular-rep-theorem}.  Recall that its statement
involves the number $\ell+1$ of simple $A$-modules, the dimension $d$ of $A$,
and the gcd $\gamma$ 
of the dimensions of the indecomposable projective $A$-modules.

\vskip.1in
\noindent
{\bf Theorem~\ref{regular-rep-theorem}.}
{\it 
Let $d := \dim A$ and $\gamma := \gcd(\ppp)$.
If $\ell=0$ then $K(A)=0$, else
$
K(A) \cong \left( \ZZ/\gamma\ZZ \right)
             \oplus \left( \ZZ/d\ZZ \right)^{\ell-1}.
$
}
\vskip.1in
\begin{proof}
The case $\ell=0$ is somewhat trivial, since
$M_A, L_A$ are the $1 \times 1$ matrices $[d],[0]$,
and $K(A)=0$.

When $\ell \geq 1$, note that
Proposition~\ref{prop.important-dot-product} gives
$\sss^T \ppp = \dim A = d$, and Proposition~\ref{prop.MA} yields
$M_A = \ppp \sss^T$, so that
$
L_A = d I_{\ell+1} - \ppp \sss^T.
$ 
Reindexing $S_1,\ldots,S_{\ell+1}$ so that $S_{\ell+1}=\epsilon$,
the result now follows from  Lemma~\ref{lem.coker}.
\end{proof}

\section{Proofs of Theorem~\ref{Lorenzini-style-formula}
and Corollary~\ref{Gaetz-formula}}
\label{Lorenzini-Gaetz-section}

We record here a key 
observation of Lorenzini \cite[Prop. 2.1]{Lorenzini} 
that leads to a formula
for the cardinality of the critical group $K(V)$.

\begin{proposition}[{\cite[Prop. 2.1]{Lorenzini}}]
\label{Lorenzini-prop}
Let $L$ be a matrix in $\ZZ^{(\ell+1)\times(\ell+1)}$,
regarded as a linear map $\ZZ^{\ell+1} \rightarrow \ZZ^{\ell+1}$,
 of rank $\ell$, 
 with characteristic polynomial $x\prod_{i=1}^\ell (x-\lambda_i)$,
and whose integer right-nullspace
\begin{verlong}
\footnote{The \textit{integer
right-nullspace} of $L$ is defined to be the $\ZZ$-module of all
column vectors $u \in \ZZ^{\ell+1}$ such that $L u = 0$.
The \textit{integer left-nullspace} of $L$ is defined to be the
$\ZZ$-module of all
column vectors $u \in \ZZ^{\ell+1}$ such that $L^T u = 0$.}
\end{verlong}
 (resp. left-nullspace) is 
spanned over $\ZZ$ by the primitive vector $\nnn$
(resp. $\nnn'$) in $\ZZ^{\ell+1}$. Assume that $\nnn^T \nnn' \neq 0$.

Then, the torsion part $K$ of the cokernel
$\ZZ^{\ell+1}/\im L \cong \ZZ \oplus K$
has cardinality
$
\card{K} = \left| \frac{1}{\nnn^T \nnn'} 
                  (\lambda_1 \lambda_2 \cdots \lambda_\ell) \right|.
$
\end{proposition}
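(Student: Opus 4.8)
The plan is to reduce the statement to a computation with the Smith normal form of $L$, using the characteristic polynomial to pin down the product of the nonzero invariant factors. Write the Smith normal form of $L$ as $\operatorname{diag}(d_1, d_2, \ldots, d_\ell, 0)$ with $d_1 \mid d_2 \mid \cdots \mid d_\ell$; this is possible since $L$ has rank exactly $\ell$. Then $\ZZ^{\ell+1}/\im L \cong \ZZ \oplus \bigoplus_{i=1}^\ell \ZZ/d_i\ZZ$, so the torsion part $K$ has cardinality $d_1 d_2 \cdots d_\ell = |\det'(L)|$, where by $\det'(L)$ I mean the product of the nonzero eigenvalues, i.e. $\lambda_1 \lambda_2 \cdots \lambda_\ell$ — but \emph{a priori} the product of invariant factors and the product of nonzero eigenvalues can differ, so the real content is relating these two quantities via the nullvectors $\nnn$ and $\nnn'$.

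The key step is the following. Consider the adjugate matrix $\operatorname{adj}(L)$. Since $L$ has rank $\ell$ (so corank one), $\operatorname{adj}(L)$ has rank one, and $L \cdot \operatorname{adj}(L) = \operatorname{adj}(L) \cdot L = \det(L) I = 0$. Hence every column of $\operatorname{adj}(L)$ lies in the (rational) right-nullspace of $L$, and every row lies in the left-nullspace; as these nullspaces are one-dimensional and spanned by $\nnn$ and $\nnn'$ respectively, we get $\operatorname{adj}(L) = c\, \nnn \nnn'^T$ for some scalar $c \in \QQ$. In fact $c \in \ZZ$ because $\operatorname{adj}(L)$ has integer entries and $\nnn, \nnn'$ are primitive. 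On one hand, the trace of $\operatorname{adj}(L)$ equals $(-1)^\ell$ times the coefficient of $x$ in $\det(xI - L)$, which by hypothesis is the elementary symmetric function giving $\lambda_1 \cdots \lambda_\ell$ (with the sign working out to $\operatorname{tr}(\operatorname{adj}(L)) = \lambda_1 \lambda_2 \cdots \lambda_\ell$). On the other hand $\operatorname{tr}(\operatorname{adj}(L)) = c\, \operatorname{tr}(\nnn \nnn'^T) = c\, \nnn'^T \nnn = c\, \nnn^T \nnn'$. Therefore $c = \dfrac{\lambda_1 \lambda_2 \cdots \lambda_\ell}{\nnn^T \nnn'}$, which is in particular an integer. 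Separately, the gcd of all entries of $\operatorname{adj}(L)$ is exactly $g_\ell$, the gcd of all $\ell \times \ell$ minors of $L$, which equals $d_1 d_2 \cdots d_\ell = \card{K}$ by the standard invariant-factor formulas. But since $\nnn$ and $\nnn'$ are primitive, the gcd of the entries of $c\, \nnn \nnn'^T$ is $|c|$. Hence $\card{K} = |c| = \left| \dfrac{1}{\nnn^T \nnn'}(\lambda_1 \lambda_2 \cdots \lambda_\ell) \right|$, as claimed.

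I expect the main obstacle to be the bookkeeping around signs and which symmetric function of the $\lambda_i$ appears: one must be careful that $\operatorname{tr}(\operatorname{adj}(L))$ is indeed $\pm \lambda_1 \cdots \lambda_\ell$ (it is the sum of the $\ell \times \ell$ principal minors of $L$, which is the degree-one coefficient of the characteristic polynomial up to sign, and with the eigenvalue $0$ present this collapses to the single product of the nonzero eigenvalues). The other point requiring a clean argument is the claim that the gcd of the entries of a rank-one integer matrix $\nnn \nnn'^T$ with $\nnn, \nnn'$ primitive is $1$ — this follows since a common divisor of all $\nnn_i \nnn'_j$ that is coprime to, say, some $\nnn'_{j_0}$ must divide all $\nnn_i$, and iterating over a $\ZZ$-combination of the $\nnn'_j$ equal to $1$ forces it to be a unit; equivalently one invokes that $\gcd$ is multiplicative on such outer products. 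Everything else is the standard Smith-normal-form dictionary already used in the proof of Lemma~\ref{lem.coker}.
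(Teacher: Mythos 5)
Your proof is correct. Note that the paper itself does not prove this proposition: it cites Lorenzini's \cite[Prop.~2.1]{Lorenzini} and simply observes (in a remark suppressed from the compiled version) that the claim is a restatement of Lorenzini's displayed equation $\lambda_1\cdots\lambda_\ell = \pm\,\card{K}\cdot(\nnn^T\nnn')$. So there is no ``paper proof'' to compare against line by line; what you have supplied is a self-contained proof of the cited result, and it is sound.

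The essential points all check out: $\adj(L)$ is nonzero of rank one (since $L$ has rank exactly $\ell$) and is annihilated on both sides by $L$, hence $\adj(L)=c\,\nnn\nnn'^T$; primitivity of $\nnn,\nnn'$ forces $c\in\ZZ$ and also gives $\gcd$ of the entries of $\nnn\nnn'^T$ equal to $1$; the trace identity $\operatorname{tr}\adj(L)=\sigma_\ell(L)=\lambda_1\cdots\lambda_\ell$ is exactly the coefficient comparison for the degree-$1$ term of $\det(xI-L)=x\prod_i(x-\lambda_i)$, with the signs cancelling as you say; and $\gcd$ of the entries of $\adj(L)$ equals $g_\ell=d_1\cdots d_\ell=\card{K}$ by the standard invariant-factor formula. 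Combining these yields $\card{K}=|c|=\bigl|\lambda_1\cdots\lambda_\ell/(\nnn^T\nnn')\bigr|$. (One small remark you could make explicit: $c\neq0$ because $\adj(L)\neq0$, and hence the hypotheses force $\lambda_1\cdots\lambda_\ell\neq0$; as written this is used implicitly when you pass from ``gcd of entries of $c\,\nnn\nnn'^T$'' to $|c|$.) This adjugate/Smith-normal-form route is essentially the argument in Lorenzini's paper, so your proof is a faithful reconstruction of the source rather than an alternative approach.
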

\begin{verlong}
\begin{proof}
This is a restatement of
$\lambda_1 \lambda_2 \cdots \lambda_\ell
= \pm \card{K} \cdot \left(\nnn^T \nnn'\right)$,
which is the displayed equation in
\cite[Prop. 2.1]{Lorenzini}. (The requirement $n > 1$ is
unnecessary.)
\end{proof}
\end{verlong}

This lets us prove Theorem~\ref{Lorenzini-style-formula} from the
Introduction,
whose statement we recall here.

\vskip.1in
\noindent
{\bf Theorem~\ref{Lorenzini-style-formula}.}
{\it
Let $d := \dim A$ and $\gamma := \gcd(\ppp)$.
Assume $K(V)$ is finite, so that $L_V$ has nullity one.  If
the characteristic polynomial of $L_V$ factors as 
$\det(xI-L_V)=x\prod_{i=1}^\ell (x-\lambda_i)$, then 
$
\card{K(V)} = \left| \frac{\gamma}{d} 
                     (\lambda_1 \lambda_2 \cdots \lambda_\ell)\right|.
$
}

\begin{proof}
From \eqref{eq.K(V).def2}, we see that
$K\left(V\right)$ is isomorphic to the torsion part of
$\ZZ^{\ell+1} / \im (L_V)$.

From Proposition~\ref{prop.important-dot-product}, we obtain
$\sss^T \ppp = d (\neq 0)$. 
Propositions~\ref{left-nullvector-prop} and
\ref{right-nullvector-prop} exhibit $\sss$ and $\ppp$
as left- and right-nullvectors of $L_V$ in $\ZZ^{\ell+1}$.
Note that $\sss$ is primitive, since
one of its coordinates is $\dim(\epsilon)=1$,
while $\frac{1}{\gamma}\ppp$ is also primitive.
Since the integer left-nullspace and the integer right-nullspace of
$L_V$ are 
free of rank $1$ (because $L_V$ has nullity $1$),
this shows that $\sss$ and $\frac{1}{\gamma}\ppp$ span these two
nullspaces.
Then Proposition~\ref{Lorenzini-prop} (applied to
$\nnn = \frac{1}{\gamma}\ppp$ and $\nnn' = \sss$) implies 
\[
\card{K(V)}
= \left| \frac{1}{\left(\frac{1}{\gamma}\ppp\right)^T \sss} 
                 (\lambda_1 \lambda_2 \cdots \lambda_\ell) \right|
= \left| \frac{\gamma}{d} 
             (\lambda_1 \lambda_2 \cdots \lambda_\ell) \right|.\qedhere
\]
\end{proof}

The important role played by $\gamma=\gcd(\ppp)$ in 
Theorem~\ref{regular-rep-theorem} and
Theorem~\ref{Lorenzini-style-formula} 
raises the following question.

\begin{question}
\label{gcd-question}
For a finite-dimensional Hopf algebra $A$ over an algebraically closed field,
what does the gcd of the dimensions of the 
indecomposable projective $A$-modules ``mean'' in terms of the structure of $A$?
\end{question}

We shall answer this question for some Hopf algebras $A$
in Remark~\ref{rmk.sylow-gcd} further below.
The following answer for group algebras may be known to experts,
but we did not find it in the literature.

\begin{proposition}
\label{group-algebra-gcd}
For $A=\FF G$ the group algebra of a finite group $G$,
the gcd $\gamma$ of the dimensions $\ppp$
of the indecomposable projective $\FF G$-modules equals 
\begin{itemize}
\item $1$ when $\FF$ has characteristic zero,
\item the order of a $p$-Sylow subgroup of $G$ 
when $\FF$ has characteristic $p > 0$.
\end{itemize}
\end{proposition}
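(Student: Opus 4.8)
The characteristic-zero case is immediate: $\FF G$ is semisimple by Maschke's theorem, so every simple module is projective, and $S_1 = \epsilon$ has dimension $1$; hence $\gamma = \gcd(\ppp) = 1$. So assume $\cha \FF = p > 0$ and write $\card{G} = p^a q$ with $\gcd(p,q) = 1$. The plan is to prove two divisibilities: first that $p^a \mid \dim P_i$ for every indecomposable projective $\FF G$-module $P_i$, which gives $p^a \mid \gamma$; and second that $\gamma \mid p^a$, which I would get by exhibiting a single indecomposable projective whose dimension is exactly $p^a$, or more robustly by a $p$-adic valuation argument showing some $\dim P_i$ has $p$-adic valuation exactly $a$.

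For the first divisibility, the cleanest route is the classical fact (Brauer--Nesbitt) that the dimension of every indecomposable projective $\FF G$-module is divisible by $p^a$, the order of a Sylow $p$-subgroup $P \le G$. I would prove this by restricting $P_i$ to $\FF P$: since $\FF P$ is a local ring (its only simple module is the trivial one, as $P$ is a $p$-group), every projective $\FF P$-module is free, so $\dim_\FF(P_i) = \card{P} \cdot (\text{rank})$ provided $P_i\!\downarrow_{P}$ is projective over $\FF P$. That restriction is indeed projective because $\FF G$ is free as a right $\FF P$-module (a transversal gives a basis), so $P_i$, being a summand of a free $\FF G$-module, restricts to a summand of a free $\FF P$-module. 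This yields $p^a = \card{P} \mid \dim P_i$ for all $i$, hence $p^a \mid \gamma$.

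For the reverse divisibility $\gamma \mid p^a$, equivalently that the $p$-part of $\gamma$ is at most $p^a$ and the prime-to-$p$ part is $1$, I would use the relation $\ppp = C\sss$ from Corollary~\ref{cor.AA.cartan}(ii) together with the structure of the Cartan matrix in characteristic $p$: the trivial module $\epsilon = S_j$ for some $j$ has $\dim S_j = 1$, and $\dim P_j = \sum_i C_{ij}\dim S_i \equiv \dim S_j = 1 \pmod{?}$ is not quite enough, so instead I would invoke the well-known formula that the ordinary character of $P_i$ has the form $\Phi_i = \sum_k d_{ki}\chi_k$ (decomposition numbers), whence $\dim P_i = \sum_k d_{ki}\chi_k(1)$, and then appeal to the fact that $\det C = p^{a\cdot(\text{something})}$ is a power of $p$ while the elementary divisors of $C$ are all $p$-powers — so $\gamma = \gcd(\ppp) = \gcd(C\sss)$ is forced to be a $p$-power bounded by the exponent $p^a$. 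The main obstacle is pinning down this upper bound $\gamma \le p^a$ rather than just "$\gamma$ is a $p$-power"; the most reliable fix is to produce one indecomposable projective of dimension exactly $p^a$ — for instance, the projective cover $P_j$ of the trivial module, whose dimension is known (Green) to be the order of the "principal block defect group" contribution, but to stay elementary I would instead use Brauer's theorem that $P_j$ appears in $\FF G$ with multiplicity $\dim S_j = 1$ and that $p^a \parallel \dim P_j$ for the block of defect $a$ containing $\epsilon$. Citing \cite{Webb} for the statement that the principal block has full defect and its projective indecomposables have $p$-valuation exactly $a$ then closes the argument: combined with $p^a \mid \gamma$ we get $\gamma = p^a$.
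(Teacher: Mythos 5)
Your characteristic-zero case and your argument that $p^a \mid \dim P_i$ for every $i$ (restrict to a Sylow $p$-subgroup $P$, note $\FF P$ is local so projective $=$ free, and $P_i$ restricts to a projective $\FF P$-module because $\FF G$ is $\FF P$-free) are correct and in the same spirit as the paper, which cites \cite[Cor.~8.1.3]{Webb} for exactly this. You also touch on the paper's use of $\det C$ being a $p$-power (Brauer, \cite{Brauer-Cartan-invariants}); multiplying $\ppp^T = \sss^T C$ by $\adj(C)$ shows $\gamma \mid \det(C)$, so $\gamma$ is a power of $p$.

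The genuine gap is your treatment of the reverse divisibility $\gamma \mid p^a$. Your first fallback, producing a $P_i$ with $\dim P_i = p^a$ exactly, is simply false: for $A = \FF_3\SS_5$ one has $\ppp = (6,6,9,9,6)$ and $p^a = 3$, yet no $\dim P_i$ equals $3$ (this example is in the paper). You correctly retreat to the weaker claim that some $\dim P_i$ has $p$-adic valuation exactly $a$ (equivalently, that the projective cover of $\epsilon$ does), but you justify this only by gesturing at defect groups and the principal block. That claim is true but is itself a substantive theorem requiring proof or a precise citation, and in the logic of your argument it is the load-bearing step, so waving at it leaves the proof incomplete. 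Worse, the cleanest route to that claim is to first prove $\gamma = p^a$ and then observe that some entry must attain the gcd's $p$-valuation --- which is circular in your set-up.

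The paper's proof of $\gamma \mid p^a$ avoids all of this and uses only what was already at hand: Proposition~\ref{prop.important-dot-product} gives $\sss^T\ppp = \dim A = \card{G} = p^a q$, and since $\gamma$ divides every entry of $\ppp$, it divides $\sss^T\ppp = p^a q$. Being a power of $p$ coprime to $q$, it must divide $p^a$. Combined with $p^a \mid \gamma$, this gives $\gamma = p^a$ with no appeal to block theory or defect groups.
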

\begin{proof}
The statement is obvious in characteristic $0$,
since $\gamma = 1$, as $\epsilon$ is a $1$-dimensional 
projective $A$-module.

Thus we may assume $\FF$ has positive characteristic $p$.
We first claim $\gamma=\gcd(\ppp)$ is a power of $p$.
To deduce this, let $C$ be the Cartan matrix of $A$.
Proposition~\ref{prop.AA.cartan.2} shows that $\ppp^T=\sss^T C$.
Multiplying this equation on the right by the \emph{adjugate} matrix $\adj(C)$, whose entries are the cofactors of $C$,
one finds that
\begin{equation}
\label{Cartan-and-Brauer-consequence}
\ppp^T \adj(C)  = \sss^T C \adj(C) = \det(C) \sss^T. 
\end{equation}
The positive integer $\gamma$ divides every entry of $\ppp$, 
and hence divides every entry on the left of
\eqref{Cartan-and-Brauer-consequence}.  Note that
 $\det(C)$ occurs as an entry on the right of 
\eqref{Cartan-and-Brauer-consequence}, so $\gamma$ divides $\det(C)$,
which by a result of Brauer \cite[Thm. 1]{Brauer-Cartan-invariants}
(also proven in \cite[\S 16.1, Corollary 3]{Serre} and
\cite[Theorem (18.25)]{CurtisReiner1}) is a power of $p$.
That is, $\gamma = p^b$ for some $b \geq 0$.

All that remains is to apply a result of Dickson, asserting
that the $p$-Sylow order $p^a$ for $G$ is the minimum of the powers
of $p$ dividing the dimensions $\dim(P_i)$;
see Curtis and Reiner \cite[(84.15)]{CurtisReiner-old}.  We give
a modern argument for this here.  
Since the $p$-Sylow order $p^a$ for $G$ 
divides the dimension of every 
projective $\FF G$-module 
(see \cite[\S 18, Exer. 5]{CurtisReiner1}, \cite[Cor. 8.1.3]{Webb}),
it also divides $\gamma=p^b$, implying $b \geq a$.
For the opposite inequality, since $\card{G} = p^a q$ where $\gcd(p,q)=1$,
and $\card{G} = \dim \FF G = \dim A = \sss^T \ppp$
by Proposition~\ref{prop.important-dot-product}, the prime power
$p^b = \gamma$ divides $\sss^T \ppp = \card{G} = p^a q$,
and therefore $b \leq a$.  Thus $b=a$, so that $\gamma = p^b = p^a$.
\end{proof}

Since the number of simple $\FF G$-modules is the number of $p$-regular
$G$-conjugacy classes, the following is immediate from 
Theorem~\ref{regular-rep-theorem}
and Proposition~\ref{group-algebra-gcd}.

\begin{corollary}
For the group algebra $A=\FF G$ of a finite group $G$,
with $\ell+1 \geq 2$ different $p$-regular conjugacy classes, and
$p$-Sylow order $p^a$,
the regular representation $A$ has critical group 
\[
K(A)
\cong 
\left( \ZZ/p^a\ZZ \right)
   \oplus \left( \ZZ/ (\card G)\ZZ \right)^{\ell-1}.
\]
\end{corollary}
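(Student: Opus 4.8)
The plan is to specialize Theorem~\ref{regular-rep-theorem} to $A = \FF G$ and then to read off the two numerical parameters $d$ and $\gamma$ that occur in its statement. First I would note that $d := \dim A = \dim \FF G = \card{G}$; this is the obvious count of a basis, but it is also the special case of Proposition~\ref{prop.important-dot-product} asserting $d = \sss^T\ppp$.

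Next I would pin down the index $\ell$. By Brauer's theorem recalled in Section~\ref{eigenspaces-section}, the number $\ell+1$ of pairwise non-isomorphic simple $\FF G$-modules coincides with the number of $p$-regular conjugacy classes of $G$; the hypothesis that there are at least two such classes is therefore exactly the assertion $\ell+1 \geq 2$, i.e.\ $\ell \geq 1$. We are thus in the non-degenerate branch of Theorem~\ref{regular-rep-theorem}, which gives $K(A) \cong \left(\ZZ/\gamma\ZZ\right) \oplus \left(\ZZ/d\ZZ\right)^{\ell-1}$, where $\gamma = \gcd(\ppp)$ is the greatest common divisor of the dimensions of the indecomposable projective $\FF G$-modules.

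Finally I would invoke Proposition~\ref{group-algebra-gcd}, which evaluates $\gamma$: it equals $1$ when $\cha\FF = 0$ and equals the order $p^a$ of a $p$-Sylow subgroup of $G$ when $\cha\FF = p > 0$; in other words $\gamma = p^a$ under the standing convention that $p^a = 1$ in characteristic zero. Substituting $\gamma = p^a$ and $d = \card{G}$ into the isomorphism above yields $K(A) \cong \left(\ZZ/p^a\ZZ\right) \oplus \left(\ZZ/(\card{G})\ZZ\right)^{\ell-1}$, as claimed. There is no genuine obstacle at this stage: all of the substantive work has already been carried out, residing in the Smith-normal-form computation behind Theorem~\ref{regular-rep-theorem} (via Lemma~\ref{lem.coker}) and in the identification $\gcd(\ppp) = p^a$ of Proposition~\ref{group-algebra-gcd}, which itself rests on results of Brauer and Dickson. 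The only point demanding care is bookkeeping --- checking that the convention $p^a = 1$ in characteristic zero is used consistently in both cited statements, and that the hypothesis $\ell+1 \geq 2$ is precisely what excludes the trivial case $\ell = 0$, in which $K(A) = 0$.
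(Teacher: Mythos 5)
Your proposal is correct and follows exactly the paper's own route: specialize Theorem~\ref{regular-rep-theorem} to $A=\FF G$, identify $d=\dim A=\card{G}$, use Brauer's result that $\ell+1$ counts the $p$-regular conjugacy classes, and invoke Proposition~\ref{group-algebra-gcd} to evaluate $\gamma=p^a$. Nothing more is needed, and your bookkeeping remarks match the (very brief) justification the paper gives.
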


Since for group algebras, either of
Proposition~\ref{left-eigenvector-prop} or \ref{right-eigenvector-prop}
identified the eigenvalues of $L_V$ in terms of
the Brauer character values of $V$, one immediately
deduces Corollary~\ref{Gaetz-formula} from the
Introduction:

\vskip.1in
\noindent
{\bf Corollary~\ref{Gaetz-formula}.}
{\it 
For any $\FF G$-module $V$ of dimension $n$ with $K(V)$ finite,
one has 
$$
\card{K(V)} = \frac{p^a}{\card{G}} 
\displaystyle\prod_{g \neq e} \left(n-\chi_V(g)\right),
$$
where the product runs through a set of representatives $g$ for
the non-identity $p$-regular $G$-conjugacy classes.
In particular, the quantity on the right is a positive integer.
}
\vskip.1in

\begin{example}
Let us compute what some of the foregoing results
say when $A=\FF G$ for the symmetric group $G=\SS_4$,
and $\FF$ has characteristic $p$,
assuming some facts about modular $\SS_N$-representations
that can be found, e.g., in James and Kerber \cite{JamesKerber}.
Every field $\FF$ is a splitting field for each $\SS_N$,
so we may assume $\FF=\FF_p$.  Furthermore
one need only consider three cases, namely $p=2,3$ and $p \geq 5$, since
$\FF \SS_N$ is semisimple for $p > N$, and in that case,
the theory is the same as in characteristic zero.
The simple $A$-modules can be indexed $D^\lambda$ where
$\lambda$ are the $p$-regular partitions of $N=4$, that is,
those partitions having no parts repeated $p$ or more times.
For $p = 2, 3$, we have the following Brauer character tables and
Cartan matrices (see \cite[Example 10.1.5]{Webb}):
$$
p=2:
\qquad
\bordermatrix{
~ &e&(ijk) \cr
D^{4}&1&1 \cr
D^{31}&2&-1 
}
\qquad
C=\left(
\begin{matrix}
4&2 \\
2&3 
\end{matrix}
\right)
$$
\vskip.2in 
$$
p=3:
\qquad
\bordermatrix{
~ &e&(ij)&(ij)(kl)&(ijkl) \cr
D^{4}&1&1&1&1 \cr
D^{31}&3&1&-1&-1\cr 
D^{22}&1&-1&1&-1 \cr
D^{211}&3&-1&-1&1
}
\qquad
C=\left(
\begin{matrix}
2&0&1&0 \\
0&1&0&0 \\
1&0&2&0 \\
0&0&0&1 
\end{matrix}
\right)
$$
while for $p \geq 5$, the Brauer character table is the ordinary one
(and the Cartan matrix $C$ is the identity):
$$
\bordermatrix{
~ &e&(ij)&(ij)(kl)&(ijk)&(ijkl) \cr
D^{4}&1&1&1&1&1 \cr
D^{31}&3&1&0&-1&-1\cr 
D^{22}&2&0&-1&2&0 \cr
D^{211}&3&-1&0&-1&1\cr
D^{1111}&1&-1&1&1&-1}.
$$
In each case, $\sss$ is the first column
of the Brauer character table, 
$\ppp^T= \sss^T C$, and $\gamma=\gcd(\ppp)$:
\begin{center} 
\begin{tabular}{|c|c|c|c|}\hline
$p$&$\sss$&$\ppp$ &$\gamma$\\ \hline\hline
$2$&$[1,2]^T$& $[8,8]^T$&$8=2^3$\\ \hline
$3$&$[1,3,1,3]^T$&$[3,3,3,3]^T$&$3$ \\ \hline
$\geq 5$&$[1,3,2,3,1]^T$&$[1,3,2,3,1]^T$&$1$\\ \hline
\end{tabular}
\end{center}
Note that $\gamma$ is the order $p^a$ of the $p$-Sylow subgroups for
$G=\SS_4$ in each case.

In Section~\ref{M-matrix-section} we will show that 
the critical group $K(V)$ is finite if and only if $V$ is tensor-rich.
One can read off which simple $\FF\SS_4$-modules $V=D^\lambda$ are
tensor-rich using Theorem~\ref{Brauer-result} below: this holds
exactly when the only $g \in \SS_4$ satisfying $\chi_V(g)=n:=\dim V$
is $g=e$.  
Perusing the above tables, one sees
that in each case, the simple modules
labeled $D^{4}, D^{22}, D^{1111}$ (to the extent they are simple)
are the ones which are not tensor-rich.
However, the module $V=D^{31}$ is tensor-rich for each $p$, 
and one can use its character
values $\chi_V(g)$ to compute $M_V, L_V, K(V)$ and check
Corollary~\ref{Gaetz-formula} in each case as follows:
\begin{center}
\begin{tabular}{|c|c|c|c|c|l|}\hline
$p$ & $M_V$ for $V=D^{31}$ & $L_V=nI-M_V$ &Smith form of $L_V$& $K(V)$ &$\#K(V)=\frac{\gamma}{\#G}\prod_{g \neq e}(n-\chi_V(g))$\\ \hline
 & & & & & \\
$2$ &
$\left(\begin{smallmatrix}
0 & 2\\
1 & 1\\
\end{smallmatrix}\right)$
 &
$\left(\begin{smallmatrix}
2 & -2\\
-1 & 1\\
\end{smallmatrix}\right)$ & 
$\left(\begin{smallmatrix}
1 & 0\\
0 & 0\\
\end{smallmatrix}\right)$ & 
$0$ &
$1=\frac{8}{24}(2-(-1))$\\ 
 & & & & & \\ \hline
 & & & & & \\
$3$ &
$\left(\begin{smallmatrix}
0 & 2 & 0 & 1\\
1 & 1 & 0 & 1\\
0 & 1 & 0 & 2\\
0 & 1 & 1 & 1
\end{smallmatrix}\right)$
 &
$\left(\begin{smallmatrix}
3 & -2 & 0 & -1\\
-1 & 2 & 0 & -1\\
0 & -1 & 3 & -2\\
0 & -1 & -1 & 2
\end{smallmatrix}\right)$ & 
$\left(\begin{smallmatrix}
1 & 0 & 0 & 0\\
0 & 1 & 0 & 0\\
0 & 0 & 4 & 0\\
0 & 0 & 0 & 0
\end{smallmatrix}\right)$
 & 
$\ZZ/4\ZZ$ &
$4=\frac{3}{24}(3-1)(3-(-1))(3-(-1))$ \\ 
 & & & & & \\ \hline
 & & & & & \\
$\geq 5$ &
$\left(\begin{smallmatrix}
0 & 1 & 0 & 0 & 0\\
1 & 1 & 1 & 1 & 0\\
0 & 1 & 0 & 1 & 0\\
0 & 1 & 1 & 1 & 1\\
0 & 0 & 0 & 1 & 0\\
\end{smallmatrix}\right)$
 &
$\left(\begin{smallmatrix}
3 & -1 & 0 & 0 & 0\\
-1 & 2 & -1 & -1 & 0\\
0 & -1 & 3 & -1 & 0\\
0 & -1 & -1 & 2 & -1\\
0 & 0 & 0 & -1 & 3\\
\end{smallmatrix}\right)$
 & 
$\left(\begin{smallmatrix}
1 & 0 & 0 & 0 & 0\\
0 & 1 & 0 & 0 & 0\\
0 & 0 & 1 & 0 & 0\\
0 & 0 & 0 & 4 & 0\\
0 & 0 & 0 & 0 & 0
 \end{smallmatrix}\right)$
 & 
$\ZZ/4\ZZ$ &
$4=\frac{1}{24}(3-1)(3-0)(3-(-1))(3-(-1))$ \\
 & & & & & \\ \hline
\end{tabular}
\end{center}
The answer $K(D^{31}) \cong \ZZ/4\ZZ$ for $p \geq 5$
is also consistent with Gaetz \cite[Example 6]{Gaetz}.
\end{example}

\begin{example}
The above examples with $G=\SS_4$ are slightly deceptive,
in that, for each prime $p$, there exists an $\FF \SS_4$-module
$P_i$ having $\dim P_i =\gamma=\gcd(\ppp)$.  This fails for $G=\SS_5$,
e.g., examining $\FF_3 \SS_5$-modules, one finds that $\sss=(1,1,4,4,6)$ and
$\ppp=(6,6,9,9,6)$, so that $\gamma=3$, but $\dim P_i \neq 3$ for all $i$.
\end{example}

\begin{verlong}
\begin{example}
(This example at least illustrates that $\gamma$ is not always equal to $\dim(P_i)$ for some $i$.)
Let $A=\FF G$ be the group algebra of the symmetric group $G=\SS_5$ over the algebraic closure $\FF$ of $\FF_3$.
By computations in Magma, the Brauer character tables of the simple
$A$-modules and indecomposable projective $A$-modules are 
\[ \begin{pmatrix}
1 & 1 & 1 & 1 & 1 \\
1 & -1 & 1 & -1 & 1 \\
4 & 2 & 0 & 0 & -1 \\
4 & -2 & 0 & 0 & -1 \\
6 & 0 & -2 & 0 & 1
\end{pmatrix} 
\quad\text{and}\quad
\begin{pmatrix}
6 & 0 & 2 & 2 & 1 \\
6 & 0 & 2  & -2 & 1 \\
9 & 3 & 1 & -1 & -1 \\
9 & -3 & 1 & 1 & -1 \\
6 & 0 & -2 & 0 & 1
\end{pmatrix} \]
where the simple $A$-modules $S_1,\ldots,S_5$ and the indecomposable projective $A$-modules $P_1,\ldots,P_5$ are labeled in such a way that $S_1$ is the trivial $A$-module and $S_i = \top(P_i)$ for all $i$. 
We have $\sss=(1,1,4,4,6)$, $\ppp=(6,6,9,9,6)$, and $\gcd(\ppp)=3$ equals the order of a $3$-Sylow subgroup of $\SS_5$.
The $A$-module $V=P_4$ is tensor-rich by Proposition~\ref{kernel-via-characters-proposition} and Theorem~\ref{Brauer-result}, since $\chi_V(e)=9\ne \chi_V(g)$ for all $3$-regular $g\ne e$.
We have
\[ M_V = \begin{pmatrix}
1 & 0 & 0 & 2 & 0 \\
0 & 1 & 2 & 0 & 0 \\
2 & 2 & 1 & 4 & 2 \\
2 & 2 & 4 & 1 & 2 \\
2 & 2 & 4 & 4 & 3 
\end{pmatrix}, \quad
L_V = \begin{pmatrix}
8 & 0 & 0 & -2 & 0 \\
0 & 8 & -2 & 0 & 0 \\
-2 & -2 & 8 & -4 & -2 \\
-2 & -2 & -4 & 8 & -2 \\
-2 & -2 & -4 & -4 & 6 
\end{pmatrix}, \quad
\overline{L_V} = \begin{pmatrix}
 8 & -2 & 0 & 0 \\
 -2 & 8 & -4 & -2 \\
 -2 & -4 & 8 & -2 \\
 -2 & -4 & -4 & 6 
\end{pmatrix}.
  \]
Then $K(V)=\mathbf{s}^\perp/\im L_V = (\ZZ/2\ZZ)^3\oplus\ZZ/24\ZZ$ and $\ZZ^4/\im\overline{L_V} = (\ZZ/2\ZZ)^2\oplus\ZZ/4\ZZ \oplus \ZZ/24\ZZ$, which are different.
The eigenvalues of $L_V$ are $0,8,8,10,12$, and $\card{K(V)} =2^3\cdot 24 = 3\cdot 8^2\cdot10\cdot12/5!$, as predicted by Theorem~\ref{Lorenzini-style-formula}.

Now let $V=S_2$ be the ``sign'' representation, which is not tensor-rich since $\chi_V(g)=\chi_V(e)=1$ for some $3$-regular $g\ne e$ by the above character table. Computations in Magma show
\[ M_V = \begin{pmatrix}
0 & 1 & 0 & 0 & 0 \\
1 & 0 & 0 & 0 & 0 \\
0 & 0 & 0 & 1 & 0 \\
0 & 0 & 1 & 0 & 0 \\
0 & 0 & 0 & 0 & 1 
\end{pmatrix}, \quad
L_V = \begin{pmatrix}
1 & -1 & 0 & 0 & 0 \\
-1 & 1 & 0 & 0 & 0 \\
0 & 0 & 1 & -1 & 0 \\
0 & 0 & -1 & 1 & 0 \\
0 & 0 & 0 & 0 & 0 
\end{pmatrix}, \quad
\overline{L_V} = \begin{pmatrix}
1 & 0 & 0 & 0 \\
0 & 1 & -1 & 0 \\
0 & -1 & 1 & 0 \\
0 & 0 & 0 & 0 
\end{pmatrix}. \]
Then $\overline{L_V}$ is singular and $K(V)=\ZZ^2 = \ZZ^4/\im \overline{L_V}$ is infinite.

For $g\in\SS_5$, $\chi_V(g)=1$ if and only if $g$ is an even permutation.
There are $40$ even $3$-regular permutations in $\SS_5$: the $24$ $5$-cycles, the $15$ products of two disjoint $2$-cycles, and the identity permutation.
These permutations generate the alternating group $N=A_5$.
Let $B=\FF[G/N]$. 
Then $V$ is a one-dimensional $B$-module on which $N$ acts by $1$ and $(1,2)N$ acts by $-1$.
As a $B$-module, $V$ is tensor-rich.
\end{example}
\end{verlong}

\begin{example}
\label{Taft-algebra-gcd-example}
Working over an algebraically closed field $\FF$ of characteristic zero,
the generalized Taft Hopf algebra $A=H_{n,m}$ 
from Example~\ref{Taft-algebra-example} has
dimension $mn$.  It has $\ell+1=n$ projective indecomposable representations 
$P_1,\ldots,P_n$, each of dimension $m$, with top $S_i=\top(P_i)$
one-dimensional (see \cite[\S4]{Cibils} and \cite[\S2]{LiZhang}).
Hence in this case, $\gamma=\gcd(\ppp)=m$ and Theorem~\ref{regular-rep-theorem}
yields
\[
K(A) \cong \left( \ZZ/m\ZZ \right)
             \oplus \left( \ZZ/mn\ZZ \right)^{n-2}
\qquad \text{for } n \geq 2.
\]
\end{example}

\begin{example}
\label{asymmetric-Nichols-regular-rep-example}
For Radford's Hopf algebra $A=A(n,m)$  from 
Examples~\ref{asymmetric-Nichols-Hopf-algebra-example},
\ref{asymmetric-Nichols-Cartan-example},
all indecomposable projectives $\{ P_k \}_{k=0}^{n-1}$ are
$2^m$-dimensional, so $\gamma:=\gcd(\ppp)=2^m$ and 
Theorem~\ref{regular-rep-theorem} gives
$$
K(A) \cong \left( \ZZ / 2^m \ZZ \right) \oplus
           \left( \ZZ / n 2^m \ZZ \right)^{n-2}.
$$
\end{example}

\begin{example}
There is a special case
of the restricted universal enveloping algebras $A=\frak{u}(\frak{g})$
from Example~\ref{restricted-Lie-algebra} 
where one has all the data needed for Theorem~\ref{regular-rep-theorem}.
Namely, when $\frak g$ is associated to a simple, simply-connected algebraic
group $\GG$ defined and split over $\FF_p$, as in
Humphreys \cite[Chap. 1]{Humphreys}, then there is a natural parametrization of
the simple $A$-modules via the set $X/pX$ where $X \cong \ZZ^{\rank{\frak g}}$ 
is the \emph{weight lattice} for $\GG$ or $\frak g$.  
Although the dimensions of the projective indecomposable $A$-modules $P_i$
are not known completely, they are all divisible by the dimension of
one among them, specifically, the \emph{Steinberg module} of dimension $p^N$
where $N$ is the number of \emph{positive roots}; 
see \cite[\S 10.1]{Humphreys}.  
Consequently, here one has
$$
\begin{array}{rlll}
\gamma&:=\gcd(\ppp)&=p^N&\\
d&:=\dim(A)&=p^{\dim \frak{g}}&\\
\ell+1&:=\#\{\text{simple }A\text{-modules}\}&=\#X/pX&=p^{\rank{\frak g}},
\end{array}
$$
and Theorem~\ref{regular-rep-theorem} implies
\[
K(A) \cong \left( \ZZ/p^N\ZZ \right)
             \oplus \left( \ZZ/p^{\dim \frak{g}}\ZZ \right)^{p^{\rank{\frak g}}-2}.
\]
\end{example} 

\begin{remark}
\label{rmk.sylow-gcd}
All the above examples of Hopf algebras $A$ share a common
interpretation for $\gamma=\gcd(\ppp)$ which we find suggestive.  
Each has a family of $\FF$-subalgebras $B \subset A$, which one is tempted to
call \emph{Sylow subalgebras}, with the following properties:
\begin{enumerate}
\item[(i)] The augmentation ideal $\ker(B \overset{\epsilon}{\rightarrow} \FF)$ is a nil ideal, that
is, it consists entirely of nilpotent elements.
\item[(ii)] $A$ is free as a left $B$-module.
\item[(iii)] $\dim B=\gamma$.
\end{enumerate}
We claim that properties (i) and (ii) already imply that $\dim B$ \emph{divides}
$\gamma$ (cf. \cite[proof of Cor. 8.1.3]{Webb}):  
property (i) implies $B$ has only one simple
module, namely $\epsilon$, whose projective cover must be
$B$ itself, and property (ii) implies that each 
projective $A$-module $P_i$ restricts to a projective $B$-module, 
which must be of form $B^{t}$, so that $\dim B$ divides $\dim P_i$,
and hence divides $\gcd(\{\dim P_i\})=\gamma$.
Thus property (iii) implies that $B$ must be \emph{maximal}
among subalgebras of $A$ having properties (i),(ii).
\begin{itemize}
\item
When $A$ is semisimple, then $B = \FF 1_A$.
\item
When $A=\FF G$ is a group algebra and $\FF$ has characteristic $p$, then $B=\FF H$ is the group algebra for any $p$-Sylow subgroup $H$.  
\item
When $A=H_{n,m}$ is the generalized Taft Hopf algebra, $B$ is the subalgebra $\FF\langle x \rangle$ generated by $x$, or by any of the elements of the form $g^i x$ for $i=0,1,\ldots,n-1$.
\item
When $A=A(n,m)$ is Radford's Hopf algebra, $B$ is the exterior subalgebra $\Lambda[x_1,\ldots,x_m]$ generated by $x_1,\ldots,x_m$, or various isomorphic subalgebras
$\Lambda[g^i x_1, \ldots, g^i x_m]$ for $i \in \ZZ$.
\item
When $A={\frak u}(\frak g)$ is the restricted universal enveloping algebra for the Lie algebra $\frak g$ of
a semisimple algebraic group over $\FF_p$, 
then $B= {\frak u}(\mathfrak{n}_+)$ for
a nilpotent subalgebra $\mathfrak{n}_+$ in a triangular decomposition 
$\mathfrak{g} = \mathfrak{n}_-  \oplus  \mathfrak{h}  \oplus  \mathfrak{n}_+$.
\end{itemize}
\end{remark}

\begin{question}
\label{Sylow-subalgebra-question}
For which finite-dimensional Hopf algebras $A$ over an algebraically closed field $\FF$ is there a subalgebra $B$
satisfying properties (i),(ii),(iii) above?
\end{question}

\section{Proof of Theorem~\ref{tensor-rich-equivalences-theorem}}
\label{M-matrix-section}

We recall the statement of the theorem, involving
an $A$-module $V$ of dimension $n$, with 
$L_V=nI_{\ell+1}-M_V$ in $\ZZ^{(\ell+1) \times (\ell+1)}$, 
and its submatrix $\overline{L_V}$ in $\ZZ^{\ell \times \ell}$.
\vskip.1in
\noindent
{\bf Theorem~\ref{tensor-rich-equivalences-theorem}.}
{\it
The following are equivalent for an $A$-module $V$:
\begin{enumerate}
\item[(i)] $\overline{L_V}$ is a nonsingular $M$-matrix. 
\item[(ii)] $\overline{L_V}$ is nonsingular.
\item[(iii)] $L_V$ has rank $\ell$, so nullity $1$.
\item[(iv)] $K(V)$ is finite.
\item[(v)] $V$ is tensor-rich.

\end{enumerate}
}
\vskip.1in
\noindent
The definitions for $V$ to be tensor-rich and for $\overline{L_V}$ to
be a nonsingular $M$-matrix are given below.

\begin{definition}
Let $V$ be an $A$-module.
Say that $V$ is \emph{rich} if $[V:S_i]>0$ for every simple $A$-module $S_i$. 
Say that $V$ is \emph{tensor-rich} if for some positive integer $t$,
the $A$-module $\bigoplus_{k = 0}^t V^{\otimes k}$ is rich.
\end{definition}

\begin{definition}
\label{our-M-matrix-definition}
Let $Q$ be a matrix in $\RR^{\ell \times \ell}$ whose
off-diagonal entries are nonpositive, that is,
$Q_{i,j} \leq 0$ for $i \neq j$.
Then $Q$ is called a \emph{nonsingular $M$-matrix} 
if it is invertible and the entries in $Q^{-1}$ are all nonnegative.
\end{definition}

To prove the theorem, we will show the following implications:
$$
\begin{array}{rcccccl}
                       &     &                     &\text{(iv)}                  &                   &   &  \\
                       &     &                    &\Updownarrow &                   &    & \\
\text{(i)} \Rightarrow& \text{(ii)}& \Rightarrow&\text{(iii)}                   & \Rightarrow &\text{(v)}& \Rightarrow \text{(i)},
\end{array}
$$
after first establishing some inequality notation for vectors and matrices.

\begin{definition}
Given $u,v$ in $\RR^m$, write $u \leq v$ (resp. $u < v$) if 
$u_j \leq v_j$ (resp. $u_j < v_j$) for all $j$.
Given matrices $M,N$ in $\RR^{m \times m'}$, similarly write
$M \leq N$ (resp. $M < N$) if $M_{i,j} \leq N_{i,j}$
(resp. $M_{i,j} < N_{i,j}$) for all $i,j$.

Note that $u \leq v$ and $u \neq v$ do not together imply that $u < v$;
similarly for matrices.
\end{definition}

\subsection{The implication $\text{(i)} \Rightarrow \text{(ii)}$}
This is trivial from Definition~\ref{our-M-matrix-definition}.

\subsection{The implication $\text{(ii)} \Rightarrow \text{(iii)}$}
Since $L_V$ is singular (as $\sss^T L_V=0$), if its 
submatrix $\overline{L_V}$ is nonsingular, 
then $L_V$ has rank $\ell$ and nullity $1$.

\subsection{The equivalence $\text{(iii)} \Leftrightarrow \text{(iv)}$}
For a square integer matrix $L_V$, having nullity $1$ 
is equivalent to its integer cokernel $\ZZ^{\ell+1}/\im(L_V)=\ZZ \oplus K(V)$ 
having free rank $1$, that is, to $K(V)$ being finite.

\subsection{The implication $\text{(iii)} \Rightarrow \text{(v)}$}
We prove the contrapositive: not (v) implies not (iii).

To say that (v) fails, i.e., $V$ is \emph{not} tensor-rich, means that
 the composition factors within the various tensor powers 
 $V^{\otimes k}$ form a nonempty proper subset 
$\{S_j\}_{j \in J}$ of the set of simple $A$-modules 
$\{S_i\}_{i=1,2,\ldots,\ell+1}$.
This implies that the McKay matrix $M_V$ has a nontrivial 
block-triangular decomposition, in the sense that
$(M_V)_{i,j}=0$ for $j \in J$ and $i \not\in J$
\begin{vershort}
(otherwise $(M_V)_{i,j}=[S_j \otimes V:S_i]>0$, which implies 
$[V^{\otimes(k+1)}:S_i]>0$ since $[V^{\otimes k}:S_j]>0$ for 
some $k$).
\end{vershort}
\begin{verlong}
\ \ \ \ \footnote{
Assume the contrary. Thus, there exist $i \notin J$ and $j \in J$
such that
$\left(M_V\right)_{i, j} \neq 0$. Consider these $i$ and $j$.
Since $\left(M_V\right)_{i, j} = \left[ S_j \otimes V : S_i \right]$,
this rewrites as $\left[ S_j \otimes V : S_i \right] \neq 0$. Hence,
$S_i$ appears as a composition factor in the $A$-module
$S_j \otimes V$. Therefore,
$\left[ S_j \otimes V \right] = \left[S_i\right] + \left[W\right]$
in $\groth$ for some $A$-module $W$. Consider this $W$.
But we have $\left[V^{\otimes k} : S_j\right] > 0$
for some $k \in \NN$ (since $j \in J$). Consider this $k$. Thus, $S_j$
appears as a composition factor in $V^{\otimes k}$. Hence,
$\left[ V^{\otimes k} \right] = \left[S_j\right] + \left[X\right]$ in
$\groth$ for some $A$-module $X$. Consider this $X$. Now, in $\groth$,
we have
\begin{align}
\left[ V^{\otimes \left(k+1\right)} \right]
&= \left[ V^{\otimes k} \otimes V \right]
= \left[ V^{\otimes k} \right] \left[ V \right]
= \left( \left[S_j\right] + \left[X\right] \right) \left[ V \right]
\qquad
\left(\text{since }
      \left[ V^{\otimes k} \right] = \left[S_j\right] + \left[X\right]
\right) \nonumber \\
&= \underbrace{\left[ S_j \otimes V \right]}_{ = \left[S_i\right] + \left[W\right]}
    + \left[X \otimes V\right]
= \left[S_i\right] + \left[W\right] + \left[X \otimes V\right]
= \left[S_i \oplus W \oplus X \otimes V\right] .
\label{pf.avalanche-finite-implies-tensor-rich.2.pf.1}
\end{align}
But if $Z$ is an $A$-module, then the number $\left[ Z : S_i \right]$
depends only on the class $\left[Z\right] \in \groth$ (but not on $Z$
itself). (Indeed, this follows from \eqref{eq.alg-reps.V:Si2}.) Hence,
from \eqref{pf.avalanche-finite-implies-tensor-rich.2.pf.1}, we obtain
\begin{align*}
\left[ V^{\otimes \left(k+1\right)} : S_i \right]
&= \left[ S_i \oplus W \oplus X \otimes V : S_i \right]
> 0
\end{align*}
(since $S_i$ clearly does appear in
$S_i \oplus W \oplus X \otimes V$). By the definition of $J$, this
shows that $i \in J$; but this contradicts $i \notin J$. This
contradiction completes our proof.}.
\end{verlong}
This will allow us
to apply the following property of nonnegative matrices.

\begin{vershort}
\begin{lemma} 
\label{lem.matrices.large-kernel}
Let $M \geq 0$ be a nonnegative matrix in $\RR^{m \times m}$,
with a nontrivial block-triangular decomposition:
$\varnothing  \subsetneq J \subsetneq \{1,2,\ldots,m\}$
for which $M_{i,j}=0$ when $j \in J, i \not\in J$.

If $M$ has both positive right- and left-eigenvectors 
$v>0, u>0$ for the same eigenvalue $\lambda$,
meaning that
$$
\begin{aligned}
Mv &= \lambda v,\\
u^T M &= \lambda u^T,
\end{aligned}
$$
then its $\lambda$-eigenspace is not simple, that is,  
$\dim \ker(\lambda I_m-M) \geq 2$.
\end{lemma}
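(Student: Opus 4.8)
The plan is to exploit the block-triangular structure carried by $M$ and to observe that the hypotheses in fact force $M$ to be block \emph{diagonal}, after which the conclusion is immediate. Reindex $\{1,2,\ldots,m\}$ so that the elements of $J$ come first, and put $\overline{J}:=\{1,2,\ldots,m\}\setminus J$, a nonempty set since $J\subsetneq\{1,2,\ldots,m\}$. The hypothesis $M_{i,j}=0$ for $j\in J$, $i\notin J$ then says that
\[
M=\begin{pmatrix} A & B\\ 0 & D\end{pmatrix},\qquad A:=M_{JJ},\quad B:=M_{J\overline{J}},\quad D:=M_{\overline{J}\overline{J}},
\]
with $A,B,D\geq 0$. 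Write $v=\begin{pmatrix} v_J\\ v_{\overline{J}}\end{pmatrix}$ and $u=\begin{pmatrix} u_J\\ u_{\overline{J}}\end{pmatrix}$ in the same block form; since $u,v>0$, all four subvectors are strictly positive, so in particular $u_J\neq 0$ and $v_{\overline{J}}\neq 0$. Reading off the lower block of $Mv=\lambda v$ gives $Dv_{\overline{J}}=\lambda v_{\overline{J}}$, and reading off the two blocks of $u^TM=\lambda u^T$ gives $u_J^TA=\lambda u_J^T$ and $u_J^TB+u_{\overline{J}}^TD=\lambda u_{\overline{J}}^T$.

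The heart of the argument — and the one step I expect to be the real obstacle — is to deduce from this that $B=0$. Rewrite the last displayed equation as $u_J^TB=u_{\overline{J}}^T(\lambda I-D)$ and multiply on the right by $v_{\overline{J}}$: the right-hand side becomes $u_{\overline{J}}^T(\lambda I-D)v_{\overline{J}}=0$ because $Dv_{\overline{J}}=\lambda v_{\overline{J}}$, so $u_J^TBv_{\overline{J}}=0$. Since $u_J>0$, $v_{\overline{J}}>0$, and $B\geq 0$, every entry of $B$ must vanish, i.e., $B=0$ and $M=\begin{pmatrix} A & 0\\ 0 & D\end{pmatrix}$.

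It now follows at once that, under the decomposition $\RR^m=\RR^{J}\oplus\RR^{\overline{J}}$, we have $\ker(\lambda I_m-M)=\ker(\lambda I-A)\oplus\ker(\lambda I-D)$. The second summand contains $v_{\overline{J}}\neq 0$, hence is nonzero; the first summand is nonzero as well, since $u_J^TA=\lambda u_J^T$ with $u_J\neq 0$ shows $\lambda$ is an eigenvalue of $A^T$, hence of $A$. Therefore $\dim\ker(\lambda I_m-M)\geq 1+1=2$, which is precisely the assertion that the $\lambda$-eigenspace of $M$ is not simple. Pleasantly, the argument uses neither Perron--Frobenius theory nor the actual value of $\lambda$: only the nonnegativity of $M$ together with the strict positivity of the two given eigenvectors.
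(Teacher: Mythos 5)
Your proof is correct and takes essentially the same approach as the paper's: the paper decomposes $v=v'+v''$ with $v'$ supported on $J$ and uses $u>0$, $u^TL=0$, $Lv'\geq 0$ to force $Lv'=0$, and this $Lv'$ is precisely your $\begin{pmatrix} Bv_{\overline{J}}\\ 0\end{pmatrix}$, so your key identity $u_J^T B v_{\overline{J}}=0$ is the same equation. Your block-matrix presentation has the pleasant bonus of making explicit that $M$ must actually be block \emph{diagonal}, a slightly stronger structural conclusion than the paper states in passing.
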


\begin{proof}
Introduce $L = \lambda I_m - M$, so that 
right- and left-nullvectors for $L$ (such as $v, u$) are
right- and left-eigenvectors for $M$ with eigenvalue $\lambda$.
Decompose $v=v'+v''$ where $v',v'' \geq 0$ are
defined by
\[
v'_i
= \begin{cases}
    v_i, & \text{ if } i \in J; \\
      0, & \text{ if } i \notin J,
  \end{cases}
\qquad \qquad
v''_i
= \begin{cases}
      0, & \text{ if } i \in J; \\
    v_i, & \text{ if } i \notin J.
  \end{cases}
\]
Since $0=Lv=Lv'+Lv''$, one has $Lv''=-Lv'$.  Also, note that $v >0$ implies that $v',v''$ have disjoint nonempty supports,
and hence are linearly independent.  Thus it only remains to show that $Lv'=0$.  In fact,
it suffices to check that $Lv' \geq 0$, since $Lv'$ has zero dot product with the positive vector $u>0$:
$$ 
u^T (Lv')=(u^TL)v'=0 \cdot v'=0.
$$
We argue each coordinate $(Lv')_i \geq 0$ in cases, 
depending on whether $i$ lies in $J$ or not.  When $i \not\in J$, one has 
$$
\left(Lv'\right)_i
= \sum_{j=1}^m L_{i, j} \left(v'\right)_j
= \sum_{j \in J}  L_{i, j} v_j
= 0 
$$
using in the last equality the fact that $L_{i,j}=0$ for $i \not\in J, j \in J$.  When $i \in J$, one has
$$
(Lv')_i= -\left(Lv''\right)_i
= -\sum_{j=1}^m L_{i, j} v''_j 
 = \sum_{j \notin J} (-L_{i, j}) v_j \geq 0 ,
$$
using for the last inequality the facts that $L_{ij} \leq 0$ when $i \in J, j \not\in J$,
and that $v_j  \geq 0$.
\end{proof}
\end{vershort}

\begin{verlong}
\begin{lemma} \label{lem.matrices.large-kernel}
Let $M \geq 0$ be a nonnegative matrix in $\RR^{m \times m}$. Let
$\lambda \in \RR$. Set $L = \lambda I_m - M$. Let
$u \in \RR^m$ and $v \in \RR^m$ be two column vectors such that
$u > 0$, $v > 0$, $u^T L = 0$ and $L v = 0$.

Let $J$ be a nonempty proper subset of
$\left\{1, 2, \ldots, m\right\}$. Assume that
\begin{equation}
M_{i, j} = 0 \qquad \text{ for all } i \notin J \text{ and } j \in J .
\label{eq.lem.matrices.large-kernel.0}
\end{equation}
Then, $\dim \left(\ker L\right) \geq 2$.
\end{lemma}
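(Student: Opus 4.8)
The plan is to build, out of the single right‑nullvector $v$, two linearly independent vectors lying in $\ker L$, by splitting $v$ according to the partition $\left\{1,\ldots,m\right\} = J \sqcup \left(\left\{1,\ldots,m\right\}\setminus J\right)$ induced by the block structure.

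First I would write $v = v' + v''$, where $v'$ is the vector agreeing with $v$ in the coordinates indexed by $J$ and vanishing in all others, and $v'' := v - v'$ is supported on the complement of $J$. Since $v > 0$ and both $J$ and its complement are nonempty, the vectors $v'$ and $v''$ are nonzero, nonnegative, and have disjoint supports, hence are linearly independent. From $Lv = 0$ we obtain $Lv' = -Lv''$, so it suffices to prove $Lv' = 0$: this automatically forces $Lv'' = 0$ as well, and then $v', v'' \in \ker L$ give $\dim\left(\ker L\right) \geq 2$.

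To prove $Lv' = 0$, I would first establish the componentwise inequality $Lv' \geq 0$ and then invoke the left‑nullvector: since $u^T L = 0$, we get $u^T\left(Lv'\right) = \left(u^T L\right) v' = 0$, and a nonnegative vector whose dot product with the strictly positive vector $u$ vanishes must be the zero vector. For the inequality $Lv' \geq 0$ I would argue coordinate by coordinate, using $L_{i,j} = \lambda \delta_{i,j} - M_{i,j}$. When $i \notin J$, the sum $\left(Lv'\right)_i = \sum_{j\in J} L_{i,j} v_j$ has only terms with $i \neq j$, for which $L_{i,j} = -M_{i,j} = 0$ by hypothesis \eqref{eq.lem.matrices.large-kernel.0}, so $\left(Lv'\right)_i = 0$. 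When $i \in J$, I would instead use $\left(Lv'\right)_i = -\left(Lv''\right)_i = -\sum_{j\notin J} L_{i,j} v_j$; here $i \neq j$, so $L_{i,j} = -M_{i,j} \leq 0$, and since $v_j \geq 0$ the expression $\sum_{j\notin J}\left(-L_{i,j}\right) v_j = \sum_{j\notin J} M_{i,j} v_j$ is a sum of nonnegative terms, whence $\left(Lv'\right)_i \geq 0$.

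All the computations here are elementary; the only genuine idea is the splitting trick combined with the observation that pairing $Lv'$ against the strictly positive left‑nullvector $u$ promotes the one‑sided inequality $Lv' \geq 0$ to the equality $Lv' = 0$. The one point that must be handled with care is that both $u$ and $v$ are required to be strictly positive, not merely nonnegative: strict positivity of $v$ is what guarantees that $v'$ and $v''$ are both nonzero (and thus independent), and strict positivity of $u$ is what makes the dot‑product argument conclusive. In the intended application, $L = L_V$, the vectors $\sss$ and $\ppp$ supplied by Propositions~\ref{left-nullvector-prop} and~\ref{right-nullvector-prop} play the roles of $u$ and $v$, and the failure of tensor‑richness produces precisely the block‑triangularity hypothesis on $M_V$.
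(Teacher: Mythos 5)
Your proposal is correct and follows essentially the same route as the paper's proof: the same decomposition $v = v' + v''$ supported on $J$ and its complement, the same use of $u^T L = 0$ together with $u > 0$ to upgrade the componentwise inequality $Lv' \geq 0$ to $Lv' = 0$, and the same coordinate-by-coordinate verification of that inequality split according to whether $i \in J$. Nothing to add.
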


\begin{proof}
Recall that $L = \lambda I_m - M$.
Hence, for all $i \notin J$ and $j \in J$, we have
\begin{equation}
L_{i, j} = \left(\lambda I_m - M\right)_{i, j}
= \lambda \underbrace{\left(I_m\right)_{i, j}}_{\substack{
            = \delta_{i, j} = 0 \\
            \text{(since } i \notin J \text{ and } j \in J \\
            \text{ lead to } i \neq j \text{)} }}
  - \underbrace{M_{i, j}}_{\substack{ = 0 \\
            \text{(by \eqref{eq.lem.matrices.large-kernel.0})}}}
= 0 .
\label{pf.lem.matrices.large-kernel.1}
\end{equation}
On the other hand, for all $i \in J$ and $j \notin J$, we have
\begin{equation}
L_{i, j} = \left(\lambda I_m - M\right)_{i, j}
= \lambda \underbrace{\left(I_m\right)_{i, j}}_{\substack{
            = \delta_{i, j} = 0 \\
            \text{(since } i \in J \text{ and } j \notin J \\
            \text{ lead to } i \neq j \text{)} }}
  - \underbrace{M_{i, j}}_{\substack{ \geq 0 \\
            \text{(since } M \geq 0 \text{)}}}
\leq 0 .
\label{pf.lem.matrices.large-kernel.2}
\end{equation}

Define a column vector $v' \in \RR^m$ by setting
\[
\left(v'\right)_i
= \begin{cases}
    v_i, & \text{ if } i \in J; \\
      0, & \text{ if } i \notin J
  \end{cases}
\qquad \text{ for all } i \in \left\{1,2,\ldots,m\right\} .
\]
Define a column vector $v'' \in \RR^m$ by setting
\[
\left(v''\right)_i
= \begin{cases}
      0, & \text{ if } i \in J; \\
    v_i, & \text{ if } i \notin J
  \end{cases}
\qquad \text{ for all } i \in \left\{1,2,\ldots,m\right\} .
\]
Clearly, $v = v' + v''$. Moreover, the vectors $v$ and $v'$ are
linearly independent\footnote{To see this, recall that the vector $v$
has all its coordinates nonzero (since $v > 0$), whereas the vector
$v'$ has only a nonempty proper subset of its coordinates nonzero
(namely, the coordinates $\left(v'\right)_i$ for $i \in J$).}. Thus,
$\dim \operatorname{span}_{\RR}\left(v, v'\right) = 2$.

From $v = v' + v''$, we obtain $Lv = Lv' + Lv''$, so that
$Lv' + Lv'' = Lv = 0$ and therefore $Lv' = - Lv''$.

Now, we claim that
\begin{equation}
\left(Lv'\right)_i \geq 0
\qquad \text{ for all } i \in \left\{1, 2, \ldots, m \right\} .
\label{pf.lem.matrices.large-kernel.3}
\end{equation}
To prove this, we handle the cases $i \in J$ and $i \notin J$
separately:
\begin{itemize}
\item Let us consider the case when $i \in J$. In this case, recall
that $Lv' = - Lv''$; hence,
$\left(Lv'\right)_i = - \left(Lv''\right)_i$. Since
\begin{align*}
\left(Lv''\right)_i
&= \sum_{j=1}^m L_{i, j} \left(v''\right)_j
 = \sum_{j=1}^m L_{i, j}  \begin{cases}
                              0, & \text{ if } j \in J; \\
                            v_j, & \text{ if } j \notin J
                          \end{cases}
 \qquad \left(\text{by the definition of } v''\right) \\
&= \sum_{j \notin J}
     \underbrace{L_{i, j}}_{\substack{\leq 0 \\
            \text{(by \eqref{pf.lem.matrices.large-kernel.2})}}}
     \underbrace{v_j}_{\substack{> 0 \\
            \text{(since } v > 0 \text{)}}}
\leq 0 ,
\end{align*}
this results in $\left(Lv'\right)_i \geq 0$. Thus,
\eqref{pf.lem.matrices.large-kernel.3} is proven when $i \in J$.
\item Let us now consider the case when $i \notin J$. In this case,
\begin{align*}
\left(Lv'\right)_i
&= \sum_{j=1}^m L_{i, j} \left(v'\right)_j
 = \sum_{j=1}^m L_{i, j}  \begin{cases}
                            v_j, & \text{ if } j \in J; \\
                              0, & \text{ if } j \notin J
                          \end{cases}
 \qquad \left(\text{by the definition of } v'\right) \\
&= \sum_{j \in J}
     \underbrace{L_{i, j}}_{\substack{= 0 \\
            \text{(by \eqref{pf.lem.matrices.large-kernel.1})}}}
     v_j
= 0 .
\end{align*}
Hence, \eqref{pf.lem.matrices.large-kernel.3} is proven when
$i \notin J$.
\end{itemize}

Thus, \eqref{pf.lem.matrices.large-kernel.3} is proven in all cases.
Consequently, $Lv' \geq 0$.

Now, recall that $u > 0$. Hence, if $x \in \RR^m$ is any column
vector satisfying $x \geq 0$ and $u^T x = 0$, then we must have
$x = 0$ (because a sum of nonnegative reals can only be $0$ if all
of its addends are $0$). Applying this to $x = Lv'$, we find
$Lv' = 0$ (since $Lv' \geq 0$ and $\underbrace{u^T L}_{= 0} v' = 0$).
Combined with $Lv = 0$, this shows that both $v$ and $v'$ lie in
$\ker L$. Therefore, $\ker L \supseteq
\operatorname{span}_{\RR}\left(v, v'\right)$. Hence,
$\dim \left(\ker L\right) \geq
\dim \operatorname{span}_{\RR}\left(v, v'\right) = 2$.
\end{proof}
\end{verlong}

This lets us finish the proof that not (v) implies not (iii):
the discussion preceding Lemma~\ref{lem.matrices.large-kernel} shows that
when $V$ is \textbf{not} tensor-rich, one can apply 
Lemma~\ref{lem.matrices.large-kernel} to $M_V$, with the 
roles of $u,v$ played by $\sss, \ppp$, and conclude that
$L_V=nI_{\ell+1}-M_V$ has nullity at least two.

\begin{noncompile}
\[
G = \left\{ j \in \left\{1, 2, \ldots, \ell+1\right\}
            \mid \left[V^{\otimes k} : S_j\right] > 0
                \text{ for some } k \in \NN \right\} .
\]
Then, $G$ is a \textbf{proper} subset of
$\left\{1, 2, \ldots, \ell+1\right\}$ (since $V$ is not tensor-rich).
Furthermore, $G$ is nonempty (because the simple $A$-module 
$\epsilon$ appears in $V^{\otimes 0}$).

Clearly, $\sss > 0$ and $\ppp > 0$. Furthermore,
$L_V = nI_{\ell+1} - M_V$. Proposition~\ref{left-nullvector-prop}
yields $\left(L_V\right)^T \sss = 0$, hence $\sss^T L_V = 0$.
Proposition~\ref{right-nullvector-prop} yields $L_V \ppp = 0$.

Now, we claim that
\begin{equation}
\left(M_V\right)_{i, j} = 0
\qquad \text{ for all } i \notin G \text{ and } j \in G .
\label{pf.avalanche-finite-implies-tensor-rich.2}
\end{equation}

Indeed, let us prove
\eqref{pf.avalanche-finite-implies-tensor-rich.2}: Assume the
contrary. Thus, there exist $i \notin G$ and $j \in G$ such that
$\left(M_V\right)_{i, j} \neq 0$. Consider these $i$ and $j$.
Since $\left(M_V\right)_{i, j} = \left[ S_j \otimes V : S_i \right]$,
this rewrites as $\left[ S_j \otimes V : S_i \right] \neq 0$. Hence,
$S_i$ appears as a composition factor in the $A$-module
$S_j \otimes V$. Therefore,
$\left[ S_j \otimes V \right] = \left[S_i\right] + \left[W\right]$
in $\groth$ for some $A$-module $W$. Consider this $W$.
But we have $\left[V^{\otimes k} : S_j\right] > 0$
for some $k \in \NN$ (since $j \in G$). Consider this $k$. Thus, $S_j$
appears as a composition factor in $V^{\otimes k}$. Hence,
$\left[ V^{\otimes k} \right] = \left[S_j\right] + \left[X\right]$ in
$\groth$ for some $A$-module $X$. Consider this $X$. Now, in $\groth$,
we have
\begin{align}
\left[ V^{\otimes \left(k+1\right)} \right]
&= \left[ V^{\otimes k} \otimes V \right]
= \left[ V^{\otimes k} \right] \left[ V \right]
= \left( \left[S_j\right] + \left[X\right] \right) \left[ V \right]
\qquad
\left(\text{since }
      \left[ V^{\otimes k} \right] = \left[S_j\right] + \left[X\right]
\right) \nonumber \\
&= \underbrace{\left[ S_j \otimes V \right]}_{ = \left[S_i\right] + \left[W\right]}
    + \left[X \otimes V\right]
= \left[S_i\right] + \left[W\right] + \left[X \otimes V\right]
= \left[S_i \oplus W \oplus X \otimes V\right] .
\label{pf.avalanche-finite-implies-tensor-rich.2.pf.1}
\end{align}
But if $Z$ is an $A$-module, then the number $\left[ Z : S_i \right]$
depends only on the class $\left[Z\right] \in \groth$ (but not on $Z$
itself). (Indeed, this follows from \eqref{eq.alg-reps.V:Si2}.) Hence,
from \eqref{pf.avalanche-finite-implies-tensor-rich.2.pf.1}, we obtain
\begin{align*}
\left[ V^{\otimes \left(k+1\right)} : S_i \right]
&= \left[ S_i \oplus W \oplus X \otimes V : S_i \right]
> 0
\end{align*}
(since $S_i$ clearly does appear in
$S_i \oplus W \oplus X \otimes V$). By the definition of $G$, this
shows that $i \in G$; but this contradicts $i \notin G$. This
contradiction completes our proof of
\eqref{pf.avalanche-finite-implies-tensor-rich.2}.

Thanks to \eqref{pf.avalanche-finite-implies-tensor-rich.2}, we can
now apply Lemma~\ref{lem.matrices.large-kernel} to $\ell+1$, $M_V$,
$n$, $L_V$, $\sss$ and $\ppp$ instead of $m$, $M$, $\lambda$, $L$,
$u$ and $v$. As a result, we obtain
$\dim \left(\ker \left(L_V\right)\right) \geq 2$. Hence,
$\rank \left(L_V\right) \leq \ell-1$. Set $L = L_V$; then this
rewrites as $\rank L \leq \ell-1$.

Now, we want to prove that $K\left(V\right)$ is infinite. Because of
\eqref{eq.K(V).def3}, this boils down to showing that the free
$\ZZ$-module $\im L$ has a smaller rank than $\sss^\perp$. But this is
true, because the rank of $\im L$ is
$\rank L \leq \ell-1 < \ell$, whereas the rank of $\sss^\perp$
is $\ell$.
\end{proof}
\end{noncompile}

\subsection{The implication $\text{(v)} \Rightarrow \text{(i)}$}

Here we will use a nontrivial fact which is part\footnote{See \cite{Grinberg-Mmat} for a more self-contained proof of the implication
we are using, namely that if one has a matrix $Q \in \RR^{\ell \times \ell}$ with $Q_{i,j} \leq 0$ for $i \neq j$,
and a vector $x \in \RR^\ell$ with both $x>0$ and $Qx > 0$, then $Q$ is nonsingular.}
of the equivalence of two characterizations for nonsingular $M$-matrices
given by Plemmons \cite[Thm. 1]{Plemmons};
see his conditions $F_{15}, K_{34}$.

\begin{proposition} \label{prop.nonsingular-Mmatrix-is-nonsingular}
A matrix $Q \in \RR^{\ell \times \ell}$ with nonpositive
off-diagonal entries is a nonsingular $M$-matrix
as in Definition~\ref{our-M-matrix-definition} if and only if
there exists $x \in \RR^\ell$ with both $x>0$ and $Qx > 0$.
\end{proposition}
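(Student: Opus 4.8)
The plan is to prove the equivalence directly, in both directions; the ``only if'' implication is a one-line observation, and essentially all the content lies in the ``if'' implication, which is a version of the standard characterization of nonsingular $M$-matrices via a Neumann series.

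For the ``only if'' direction, assume $Q$ is a nonsingular $M$-matrix, so $Q$ is invertible and $Q^{-1}\geq 0$. I would take $x:=Q^{-1}\mathbf{1}$, where $\mathbf{1}=[1,1,\ldots,1]^T\in\RR^\ell$. Then $Qx=\mathbf{1}>0$ by construction, and $x\geq 0$ since $Q^{-1}\geq 0$; moreover $x>0$, because an invertible matrix has no zero row, so for each $i$ the coordinate $x_i=\sum_j(Q^{-1})_{i,j}$ is a sum of nonnegative reals that are not all zero.

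For the ``if'' direction, assume $x\in\RR^\ell$ satisfies $x>0$ and $Qx>0$. Fix a real number $s>0$ with $s\geq Q_{i,i}$ for every $i$, and set $B:=sI_\ell-Q$. Since the off-diagonal entries of $Q$ are nonpositive, the off-diagonal entries of $B$ are $-Q_{i,j}\geq 0$ and its diagonal entries are $s-Q_{i,i}\geq 0$, so $B\geq 0$. From $Qx>0$ we get $Bx=sx-Qx<sx$, so the number $r:=\max_i (Bx)_i/(sx_i)$ satisfies $0\leq r<1$ and $s^{-1}Bx\leq rx$; using $B\geq 0$ (so that left multiplication by $B$ preserves entrywise inequalities) and induction on $k$, this gives $(s^{-1}B)^k x\leq r^k x$ for all $k\geq 0$.

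Finally I would run the Neumann series argument. Set $S_N:=\sum_{k=0}^N (s^{-1}B)^k$. Each $S_N$ is entrywise nonnegative, the sequence $(S_N)_{N\geq 0}$ is entrywise nondecreasing, and $S_N x=\sum_{k=0}^N (s^{-1}B)^k x\leq\bigl(\sum_{k=0}^N r^k\bigr)x\leq (1-r)^{-1}x$; since $(S_N)_{i,j}x_j\leq (S_N x)_i$ and $x_j>0$, this bounds the entries of $S_N$ uniformly in $N$, so $S_N\to S$ for some $S\geq 0$. From the identity $(I_\ell-s^{-1}B)S_N=I_\ell-(s^{-1}B)^{N+1}$ together with $(s^{-1}B)^{N+1}x\leq r^{N+1}x\to 0$ (which forces $(s^{-1}B)^{N+1}\to 0$ entrywise, again using $x>0$), passing to the limit yields $(I_\ell-s^{-1}B)S=I_\ell$, and symmetrically $S(I_\ell-s^{-1}B)=I_\ell$. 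Hence $Q=s(I_\ell-s^{-1}B)$ is invertible with $Q^{-1}=s^{-1}S\geq 0$, i.e.\ $Q$ is a nonsingular $M$-matrix. The only delicate point in the argument is the entrywise bookkeeping --- using $x>0$ to turn inequalities on the vectors $S_N x$ and $(s^{-1}B)^{N+1}x$ into entrywise control of the matrices $S_N$ and $(s^{-1}B)^{N+1}$ themselves; alternatively one can shortcut this by invoking the standard estimate $\rho(B)\leq\max_i (Bx)_i/x_i$ for a nonnegative matrix $B$ and a positive vector $x$, which gives $\rho(s^{-1}B)<1$ and hence convergence of $\sum_{k\geq 0}(s^{-1}B)^k$ at once.
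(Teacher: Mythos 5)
Your proof is correct in both directions. Note, however, that the paper does not give a proof of this proposition at all: it cites Plemmons for the equivalence of conditions $F_{15}$ and $K_{34}$, with a footnote pointing to an external reference for the specific implication (nonsingularity from the existence of such an $x$) that is actually needed in Theorem~\ref{tensor-rich-equivalences-theorem}. So your write-up is filling in a black box rather than re-deriving the paper's argument.

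The route you take is the classical one. The ``only if'' direction via $x := Q^{-1}\mathbf{1}$ is exactly the right move, and the observation that an invertible nonnegative matrix has no zero row closes that case. For the ``if'' direction, splitting $Q = sI_\ell - B$ with $B \geq 0$, extracting $r := \max_i (Bx)_i/(sx_i) < 1$ from the strict inequality $Qx > 0$, and then running the Neumann series $\sum_{k\geq 0}(s^{-1}B)^k$ with the entrywise monotone-convergence bookkeeping is the standard proof. The key technical steps check out: $s^{-1}Bx \leq rx$ propagates to $(s^{-1}B)^k x \leq r^k x$ by nonnegativity of $B$, and positivity of $x$ converts the bound on $S_N x$ into a uniform entrywise bound on $S_N$ (and similarly forces $(s^{-1}B)^{N+1}\to 0$ entrywise), so the telescoping identities pass to the limit and give a two-sided inverse $Q^{-1}=s^{-1}S\geq 0$. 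The alternative shortcut via the Collatz--Wielandt-type bound $\rho(B)\leq\max_i(Bx)_i/x_i$ is also valid and somewhat shorter if one is willing to cite that estimate. The only thing to keep an eye on is the very degenerate case $\ell = 0$, where everything is vacuous; your argument handles $\ell \geq 1$ cleanly.
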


\begin{noncompile}
\begin{proof}
Assume the contrary. Thus, there exists some nonzero vector
$y \in \RR^\ell$ such that $Qy = 0$. Consider this $y$.

Since $Q$ is a nonsingular $M$-matrix,
there exists an $x \in \RR^\ell$ with both $x>0$ and $Qx > 0$.
Consider this $x$.

There exists some $\mu \in \RR$ such that the vector $x - \mu y$ is
nonnegative but has at least one zero entry\footnote{\textit{Proof.}
The vector $y$ is nonzero. Thus, we can WLOG assume that $y$ has at
least one positive coordinate $y_i > 0$
(otherwise, replace $y$ by $-y$).
Then, let $\mu$ be the minimum of the set
$\left\{ x_i / y_i \mid y_i > 0 \right\}$. It is easy to check that
the vector $x - \mu y$ is nonnegative but has at least one zero
entry.}. Consider this $\mu$. Set $z = x - \mu y$. Thus, the vector
$z$ is nonnegative but has at least one zero entry $z_i$. Furthermore,
$Q z = Q \left(x - \mu y\right) = Q x - \mu \underbrace{Q y}_{= 0}
= Q x > 0$.

But
\begin{align*}
\left(Q z\right)_i 
&= \sum_{j=1}^\ell Q_{i,j} z_j
= Q_{i,i} \underbrace{z_i}_{= 0}
  + \sum_{j \neq i}
      \underbrace{Q_{i,j}}_{\substack{\leq 0 \\
        \text{(since } Q \text{ is a nonsingular } M\text{-matrix)}}}
      \underbrace{z_j}_{\substack{\geq 0 \\ \text{(since }
        z \geq 0 \text{)}}}
\leq 0 + \sum_{j \neq i} 0 = 0 .
\end{align*}
This contradicts $Q z > 0$. This contradiction proves that our
assumption was wrong.
\end{proof}
\end{noncompile}

A few more notations are in order.
For $x$ in $\RR^{\ell+1}$, let $\overline{x}$
be the vector in $\RR^\ell$ obtained by
forgetting its last coordinate.
For $M$ in $\RR^{(\ell+1) \times (\ell+1)}$,
let $\overline{M}$ be the matrix in $\RR^{\ell \times \ell}$ obtained
by forgetting its last row and last column.\footnote{This
notation will not conflict with the notation
$\overline{L_V}$ used (e.g.) in
Theorem~\ref{tensor-rich-equivalences-theorem}
because 
we shall re-index the simple $A$-modules in such a way
that the last row and the last column of $L_V$ are the ones
corresponding to $\epsilon$.}
Let $M_{*,k}$ denote the vector which is the $k$-th column of $M$.

\begin{proposition}
\label{overline-and-product}
For nonnegative matrices $M,N \geq 0$ both in
$\RR^{(\ell+1) \times (\ell+1)}$, one has
$\overline{M} \cdot \overline{N} \leq \overline{MN}$.
\end{proposition}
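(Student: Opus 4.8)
The plan is to verify the inequality entrywise, since both sides are matrices in $\RR^{\ell \times \ell}$ and the asserted relation $\leq$ is defined coordinatewise. Fix indices $i, j \in \left\{1, 2, \ldots, \ell\right\}$, and compare the $(i,j)$ entries of the two sides.

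First I would unwind the definition of $\overline{\phantom{M}}$: for a matrix $U \in \RR^{(\ell+1)\times(\ell+1)}$ and indices $i,j \leq \ell$, one has $\left(\overline{U}\right)_{i,j} = U_{i,j}$. Applying this to $U = MN$ gives $\left(\overline{MN}\right)_{i,j} = (MN)_{i,j} = \sum_{k=1}^{\ell+1} M_{i,k} N_{k,j}$, where the sum runs over all $\ell+1$ indices. On the other hand, the product $\overline{M} \cdot \overline{N}$ is a genuine product of $\ell \times \ell$ matrices, so $\left(\overline{M} \cdot \overline{N}\right)_{i,j} = \sum_{k=1}^{\ell} \left(\overline{M}\right)_{i,k} \left(\overline{N}\right)_{k,j} = \sum_{k=1}^{\ell} M_{i,k} N_{k,j}$, where now the sum omits the index $k = \ell+1$.

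Subtracting, the difference is exactly the single missing term:
\[
\left(\overline{MN}\right)_{i,j} - \left(\overline{M} \cdot \overline{N}\right)_{i,j} = M_{i,\ell+1} \, N_{\ell+1,j}.
\]
Since $M \geq 0$ and $N \geq 0$, both factors $M_{i,\ell+1}$ and $N_{\ell+1,j}$ are nonnegative, hence so is their product. Therefore $\left(\overline{M} \cdot \overline{N}\right)_{i,j} \leq \left(\overline{MN}\right)_{i,j}$ for all $i,j$, which is the claim. There is no real obstacle here: the only point worth stating is that deleting the last row and column before multiplying drops precisely the contributions routed through index $\ell+1$, and nonnegativity of $M$ and $N$ makes those dropped contributions nonnegative. (It may be worth remarking that the same argument shows $\overline{M}\cdot\overline{N} = \overline{MN}$ whenever the last column of $M$ or the last row of $N$ vanishes, which is the situation in which this proposition will typically be applied.)
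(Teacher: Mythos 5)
Your argument is correct and coincides with the paper's own proof: both compare the $(i,j)$ entries and observe that the difference is the single dropped term $M_{i,\ell+1}N_{\ell+1,j} \geq 0$. The closing remark about when equality holds is a harmless addition not present in the paper.
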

\begin{proof}
Compare their $(i,j)$-entries for $i,j \in \{1,2,\ldots,\ell\}$:
\[
\begin{aligned}
 \overline{MN}_{i,j} 
  = ( MN )_{i,j} 
 =\sum_{k=1}^{\ell+1} M_{i,k} N_{k,j}  
 &= M_{i,\ell+1} N_{\ell+1,j} + \sum_{k=1}^\ell M_{i,k} N_{k,j} \\
 &= M_{i,\ell+1} N_{\ell+1,j} + \left( \overline{M} \cdot \overline{N}\right) _{i,j}
 \geq \left( \overline{M} \cdot \overline{N}\right) _{i,j}. \qedhere
 \end{aligned}
\]
\end{proof}

The following gives a useful method to produce
nonsingular $M$-matrices, to be applied to $M=M_V$ below.

\begin{proposition}
\label{avalanche-finite-criteria}
Assume one has an eigenvector equation
\[
Mx = \lambda x
\]
with a nonnegative matrix $M \geq 0$ in $\RR^{(\ell+1) \times (\ell+1)}$,
a real scalar $\lambda$, and a
positive eigenvector $x > 0$ in $\RR^{\ell+1}$.
Let $\overline{L}:=\lambda I_{\ell} - \overline{M}$. 

\begin{enumerate}
\item[(i)]
One always has $\lambda \geq 0$, and
\[
\overline{M}\overline{x} \leq \lambda \overline{x}.
\]
Consequently, $\overline{L}\overline{x} \geq 0$.
\item[(ii)]
Under the additional hypothesis that $M$ has positive last column 
$M_{*,\ell+1}>0$, then 
\[
\overline{M}\overline{x} < \lambda \overline{x}.
\]
Consequently, (under this hypothesis)
$\overline{L}$ is a nonsingular $M$-matrix, 
since both $\overline{x}>0$ and $\overline{L} \overline{x} > 0$.

\item[(iii)]
Let $t$ be a positive integer.
Set
$
\overline{y}:=\sum_{k=0}^{t-1} \overline{M}^k \overline{x}
$.
Then, $\overline{y} > 0$.
Under the additional hypothesis (different from (ii))
that the last column of
$\sum_{k=0}^{t-1} M^k$ is strictly positive, 
we also have
\[
\overline{M} \overline{y} < \lambda \overline{y}.
\]
Consequently, (under this hypothesis)
$\overline{L}$ is a nonsingular $M$-matrix, 
since both $\overline{y} > 0$ and $\overline{L} \overline{y} > 0$.
\end{enumerate}

\end{proposition}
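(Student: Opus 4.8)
The plan is to treat the three parts in sequence, squeezing out of part~(i) an explicit formula for $\overline{L}\,\overline{x}$ that then makes parts~(ii) and~(iii) run smoothly. For part~(i): fixing any index $i$, the eigenvector equation gives $\lambda x_i = (Mx)_i = \sum_{j} M_{i,j}x_j \ge 0$ because $M \ge 0$ and $x > 0$, so dividing by $x_i > 0$ yields $\lambda \ge 0$. For $i \le \ell$ one has $(\overline{M}\,\overline{x})_i = \sum_{j=1}^{\ell} M_{i,j}x_j = (Mx)_i - M_{i,\ell+1}x_{\ell+1} = \lambda x_i - M_{i,\ell+1}x_{\ell+1}$, and hence $(\overline{L}\,\overline{x})_i = \lambda \overline{x}_i - (\overline{M}\,\overline{x})_i = M_{i,\ell+1}x_{\ell+1} \ge 0$. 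This proves both $\overline{M}\,\overline{x} \le \lambda\overline{x}$ and $\overline{L}\,\overline{x} \ge 0$, and moreover records the identity $\overline{L}\,\overline{x} = x_{\ell+1}\,c$ with $c := \overline{M_{*,\ell+1}} \in \RR^\ell$ (the last column of $M$ with its final entry removed). Part~(ii) is then immediate: if in addition $M_{*,\ell+1}>0$, then $c>0$, and since $x_{\ell+1}>0$ we get $\overline{L}\,\overline{x} = x_{\ell+1}c > 0$, i.e.\ $\overline{M}\,\overline{x} < \lambda\overline{x}$; as $\overline{L} = \lambda I_\ell - \overline{M}$ has nonpositive off-diagonal entries (because $\overline{M}\ge 0$) and sends the positive vector $\overline{x}$ to a positive vector, Proposition~\ref{prop.nonsingular-Mmatrix-is-nonsingular} shows $\overline{L}$ is a nonsingular $M$-matrix.

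For part~(iii), positivity of $\overline{y}$ is clear: $\overline{M} \ge 0$ and $\overline{x} > 0$ force every summand $\overline{M}^k\overline{x}$ with $k\ge 1$ to be nonnegative, so $\overline{y} = \overline{x} + \sum_{k=1}^{t-1}\overline{M}^k\overline{x} \ge \overline{x} > 0$. For the strict inequality $\overline{M}\,\overline{y} < \lambda\overline{y}$, which is the same as $\overline{L}\,\overline{y} > 0$, I would use that $\overline{L}$ commutes with $\overline{M}$ (it is a polynomial in $\overline{M}$), so part~(i) gives $\overline{L}\,\overline{y} = \sum_{k=0}^{t-1}\overline{M}^k(\overline{L}\,\overline{x}) = x_{\ell+1}\sum_{k=0}^{t-1}\overline{M}^k c$. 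Everything then reduces to showing $\sum_{k=0}^{t-1}\overline{M}^k c > 0$, and this is exactly where the hypothesis on the last column of $\sum_{k=0}^{t-1}M^k$ is used. The bridge I would use is the first-passage identity
$$(M^k)_{i,\ell+1} = \sum_{j=1}^{k}(\overline{M}^{j-1}c)_i\,(M^{k-j})_{\ell+1,\ell+1}, \qquad i \le \ell,\ k \ge 1,$$
proved by induction on $k$, peeling off the first step of a walk from $i$ to $\ell+1$ and splitting according to whether it lands inside $\{1,\ldots,\ell\}$ or already at $\ell+1$. Summing this over $k=0,1,\ldots,t-1$ (the $k=0$ term of the left side vanishes for $i \le \ell$) and interchanging the two sums gives $\bigl(\sum_{k=0}^{t-1}M^k\bigr)_{i,\ell+1} = \sum_{j=1}^{t-1}(\overline{M}^{j-1}c)_i\bigl(\sum_{m=0}^{t-1-j}(M^m)_{\ell+1,\ell+1}\bigr)$, where every inner sum is at least $(M^0)_{\ell+1,\ell+1} = 1$ and all terms are nonnegative; hence strict positivity of the left-hand side forces $(\overline{M}^{j-1}c)_i > 0$ for some $j \in \{1,\ldots,t-1\}$, so $\bigl(\sum_{k=0}^{t-1}\overline{M}^k c\bigr)_i > 0$. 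As this holds for all $i \le \ell$, we conclude $\overline{L}\,\overline{y} = x_{\ell+1}\sum_{k}\overline{M}^k c > 0$, i.e.\ $\overline{M}\,\overline{y} < \lambda\overline{y}$, and Proposition~\ref{prop.nonsingular-Mmatrix-is-nonsingular} (applied with $\overline{y}$ in the role of $x$) finishes the argument.

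I expect the last step of part~(iii) to be the main obstacle: because truncation does not commute with taking powers ($\overline{M}^k \ne \overline{M^k}$ in general, cf.\ Proposition~\ref{overline-and-product}), the entrywise positivity of the last column of $\sum_{k}M^k$ cannot be transferred directly to $\sum_{k}\overline{M}^k c$, and one genuinely needs a combinatorial device such as the first-passage identity above --- equivalently, the observation that any positive-weight walk from $i$ to $\ell+1$ can be cut at its first visit to $\ell+1$ to produce a positive-weight walk that stays inside $\{1,\ldots,\ell\}$ until its final step. Parts~(i) and~(ii), by contrast, will be one-line verifications once the formula $\overline{L}\,\overline{x} = x_{\ell+1}\,\overline{M_{*,\ell+1}}$ is in hand.
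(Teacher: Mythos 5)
Your argument for parts (i) and (ii) is essentially the same as the paper's: both extract from the top $\ell$ rows of $Mx = \lambda x$ the relation $\overline{M}\,\overline{x} + \overline{M_{*,\ell+1}}\,x_{\ell+1} = \lambda\overline{x}$ and read off the (weak or strict) inequality from the sign of the truncated last column.

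For part (iii), you take a genuinely different route, and a correct one. The paper first establishes the weak inequality $\overline{M}\,\overline{y} \leq \lambda\overline{y}$ by multiplying the inequality of (i) by powers of $\overline{M}$ and summing, and then proves strictness by contradiction: assuming equality in some coordinate $j$ forces $(\overline{M}^k\overline{x})_j = \lambda^k\overline{x}_j$ for all $k < t$, which upon summing contradicts the chain
$\bigl(\sum_k\overline{M}^k\bigr)\overline{x} \le \overline{\bigl(\sum_k M^k\bigr)}\,\overline{x} < \bigl(\sum_k\lambda^k\bigr)\overline{x}$,
where the first step invokes Proposition~\ref{overline-and-product} ($\overline{M}^k \le \overline{M^k}$) and the second applies part (ii) to the eigenvector equation for $\sum_k M^k$. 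You instead derive the exact identity $\overline{L}\,\overline{y} = x_{\ell+1}\sum_{k=0}^{t-1}\overline{M}^k c$ (with $c = \overline{M_{*,\ell+1}}$) and reduce everything to positivity of $\sum_k\overline{M}^k c$, which you establish directly via the first-passage decomposition
$(M^k)_{i,\ell+1} = \sum_{j=1}^{k}(\overline{M}^{j-1}c)_i\,(M^{k-j})_{\ell+1,\ell+1}$. This is a finer piece of bookkeeping than $\overline{M}^k \le \overline{M^k}$: it accounts exactly for the discrepancy by cutting a walk at its first visit to $\ell+1$. Your argument is direct (no contradiction), does not re-use part (ii), and makes the combinatorial source of positivity explicit; the paper's route is shorter because it leans on Proposition~\ref{overline-and-product} and part (ii), both already available. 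Both proofs are sound, and the first-passage identity is a nice alternative worth keeping in mind.
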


\begin{proof}
The nonnegativity $\lambda \geq 0$ follows
from $M x = \lambda x$ since $M \geq 0$ and $x>0$.

For the remaining assertions in (i) and (ii), 
note that the first $\ell$ equations in the system $Mx=\lambda x$ assert
\[
\overline{M}\overline{x} + \ \overline{M_{*,\ell+1}} x_{\ell+1}
= \lambda \overline{x} ,
\qquad
\text{where } M_{*,\ell+1} \text{ is the last column vector of } M .
\]
Since $x_{\ell+1}>0$, and
since the entries of $M_{*,\ell+1}$ are nonnegative (resp. strictly positive) under the hypotheses in (i)
(resp. in (ii)), the remaining assertions in (i) and (ii) follow.

For assertion (iii), note that 
$\overline{y}=\overline{x}+\sum_{k=1}^{t-1} \overline{M}^k \overline{x}$,
and hence $\overline{y}>0$ follows from
the facts that $\overline{x} >0$ and $\overline{M} \geq 0$.
To prove $\overline{M} \overline{y} < \lambda \overline{y}$,
we first prove a weak inequality as follows.  For each $k=0,1,2,\ldots,t-1$,
multiply the inequality in (i) by $\overline{M}^k$, obtaining:
\begin{equation}
\label{x-power-inequality}
\overline{M}^{k+1}\overline{x} 
 \leq \lambda \overline{M}^k \overline{x} .
\end{equation}
Summing this over all $k$, we find
\[
\sum_{k=0}^{t-1}\overline{M}^{k+1}\overline{x} 
 \leq \sum_{k=0}^{t-1} \lambda \overline{M}^k \overline{x} .
\]
In view of the definition of $\overline{y}$, this can be rewritten as
\begin{equation}
\label{y-inequality}
 \overline{M} \overline{y} 
 \leq \lambda \overline{y}.
\end{equation}
It remains to show that for $1 \leq j \leq \ell$, the inequality 
in the $j^{th}$ coordinate of \eqref{y-inequality} is strict.
For the sake of contradiction, assume   
$\left( \overline{M} \overline{y} \right)_j = \lambda \overline{y}_j$.
This forces equalities in the $j$-th coordinate of \eqref{x-power-inequality} for $0 \leq k \leq t-1$: 
\[
\left(\overline{M}^{k+1}\overline{x} \right)_j
 = \lambda \left( \overline{M}^k \overline{x} \right)_j.
\]
This implies via induction on $k$ that 
$
 \left(\overline{M}^{k}\overline{x} \right)_j
=\lambda^{k} \overline{x}_j,
$
for $k=0,1,2,\ldots,t-1$. Summing on $k$ gives
\[
\left(
\left( \sum_{k=0}^{t-1} \overline{M}^k \right) \overline{x}
\right)_j
 =
\left( \sum_{k=0}^{t-1} \lambda^k \right) \overline{x}_j .
\]
However, this contradicts the strict inequality in the $j^{th}$ coordinate in the following:
\begin{equation}
\label{two-inequalities-together}
\left( \sum_{k=0}^{t-1} \overline{M}^k \right) \overline{x}
 \leq \overline{ \left( \sum_{k=0}^{t-1} M^k \right) } \overline{x} 
  < \left( \sum_{k=0}^{t-1} \lambda^k \right) \overline{x} .
\end{equation}
The first (weak) inequality in \eqref{two-inequalities-together} comes
from the fact that $\overline{M}^k \leq \overline{M^k}$
(which follows by induction
from Proposition~\ref{overline-and-product}),
while the second (strict) inequality comes from applying assertion (ii) to the eigenvector equation 
$
\left( \sum_{k=0}^{t-1} M^k \right) x 
=\left( \sum_{k=0}^{t-1} \lambda^k \right) x
$
(which follows from $M x = \lambda x$).
\end{proof}

We return now to our usual context of a finite-dimensional Hopf algebra $A$ over an
algebraically closed field $\FF$, and 
an $A$-module $V$ of dimension $n$.
Recall the matrices $M_V$ and $L_V$ are given by $(M_V)_{i,j}=[S_j \otimes V:S_i]$
and $L_V:=nI_{\ell+1}-M_V$.
For the remainder of this section,
assume one has indexed the simple $A$-modules $\{S_i\}_{i=1,2,\ldots,\ell+1}$ such that
$S_{\ell+1}=\epsilon$ is the trivial $A$-module on $\FF$.
Thus $\overline{M_V}, \overline{L_V}$ 
come from $M_V, L_V$ by removing the row and column 
indexed by $\epsilon$.

Richness of $V$ has an obvious reformulation in terms of $M_V$.
 \begin{proposition}
\label{rich-implies-positive}
$V$ is rich if and only if the McKay matrix $M_V$ has positive last column $(M_V)_{*,\ell+1}>0$.\end{proposition}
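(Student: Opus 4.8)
The plan is to simply unwind the definitions, together with the single input that $\epsilon$ is a tensor unit. By the indexing convention fixed just above the proposition, $S_{\ell+1} = \epsilon$ is the trivial $A$-module, so the last column of the McKay matrix $M_V$ has $i$-th entry
\[
(M_V)_{i,\ell+1} = [S_{\ell+1} \otimes V : S_i] = [\epsilon \otimes V : S_i]
\]
for each $i \in \{1,2,\ldots,\ell+1\}$, where the first equality is the definition $(M_V)_{i,j} = [S_j \otimes V : S_i]$. First I would invoke the canonical $A$-module isomorphism $\epsilon \otimes V \cong V$ from \eqref{canonical-triv-tensor-isomorphisms}; since composition multiplicities depend only on the isomorphism class of the module, this yields $[\epsilon \otimes V : S_i] = [V : S_i]$. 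Hence the $i$-th entry of the last column $(M_V)_{*,\ell+1}$ is exactly the composition multiplicity $[V : S_i]$.

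From here the equivalence is immediate. The entries of $M_V$ are nonnegative integers, so $(M_V)_{*,\ell+1} > 0$ (in the entrywise sense of the paper's notation) holds if and only if $[V : S_i] > 0$ for every $i$, which is precisely the definition of $V$ being rich. There is essentially no obstacle; the only point that requires care is the bookkeeping that tensoring \emph{on the left} by $S_{\ell+1} = \epsilon$ corresponds to the last \emph{column} (not the last row) of $M_V$, which is consistent with the convention $(M_V)_{i,j} = [S_j \otimes V : S_i]$ in force throughout the paper.
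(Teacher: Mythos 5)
Your proof is correct and coincides with the paper's argument: both identify $(M_V)_{i,\ell+1}=[S_{\ell+1}\otimes V:S_i]=[\epsilon\otimes V:S_i]=[V:S_i]$ via the isomorphism $\epsilon\otimes V\cong V$, and then read off the equivalence with richness from the nonnegativity of these entries. The extra remark about left tensoring by $S_{\ell+1}$ corresponding to the last \emph{column} is a good sanity check but not a new ingredient.
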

\begin{proof}
\noindent
Using 
\eqref{canonical-triv-tensor-isomorphisms} one has
$[V: S_i]
=[\epsilon \otimes V:S_i]=[S_{\ell+1} \otimes V:S_i]=(M_V)_{i,\ell+1}$ .
\end{proof}

\begin{proof}[Proof of $\text{(v)} \Rightarrow \text{(i)}$] 
Assuming $V$ is tensor-rich, there is some $t>0$ for which
$W:=\bigoplus_{k=0}^{t} V^{\otimes k}$ is rich.
Thus $M_W$ has positive last column $(M_W)_{*,\ell+1}>0$.
In $G_0(A)$, one has
$[W]=\sum_{k=0}^t [V]^k$, giving the matrix equation
$
M_W=\sum_{k=0}^t M_V^k.
$
Since $M_V \ppp = n \ppp$ by Proposition~\ref{right-nullvector-prop},
one can apply Proposition~\ref{avalanche-finite-criteria}(iii), 
with $M=M_V, \lambda=n, x=\ppp$, and
conclude that $\overline{L_V}$ is a nonsingular $M$-matrix.
\begin{noncompile}
Proposition~\ref{right-nullvector-prop}
shows $M_V \ppp = n \ppp$ with $\ppp > 0$. Notice that
$\overline{L_V} = n I_\ell - \overline{M_V}$.
From $W = \bigoplus_{k=0}^{t-1} V^{\otimes k}$, we obtain
$[W]= \sum_{k=0}^{t-1} [V]^{k}$ in $G_0(A)$,
so that $M_W=\sum_{k=0}^{t-1} M_V^k$
(since $M_V$ is a matrix representing multiplication on the right by
$[V]$ on $\groth$, and similarly $M_W$ represents multiplication by
$[W]$).
But Proposition~\ref{rich-implies-positive} (applied to $W$ instead
of $V$) shows that the McKay matrix $M_W$ has positive last column
$\left(M_W\right)_{*, \ell+1}$. Since $M_W=\sum_{k=0}^{t-1} M_V^k$,
this means that the last column of $\sum_{k=0}^{t-1} M_V^k$ is
positive. Hence, Proposition~\ref{avalanche-finite-criteria}(iii)
(applied to $M_V$, $n$, $\ppp$ and $\overline{L_V}$
instead of $M$, $\lambda$, $x$ and $\overline{L}$) shows that
$\overline{L_V}$ is a nonsingular $M$-matrix.

By Proposition~\ref{prop.nonsingular-Mmatrix-is-nonsingular}, this
shows that $\overline{L_V}$ is nonsingular, and therefore the matrix
$L_V$ has rank $\geq \ell$. Thus, the group $K(V)$ is finite.
\end{noncompile}
\end{proof}

This completes the proof of Theorem~\ref{tensor-rich-equivalences-theorem}.

\medskip

Theorem~\ref{tensor-rich-equivalences-theorem}
raises certain questions on finite-dimensional Hopf algebras.

\begin{question}
\label{tensor-rich-questions}
Let $A$ be a finite-dimensional Hopf algebra over an
algebraically closed field.
\begin{itemize}
\item[(i)]
How does one test whether $V$ is tensor-rich in terms
of some kind of character theory for $A$?
\item[(ii)]
Can the nullity of $L_V$ be described in terms of the simple
$A$-modules appearing in $V^{\otimes k}$ for $k \geq 1$?
\end{itemize}
\end{question}

\noindent
Section~\ref{group-algebra-section} 
answers Question~\ref{tensor-rich-questions}(i)
for group algebras $A=\FF G$, via Brauer characters.

\subsection{Non-tensor-rich modules as inflations}

Any module $V$ over an algebra $B$ can be regarded as an
inflation of a faithful $B/\Ann_B V$-module.\footnote{The
  \textit{annihilator} of a $B$-module $V$ is defined to
  be the ideal $\left\{ b \in B \mid bV = 0 \right\}$ of
  $B$. It is denoted by $\Ann_B V$. \par
  A $B$-module $V$ is said to be \textit{faithful} if and
  only if $\Ann_B V = 0$.}
A natural question to ask is whether a similar fact
holds for tensor-rich modules over Hopf algebras.
The annihilator of an $A$-module is always an ideal,
not necessarily a Hopf ideal; thus, a subtler
construction is needed.
The answer is given by part (iv) of the following theorem,
communicated to us by Sebastian Burciu who graciously
allowed us to include it in this paper.

\begin{theorem} \label{thm.burciu}
Let $V$ be an $A$-module.
Let $\omega$ be the map $A \to A$ sending each
$a \in A$ to $a - \epsilon(a) 1$.
Let $J_V = \bigcap_{k \geq 0} \Ann_A(V^{\otimes k})$.

\begin{enumerate}
\item[(i)] We have $J_V = \omega(\LKer_V) A$, where
$\LKer_V = \left\{ a \in A
\mid \sum a_1 \otimes a_2 v = a \otimes v \text{ for all } v
\in V \right\}$.

\item[(ii)] The subspace $J_V$ of $A$ is a Hopf ideal of $A$,
and thus $A / J_V$ is a Hopf algebra.

\item[(iii)] If $J_V = 0$, then $V$ is tensor-rich.

\item[(iv)] The $A$-module $V$ is the inflation of an
$A / J_V$-module via the canonical projection $A \to A / J_V$,
and the latter $A / J_V$-module is tensor-rich.

\item[(v)] Let $J'$ be any Hopf ideal of $A$ such that the
$A$-module $V$ is the inflation of an $A / J'$-module via
the canonical projection $A \to A / J'$.
Then, $J' \subseteq J_V$.
\end{enumerate}
\end{theorem}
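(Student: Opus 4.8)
The plan is to prove the five parts of Theorem~\ref{thm.burciu} more or less in the order stated, since each later part leans on the earlier ones. Throughout, write $\omega(a) = a - \epsilon(a)1$, so that $\omega$ is the canonical projection of $A$ onto the augmentation ideal $\ker\epsilon$ (split by $\FF 1$), and note the coproduct identity $\Delta(\omega(a)) = \omega(a_1)\otimes a_2 + 1\otimes\omega(a_2)$ in Sweedler notation, which will be used repeatedly.

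\textbf{Part (i): identifying $J_V$.} First I would record the key observation that, for an $A$-module $W$, an element $a\in A$ annihilates $W$ if and only if $a$ acts as zero, which for the tensor power $V^{\otimes k}$ can be analyzed via $\Delta^{(k-1)}(a)$. The more efficient route is the standard fact that $\Ann_A(V\otimes W)\supseteq \omega(\Ann_A V)\,A$ type inclusions, but cleanest is: show $a\in\Ann_A(V^{\otimes k})$ for all $k\ge 0$ iff $a$ lies in the largest two-sided ideal contained in $\Ann_A V$ that is stable under the "adjoint-like" comultiplication moves; then identify this with $\omega(\LKer_V)A$. Concretely, I would prove two inclusions. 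For $\omega(\LKer_V)A\subseteq J_V$: if $a\in\LKer_V$, meaning $\sum a_1\otimes a_2 v = a\otimes v$ for all $v\in V$, then on $V^{\otimes k}$ one computes that $\omega(a)$ acts by $a$ acting on the first tensor factor minus $\epsilon(a)$; iterating the $\LKer$ condition through $\Delta^{(k-1)}$ shows $\omega(a)$ kills $V^{\otimes k}$, and since $J_V$ is a right ideal (being an intersection of annihilators of $A$-modules, each of which is a right... actually $\Ann_A W$ is a two-sided ideal), we get $\omega(\LKer_V)A\subseteq J_V$. For the reverse: given $a\in J_V$, in particular $a\in\Ann_A V=\Ann_A(V^{\otimes 1})$; I would show $\omega$ applied to a suitable "correction" lands in $\LKer_V$ by using that $a$ also kills $V^{\otimes 2}$, which forces the comultiplication-compatibility that is exactly the $\LKer_V$ condition. \textbf{This two-inclusion bookkeeping, tracking $\Delta^{(k-1)}$ through the $\LKer$ relation, is where most of the work sits.}

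\textbf{Parts (ii)–(iv).} For (ii), since $J_V=\bigcap_k\Ann_A(V^{\otimes k})$ and the family $\{V^{\otimes k}\}_{k\ge 0}$ is closed under tensor products (as $V^{\otimes k}\otimes V^{\otimes m}=V^{\otimes(k+m)}$) and contains $\epsilon=V^{\otimes 0}$, the subspace $J_V$ is a two-sided ideal with $\epsilon(J_V)=0$ (from killing $V^{\otimes 0}=\epsilon$... more precisely $J_V\subseteq\ker\epsilon$ because... actually I should check this: $a\in\Ann_A\epsilon$ means $a$ acts as $0$ on $\epsilon$, i.e. $\epsilon(a)=0$), and $\Delta(J_V)\subseteq J_V\otimes A + A\otimes J_V$ — this last is the crucial point and follows because if $a$ kills every $V^{\otimes k}$ then, using exactness of $\otimes$ and the fact that the action of $\Delta(a)$ on $V^{\otimes k}\otimes V^{\otimes m}$ must vanish, one extracts the Hopf-ideal condition; alternatively invoke part (i) and compute $\Delta(\omega(\LKer_V)A)$ directly using the coproduct identity above. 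Stability under the antipode $\alpha$ is automatic once it is a coideal and ideal in a finite-dimensional Hopf algebra. For (iii): if $J_V=0$ then the composition factors of $\bigoplus_{k\ge 0}V^{\otimes k}$ must exhaust all simple $A$-modules — otherwise, letting $\{S_j\}_{j\in J}$ be the factors that do occur, the ideal $\bigcap_{j\in J}\Ann_A S_j$ is a nonzero (it contains e.g. a nonzero element killing all those simples but not all, by density/Jacobson since $A/\rad A$ is a product of matrix algebras) two-sided ideal contained in $J_V$, contradiction; hence $V$ is rich in some $\bigoplus_{k=0}^t V^{\otimes k}$, i.e. tensor-rich. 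Part (iv) is then formal: $V$ is annihilated by $J_V$ so descends to an $A/J_V$-module $\overline V$, and $\overline V^{\otimes k}$ over $A/J_V$ has the same composition factors as $V^{\otimes k}$ over $A$; since $J_V/J_V=0$ is the analogous intersection of annihilators over $A/J_V$, part (iii) applied to $\overline V$ over $A/J_V$ gives tensor-richness. \textbf{The mild subtlety here is part (iii), needing the Jacobson-density argument to produce a nonzero ideal killing a specified proper set of simples.}

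\textbf{Part (v): minimality.} Suppose $J'$ is a Hopf ideal with $V$ inflated from an $A/J'$-module; then $J'\subseteq\Ann_A V$. Because $J'$ is a \emph{Hopf} ideal, $A/J'$ is a Hopf algebra and the tensor product of $A/J'$-modules is computed via its coproduct, which is compatible with that of $A$; hence each $V^{\otimes k}$ is also an inflation of an $A/J'$-module, so $J'\subseteq\Ann_A(V^{\otimes k})$ for every $k\ge 0$, giving $J'\subseteq\bigcap_k\Ann_A(V^{\otimes k})=J_V$. The only thing to spell out is that $J'$ being a coideal is exactly what guarantees the $k$-fold tensor power stays in the image of inflation — concretely, $\Delta(J')\subseteq J'\otimes A+A\otimes J'$ ensures that the $A$-action on $V\otimes W$ factors through $A/J'$ whenever both $V$ and $W$ do. Thus $J_V$ is the \emph{largest} Hopf ideal through which $V$ factors, completing the proof and justifying the slogan that $K(V)$ is finite (equivalently $V$ is tensor-rich) precisely for $V$ that are "faithful as Hopf-algebra modules" in the sense of $J_V=0$.
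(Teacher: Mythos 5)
Your plan for parts (iv) and (v) matches the paper and is correct, but part (iii) contains a genuine logical gap, and parts (i)--(ii) are too sketchy to evaluate.

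The flaw in (iii): you claim that if $\{S_j\}_{j\in J}$ is the (proper) set of simples occurring as composition factors of $\bigoplus_{k\ge 0}V^{\otimes k}$, then the nonzero ideal $I:=\bigcap_{j\in J}\Ann_A S_j$ is contained in $J_V$. This is false in general. An element $a\in I$ annihilates every composition factor of $V^{\otimes k}$, so it acts \emph{nilpotently} on $V^{\otimes k}$ (it sends each step of a composition series into the next), but that does not mean it acts as zero; hence $a\notin\Ann_A(V^{\otimes k})$ in general, and $I\not\subseteq J_V$. (Compare: the Jacobson radical $\rad A=\bigcap_i\Ann_A S_i$ annihilates every simple but certainly need not annihilate $V$.) The paper's argument avoids this: from $J_V=0$ one gets that $W:=\bigoplus_{k\ge 0}V^{\otimes k}$ is a \emph{faithful} $A$-module, so for each primitive idempotent $e_i$ one has $e_iW\ne 0$, hence $e_iV^{\otimes k}\ne 0$ for some $k$, and then $[V^{\otimes k}:S_i]=\dim\Hom_A(Ae_i,V^{\otimes k})=\dim(e_iV^{\otimes k})>0$. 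That argument is direct and bypasses any containment of ideals.

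On (i) and (ii): your sketches signal roughly where the work goes but never actually carry out the two inclusions in (i) (the ``suitable correction'' that lands in $\LKer_V$ is not produced), nor the coideal verification $\Delta(J_V)\subseteq J_V\otimes A+A\otimes J_V$ in (ii) (``using exactness of $\otimes$ \ldots one extracts the Hopf-ideal condition'' is not an argument). The paper does not prove these from scratch either --- it cites Burciu \cite[Cor.~2.3.7]{Burciu} for (i) and Passman--Quinn \cite[Thm.~7(i)]{Passman-Quinn} for (ii), the latter applying because the family $(V^{\otimes k})_{k\ge 0}$ is closed under tensor products. If you want to prove (ii) directly you would need to actually write down the coproduct computation, and for the antipode stability you would need the fact that a bi-ideal of a finite-dimensional Hopf algebra is automatically a Hopf ideal; neither is done here.
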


Note that part (i) of the theorem allows for actually computing
$J_V$, while the definition of $J_V$ itself involves an uncomputable
infinite intersection.

\begin{proof}[Proof of Theorem~\ref{thm.burciu}.]
Part (i) is \cite[Corollary 2.3.7]{Burciu}.

Part (ii) follows from \cite[Theorem 7 (i)]{Passman-Quinn},
since the family $\left( V^{\otimes n} \right)_{n \geq 0}$
of $A$-modules is clearly closed under tensor products.

Part (iii) is essentially \cite[(3)]{Steinberg-complete},
but let us also prove it for the sake of completeness:
Assume that $J_V = 0$.
Consider any simple $A$-module $S_i$ and the corresponding
primitive idempotent $e_i$ of $A$.
The $A$-module $\bigoplus_{k \geq 0} V^{\otimes k}$ is
faithful (since its annihilator is $J_V = 0$).
Thus, $e_i \cdot \bigoplus_{k \geq 0} V^{\otimes k} \neq 0$.
Thus, there exists some $k \geq 0$ such that
$e_i V^{\otimes k} \neq 0$.
Consider this $k$.
But recall (see, e.g., \cite[Prop. 7.4.1 (3)]{Webb})
that
$\Hom_A\left(Ae, W\right) \cong eW$ 
for any $A$-module $W$ and any idempotent $e$ of $A$.
Thus, $\Hom_A\left(Ae_i, V^{\otimes k}\right)
\cong e_i V^{\otimes k} \neq 0$, so that
$\dim \Hom_A\left(Ae_i, V^{\otimes k}\right) > 0$.
Hence,
\[
\left[ V^{\otimes k} : S_i \right]
= \dim \Hom_A\left(P_i, V^{\otimes k}\right)
= \dim \Hom_A\left(Ae_i, V^{\otimes k}\right) > 0 .
\]
Since we have shown this to hold for each $i$, we thus
conclude that $V$ is tensor-rich.

(iv) Since $J_V \subseteq \Ann_A \left( V \right)$, we
see immediately that $V$ is the inflation of an
$A / J_V$-module $V'$.
It remains to show that this $V'$ is tensor-rich.
But this follows from part (iii), applied to $A / J_V$,
$V'$ and $0$ instead of $A$, $V$ and $J_V$:
Indeed, we have
$0 = \bigcap_{k \geq 0} \Ann_{A / J_V}(\left(V'\right)^{\otimes k})$,
since
$\bigcap_{k \geq 0} \Ann_{A / J_V}(\left(V'\right)^{\otimes k})$
is the projection of
$\bigcap_{k \geq 0} \Ann_A(V^{\otimes k}) = J_V$
onto the quotient ring $A / J_V$,
which projection of course is $J_V / J_V = 0$.

(v) We assumed that the
$A$-module $V$ is the inflation of an $A / J'$-module $V'$ via
the canonical projection $A \to A / J'$.
Thus, for each $k \geq 0$, the $A$-module $V^{\otimes k}$
is the inflation of the $A / J'$-module $\left(V'\right)^{\otimes k}$
via this projection.
Hence, for each $k \geq 0$, we have $J' V^{\otimes k} = 0$.
Thus, $J' \subseteq \bigcap_{k \geq 0} \Ann_A(V^{\otimes k}) = J_V$.
\end{proof}

\subsection{Avalanche-finiteness}

We digress slightly to discuss \emph{avalanche-finite
matrices} and \emph{chip-firing}.

\begin{definition}
An integer nonsingular $M$-matrix is called an \emph{avalanche-finite} matrix;
see \cite[\S 2]{BenkartKlivansReiner}.
\end{definition}

The terminology arises because the integer cokernel
$\ZZ^\ell/\im L$ for an avalanche-finite matrix $L$ in $\ZZ^{\ell \times \ell}$
has certain convenient
coset representatives in $(\ZZ_{\geq 0})^\ell$, 
characterized via their behavior with
respect to the dynamics of a game in which one makes moves
(called \emph{chip-firing} or \emph{toppling} or \emph{avalanches})
that subtract columns of $L$.  One such family of coset representatives
are called \emph{recurrent}, and the other such family are called \emph{superstable}; 
see, e.g., \cite[\S 2, Thm. 2.10]{BenkartKlivansReiner}
for their definitions and further discussion.\footnote{This
terminology harkens back to the theory of chip-firing on graphs
(also known as the theory of sandpiles),
where analogous notions have been known for longer -- see, e.g.,
\cite{chipfiring} or \cite{sandpile-ag}.}

Since $L_V=nI_{\ell+1}-M_V$ always has
$\overline{L_V}$ in $\ZZ^{\ell \times \ell}$,
Theorem~\ref{tensor-rich-equivalences-theorem}
has the following immediate consequence.

\begin{proposition}
For a finite-dimensional Hopf algebra $A$ over an algebraically
closed field, any tensor-rich $A$-module $V$ has
$\overline{L_V}$ avalanche-finite.
\end{proposition}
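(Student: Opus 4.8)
The statement is essentially a repackaging of Theorem~\ref{tensor-rich-equivalences-theorem} together with the definition of avalanche-finiteness, so the plan is simply to chain these together. First I would observe that $L_V = n I_{\ell+1} - M_V$ has integer entries, since $n = \dim V \in \ZZ$ and the McKay matrix $M_V \in \ZZ^{(\ell+1)\times(\ell+1)}$; hence its submatrix $\overline{L_V}$ obtained by deleting the row and column indexed by $\epsilon$ lies in $\ZZ^{\ell\times\ell}$. Second, I would invoke the implication $\text{(v)} \Rightarrow \text{(i)}$ of Theorem~\ref{tensor-rich-equivalences-theorem}: since $V$ is tensor-rich, $\overline{L_V}$ is a nonsingular $M$-matrix in the sense of Definition~\ref{our-M-matrix-definition}.

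Finally, I would note that an integer matrix which is a nonsingular $M$-matrix is, by the very definition recalled just above the statement, an avalanche-finite matrix; combining this with the two observations above yields that $\overline{L_V}$ is avalanche-finite, completing the proof. There is no real obstacle here — the only thing worth spelling out is the (trivial) check that $\overline{L_V}$ is an \emph{integer} matrix, which is what distinguishes "avalanche-finite" from the purely real-matrix notion "nonsingular $M$-matrix" appearing in Theorem~\ref{tensor-rich-equivalences-theorem}(i). Everything substantive has already been done in the proof of that theorem, in particular the implication $\text{(v)} \Rightarrow \text{(i)}$, which supplies the strictly positive vectors $\overline{x}$ and $\overline{y}$ certifying the $M$-matrix property via Proposition~\ref{prop.nonsingular-Mmatrix-is-nonsingular}.
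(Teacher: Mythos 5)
Your proposal is correct and matches the paper's argument exactly: the paper presents this proposition as an immediate consequence of Theorem~\ref{tensor-rich-equivalences-theorem}, noting beforehand that $\overline{L_V}$ is an integer matrix, so the implication $\text{(v)} \Rightarrow \text{(i)}$ gives a nonsingular $M$-matrix with integer entries, i.e., an avalanche-finite matrix by definition. Nothing further is needed.
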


The problem in applying this
to study the critical group is that
$K(V) \cong \sss^\perp/\im L_V$, which is not
always isomorphic to $\ZZ^\ell/\im \overline{L_V}$.
Under certain conditions, they are isomorphic, namely when the 
left-nullvector $\sss$ and the right-nullvector 
$\ppp$ both have their $(\ell+1)^{st}$ coordinate equal to $1$;
see \cite[\S 2, Prop. 2.19]{BenkartKlivansReiner}.  
Since we have indexed the simple $A$-modules in such a way that $S_{\ell+1} = \epsilon$ is
the trivial $A$-module, this condition is equivalent
to the $1$-dimensional trivial $A$-module $\epsilon$ being its own
projective cover $P_\epsilon=\epsilon$.  This, in turn,
is known \cite[Cor.~4.2.13]{Tensor} to be equivalent to semisimplicity of $A$.
For example, this is always the case in the 
setting of \cite{BenkartKlivansReiner},
where $A=\FF G$ was a group algebra of a finite
group and $\FF$ had characteristic zero.

%
%

In the case where $A$ is semisimple, many of the results on chip-firing 
from \cite[\S5]{BenkartKlivansReiner} remain valid, with the same proofs.
For example, \cite[Prop. 5.16]{BenkartKlivansReiner} explains why removing
the last entry from the last column of $M_V$
gives a \emph{burning configuration} for the avalanche-finite matrix $\overline{L_V}$,
which allows one to easily test when a configuration is recurrent.

\begin{noncompile}
There is another situation where one might want to say
Do we want to say anything about the case where there exists some $P_i$ having
$\dim(P_i)=\gamma$, such as seems to hold for the Steinberg representation of 
${\frak u}({\frak g})$ with $\frak g$ semisimple?
Or that example in symmetric groups that Jia computed?
\end{noncompile}

\section{Tensor-rich group representations}
\label{group-algebra-section}

Brauer already characterized tensor-rich $\FF G$-modules, 
though he did not state it in these terms.  We need
the following fact, well-known when $\FF$ has characteristic zero
(see, e.g., James and Liebeck~\cite[Thm. 13.11]{JamesLiebeck}),
and whose proof works just as well in positive characteristic.

\begin{proposition}
\label{kernel-via-characters-proposition}
Given a finite group $G$ and $n$-dimensional $\FF G$-module $V$, 
a $p$-regular element $g$ in $G$ acts as $1_V$  on $V$ 
if and only if $\chi_V(g)=n$.
\end{proposition}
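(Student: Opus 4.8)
The plan is to prove the two implications separately; the forward direction is immediate, and the converse rests on the equality case of the triangle inequality for complex numbers of modulus one.

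First I would dispose of the easy direction. If $g$ acts as $1_V$ on $V$, then all of its eigenvalues $\lambda_1, \ldots, \lambda_n$ on $V$ equal $1$, so $\widehat{\lambda_i} = \widehat{1} = 1$ for each $i$ (here $\widehat{1} = 1$ because $\lambda \mapsto \widehat{\lambda}$ is a group isomorphism), and hence $\chi_V(g) = \sum_{i=1}^n \widehat{\lambda_i} = n$.

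For the converse, assume $\chi_V(g) = n$. Recall from the setup of the Brauer character that, since $g$ is $p$-regular, it acts semisimply on $V$ by Maschke's theorem (applied to the cyclic group $\langle g\rangle$, whose order is invertible in $\FF$), with eigenvalues $\lambda_1, \ldots, \lambda_n$ that are $q^{th}$ roots of unity in $\overline{\FF}$, and $\chi_V(g) = \sum_{i=1}^n \widehat{\lambda_i}$. Each $\widehat{\lambda_i}$ is a root of unity in $\CC$, hence of modulus $1$, so
\[
n = \chi_V(g) = \left| \sum_{i=1}^n \widehat{\lambda_i} \right|
\le \sum_{i=1}^n \left| \widehat{\lambda_i} \right| = n ,
\]
and equality holds throughout. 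The equality case of the triangle inequality, applied to the unit complex numbers $\widehat{\lambda_1}, \ldots, \widehat{\lambda_n}$ whose sum is the positive real number $n$, forces $\widehat{\lambda_i} = 1$ for every $i$. Since $\lambda \mapsto \widehat{\lambda}$ is an isomorphism (in particular injective) and $\widehat{1} = 1$, we conclude $\lambda_i = 1$ for all $i$. As $g$ acts semisimply (i.e., diagonalizably) on $V$ with all eigenvalues equal to $1$, it acts as $1_V$.

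There is no serious obstacle here: the only two points requiring care are the invocation of Maschke's theorem to get diagonalizability of a $p$-regular element (already recorded in the definition of the Brauer character) and the observation that equality in the triangle inequality for complex numbers of equal modulus forces them to coincide. Both are standard, and the argument in positive characteristic is verbatim the same as the classical one in characteristic zero.
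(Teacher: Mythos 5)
Your proof is correct and follows essentially the same route as the paper's: reduce to the equality case of a classical inequality applied to the unit complex numbers $\widehat{\lambda_i}$. The only difference is that the paper invokes the equality case of Cauchy--Schwarz (pairing $(\widehat{\lambda_i})$ with the all-ones vector) where you invoke the equality case of the triangle inequality; both force the $\widehat{\lambda_i}$ to coincide and hence equal $1$, so the two arguments are interchangeable.
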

\begin{proof}
The forward implication is clear.  For the reverse, if one has
$n=\chi_V(g)=\sum_{i=1}^{n} \widehat{\lambda_i}$, then since each 
$|\widehat{\lambda_i}|=1$, the Cauchy-Schwarz inequality implies that the
$\widehat{\lambda_i}$ are all equal, and hence they must all equal $1$, since
they sum to $n$.  But then $\lambda_i=1$ for all $i$, which means that
$g$ acts as $1_V$ on $V$.
\end{proof}

We also need the following fact about Brauer characters; 
see, e.g., \cite[Prop. 10.2.1]{Webb}.

\begin{theorem}
Given a simple $\FF G$-module $S$ having projective cover $P$,
then any $\FF G$-module $V$ has
\[
[V:S]=\dim \Hom_{\FF G}(P,V) 
     = \frac{1}{\card{G}}\sum_{\substack{p-\text{regular }\\g\in G}} 
          \overline{\chi}_P(g) \chi_V(g).
\]
\end{theorem}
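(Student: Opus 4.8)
The plan is to handle the two equalities separately. The first equality, $[V:S]=\dim\Hom_{\FF G}(P,V)$, is exactly \eqref{eq.alg-reps.V:Si} applied with $S=S_i$ and $P=P_i$, so there is nothing to prove there. For the second equality, the first move is to reduce to the case where $V$ is simple: both sides are additive over short exact sequences of $\FF G$-modules — the left side because $P$ is projective, so $\Hom_{\FF G}(P,-)$ is exact; the right side because $[V]\mapsto \chi_V$ is additive (indeed a ring map $G_0(\FF G)\to\CC^{\ell+1}$ by Brauer's theorem, recalled above), and the right side is $\CC$-linear in $\chi_V$. Hence each side induces a homomorphism out of $G_0(\FF G)$, and it suffices to check agreement on the basis $[S_1],\dots,[S_{\ell+1}]$. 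For $V=S_j$ the left side is $[S_j:S]$, which is $\delta_{[S],[S_j]}$. So the whole theorem collapses to Brauer's orthogonality relation
\[
\frac{1}{\card{G}}\sum_{\substack{p\text{-regular}\\ g\in G}}\overline{\chi}_{P_i}(g)\,\chi_{S_j}(g)=\delta_{ij}
\]
between the projective indecomposable Brauer characters and the simple Brauer characters.

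To establish this orthogonality relation I would bring in the lifting theory for projectives over a $p$-modular system $(\mathcal{O},K,\FF)$: each $P_i$ is the reduction mod $p$ of a projective $\mathcal{O}G$-lattice $\widetilde P_i$, whose ordinary ($K$-)character $\Theta_i$ has two key properties, namely $\Theta_i(g)=\chi_{P_i}(g)$ for $p$-regular $g$, and $\Theta_i(g)=0$ for $p$-singular $g$ (the classical vanishing of a projective character off the $p$-regular classes). Letting $\chi_1,\dots,\chi_r$ be the ordinary irreducible characters and $D=(d_{ki})$ the decomposition matrix, one has $\Theta_i=\sum_k d_{ki}\chi_k$ and $\chi_k|_{p\text{-reg}}=\sum_j d_{kj}\chi_{S_j}$. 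Since $D$ has full column rank, pick a rational left inverse $C'=(c'_{jk})$ with $C'D=I_{\ell+1}$ and set $\widehat\psi_j:=\sum_k c'_{jk}\chi_k$, a virtual ordinary character restricting to $\chi_{S_j}$ on the $p$-regular classes. Using the vanishing property to extend the sum over $p$-regular classes to a sum over all of $G$, then ordinary character orthogonality $\langle\chi_k,\chi_l\rangle_G=\delta_{kl}$, gives
\[
\frac{1}{\card{G}}\sum_{\substack{p\text{-regular}\\g\in G}}\overline{\chi}_{P_i}(g)\,\chi_{S_j}(g)=\frac{1}{\card{G}}\sum_{g\in G}\overline{\Theta_i(g)}\,\widehat\psi_j(g)=\sum_k d_{ki}c'_{jk}=(C'D)_{ji}=\delta_{ij},
\]
as desired; combining this with the reduction above proves the theorem.

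The main obstacle is precisely the input from modular representation theory: the existence of the lift $\widetilde P_i$ together with the vanishing of its character $\Theta_i$ on $p$-singular elements (equivalently, the identity $C=D^TD$). This is a genuine theorem, not a formal manipulation, so a self-contained proof would require developing the $p$-modular system and the reduction map $G_0(\mathcal{O}G)\to G_0(\FF G)$; the rest (the additivity reduction and the bookkeeping with $D$ and its left inverse) is routine. In practice one simply cites the orthogonality relation from a standard source such as \cite[Prop. 10.2.1]{Webb} or \cite{CurtisReiner1}, and the theorem then follows via the two-line reduction in the first paragraph.
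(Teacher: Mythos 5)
The paper itself gives no proof of this theorem; it is stated as a known fact and cited to \cite[Prop.~10.2.1]{Webb}, so there is no in-paper argument for your proposal to track. That said, your proof is correct and is essentially the standard one found in the cited sources. The first equality is, as you note, just \eqref{eq.alg-reps.V:Si}. Your reduction of the second equality is sound: $\dim\Hom_{\FF G}(P,-)$ is additive over short exact sequences because $P$ is projective, and the bilinear-form side is additive because $V\mapsto\chi_V$ factors through $G_0(\FF G)$, so it suffices to check on the basis $\{[S_j]\}$, where the claim reduces to the Brauer orthogonality relation
$\frac{1}{\card{G}}\sum_{p\text{-reg }g}\overline{\chi}_{P_i}(g)\chi_{S_j}(g)=\delta_{ij}$.
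Your derivation of this from the decomposition matrix $D$, the vanishing of $\Theta_i$ on $p$-singular classes, and ordinary orthogonality is also correct; the existence of a rational left inverse $C'$ of $D$ follows from $D^TD=C$ being nonsingular.

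One small imprecision worth fixing: you call $\widehat\psi_j=\sum_k c'_{jk}\chi_k$ a ``virtual ordinary character,'' but since $\det C$ is a nontrivial power of $p$ in general, $D$ need not have an integer left inverse, so the $c'_{jk}$ are only rational and $\widehat\psi_j$ is merely a rational (or complex) linear combination of irreducible characters. This does not affect the computation --- the pairing and the orthogonality of the $\chi_k$ are $\CC$-linear --- but the terminology should be adjusted. You are also right to flag the genuine inputs: the existence of the lift $\widetilde{P}_i$ over a $p$-modular system, and the vanishing of its ordinary character off the $p$-regular classes (equivalently Brauer reciprocity, hence $C=D^TD$). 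These are real theorems, and the paper sidesteps developing them by simply citing Webb, which is the appropriate choice given the paper's scope.
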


We come now to the characterization of tensor-rich finite group representations.

\begin{theorem}(Brauer \cite[Rmk. 4]{Brauer-tensor-rich})
\label{Brauer-result}
For $\FF$ an algebraically closed
field, and $G$ a finite group,
an $\FF G$-module $V$ is tensor-rich if and only if the only $p$-regular element 
acting as $1_V$ on $V$ is the identity element $e$ of $G$.

More precisely, if the only $p$-regular element in $G$ acting as $1_V$
is the identity $e$, and if the Brauer character values $\chi_V(g)$ 
take on exactly $t$ distinct values, then $\bigoplus_{k=0}^{t-1} V^{\otimes k}$ is rich.
\end{theorem}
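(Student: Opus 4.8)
The plan is to prove the two implications separately; the forward direction (tensor-rich $\Rightarrow$ only $e$ acts as $1_V$ among $p$-regular elements) is short, while the converse, which also yields the quantitative refinement, rests on a polynomial-interpolation argument built on the formula for $[V:S]$ recorded just above. For the forward direction, I would argue as follows. Suppose $V$ is tensor-rich and a $p$-regular $g \in G$ acts as $1_V$ on $V$. Since $\Delta(g) = g \otimes g$, the element $g$ acts diagonally on each tensor power $V^{\otimes k}$, hence as the identity on $V^{\otimes k}$, and therefore as the identity on every composition factor of every $V^{\otimes k}$. Tensor-richness means every simple $\FF G$-module occurs as such a composition factor, so $g$ acts as $1$ on every simple module $S_i$; thus $\chi_{S_i}(g) = \dim S_i = \chi_{S_i}(e)$ for all $i$. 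The column of the Brauer character table indexed by the class of $g$ then equals the column indexed by the class of $e$, and since that table is invertible (hence has pairwise distinct columns) we conclude $g = e$.

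For the converse, assume the only $p$-regular element acting as $1_V$ is $e$, and let $\mu_0 = n, \mu_1, \ldots, \mu_{t-1}$ be the distinct values taken by $\chi_V$ on $p$-regular elements, noting $\mu_0 = \chi_V(e) = n$. Put $W = \bigoplus_{k=0}^{t-1} V^{\otimes k}$ and fix a simple module $S$ with projective cover $P$; I claim $[W:S] > 0$, i.e.\ $[V^{\otimes k}:S] > 0$ for some $k \leq t-1$. If not, then since $\chi_{V^{\otimes k}} = \chi_V^k$, the formula above gives $\sum_{p\text{-regular } g} \overline{\chi}_P(g)\, \chi_V(g)^k = 0$ for $0 \leq k \leq t-1$, hence $\sum_{p\text{-regular } g} \overline{\chi}_P(g)\, f(\chi_V(g)) = 0$ for every $f \in \CC[x]$ of degree $\leq t-1$. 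Applying this to $f(x) = \prod_{j=1}^{t-1}(x-\mu_j)$ (which has degree $t-1$), Proposition~\ref{kernel-via-characters-proposition} shows that $f(\chi_V(g))$ vanishes unless $\chi_V(g) = n$, i.e.\ unless $g$ acts as $1_V$, i.e.\ (by hypothesis, $g$ being $p$-regular) unless $g = e$; so the sum collapses to $\overline{\chi}_P(e)\, f(n) = (\dim P)\prod_{j=1}^{t-1}(n - \mu_j) \neq 0$, a contradiction. Hence $W$ is rich, so $V$ is tensor-rich, and the choice of $t$ establishes the ``more precisely'' statement.

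The hard part will be keeping the polynomial bookkeeping exact: one needs $f$ to have degree exactly $t-1$ so that the vanishing hypothesis $[V^{\otimes k}:S] = 0$ for $0 \leq k \leq t-1$ can be invoked, and one needs $f(n) \neq 0$, which holds precisely because $n = \mu_0$ is distinct from $\mu_1, \ldots, \mu_{t-1}$. One must also be careful that every sum runs only over $p$-regular conjugacy classes, so that Proposition~\ref{kernel-via-characters-proposition} (which is about $p$-regular elements) applies; by contrast the forward direction is essentially automatic once the diagonal action on tensor powers and the invertibility of the Brauer character table are observed.
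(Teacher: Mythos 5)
Your proof is correct and follows essentially the same line of argument as the paper: the forward direction rests on the invertibility (pairwise-distinct columns) of the Brauer character table, and the converse applies the formula $[V^{\otimes k}:S]=\frac{1}{\card{G}}\sum\overline{\chi}_P(g)\chi_V(g)^k$ together with the fact that the distinct Brauer character values $\mu_0=n,\mu_1,\ldots,\mu_{t-1}$ are separated. The only cosmetic difference is in the converse: where the paper sets up a $t\times t$ Vandermonde system in the unknowns $\sum_{g\in A_j}\overline{\chi}_P(g)$ and invokes its invertibility, you directly evaluate the equivalent statement at the single test polynomial $f(x)=\prod_{j=1}^{t-1}(x-\mu_j)$, which collapses the sum to the term at $g=e$; these are two ways of applying the same nonsingularity fact.
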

\begin{proof}
To see the ``only if'' direction of the first sentence, note that if
some $p$-regular element $g \neq e$ acts as $1_V$ on $V$,
then the action of $G$ on $V$ factors through some nontrivial quotient group
$G/N$ with $g \in N$, and the same is true for $G$ acting 
on every tensor power $V^{\otimes k}$.  Note that not every
simple $\FF G$-module can be the inflation of a simple $\FF[G/N]$-module through the quotient
map $G \rightarrow G/N$ in this way, else the columns
indexed by $e$ and by $g$ in the Brauer character table of $G$ would be equal,
contradicting its invertibility.  Therefore not all simple $\FF G$-modules
can appear in the tensor algebra $T(V)$, that is, $V$ cannot be tensor-rich.

To see the ``if'' direction of the first sentence,
it suffices to show the more precise statement 
in the second sentence. So assume that the only $p$-regular element 
acting as $1_V$ on $V$ is $e$, and label the $t$ distinct 
Brauer character values $\chi_V(g)$ as $a_1,a_2,\ldots,a_t$,
where $a_1=\dim(V)=\chi_V(e)$.  Letting $A_j$ denote the set of
$p$-regular elements $g$ for which $\chi_V(g)=a_j$, 
Proposition~\ref{kernel-via-characters-proposition} implies that $A_1=\{e\}$.
Assuming for the sake of contradiction that
$\bigoplus_{k=0}^{t-1} V^{\otimes k}$ is not rich, then there 
exists some simple $\FF G$-module $S$ such that for $k=0,1,\ldots,t-1$,
one has (with $P$ denoting the projective cover of $S$) the
equality
\[
0=[V^{\otimes k}:S]=\dim \Hom_{\FF G}(P,V^{\otimes k}) =
\frac{1}{\card{G}}\sum_{\substack{p-\text{regular }\\g\in G}} 
          \overline{\chi}_P(g) \chi_{V^{\otimes k}}(g)
=\frac{1}{\card{G}}\sum_{j=1}^t a_j^k \sum_{g \in A_j} \overline{\chi}_P(g). 
\]
Multiplying each of these equations by $\card{G}$, 
one can rewrite this as a matrix system
\[
\left[
\begin{matrix}
1 & 1& \cdots & 1\\
a_1&a_2& \cdots & a_t\\
a_1^2&a_2^2& \cdots & a_t^2\\
\vdots& \vdots& \ddots & \vdots\\
a_1^{t-1}&a_2^{t-1}& \cdots & a_t^{t-1}
\end{matrix}
\right]
\left[
\begin{matrix}
\sum_{g \in A_1} \overline{\chi}_P(g) \\
\vdots\\
\sum_{g \in A_t} \overline{\chi}_P(g) 
\end{matrix}
\right]
=
\left[
\begin{matrix}
0\\
0\\
\vdots\\
0
\end{matrix}
\right].
\]
The matrix on the left governing the system is an invertible 
Vandermonde matrix, forcing 
$\sum_{g \in A_j} \overline{\chi}_P(g) = 0$ for each $j=1,2,\ldots,t$.
However, the $j=1$ case contradicts 
$
\sum_{g \in A_1} \overline{\chi}_P(g) = \overline{\chi}_P(e) = \dim(P) \neq 0.
\qedhere
$
\end{proof}

\begin{noncompile}
Theorem~\ref{Brauer-result} has several corollaries.

The first of these provides an alternative route to the results of
Theorem~\ref{tensor-rich-implies-avalanche-finite} (sans the claim
about $\overline{L_V}$) and
Theorem~\ref{avalanche-finite-implies-tensor-rich} in the case when
$A = \FF G$ is a group algebra:

\begin{corollary}
\label{group-reps-are-tensor-rich-iff-critical-group-finite}
An $\FF G$-module $V$ for a finite group $G$
has $K(V)$ finite if and only if $V$ is tensor-rich.
\end{corollary}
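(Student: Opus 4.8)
The plan is to deduce this corollary directly from Brauer's characterization of tensor-rich $\FF G$-modules (Theorem~\ref{Brauer-result}) together with the eigenvalue description of $L_V$ available for group algebras, thereby giving a route to it that is independent of the general Hopf-algebraic argument in Theorem~\ref{tensor-rich-equivalences-theorem}. The key point to establish first is an exact formula for the nullity of $L_V$. By Proposition~\ref{left-eigenvector-prop} (or equally its right-handed counterpart Proposition~\ref{right-eigenvector-prop}), for each representative $g$ of a $p$-regular conjugacy class the vector $\sss(g)$ is a left-eigenvector of $L_V$ with eigenvalue $n - \chi_V(g)$; and since the $\sss(g)$ are the columns of the Brauer character table of the simple $\FF G$-modules, which is an invertible $(\ell+1)\times(\ell+1)$ matrix, they form a basis of $\CC^{\ell+1}$. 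Hence $L_V$ (viewed over $\CC$) is diagonalizable, and its nullity equals the number of $p$-regular conjugacy classes $[g]$ for which $\chi_V(g) = n$.

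Next I would combine this with Proposition~\ref{kernel-via-characters-proposition}: for $p$-regular $g$, one has $\chi_V(g) = n$ if and only if $g$ acts as $1_V$ on $V$. Therefore the nullity of $L_V$ equals the number of $p$-regular conjugacy classes all of whose elements act as the identity on $V$. This count is always at least $1$, the identity class being one of them; and by the first sentence of Theorem~\ref{Brauer-result} it equals $1$ precisely when $V$ is tensor-rich.

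Finally I would translate nullity into finiteness of the critical group. From \eqref{eq.K(V).def2} we have $\ZZ^{\ell+1}/\im L_V \cong \ZZ \oplus K(V)$, so the free rank of this cokernel is $1 + \rank_\ZZ K(V)$; on the other hand that free rank equals $\dim_\QQ \ker L_V$, i.e.\ the nullity of $L_V$. Hence $K(V)$ is finite if and only if $L_V$ has nullity $1$, which by the previous paragraph is equivalent to $V$ being tensor-rich.

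Essentially all of this is bookkeeping once Theorem~\ref{Brauer-result} is in hand; the only subtlety worth flagging is that one really needs $L_V$ to be diagonalizable over $\CC$, so that the number of zero eigenvalues (with algebraic multiplicity, which is what controls the free rank of the integral cokernel) coincides with the number of $p$-regular classes acting trivially. This is exactly what the invertibility of the Brauer character table supplies; without it the argument would only give the inequality ``nullity $\ge$ (number of such classes)'', which suffices for the direction ``$K(V)$ finite $\Rightarrow$ $V$ tensor-rich'' but not for the converse. So the main work is really internal to the already-proved Theorem~\ref{Brauer-result}, and this corollary is a short consequence of it plus Propositions~\ref{left-eigenvector-prop} and~\ref{kernel-via-characters-proposition}.
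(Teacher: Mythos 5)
Your proof is correct and follows essentially the same route as the paper's own group-algebra-specific argument: it reduces finiteness of $K(V)$ to the statement that $L_V$ has nullity $1$, computes that nullity via the full set of left-eigenvectors coming from the (invertible) Brauer character table so that it equals the number of $p$-regular classes $[g]$ with $\chi_V(g)=n$, converts that count via Proposition~\ref{kernel-via-characters-proposition} into the number of $p$-regular classes acting trivially, and finishes with Theorem~\ref{Brauer-result}. Your extra remark about diagonalizability of $L_V$ over $\CC$ (so that geometric nullity is what is being counted) is a useful clarification, but the underlying argument is the same.
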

\begin{proof}
$K(V)$ is finite if and only if $L_V=nI_{\ell+1}-M_V$ 
(where $n=\dim(V)$) has rank $\ell$ and nullity $1$.  
But Proposition~\ref{left-eigenvector-prop} or \ref{right-eigenvector-prop} 
shows that the nullity of $L_V$ is the number of 
$p$-regular conjugacy classes $g$ with $\chi_V(g)=n$,
or the number for which $g$ acts as $1_V$ by 
Proposition~\ref{kernel-via-characters-proposition}.
This number is $1$ if and only if $V$ satisfies the
hypothesis for tensor-richness in Theorem~\ref{Brauer-result}.
\end{proof}
\end{noncompile}

\begin{corollary}
Faithful representations $V$ of a finite group $G$ are always tensor-rich.
\end{corollary}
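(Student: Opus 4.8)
The plan is to derive this immediately from Brauer's characterization recorded in Theorem~\ref{Brauer-result}. Recall that an $\FF G$-module $V$ is \emph{faithful} precisely when the associated group homomorphism $G \to \GL(V)$ is injective; equivalently, no element of $G$ other than the identity $e$ acts as $1_V$ on $V$. In particular, whenever $V$ is faithful, the only \emph{$p$-regular} element of $G$ acting as $1_V$ on $V$ is $e$, simply because the $p$-regular elements form a subset of $G$ and the faithfulness hypothesis already forbids \emph{any} nonidentity element from acting trivially.

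Now I would invoke Theorem~\ref{Brauer-result}, which asserts that an $\FF G$-module $V$ is tensor-rich if and only if the only $p$-regular element of $G$ acting as $1_V$ on $V$ is $e$. By the observation of the previous paragraph, a faithful $V$ satisfies exactly this condition, and hence is tensor-rich. (If one also wishes to extract the effective bound, the more precise form of Theorem~\ref{Brauer-result} gives that $\bigoplus_{k=0}^{t-1} V^{\otimes k}$ is already rich, where $t$ is the number of distinct values taken by the Brauer character $\chi_V$; alternatively, Proposition~\ref{kernel-via-characters-proposition} lets one phrase the hypothesis purely in terms of $\chi_V$, namely that $\chi_V(g) = \dim V$ holds for no $p$-regular $g \neq e$.)

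There is essentially no obstacle in this argument — it is a one-line consequence of Theorem~\ref{Brauer-result}. The only subtlety worth flagging is the (trivial) passage from ``no nonidentity element acts as $1_V$'' to ``no nonidentity $p$-regular element acts as $1_V$'': faithfulness is a statement about all of $G$, so it is a fortiori stronger than what Theorem~\ref{Brauer-result} needs, and in positive characteristic the converse can genuinely fail, which is why the corollary is only an implication and not an equivalence.
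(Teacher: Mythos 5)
Your proposal is correct and matches the paper's (implicit) argument exactly: the paper offers no separate proof because the corollary is an immediate consequence of the ``if'' direction of Theorem~\ref{Brauer-result}, and you correctly observe that faithfulness — no nonidentity element of $G$ acting as $1_V$ — is a fortiori stronger than the hypothesis that no nonidentity $p$-regular element acts as $1_V$. Your closing remark that the converse can fail in positive characteristic also agrees with the paper's following discussion.
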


In fact, in characteristic zero, Burnside had shown that 
faithfulness of $V$ is the same as the condition in 
Theorem~\ref{Brauer-result} characterizing tensor-richness.
Hence one can always regard a non-faithful $G$-representation of $V$ 
as a faithful (and hence tensor-rich) representation of 
some quotient $G/N$ where $N$ is the kernel of the representation on $V$.
There is a similar reduction in positive characteristic. 

\begin{proposition}
\label{non-tensor-rich-quotient-group-prop}
For a finite group representation $\rho: G \to GL(V)$ over
a field $\FF$ of characteristic $p$, 
\begin{itemize}
\item the subgroup
$N$ of $G$ generated by the $p$-regular elements in $\ker(\rho)$ is always 
normal, and 
\item 
$\rho$ factors through the representation of the quotient
$\overline{\rho}: G/N \rightarrow GL(V)$ which is tensor-rich.
\end{itemize}
\end{proposition}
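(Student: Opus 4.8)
The plan is to reduce everything to Brauer's criterion from Theorem~\ref{Brauer-result}. Write $K := \ker(\rho)$, a normal subgroup of $G$. First I would check that $N$ is normal: since $K \trianglelefteq G$ and $G$-conjugation preserves the order of each element, the set of $p$-regular elements of $K$ is stable under conjugation by $G$, so the subgroup $N$ it generates satisfies $xNx^{-1} = N$ for all $x \in G$. As $N$ is generated by elements of $K$, we have $N \subseteq K = \ker(\rho)$, and hence $\rho$ descends to a representation $\overline{\rho} : G/N \to \GL(V)$. (Recall that $\FF$ is algebraically closed by our standing assumptions, so Theorem~\ref{Brauer-result} is applicable to $\FF[G/N]$-modules.)

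Next I would establish the key group-theoretic fact that $K/N$ is a $p$-group. Given $g \in K$, decompose $g = g_{p'} g_p$ into its commuting $p$-regular part $g_{p'}$ and $p$-part $g_p$, both of which are powers of $g$ and hence lie in $K$. By the definition of $N$, the $p$-regular element $g_{p'}$ lies in $N$, so the coset $gN = g_p N$ has $p$-power order in $K/N$. Thus every element of $K/N$ has $p$-power order, i.e., $K/N$ is a $p$-group. (When $p = 0$ this degenerates harmlessly: every element of $K$ is $p$-regular, so $N = K$ and $K/N$ is trivial, and $\overline{\rho}$ is the faithful representation of $G/K$.)

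Finally, to invoke Theorem~\ref{Brauer-result} for the $\FF[G/N]$-module $V$, I must show that the only $p$-regular element of $G/N$ acting as $1_V$ is the identity. Suppose $\overline{g} \in G/N$ is $p$-regular with $\overline{\rho}(\overline{g}) = 1_V$; choosing any lift $g \in G$, we get $\rho(g) = 1_V$, so $g \in K$, whence $\overline{g}$ lies in the image of $K$ in $G/N$, namely $K/N$. Since $\overline{g}$ is then both $p$-regular and of $p$-power order, it must be trivial. Theorem~\ref{Brauer-result} now yields that $V$ is a tensor-rich $\FF[G/N]$-module. The only real subtlety — and the step worth dwelling on — is this last linkage: one should resist trying to lift $p$-regular elements of $G/N$ to $p$-regular elements of $G$ (which need not exist), and instead observe that the $p$-regular elements of $G/N$ relevant to tensor-richness automatically come from $K$, where the quotient $K/N$ has been arranged to be a $p$-group.
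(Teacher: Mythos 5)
Your proof is correct and uses essentially the same approach as the paper: both rest on the Jordan decomposition of $g \in \ker(\rho)$ into its commuting $p$-regular and $p$-power parts, noting the $p$-regular part lands in $N$ by construction. The minor difference is purely organizational — you isolate the clean intermediate claim that $K/N$ is a $p$-group and then finish by an order argument, whereas the paper instead directly shows $g \in N$ via a B\'ezout computation on the $p$-part $b$; your packaging of the step is arguably a bit tidier.
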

\begin{proof}
The subgroup $N$ as defined above is normal 
since its generating set is stable under $G$-conjugation.

To show that the representation $\overline{\rho}: G/N \rightarrow GL(V)$ 
is tensor-rich, by Theorem~\ref{Brauer-result} above, it suffices to check that
if a coset $gN$ in $G/N$ is both $p$-regular 
and has $gN$ in $\ker(\overline{\rho})$ (that is, $g \in \ker(\rho)$), 
then $g \in N$.  The $p$-regularity means $g^m \in N$ for some
$m$ with $\gcd(m,p)=1$.  
Recall (e.g., from \cite[proof of Lemma~9.3.4]{Webb}) 
that one can write $g=ab$ uniquely with $a,b$ both powers of $g$
in which $a$ is $p$-regular, but $b$ is \emph{$p$-singular} (that is, 
$b$ has order a power of $p$).
Since $a$ is a power of $g$, one has $a \in \ker(\rho)$, and
therefore also $a \in N$.  Additionally $b^m=a^{-m} g^m$ must also
lie in $N$.  Since $b$ is $p$-singular, say of order $p^d$, one 
has $1 = x m + y p^d$ with $x,y$ in $\ZZ$, and then
$b=b^{xm+yp^d} = (b^m)^x (b^{p^d})^y = (b^m)^x \in N$.  Hence $g=ab \in N$,
as desired.
\end{proof}

Proposition~\ref{non-tensor-rich-quotient-group-prop}
implies the following fact,
which should be contrasted with
Theorem~\ref{thm.burciu}.

\begin{corollary}
\label{group-algebra-quotient-corollary}
A non-tensor-rich $A$-module $V$ for $A=\FF G$
the group algebra of a finite group is
always a $B$-module for a proper Hopf quotient 
$A \twoheadrightarrow B$, namely the group algebra $B=\FF[G/N]$,
where $N$ is the subgroup generated by the $p$-regular elements in
$G$ that act as $1_V$.
\end{corollary}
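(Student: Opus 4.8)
The plan is to deduce this directly from Proposition~\ref{non-tensor-rich-quotient-group-prop} and Theorem~\ref{Brauer-result}. Write $\rho\colon G\to\GL(V)$ for the representation afforded by $V$, and let $N$ be the subgroup of $G$ generated by the $p$-regular elements acting as $1_V$ on $V$; since an element acts as $1_V$ exactly when it lies in $\ker(\rho)$, this $N$ is the same subgroup that appears in Proposition~\ref{non-tensor-rich-quotient-group-prop}. That proposition tells us $N$ is normal in $G$ and that $\rho$ factors through $\overline{\rho}\colon G/N\to\GL(V)$. Extending the projection $G\to G/N$ linearly gives a surjective algebra map $\FF G\twoheadrightarrow\FF[G/N]$, which is moreover a Hopf algebra morphism because it carries group-like elements to group-like elements (compatibly with $\Delta(g)=g\otimes g$, $\epsilon(g)=1$, $\alpha(g)=g^{-1}$). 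Via this map, $V$ is the inflation of the $\FF[G/N]$-module determined by $\overline{\rho}$, so $V$ is a $B$-module for the Hopf quotient $B=\FF[G/N]$ as claimed.

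It remains to check that this quotient is \emph{proper}, which is the one place the hypothesis enters. Since $V$ is not tensor-rich, Theorem~\ref{Brauer-result} provides a $p$-regular element $g\neq e$ of $G$ acting as $1_V$ on $V$; by construction $g\in N$, so $N\neq\{e\}$. Hence $\dim_\FF\FF[G/N]=\card{G}/\card{N}<\card{G}=\dim_\FF\FF G$, so the surjection $\FF G\twoheadrightarrow\FF[G/N]$ is not an isomorphism, i.e.\ $B$ is a proper Hopf quotient of $A$.

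I expect no genuine obstacle here: the substantive work is entirely contained in Proposition~\ref{non-tensor-rich-quotient-group-prop} (which produces the factoring through $G/N$) and in Theorem~\ref{Brauer-result} (which guarantees $N$ is nontrivial when $V$ fails to be tensor-rich), and the remaining argument is routine bookkeeping. The only small point worth stating carefully is that the canonical surjection $\FF G\to\FF[G/N]$ is compatible with the Hopf structure, which is immediate from the group-algebra formulas above.
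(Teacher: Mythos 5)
Your proof is correct and follows essentially the same route as the paper: invoke Proposition~\ref{non-tensor-rich-quotient-group-prop} to obtain the normal subgroup $N$ with $\rho$ factoring through $G/N$, use Theorem~\ref{Brauer-result} (equivalently, the equivalence already established in Theorem~\ref{tensor-rich-equivalences-theorem}) to see that $N\neq\{e\}$ so the quotient is proper, and observe that the induced surjection $\FF G\twoheadrightarrow\FF[G/N]$ is a Hopf algebra map. The paper's one-line proof simply exhibits the kernel $I=\operatorname{span}_\FF\{g-gn\}_{g\in G,\,n\in N}$ as a (two-sided) ideal and coideal, which is the dual way of saying the same thing; your version, verifying Hopf-compatibility on group-like generators, is a perfectly standard substitute.
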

\begin{proof}
$B=A/I$ where $I$ is the $\FF$-span 
of $\{g-gn\}_{g \in G, n \in N}$, a two-sided ideal and coideal of
$A=\FF G$.
\end{proof}

\begin{remark}
These last few results relate to a result of Rieffel \cite[Cor. 1]{Rieffel},
asserting that an $A$-module $V$ for a finite-dimensional Hopf algebra $A$ that cannot
be factored through a proper Hopf quotient must be a \emph{faithful} representation of the algebra $A$,
in the sense that the ring map $A \rightarrow \End(V)$ is injective.  He also shows that this implies $V$ is tensor-rich.
However, as he notes there, faithfulness of a finite group representation $G \rightarrow \GL(V)$ over $\FF$
is a weaker condition than faithfulness of the $\FF G$-module $V$ in the above sense.
\end{remark}

\section{Appendix A: Hopf algebra proofs}
\label{Hopf-appendix}

In this section, we collect proofs for some facts stated in
Section~\ref{Hopf-algebra-subsection} about Hopf algebras.
In fact, we shall prove more general versions of these facts.

\begin{vershort}
To achieve this generality, let us disavow two of the assumptions
made in Subsection~\ref{subsect.conventions} and in
Subsection~\ref{subsect.findim-alg}.
First, we shall not assume $\FF$ to be algebraically closed (instead,
$\FF$ can be any field).
Second, we shall not assume $A$ (or any $A$-module) to be
finite-dimensional unless explicitly required.
Other conventions remain in place (in particular, $A$ is still a
Hopf algebra, and $A$-modules always mean left $A$-modules) as long as
they still make sense.
\end{vershort}

\begin{verlong}
To achieve this generality, we will \textbf{not} follow
the conventions and assumptions made in
Subsection~\ref{subsect.conventions}.
In particular, we shall not require $\FF$ to be algebraically closed.
We shall also not require our Hopf algebra $A$ (and its modules) to be
finite-dimensional unless we explicitly state so.

Instead, let us make the following standing assumptions:
We let $A$ be a Hopf algebra over a field $\FF$.
We denote its counit, its coproduct, and its antipode by
$\epsilon$, $\Delta$ and $\alpha$, respectively
(as in Section~\ref{Hopf-algebra-subsection}).
All tensor products are over $\FF$.
We shall use Sweedler notation for comultiplication in $A$,
writing $\sum a_1 \otimes a_2$ for the coproduct
$\Delta\left(a\right)$ of any $a \in A$.
We denote by $\dim V$
the dimension of an $\FF$-vector space $V$.
The word ``module'' always means ``left module''.
\end{verlong}

\begin{vershort}
Some of what has been said in Subsection~\ref{subsect.findim-alg}
still applies verbatim to our new general setting:
The antipode $\alpha$ of $A$ is still an algebra anti-endomorphism and
a coalgebra anti-endomorphism of $A$.
The trivial $A$-module $\epsilon$ is still well-defined, as is the
subspace $V^A$ of $A$-fixed points of any $A$-module $V$.
The tensor product $V \otimes W$ of two $A$-modules $V$ and $W$ is
still defined in the same way (and still satisfies the
associativity law and the canonical isomorphisms
\eqref{canonical-triv-tensor-isomorphisms}); so is the $A$-module
$\Hom_\FF(V, W)$. We shall abbreviate the latter as $\Hom(V, W)$.
The left-dual $V^*$ of an $A$-module $V$ is also well-defined.

However, the antipode $\alpha$ of $A$ is no longer necessarily
bijective. As a consequence, the right-dual $\leftast{V}$ is not
well-defined in general.
Lemma~\ref{lem.Vstarstar} no longer holds (unless both $V$ is
finite-dimensional and $\alpha$ is bijective).
The $\epsilon^* \cong \epsilon$ part of Lemma~\ref{lem.epsilonstar}
still holds, but the $\leftast{\epsilon} \cong \epsilon$ part only
holds if $\alpha$ is bijective.
As for Lemma~\ref{lem.VtimesA}, part (ii) holds in full generality,
while part (i) requires $\alpha$ to be bijective. (The same proof
applies.)
Lemma~\ref{lem.hom-tensor-iso} still holds if at least one of $V$ and
$W$ is finite-dimensional (otherwise, $\Phi$ is merely an
$A$-module homomorphism, not an isomorphism).
Lemma~\ref{lem:AHom} holds in full generality (and the proof in
\cite[Lemma 4.1]{Schneider} still applies).
\end{vershort}

\begin{verlong}
We recall the following basic properties of Hopf algebras:
\begin{itemize}
\item The antipode $\alpha : A \to A$ is an algebra
      anti-endomorphism and a coalgebra anti-endomorphism of $A$.
      (This is proven, e.g., in
      \cite[Proposition 4.2.6]{DascalescuEtAl} or in
      \cite[Theorem 1.5]{Schneider}.)
\item If $A$ is finite-dimensional, then the antipode
      $\alpha : A \to A$ is bijective.
      (See, e.g., \cite[Thm. 2.1.3]{Montgomery} or
      \cite[Thm. 7.1.14 (b)]{Radford} or \cite[Prop. 4]{Pareigis}
      or \cite[Thm. 2.3. 2)]{Schneider}
      or \cite[Prop. 5.3.5]{Tensor} for proofs of this fact.)
\item The vector space $\FF$ becomes an $A$-module,
      by letting $A$ act on $\FF$ through the algebra homomorphism
      $\epsilon$.
      This $A$-module is denoted by $\epsilon$, and called the
      \textit{trivial $A$-module}.
\item For each $A$-module $V$, we define its
      \textit{subspace of $A$-fixed points} to be
      \[
      V^A := \{v \in V: av=\epsilon(a)v \text{ for all }a \in A\} .
      \]
\item If $V$ and $W$ are two $A$-modules, then their tensor product
      $V \otimes W$ becomes an $A$-module according to the rule
      $a \left(v \otimes w\right)
       := \sum a_1 v \otimes a_2 w$ for each $a \in A$,
      $v \in V$ and $w \in W$.
      (This can be restated more abstractly as follows:
      The $A$-module structure on $V \otimes W$ is obtained from
      the obvious $A \otimes A$-module structure on $V \otimes W$
      by restriction along the $\FF$-algebra homomorphism
      $\Delta : A \to A \otimes A$.) \par
      This concept of tensor products of $A$-modules satisfies the
      associativity law (more precisely: if $U$, $V$ and $W$ are three
      $A$-modules, then the canonical $\FF$-vector space isomorphism
      $\left(U \otimes V\right) \otimes W
         \to U \otimes \left(V \otimes W\right)$
      is an $A$-module isomorphism), thus allowing us to write
      tensor products of multiple factors without parenthesizing them.
      Furthermore, the canonical isomorphisms
      \eqref{canonical-triv-tensor-isomorphisms} hold for every
      $A$-module $V$.
      However, the tensor product is not generally commutative
      (indeed, the $A$-modules $V \otimes W$ and $W \otimes V$ may
      be non-isomorphic).
\item For any $A$-module $V$, the dual space $\Hom_\FF(V,\FF)$
      becomes an $A$-module according to the rule
      $\left(af\right) \left(v\right)
       := f \left( \alpha(a) v \right)$
      for each $a \in A$, $f \in \Hom_\FF(V,\FF)$ and $v \in V$.
      This $A$-module $\Hom_\FF(V,\FF)$ is denoted by $V^*$, and is
      called the \textit{left-dual} of $V$.
      (This is a well-defined $A$-module because
      $\alpha : A \to A$ is an algebra anti-homomorphism.)
\item If $A$ is finite-dimensional (so that $\alpha$ is bijective),
      then there is also another $A$-module structure on the dual
      space $\Hom_\FF(V,\FF)$ of an $A$-module $V$: Namely, we define
      it by the rule
      $\left(af\right) \left(v\right)
       := f \left( \alpha^{-1}(a) v \right)$
      for each $a \in A$, $f \in \Hom_\FF(V,\FF)$ and $v \in V$.
      This $A$-module $\Hom_\FF(V,\FF)$ is denoted by $\leftast{V}$,
      and is called the \textit{right-dual} of $V$.
      (This is a well-defined $A$-module because
      $\alpha^{-1} : A \to A$ is an algebra anti-homomorphism.)
\item For any two $A$-modules $V$ and $W$, we define an $A$-module
      structure on the vector space $\Hom_\FF(V,W)$ via 
      \[
      (a\varphi)(v) := \sum a_1 \varphi(\alpha(a_2)v)
      \]
      for all $a \in A$, $\varphi \in \Hom_\FF(V,W)$ and $v \in V$.
      (See Lemma~\ref{lem.internal-Hom} below for a proof of the
      fact that this is a well-defined $A$-module structure.)
      Sometimes, we will simply write $\Hom(V, W)$ for this $A$-module
      $\Hom_\FF(V, W)$.
\end{itemize}

The following fact is folklore, but an explicit mention is hard to
find in the literature:

\begin{lemma} \label{lem.internal-Hom}
Let $V$ and $W$ be two $A$-modules.
For any $a \in A$ and $\varphi \in \Hom_\FF(V,W)$, define an
element $a \varphi \in \Hom_\FF(V, W)$ by
\[
(a\varphi)(v) := \sum a_1 \varphi(\alpha(a_2)v)
\qquad \text{ for all } v \in V .
\]
This defines an $A$-module structure on $\Hom_\FF(V, W)$.
\end{lemma}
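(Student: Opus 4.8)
The plan is to verify the three defining axioms of a module structure on $\Hom_\FF(V, W)$: $\FF$-bilinearity of the map $(a, \varphi) \mapsto a\varphi$, the unit axiom $1 \cdot \varphi = \varphi$, and the associativity axiom $(ab)\varphi = a(b\varphi)$ for all $a, b \in A$. Well-definedness of $a\varphi$ as an element of $\Hom_\FF(V, W)$ is immediate, since for fixed $a$ the assignment $v \mapsto \sum a_1 \varphi(\alpha(a_2) v)$ is a sum of composites of $\FF$-linear maps; and bilinearity in $a$ and in $\varphi$ follows at once from the linearity of $\Delta$, of $\alpha$, of the $A$-actions on $V$ and $W$, and of $\varphi$. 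For the unit axiom, I would use $\Delta(1) = 1 \otimes 1$ and $\alpha(1) = 1$, which give $(1 \cdot \varphi)(v) = 1 \cdot \varphi(\alpha(1) v) = \varphi(v)$ for every $v \in V$.

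The heart of the proof is the associativity axiom, which I would check by evaluating both sides at an arbitrary $v \in V$. For the left-hand side, since $\Delta$ is an algebra homomorphism we may write $\Delta(ab) = \sum a_1 b_1 \otimes a_2 b_2$ in Sweedler notation, and since $\alpha$ is an algebra anti-homomorphism we have $\alpha(a_2 b_2) = \alpha(b_2)\alpha(a_2)$; hence
\[
((ab)\varphi)(v) = \sum a_1 b_1 \, \varphi\bigl( \alpha(b_2) \alpha(a_2) v \bigr).
\]
For the right-hand side, applying the defining formula twice --- first to $b\varphi$ with argument $\alpha(a_2) v$, then to $a$ --- and using that the $A$-action on $W$ is associative yields
\[
(a(b\varphi))(v) = \sum a_1 \, (b\varphi)\bigl( \alpha(a_2) v \bigr) = \sum a_1 b_1 \, \varphi\bigl( \alpha(b_2) \alpha(a_2) v \bigr),
\]
which agrees with the previous display. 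This proves $(ab)\varphi = a(b\varphi)$, completing the verification.

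The lemma is purely formal, so there is no serious obstacle; the one step that demands care --- and which I expect to be the most error-prone rather than genuinely hard --- is the bookkeeping of Sweedler indices in the associativity computation, in particular keeping the order of factors correct when invoking the anti-multiplicativity of $\alpha$ and when moving $a_1$ past the inner Sweedler sum coming from $b\varphi$.
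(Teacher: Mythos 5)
Your proof is correct and takes essentially the same route as the paper's: verify $\FF$-bilinearity, the unit axiom via $\Delta(1)=1\otimes 1$ and $\alpha(1)=1$, and associativity $(ab)\varphi = a(b\varphi)$ by expanding both sides at an arbitrary $v$ and reconciling them via the bialgebra axiom $\Delta(ab)=\Delta(a)\Delta(b)$ and the anti-multiplicativity of $\alpha$. The paper works from $a(b\varphi)$ toward $(ab)\varphi$ while you bring both sides to the common expression $\sum a_1 b_1 \varphi(\alpha(b_2)\alpha(a_2)v)$, but this is only a cosmetic difference.
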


\begin{proof}[Proof of Lemma~\ref{lem.internal-Hom}.]
The definition of $a \varphi$ (for $a \in A$ and
$\varphi \in \Hom_\FF(V,W)$) can be rewritten as follows:
$a \varphi$ is the image of $a$ under the composition
\[
\xymatrix{
A \ar[r]^-{\Delta} & A \otimes A \ar[r]^{\id \otimes \alpha} &
    A \otimes A \ar[r] & \Hom_\FF(V,W)
} ,
\]
where the last arrow is the $\FF$-linear map
$A \otimes A \to \Hom_\FF(V,W)$ sending each
$b \otimes c \in A \otimes A$ to the map
$V \to W, \  v \mapsto b \varphi\left(c v\right)$.
Thus, $a \varphi$ depends $\FF$-linearly on $a$.
Also, $a \varphi$ depends $\FF$-linearly on $\varphi$ (this is clear
from the definition).

Furthermore, $1 \varphi = \varphi$ for each
$\varphi \in \Hom_\FF(V,W)$.
(This follows easily from $\Delta(1) = 1 \otimes 1$ and
$\alpha(1) = 1$.)

It thus remains to show that
$\left(ab\right) \varphi = a \left(b \varphi\right)$ for any
$a \in A$, $b \in A$ and $\varphi \in \Hom_\FF(V,W)$.

For this purpose, let us fix
$a \in A$, $b \in A$ and $\varphi \in \Hom_\FF(V,W)$.
Also, fix $v \in V$.

Now, the definition of $a \left(b \varphi\right)$ yields
\begin{align*}
\left( a \left(b \varphi\right) \right) \left(v\right)
&= \sum a_1 \underbrace{\left(b \varphi\right) \left( \alpha\left(a_2\right) v \right)}
                       _{\substack{
                          = \sum b_1 \varphi\left( \alpha\left(b_2\right) \alpha\left(a_2\right) v \right) \\
                          \text{(by the definition of } b \varphi \text{)}}} \\
&= \sum \sum a_1 b_1 \varphi\left( \underbrace{\alpha\left(b_2\right) \alpha\left(a_2\right)}
                                              _{\substack{ = \alpha\left( a_2 b_2 \right) \\
                                                    \text{(since } \alpha \text{ is an algebra anti-endomorphism)}}}
                                    v \right) \\
&= \sum \sum a_1 b_1 \varphi\left( \alpha \left( a_2 b_2 \right) v \right)
 = \sum \left(ab\right)_1 \varphi \left( \alpha \left( \left(ab\right)_2 \right) v \right)
\end{align*}
(since one of the axioms of a bialgebra yields
$\sum \sum a_1 b_1 \otimes a_2 b_2 = \sum \left(ab\right)_1 \otimes \left(ab\right)_2$).
Comparing this with
\[
\left( \left( ab \right) \varphi \right) \left(v\right)
= \sum \left(ab\right)_1 \varphi \left( \alpha \left( \left(ab\right)_2 \right) v \right)
\qquad \left(\text{by the definition of } \left( ab \right) \varphi \right) ,
\]
we obtain
$\left( \left( ab \right) \varphi \right) \left(v\right)
= \left( a \left(b \varphi\right) \right) \left(v\right)$.
Since this holds for all $v \in V$, we thus have
$\left(ab\right) \varphi = a \left(b \varphi\right)$. This completes
our proof.
\end{proof}

Let us now generalize Lemma~\ref{lem.VtimesA}:

\begin{lemma}
\label{lem.VtimesA.gen}
Let $V$ be an $A$-module.
\begin{enumerate}
\item[(i)] If $A$ is finite-dimensional, then
$V \otimes A \cong A^{\oplus \dim V}$ as $A$-modules.
\item[(ii)] Also, $A \otimes V \cong A^{\oplus \dim V}$
as $A$-modules (independently of the dimension of $A$).
\end{enumerate}
\end{lemma}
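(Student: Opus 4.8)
The plan is to prove part (ii) first by writing down an explicit isomorphism, and then to deduce part (i) by applying part (ii) to the opposite-coalgebra Hopf algebra $A^{\mathrm{cop}}$. Note that part (ii) does not need finite-dimensionality of $A$ or bijectivity of $\alpha$, so it is safe to feed $A^{\mathrm{cop}}$ into it.

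For part (ii), let $V_0$ denote the vector space $V$ equipped with the \emph{trivial} $A$-action $a \cdot v = \epsilon(a) v$; choosing an $\FF$-basis of $V$ exhibits $V_0 \cong \epsilon^{\oplus \dim V}$ as $A$-modules, whence $A \otimes V_0 \cong (A \otimes \epsilon)^{\oplus \dim V} \cong A^{\oplus \dim V}$ by \eqref{canonical-triv-tensor-isomorphisms}. It therefore suffices to produce an $A$-module isomorphism $A \otimes V_0 \cong A \otimes V$. I would define the two $\FF$-linear maps
\[
\Phi\colon A \otimes V_0 \to A \otimes V,\quad b \otimes v \mapsto \sum b_1 \otimes b_2 v,
\qquad
\Psi\colon A \otimes V \to A \otimes V_0,\quad b \otimes v \mapsto \sum b_1 \otimes \alpha(b_2) v,
\]
where $b_2 v$ and $\alpha(b_2) v$ use the genuine $A$-action on $V$, and then verify: (a) $\Phi$ is an $A$-module homomorphism --- a short computation using that the $A$-action on $A \otimes V_0$ is just left multiplication in the first factor, together with coassociativity; (b) $\Psi \Phi = \mathrm{id}$ and $\Phi \Psi = \mathrm{id}$ --- each obtained by expanding $\Delta$ once more and then applying one of the two antipode axioms ($\sum \alpha(b_1) b_2 = \epsilon(b)1$ for $\Psi\Phi$, and $\sum b_1 \alpha(b_2) = \epsilon(b)1$ for $\Phi\Psi$) followed by the counit axiom. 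Since $\Phi$ is a module map admitting a two-sided inverse, it is an $A$-module isomorphism, proving part (ii).

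For part (i), assume $A$ is finite-dimensional, so its antipode $\alpha$ is bijective; then $A^{\mathrm{cop}}$ --- the same algebra $A$ but with the opposite coproduct $a \mapsto \sum a_2 \otimes a_1$ and antipode $\alpha^{-1}$ --- is again a Hopf algebra over $\FF$. Because $A$ and $A^{\mathrm{cop}}$ have the same underlying algebra, an $A^{\mathrm{cop}}$-module is literally an $A$-module, the left-regular module is the same for both, and an isomorphism of $A^{\mathrm{cop}}$-modules is the same as one of $A$-modules. Moreover, for $A^{\mathrm{cop}}$-modules $X$ and $Y$, the tensor product in $\Rep(A^{\mathrm{cop}})$ has $a$ acting by $x \otimes y \mapsto \sum a_2 x \otimes a_1 y$, so the flip $x \otimes y \mapsto y \otimes x$ is an $A$-module isomorphism from it onto $Y \otimes X$ with its $\Rep(A)$-tensor structure. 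Applying part (ii) to $A^{\mathrm{cop}}$ and the $A^{\mathrm{cop}}$-module $V$ gives an isomorphism in $\Rep(A^{\mathrm{cop}})$ between the $A^{\mathrm{cop}}$-tensor product of $A$ with $V$ and $A^{\oplus \dim V}$; transporting the left-hand side through the flip turns it into $V \otimes A$ with the ordinary $\Rep(A)$-tensor structure. This yields $V \otimes A \cong A^{\oplus \dim V}$ as $A$-modules, proving part (i).

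The only real difficulty I anticipate is bookkeeping: keeping Sweedler indices straight when checking that $\Phi$ and $\Psi$ are mutually inverse (one must expand $\Delta$ twice and be careful which antipode axiom is invoked where), and, in part (i), being precise that sharing an underlying algebra makes the two module categories and their regular modules coincide. A tempting shortcut for part (i) --- writing down the ``mirror image'' of $\Phi,\Psi$ directly for $V \otimes A$ --- fails, because the obvious candidate inverse maps are not $A$-module homomorphisms: the expressions one runs into, such as $\sum \alpha(b_2) b_1$, need not equal $\epsilon(b)1$ in a noncocommutative Hopf algebra. Routing through $A^{\mathrm{cop}}$ avoids this obstruction entirely.
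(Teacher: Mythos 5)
Your proposal is correct, and part (ii) is essentially the paper's own proof: the maps $\Phi\colon b\otimes v\mapsto \sum b_1\otimes b_2 v$ and $\Psi\colon b\otimes v\mapsto \sum b_1\otimes \alpha(b_2) v$ are exactly the ones the paper writes down. Part (i), however, is argued differently. The paper proves part (i) \emph{directly}, via the pair of maps $\underline{V}\otimes A\to V\otimes A$, $v\otimes b\mapsto \sum b_1 v\otimes b_2$, and $V\otimes A\to \underline{V}\otimes A$, $v\otimes b\mapsto \sum \alpha^{-1}(b_1) v\otimes b_2$, using the identity $\sum a_2\alpha^{-1}(a_1)=\sum \alpha^{-1}(a_2)a_1=\epsilon(a)1$, which it derives by applying the algebra anti-homomorphism $\alpha^{-1}$ to the two antipode axioms. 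Your remark that the naive mirror of part (ii)'s maps fails is accurate if one insists on applying $\alpha$ to a Sweedler leg of $b$ — one then hits $\sum\alpha(b_2)b_1$, which is not $\epsilon(b)1$ in general — but the paper sidesteps the obstruction by applying $\alpha^{-1}$ to $b_1$ instead, so a direct proof of (i) is in fact available. Your detour through $A^{\mathrm{cop}}$ is a clean conceptual reformulation of the same fix: the antipode of $A^{\mathrm{cop}}$ \emph{is} $\alpha^{-1}$, and unwinding your flip isomorphism reproduces precisely the paper's explicit maps for (i). What your route buys is that all the Sweedler bookkeeping lives in one place (part (ii)), and the need for $\alpha^{-1}$ is explained rather than discovered; what the paper's route buys is that it avoids introducing $A^{\mathrm{cop}}$ and the transport-of-structure argument, at the price of repeating a near-identical computation with $\alpha^{-1}$ in place of $\alpha$. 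Both are valid.
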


\begin{proof}[Proof of Lemma~\ref{lem.VtimesA.gen}.]
(i) Recall that the antipode $\alpha : A \to A$ is bijective (since
$A$ is finite-dimensional). Hence, its inverse $\alpha^{-1}$ is
well-defined and an algebra anti-homomorphism (since $\alpha$ is an
algebra anti-homomorphism).

The defining property of the antipode $\alpha$ of $A$ shows that
\[
\sum a_1 \alpha(a_2) = \sum \alpha(a_1) a_2 = \epsilon(a) 1
\]
for each $a \in A$. Applying the map $\alpha^{-1}$ to this chain of
equalities, we obtain
\[
\alpha^{-1} \left( \sum a_1 \alpha(a_2) \right)
= \alpha^{-1} \left( \sum \alpha(a_1) a_2 \right)
= \alpha^{-1} (\epsilon(a) 1) .
\]
Since $\alpha^{-1}$ is an algebra anti-homomorphism,
this can be rewritten as
\begin{equation}
\sum a_2 \alpha^{-1} (a_1) = \sum \alpha^{-1} (a_2) a_1
= \epsilon(a) 1 .
\label{pf.lem.VtimesA.beq}
\end{equation}

Let $\underline{V}$ denote $V$ considered as a mere $\FF$-vector
space, without $A$-module structure. Then,
$\underline{V} \otimes A$ is a well-defined $A$-module, isomorphic to
$A^{\oplus \dim V}$. It remains to prove the isomorphism
$V \otimes A \cong \underline{V} \otimes A$.

Define an $\FF$-linear map
$\Phi : \underline{V} \otimes A \to V \otimes A$ by setting
$\Phi\left(v \otimes b\right) = \sum b_1 v \otimes b_2$ for all
$v \in V$ and $b \in A$. Then, $\Phi$ is $A$-linear (as follows from
a straightforward argument using $\sum \sum a_1 b_1 \otimes a_2 b_2
= \sum (ab)_1 \otimes (ab)_2$).

Define an $\FF$-linear map
$\Psi : V \otimes A \to \underline{V} \otimes A$ by setting
$\Psi\left(v \otimes b\right) = \sum \alpha^{-1}(b_1) v \otimes b_2$
for all $v \in V$ and $b \in A$. Then,
every $v \in V$ and $b \in A$ satisfy
\begin{align*}
\Phi\left(\Psi\left(v \otimes b\right)\right)
&= \Phi\left(\sum \alpha^{-1}(b_1) v \otimes b_2\right)
= \sum \underbrace{\Phi\left(\alpha^{-1}(b_1) v \otimes b_2\right)}_{= \sum (b_2)_1 \alpha^{-1}(b_1) v \otimes (b_2)_2}
= \sum \sum (b_2)_1 \alpha^{-1}(b_1) v \otimes (b_2)_2 \\
&= \sum \underbrace{\sum (b_1)_2 \alpha^{-1}((b_1)_1)}_{\substack{= \epsilon(b_1) 1\\ \text{(by \eqref{pf.lem.VtimesA.beq}, applied to } a = b_1\text{)}}}
 v \otimes b_2
= \sum \epsilon(b_1) 1 v \otimes b_2
= v \otimes \left(\sum \epsilon(b_1) b_2\right)
= v \otimes b
\end{align*}
(where the fourth equality sign relied on the coassociativity of $A$
in the form $\sum \sum b_1 \otimes (b_2)_1 \otimes (b_2)_2
= \sum \sum (b_1)_1 \otimes (b_1)_2 \otimes b_2$, and the seventh
relied on $\sum \epsilon(b_1) b_2 = b$). Thus, $\Phi \circ \Psi = \id$.
A similar computation reveals that $\Psi \circ \Phi = \id$, whence we
see that $\Phi$ and $\Psi$ are mutually inverse bijections. Since
$\Phi$ is $A$-linear, they are thus $A$-isomorphisms, and hence
$V \otimes A \cong \underline{V} \otimes A$ is proven.

(ii) A similar
argument shows that
$A \otimes V \cong A \otimes \underline{V} \cong A^{\oplus \dim V}$
via the isomorphisms
$A \otimes \underline{V} \to A \otimes V,
\ b \otimes v \mapsto \sum b_1 \otimes b_2 v$ and
$A \otimes V \to A \otimes \underline{V},
\ b \otimes v \mapsto \sum b_1 \otimes \alpha(b_2) v$.
\end{proof}

\begin{noncompile}
\begin{proof}[Proof of Lemma~\ref{lem.VtimesA.gen} (Vic's old version).]
We prove (i);  the proof of (ii) is similar.  
Let $\underline{V}$ denote $V$ considered as a mere $\FF$-vector
space, without $A$-module structure. Then,
$\underline{V} \otimes A$ is a well-defined $A$-module, isomorphic to
$A^{\oplus \dim V}$. It remains to prove the isomorphism
$V \otimes A \cong \underline{V} \otimes A$, which we do by checking that
these two $\FF$-linear maps $\Phi, \Psi$ are both $A$-linear, and mutually 
inverse bijections:
$$
\begin{array}{rcl}
\underline{V} \otimes A 
 &\overset{\Phi}{\longrightarrow} &V \otimes A \\
 & & \\
v \otimes b &\longmapsto &\sum b_1 v \otimes b_2,\\
\end{array}
\qquad \qquad \qquad
\begin{array}{rcl}
V \otimes A  &\overset{\Psi}{\longrightarrow}& \underline{V} \otimes A\\
 & & \\
v \otimes b& \longmapsto & \sum \alpha^{-1}(b_1) v \otimes b_2.
\end{array}
$$
The $A$-linearity of $\Phi$ can be deduced 
 using 
$\sum \sum a_1 b_1 \otimes a_2 b_2
= \sum (ab)_1 \otimes (ab)_2$.
Then $A$-linearity of $\Psi$ will follow
from $\Psi=\Phi^{-1}$.  
We check $\Phi \circ \Psi = \id$;
proving $\Psi \circ \Phi = \id$ is similar.  
We use the fact that any $a$ in $A$ has
\begin{equation}
\sum a_2 \alpha^{-1} (a_1) = \sum \alpha^{-1} (a_2) a_1
= \epsilon(a) 1,
\label{pf.lem.VtimesA.beq}
\end{equation}
deduced by applying the 
algebra anti-automorphism $\alpha^{-1}$ to the equalities 
$
\sum a_1 \alpha(a_2) = \sum \alpha(a_1) a_2 = \epsilon(a) 1
$
that come from the definition of 
$\alpha$.
One then has
\begin{align*}
\Phi\left(\Psi\left(v \otimes b\right)\right)
&= \Phi\left(\sum \alpha^{-1}(b_1) v \otimes b_2\right)\\
&= \sum 
           \Phi\left(\alpha^{-1}(b_1) v \otimes b_2\right)
\\
&= \sum \sum (b_2)_1 \alpha^{-1}(b_1) v \otimes (b_2)_2 \\
&= \sum 
            \sum (b_1)_2 \alpha^{-1}((b_1)_1)
 v \otimes b_2 \\
&= \sum \epsilon(b_1) 1 v \otimes b_2\\
&= v \otimes \left(\sum \epsilon(b_1) b_2\right)
= v \otimes b
\end{align*}
where the fourth equality used the coassociativity 
$\sum \sum b_1 \otimes (b_2)_1 \otimes (b_2)_2
= \sum \sum (b_1)_1 \otimes (b_1)_2 \otimes b_2$, and the 
fifth equality applied \eqref{pf.lem.VtimesA.beq} with
$a = b_1$. 
(ii) A similar
argument shows that
$A \otimes V \cong A \otimes \underline{V} \cong A^{\oplus \dim V}$
via the isomorphisms
$A \otimes \underline{V} \to A \otimes V,
\ b \otimes v \mapsto \sum b_1 \otimes b_2 v$ and
$A \otimes V \to A \otimes \underline{V},
\ b \otimes v \mapsto \sum b_1 \otimes \alpha(b_2) v$.
\end{proof}
\end{noncompile}

Next, let us generalize Lemma~\ref{lem.epsilonstar}:

\begin{lemma} \phantomsection
\label{lem.epsilonstar.gen}
\begin{itemize}
\item[(i)] We have an $A$-module isomorphism
$\epsilon^* \cong \epsilon$.
\item[(ii)] Assume that $A$ is finite-dimensional.
Then, we have an $A$-module isomorphism
$\leftast{\epsilon} \cong \epsilon$.
\end{itemize}
\end{lemma}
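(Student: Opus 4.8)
The plan is to make both isomorphisms completely explicit via the evaluation-at-$1$ map. Recall that $\epsilon^* = \Hom_\FF(\epsilon,\FF) = \Hom_\FF(\FF,\FF)$ as a vector space, and that $\ev\colon \Hom_\FF(\FF,\FF)\to\FF$, $f\mapsto f(1)$, is a vector-space isomorphism. So for part (i) it suffices to check that $\ev$ is $A$-linear when the target $\FF$ carries the trivial module structure $\epsilon$. For $a\in A$ and $f\in\epsilon^*$, the definition of the left dual gives $(af)(1) = f(\alpha(a)\cdot 1) = f(\epsilon(\alpha(a))\cdot 1) = \epsilon(\alpha(a))\, f(1)$, where the middle equality uses that $1\in\FF$ is acted on through $\epsilon$ in the module $\epsilon$. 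Thus the only thing left to verify is that $\epsilon(\alpha(a)) = \epsilon(a)$ for all $a\in A$.

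This last identity is part of the statement that $\alpha$ is a coalgebra anti-endomorphism of $A$ (recalled above); alternatively, it follows in one line by applying $\epsilon$ to $\sum\alpha(a_1)a_2 = \epsilon(a)1$ and then using the counit axiom $\sum a_1\epsilon(a_2) = a$. Granting it, we get $(af)(1) = \epsilon(a)\,f(1)$, i.e. $\ev(af) = \epsilon(a)\,\ev(f) = a\cdot\ev(f)$, so $\ev\colon \epsilon^*\xrightarrow{\sim}\epsilon$ is an $A$-module isomorphism, proving (i).

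For part (ii), assume $A$ is finite-dimensional, so that $\alpha$ is bijective and $\leftast{\epsilon}$ is defined. The identical computation, now with $\alpha^{-1}$ in place of $\alpha$, shows that $\ev\colon \leftast{\epsilon}\to\epsilon$ is $A$-linear provided $\epsilon(\alpha^{-1}(a)) = \epsilon(a)$ for all $a\in A$; but substituting $\alpha^{-1}(a)$ for $a$ in the identity $\epsilon\circ\alpha = \epsilon$ from part (i) yields exactly this. Hence $\ev$ is an $A$-module isomorphism $\leftast{\epsilon}\cong\epsilon$.

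I do not expect any real obstacle here: the entire content of the lemma is the elementary Hopf-algebra fact $\epsilon\circ\alpha = \epsilon$ (together with its immediate consequence $\epsilon\circ\alpha^{-1} = \epsilon$), and the rest is simply unwinding the definitions of $V^*$ and $\leftast{V}$ on a one-dimensional module.
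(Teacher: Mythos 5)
Your proof is correct and follows the same route as the paper's: both reduce to the identity $\epsilon\circ\alpha=\epsilon$ (and, for (ii), $\epsilon\circ\alpha^{-1}=\epsilon$), and both realize the isomorphism via the canonical identification $\FF^*\cong\FF$, i.e.\ evaluation at $1$. You are merely more explicit about unwinding the $A$-linearity check, and your one-line alternative derivation of $\epsilon\circ\alpha=\epsilon$ from the antipode and counit axioms is a valid supplement to the citation that $\alpha$ is a coalgebra anti-endomorphism.
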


\begin{proof}[Proof of Lemma~\ref{lem.epsilonstar.gen}.]
(i)
We have $\epsilon \circ \alpha = \epsilon$ (since $\alpha$ is a
coalgebra anti-homomorphism), and thus $\epsilon^* \cong \epsilon$
(via the canonical isomorphism $\FF^* \cong \FF$).

(ii) From $\epsilon \circ \alpha = \epsilon$, we obtain
$\epsilon \circ \alpha^{-1} = \epsilon$, and therefore
$\leftast{\epsilon} \cong \epsilon$.
\end{proof}

Next, we restate and prove Lemma~\ref{lem.Vstarstar}:

\begin{lemma}
\label{lem.Vstarstar.gen}
Assume that $A$ is finite-dimensional.
Let $V$ be a finite-dimensional $A$-module.
We have canonical $A$-module isomorphisms
$\leftast{(V^*)} \cong V \cong (\leftast{V})^*$.
\end{lemma}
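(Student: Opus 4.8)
The plan is to reduce everything to the ordinary double-duality isomorphism for finite-dimensional vector spaces, and then to observe that the two antipodes involved ($\alpha$ in a left-dual, $\alpha^{-1}$ in a right-dual) cancel each other when the two kinds of dual are interleaved. Note first that finite-dimensionality of $A$ guarantees $\alpha$ is bijective (cited earlier in the excerpt), so that $\leftast{W}$ is a well-defined $A$-module for every $A$-module $W$; and finite-dimensionality of $V$ guarantees that the evaluation map
\[
\ev_V \colon V \longrightarrow \Hom_\FF\bigl(\Hom_\FF(V,\FF),\FF\bigr), \qquad v \longmapsto \bigl(f \mapsto f(v)\bigr),
\]
is an $\FF$-linear isomorphism. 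Since $\leftast{(V^*)}$ and $(\leftast{V})^*$ both have $\Hom_\FF(\Hom_\FF(V,\FF),\FF)$ as underlying vector space, it suffices to check that $\ev_V$ is $A$-linear for each of the two target $A$-module structures.

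For $\leftast{(V^*)}$: fix $a \in A$, $v \in V$ and $f \in V^* = \Hom_\FF(V,\FF)$. Unwinding the right-dual structure and then the left-dual structure, and using $\alpha \circ \alpha^{-1} = \id_A$,
\[
\bigl(a \cdot \ev_V(v)\bigr)(f) = \ev_V(v)\bigl(\alpha^{-1}(a)\cdot f\bigr) = \bigl(\alpha^{-1}(a)\cdot f\bigr)(v) = f\bigl(\alpha(\alpha^{-1}(a))\,v\bigr) = f(av) = \ev_V(av)(f),
\]
so $a \cdot \ev_V(v) = \ev_V(av)$ and $\ev_V \colon V \xrightarrow{\ \sim\ } \leftast{(V^*)}$ is an $A$-module isomorphism. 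The case of $(\leftast{V})^*$ is the same computation with the roles of $\alpha$ and $\alpha^{-1}$ exchanged: the outer left-dual structure applies $\alpha$ and the inner right-dual structure applies $\alpha^{-1}$, so one gets $\bigl(a \cdot \ev_V(v)\bigr)(f) = f\bigl(\alpha^{-1}(\alpha(a))\,v\bigr) = f(av) = \ev_V(av)(f)$, again using bijectivity of $\alpha$. This yields the canonical isomorphism $V \cong (\leftast{V})^*$.

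There is no serious obstacle; the proof is a routine check. The two points that need attention are (a) that the finite-dimensionality hypotheses are genuinely used — on $V$ to make $\ev_V$ bijective rather than merely injective, and on $A$ so that the right-dual functor is defined at all — and (b) keeping straight which dual carries $\alpha$ and which carries $\alpha^{-1}$, so that they cancel in the correct order. It may be worth remarking that the analogous computation for the genuine double left-dual $V^{**}$ would produce $\alpha^2$ instead of $\id_A$ and does \emph{not} collapse, which is exactly why one interleaves a left-dual with a right-dual here.
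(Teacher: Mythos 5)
Your proof is correct and is essentially the same as the paper's: both use the evaluation map $v \mapsto (f \mapsto f(v))$ and verify $A$-equivariance by cancelling $\alpha$ against $\alpha^{-1}$, the paper likewise noting that the same map works for both isomorphisms.
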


\begin{proof}[Proof of Lemma~\ref{lem.Vstarstar.gen}.]
There is a linear isomorphism $\phi:V\to\leftast{(V^*)}$ defined by
$\phi(v)(f) = f(v)$ for all $v\in V$ and $f\in V^*$.
This isomorphism is $A$-equivariant, since each $a\in A$,
$v \in V$ and $f \in V^\ast$ satisfy
\[ (a\phi(v))(f) = \phi(v)(\alpha^{-1}(a)f) = (\alpha^{-1}(a)f)(v) = f(\alpha(\alpha^{-1}(a))v) = f(av) = \phi(av)(f). \]
This proves $\leftast{(V^*)} \cong V$.
The proof of $(\leftast{V})^* \cong V$ is similar
(again, the same $\phi$ works).
\end{proof}

Next, we shall show a generalization of
Lemma~\ref{lem.hom-tensor-iso}:

\begin{lemma}
\label{lem.hom-tensor-iso.gen}
Let $V$ and $W$ be two $A$-modules.
Consider the $\FF$-linear map
\begin{equation}
\Phi: W \otimes V^* \to \Hom_\FF(V,W)
\end{equation}
sending each $w \otimes f$ (with $w\in W$ and $f\in V^*$)
to the linear map $\varphi \in \Hom_\FF(V,W)$
that is defined by
$\varphi(v)=f(v)w$ for all $v\in V$.

\begin{itemize}
\item[(i)] This map $\Phi$ is an $A$-module homomorphism.
\item[(ii)] Assume that at least one of the vector spaces $V$ and $W$
is finite-dimensional. Then, $\Phi$ is an $A$-module isomorphism,
and therefore $W \otimes V^* \cong \Hom_\FF(V,W)$ as $A$-modules.
\end{itemize}
\end{lemma}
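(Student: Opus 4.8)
The plan is to establish part (i) by a direct calculation in Sweedler notation, and then derive part (ii) by checking that $\Phi$ is already a bijection of $\FF$-vector spaces, so that the $A$-linearity coming from part (i) promotes it to an isomorphism of $A$-modules.

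For part (i), fix $a \in A$, $w \in W$ and $f \in V^*$; since $\Phi$ is $\FF$-linear, it suffices to compare the two maps $\Phi\bigl(a(w \otimes f)\bigr)$ and $a\,\Phi(w \otimes f)$ in $\Hom_\FF(V,W)$ by evaluating both at an arbitrary $v \in V$. Using $a(w \otimes f) = \sum a_1 w \otimes a_2 f$ together with the rule $(a_2 f)(v) = f(\alpha(a_2) v)$ defining the action on $V^*$, one computes that the first map sends $v$ to $\sum f(\alpha(a_2) v)\, a_1 w$. On the other hand, writing $\varphi := \Phi(w \otimes f)$, so that $\varphi(v) = f(v) w$, the defining rule $(a\varphi)(v) = \sum a_1 \varphi(\alpha(a_2) v)$ gives that the second map sends $v$ to $\sum a_1 f(\alpha(a_2) v)\, w = \sum f(\alpha(a_2) v)\, a_1 w$. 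The two expressions coincide, so $\Phi\bigl(a(w \otimes f)\bigr) = a\,\Phi(w \otimes f)$, and $\Phi$ is $A$-linear.

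For part (ii), it remains only to see that $\Phi$ is bijective as an $\FF$-linear map, since then part (i) makes it an $A$-module isomorphism. Injectivity requires no hypothesis: write a general element of $W \otimes V^*$ as $\sum_{i=1}^{k} w_i \otimes f_i$ with $f_1, \ldots, f_k \in V^*$ linearly independent; choosing $v_1, \ldots, v_k \in V$ with $f_i(v_j) = \delta_{i,j}$ (possible by linear independence of the $f_i$), the hypothesis $\Phi\bigl(\sum_i w_i \otimes f_i\bigr) = 0$ forces $w_j = \sum_i f_i(v_j) w_i = 0$ for every $j$. Surjectivity is where one invokes the finiteness assumption, and I would handle it in two cases. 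If $\dim V < \infty$, take a basis $v_1, \ldots, v_m$ of $V$ with dual basis $f_1, \ldots, f_m$ of $V^*$, and observe that an arbitrary $\varphi \in \Hom_\FF(V,W)$ equals $\Phi\bigl(\sum_{j} \varphi(v_j) \otimes f_j\bigr)$. If instead $\dim W < \infty$, take a basis $w_1, \ldots, w_n$ of $W$, write $\varphi(v) = \sum_i g_i(v) w_i$ with uniquely determined $g_i \in V^*$, and note $\varphi = \Phi\bigl(\sum_i w_i \otimes g_i\bigr)$.

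The argument is entirely routine; the only points that repay a little care are keeping the three $A$-actions (on $V^*$, on $W \otimes V^*$, and on $\Hom_\FF(V,W)$) and their defining formulas straight in the computation for (i), and treating the two finiteness cases separately in the surjectivity step, since no single argument covers both at once.
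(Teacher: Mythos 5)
Your proof is correct and follows essentially the same route as the paper: the $A$-equivariance computation in part (i) is literally the same chain of equalities, and part (ii) rests on the standard linear-algebra fact that $\Phi$ is already a bijection of vector spaces when one of $V$, $W$ is finite-dimensional. The only cosmetic difference is that the paper exhibits explicit two-sided inverses for $\Phi$ (one formula per finiteness case, in a footnote), whereas you split bijectivity into a uniform injectivity argument plus a case-by-case surjectivity argument; both are standard and equally valid.
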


\begin{proof}[Proof of Lemma~\ref{lem.hom-tensor-iso.gen}.]
(i) Every $v \in V$, $w \in W$,
$f \in V^*$ and $a \in A$ satisfy
\[ \begin{aligned}
\Phi(a(w\otimes f))(v) &= \sum \Phi(a_1w \otimes a_2f)(v) = \sum (a_2f)(v) a_1w = \sum f(\alpha(a_2)v)a_1w \\
&= \sum a_1 f(\alpha(a_2)v) w = \sum a_1\Phi(w\otimes f)(\alpha(a_2)v) = (a\Phi(w\otimes f))(v).
\end{aligned} \]
Hence, every $w \in W$, $f \in V^*$ and $a \in A$ satisfy
$\Phi(a(w\otimes f)) = a\Phi(w\otimes f)$.
By linearity, this entails that every $t \in W \otimes V^*$ and
$a \in A$ satisfy $\Phi(at) = a\Phi(t)$.
In other words, the map $\Phi$ is $A$-equivariant.

(ii) We have assumed that at least one of the vector spaces $V$ and
$W$ is finite-dimensional.
Thus, we know from linear algebra that $\Phi$ is a vector space
isomorphism.\footnote{For instance, its inverse can be constructed as
follows:
\begin{itemize}
\item If $V$ is finite-dimensional, then we can pick a basis
$\{v_i\}$ of the $\FF$-vector space $V$, and the
corresponding dual basis $\{v_i^*\}$ of $V^*$. Then, the inverse of
$\Phi$ can be defined by
$\Phi^{-1}(h) = \sum_i h(v_i)\otimes v_i^*$ for all $h\in\Hom_\FF(V,W)$.
\item If $W$ is finite-dimensional, then we can pick a basis
$\{w_j\}$ of the $\FF$-vector space $W$, and the corresponding dual
basis $\{w_j^*\}$ of $W^*$.
Then, the inverse of $\Phi$ can be defined by
$\Phi^{-1}(h) = \sum_j w_j \otimes \left(w_j^\ast \circ h\right)$
for all $h \in \Hom_\FF\left(V, W\right)$.
\end{itemize}
}
Hence, $\Phi$ is an $A$-module isomorphism (since
Lemma~\ref{lem.hom-tensor-iso.gen} (i) shows that $\Phi$ is an
$A$-module homomorphism).
\end{proof}

\begin{corollary} \label{cor.Vstar=Hom.gen}
For any $A$-module $V$, we have $V^* \cong\Hom_\FF(V,\epsilon)$.
\end{corollary}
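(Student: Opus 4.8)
The plan is to obtain this as an immediate consequence of Lemma~\ref{lem.hom-tensor-iso.gen}. First I would apply that lemma with $W = \epsilon$, the trivial $A$-module. Since $\dim \epsilon = 1$, the vector space $W$ is finite-dimensional, so part (ii) of the lemma applies and furnishes an $A$-module isomorphism $\epsilon \otimes V^* \cong \Hom_\FF(V, \epsilon)$. Next I would invoke the canonical $A$-module isomorphism $\epsilon \otimes V^* \cong V^*$ from \eqref{canonical-triv-tensor-isomorphisms}, applied to the $A$-module $V^*$. Composing these two isomorphisms yields $V^* \cong \Hom_\FF(V, \epsilon)$, which is the claim.

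There is essentially no obstacle here: the only hypothesis of Lemma~\ref{lem.hom-tensor-iso.gen} (ii) that must be verified is the finite-dimensionality of one of the two modules, and this holds trivially because $\epsilon$ is one-dimensional.

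If one preferred a self-contained argument avoiding the lemma, one could instead observe that the underlying vector space of both $V^*$ and $\Hom_\FF(V, \epsilon)$ is $\Hom_\FF(V, \FF)$, and check directly that the two $A$-module structures coincide. Indeed, for $a \in A$, $f \in \Hom_\FF(V,\FF)$ and $v \in V$, the internal-Hom action on $\Hom_\FF(V, \epsilon)$ gives
\[
(af)(v) = \sum \epsilon(a_1) f(\alpha(a_2) v) = f\left(\alpha\left(\sum \epsilon(a_1) a_2\right) v\right) = f(\alpha(a) v),
\]
using the counit axiom $\sum \epsilon(a_1) a_2 = a$ together with the $\FF$-linearity of $f$; and $f(\alpha(a)v)$ is exactly the value of $(af)(v)$ for the left-dual $A$-module structure on $V^*$. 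Hence the identity map is an $A$-module isomorphism $V^* \to \Hom_\FF(V, \epsilon)$. Either way, the corollary is routine.
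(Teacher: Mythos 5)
Your primary argument is exactly the paper's proof: apply Lemma~\ref{lem.hom-tensor-iso.gen}(ii) with $W=\epsilon$ (finite-dimensional), then compose with the canonical isomorphism $\epsilon\otimes V^*\cong V^*$ from \eqref{canonical-triv-tensor-isomorphisms}. Your alternative direct verification that the two $A$-actions on $\Hom_\FF(V,\FF)$ coincide is also correct and is a nice, more elementary route, but it is not what the paper does.
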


\begin{proof}[Proof of Corollary~\ref{cor.Vstar=Hom.gen}.]
Clearly, the $A$-module $\epsilon$ is finite-dimensional. Hence,
Lemma~\ref{lem.hom-tensor-iso.gen} (ii) (applied to $W = \epsilon$)
shows that the map $\Phi: \epsilon \otimes V^* \to
\Hom_\FF(V, \epsilon)$
(defined as in Lemma~\ref{lem.hom-tensor-iso.gen})
is an $A$-module isomorphism.
Thus, $\Hom_\FF(V, \epsilon) \cong \epsilon \otimes V^* \cong V^*$
(by \eqref{canonical-triv-tensor-isomorphisms}, applied to $V^*$
instead of $V$)
as $A$-modules.
\end{proof}

The following lemma generalizes Lemma~\ref{lem:AHom}:

\begin{lemma}\label{lem:AHom.gen}
Let $V$ and $W$ be two $A$-modules. Then,
$\Hom_A (V, W) = \Hom_\FF (V, W)^A$.
\end{lemma}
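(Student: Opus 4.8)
The plan is to unravel both sides of the claimed equality $\Hom_A(V,W) = \Hom_\FF(V,W)^A$ into explicit conditions on a linear map $\varphi \in \Hom_\FF(V,W)$, and then check that these conditions coincide. By the definitions of the $A$-fixed points and of the $A$-module structure on $\Hom_\FF(V,W)$, a map $\varphi$ lies in $\Hom_\FF(V,W)^A$ if and only if $a\varphi = \epsilon(a)\varphi$ for every $a\in A$; evaluating both sides at an arbitrary $v\in V$, this says
\[
\sum a_1\,\varphi\bigl(\alpha(a_2)v\bigr) = \epsilon(a)\,\varphi(v)
\qquad\text{for all } a\in A \text{ and } v\in V .
\]
Call this identity $(\ast)$. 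The task is therefore to prove that $(\ast)$ holds if and only if $\varphi$ is $A$-linear.

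The forward direction ``$\varphi$ is $A$-linear $\Rightarrow (\ast)$'' is immediate: substituting $\varphi(\alpha(a_2)v) = \alpha(a_2)\varphi(v)$ into the left-hand side of $(\ast)$ and invoking the antipode axiom $\sum a_1\alpha(a_2) = \epsilon(a)1$ gives $\sum a_1\alpha(a_2)\varphi(v) = \epsilon(a)\varphi(v)$, as desired.

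For the converse, assume $(\ast)$. The idea is to apply $(\ast)$ ``inside a coproduct''. Regard $(\ast)$ as an identity of $\FF$-linear maps $A\otimes V\to W$, and precompose it with the $\FF$-linear map $A\to A\otimes V$, $a\mapsto\sum a_1\otimes a_2 v$ (for a fixed $v\in V$); coassociativity of $\Delta$ then yields, for every $a\in A$,
\[
\varphi(av) = \varphi\Bigl(\sum\epsilon(a_1)\,a_2 v\Bigr) = \sum\epsilon(a_1)\,\varphi(a_2 v) = \sum a_1\,\varphi\bigl(\alpha(a_2)\,a_3 v\bigr),
\]
where $\sum a_1\otimes a_2\otimes a_3$ denotes the iterated coproduct of $a$. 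It remains to observe that $\sum a_1\otimes\alpha(a_2)a_3 = a\otimes 1$ in $A\otimes A$: this is the antipode axiom $\sum\alpha(b_1)b_2 = \epsilon(b)1$ applied to the last two tensor legs, followed by the counit axiom $\sum\epsilon(a_2)a_1 = a$. Substituting this into the previous display gives $\sum a_1\,\varphi\bigl(\alpha(a_2)\,a_3 v\bigr) = a\,\varphi(v)$, so $\varphi(av) = a\varphi(v)$ for all $a$ and $v$; that is, $\varphi$ is $A$-linear, completing the proof.

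The only step requiring care is the bookkeeping in the converse: one must justify that ``applying $(\ast)$ to the element $a_1$ with the vector $a_2 v$'' is legitimate, which is precisely why it is phrased as precomposition of the linear-map identity $(\ast)$ with $a\mapsto\sum a_1\otimes a_2 v$ together with an appeal to coassociativity. Everything else is a routine manipulation with the Hopf axioms, and — as in Schneider's original argument — it uses neither finite-dimensionality of $A$ nor bijectivity of $\alpha$, so it remains valid in the full generality of this appendix.
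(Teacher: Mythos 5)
Your proof is correct and is essentially the same as the paper's (which is Schneider's argument, cited from \cite[Lemma 4.1]{Schneider}): both directions use the identical sequence of Hopf-axiom manipulations, and the key converse step — writing $\varphi(av)=\sum\epsilon(a_1)\varphi(a_2v)$ and then applying the fixed-point hypothesis to the outer leg $a_1$ with vector $a_2v$ — is exactly the paper's computation, merely phrased in terms of "precompose the identity $(\ast)$ with $a\mapsto\sum a_1\otimes a_2v$" rather than inline Sweedler bookkeeping.
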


The following proof of Lemma~\ref{lem:AHom.gen} is taken from
\cite[Lemma 4.1]{Schneider} (where the lemma is stated in far lesser
generality, but the proof equally applies in the general setting):

\begin{proof}[Proof of Lemma~\ref{lem:AHom.gen}.]
If $\varphi\in\Hom_A(V,W)$ then $\varphi\in \Hom_\FF(V,W)^A$,
since each $a \in A$ and $v \in V$ satisfy
\[
(a\varphi)(v)
= \sum a_1 \underbrace{\varphi(\alpha(a_2)v)}
                      _{\substack{
                        = \alpha(a_2) \varphi(v) \\
                        \text{(since }
                        \varphi\in\Hom_A(V,W) \text{)}}}
= \sum a_1 \alpha(a_2) \varphi(v)
= \epsilon(a) \varphi(v).
\]
This proves $\Hom_A (V, W) \subseteq \Hom_\FF (V, W)^A$.

Conversely, let $\psi\in \Hom_\FF(V,W)^A$. Thus,
\begin{equation}
\epsilon\left(  b\right)  \psi=b\psi\qquad\text{for each }b\in A .
\label{pf.lem:AHom.back.1}
\end{equation}

Using $a=\sum\epsilon\left(  a_{1}\right)  a_{2}$, we find
\begin{align*}
\psi\left(  av\right)    & =\psi\left(  \sum\epsilon\left(  a_{1}\right)
a_{2}v\right)  =\sum\underbrace{\epsilon\left(  a_{1}\right)  \psi\left(
a_{2}v\right)  }_{=\left(  \epsilon\left(  a_{1}\right)  \psi\right)  \left(
a_{2}v\right)  }=\sum\underbrace{\left(  \epsilon\left(  a_{1}\right)
\psi\right)  }_{\substack{=a_{1}\psi\\\text{(by (\ref{pf.lem:AHom.back.1}),
applied to }b=a_{1}\text{)}}}\left(  a_{2}v\right)  \\
& =\sum\underbrace{\left(  a_{1}\psi\right)  \left(  a_{2}v\right)
}_{\substack{=\sum\left(  a_{1}\right)  _{1}\psi\left(  \alpha\left(  \left(
a_{1}\right)  _{2}\right)  a_{2}v\right)  \\\text{(by the definition of the
}A\text{-action}\\\text{on }\Hom_{\mathbb{F}}\left(
V,W\right)  \text{)}}}=\sum\sum\left(  a_{1}\right)  _{1}\psi\left(
\alpha\left(  \left(  a_{1}\right)  _{2}\right)  a_{2}v\right)  \\
& =\sum\sum a_{1}\psi\left(  \alpha\left(  \left(  a_{2}\right)  _{1}\right)
\left(  a_{2}\right)  _{2}v\right)  \\
& \ \ \ \ \ \ \ \ \ \ \left(
\begin{array}
[c]{c}%
\text{by the coassociativity law }\sum\sum\left(  a_{1}\right)  _{1}%
\otimes\left(  a_{1}\right)  _{2}\otimes a_{2}=\sum\sum a_{1}\otimes\left(
a_{2}\right)  _{1}\otimes\left(  a_{2}\right)  _{2}\\
\text{in the coalgebra }A
\end{array}
\right)  \\
& =\sum a_{1}\psi\left(  \underbrace{\sum\alpha\left(  \left(  a_{2}\right)
_{1}\right)  \left(  a_{2}\right)  _{2}}_{\substack{=\epsilon\left(
a_{2}\right)  1\\\text{(since }\sum\alpha\left(  b_{1}\right)  b_{2}%
=\epsilon\left(  b\right)  1\\\text{for every }b\in A\text{)}}}v\right)  =\sum
a_{1}\psi\left(  \epsilon\left(  a_{2}\right)  v\right)  =\underbrace{\sum
a_{1}\epsilon\left(  a_{2}\right)  }_{=a}\psi\left(  v\right)  \\
& =a\psi\left(  v\right)  .
\end{align*}
Since this holds for all $a$ and $v$, we thus conclude that
$\psi$ lies in $\Hom_A(V,W)$.
Hence, $\Hom_\FF (V, W)^A \subseteq \Hom_A (V, W)$ is proven.
\end{proof}

\begin{noncompile}
\begin{proof}[Proof of Lemma~\ref{lem:AHom.gen} (Jia's writeup).]
If $\varphi\in\Hom_A(V,W)$ then $\varphi\in \Hom_\FF(V,W)^A$,
since each $a \in A$ and $v \in V$ satisfy
\[
(a\varphi)(v) = \sum a_1 \varphi(\alpha(a_2)v) 
= \sum a_1 \alpha(a_2) \varphi(v) 
= \epsilon(a) \varphi(v). 
\]

Conversely, let $\psi\in \Hom_\FF(V,W)^A$. Let $a \in A$ and
$v \in V$. Then
\[ \psi(av) =\sum \epsilon(a_1)\psi(a_2v) = \sum (a_1\psi)(a_2v)= \sum a_1\psi(\alpha(a_2)a_3v) = \sum a_1\epsilon(a_2) \psi(v) = a\psi(v)\]
where the first and last equalities use $a = \sum a_1\epsilon(a_2) = \sum \epsilon(a_1)a_2$ and the second uses $\psi\in \Hom_\FF(V,W)^A$.
\end{proof}
\end{noncompile}

\begin{remark}
If $A$ is a Hopf algebra, then $\alpha\left(A\right)$ is a Hopf
subalgebra of $A$.
As a consequence of our proof of Lemma~\ref{lem:AHom.gen}, we obtain the
following curious fact: If $V$ and $W$ are two $A$-modules, then
$\Hom_{\alpha\left(A\right)} (V, W) = \Hom_\FF (V, W)^A
= \Hom_A \left(V, W\right)$.
In fact, in our proof of Lemma~\ref{lem:AHom.gen}, we actually showed the
two inclusions
$\Hom_{\alpha\left(A\right)} \left(V, W\right)
\subseteq \Hom_\FF \left(V, W\right) ^A
\subseteq \Hom_A \left(V, W\right)$.
But this chain of inclusions clearly is an
equality, since its last term is contained in its first term.
\end{remark}
\end{verlong}

Next, let us generalize Lemma~\ref{lem:AHom2}:

\begin{lemma}\label{lem:AHom2.gen}
Let $V$ and $W$ be two $A$-modules such that
$W$ is finite-dimensional. Then,
$\Hom_A (V, W) \cong \Hom_A \left(W^* \otimes V, \epsilon\right)$.
\end{lemma}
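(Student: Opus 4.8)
The plan is to build the isomorphism as a chain of previously established natural isomorphisms, reducing to the very concrete statement that $\Hom_A(X,\epsilon) = X^A$-type fixed points. First I would recall from Lemma~\ref{lem:AHom.gen} (the general version of Lemma~\ref{lem:AHom}) that $\Hom_A(V,W) = \Hom_\FF(V,W)^A$, so it suffices to produce an $A$-module isomorphism $\Hom_\FF(V,W) \cong W^* \otimes V$ that is compatible with the $A$-actions, and then take $A$-fixed points on both sides; but I have to be careful, because the right-hand side of the claimed isomorphism is $\Hom_A(W^*\otimes V,\epsilon)$, not the fixed points of $W^*\otimes V$ itself. So the cleaner route is: $\Hom_A(V,W) \xrightarrow{\sim} \Hom_A(W^*\otimes V, W^*\otimes W)$? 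No — better to use the adjunction-type lemmas already in the paper. Since $W$ is finite-dimensional, Lemma~\ref{lem.hom-tensor-iso.gen}(ii) gives an $A$-module isomorphism $W\otimes V^* \cong \Hom_\FF(V,W)$; but I want $W^*$ not $V^*$. The key observation is that $\Hom_A(V,W)\cong\Hom_A(W^*,V^*)$ (duality is a contravariant equivalence by Lemma~\ref{lem.Vstarstar.gen}, when things are finite-dimensional) — however $V$ need not be finite-dimensional here, so I should avoid dualizing $V$.

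The approach I actually favor: apply Lemma~\ref{lem.hom-tensor-iso.gen}(ii) with $W=\epsilon$ and the module $W^*\otimes V$ in place of $V$, giving the $A$-isomorphism $(W^*\otimes V)^* \cong \Hom_\FF(W^*\otimes V,\epsilon)$ — wait, that again dualizes $V$. Let me instead go through internal Hom and evaluation directly. The natural map is
\[
\Theta:\ \Hom_\FF(V,W)\ \xrightarrow{\ \sim\ }\ \Hom_\FF(W^*\otimes V,\epsilon),
\]
sending $\varphi\colon V\to W$ to the functional $f\otimes v\mapsto f(\varphi(v))$ on $W^*\otimes V$. One checks this is a vector-space isomorphism when $W$ is finite-dimensional (its inverse uses a basis of $W$ and the dual basis of $W^*$, exactly as in the footnote to Lemma~\ref{lem.hom-tensor-iso.gen}). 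The heart of the proof is verifying that $\Theta$ is $A$-equivariant for the $A$-module structures in play: on the source, $\Hom_\FF(V,W)$ carries the internal-Hom action $(a\varphi)(v)=\sum a_1\varphi(\alpha(a_2)v)$; on the target, $W^*\otimes V$ carries the tensor action $a(f\otimes v)=\sum a_1 f\otimes a_2 v$ with $a_1 f = f(\alpha(a_1)-)$, and then $\Hom_\FF(W^*\otimes V,\epsilon)$ carries again the internal-Hom action (which, since the target is $\epsilon$, reduces to the dual action). This is a Sweedler-notation computation: expand $\Theta(a\varphi)$ applied to $f\otimes v$, use that $\alpha$ is an algebra anti-endomorphism to collapse $\alpha(a_2)\alpha(a_1)=\alpha(a_1a_2)$-type terms, invoke the antipode axiom $\sum\alpha(a_1)a_2=\epsilon(a)1$ together with coassociativity, and match it against $(a\cdot\Theta(\varphi))(f\otimes v)$. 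Once $\Theta$ is shown to be an $A$-module isomorphism, taking $A$-fixed points of both sides and applying Lemma~\ref{lem:AHom.gen} twice (once on each side) yields
\[
\Hom_A(V,W)=\Hom_\FF(V,W)^A\ \cong\ \Hom_\FF(W^*\otimes V,\epsilon)^A=\Hom_A(W^*\otimes V,\epsilon),
\]
which is exactly the claim.

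I expect the main obstacle to be the bookkeeping in the $A$-equivariance check for $\Theta$: keeping track of which antipode factor acts on $W^*$ versus on $V$, in which order, and making the coassociativity and antipode-axiom reductions line up correctly with the internal-Hom action on the target. A possible shortcut that would sidestep much of this computation is to instead derive the statement formally from the adjunction isomorphisms already proved: note $W^*\otimes V \cong V\otimes W^*$ is not available in general (no commutativity), so one should use \eqref{eq:TensorDual2} from Lemma~\ref{lem:tensor-dual-four-isos}, namely $\Hom_A(W^*\otimes U, X)\xrightarrow{\sim}\Hom_A(U, W\otimes X)$, applied with $U=V$ and $X=\epsilon$, giving $\Hom_A(W^*\otimes V,\epsilon)\cong\Hom_A(V, W\otimes\epsilon)\cong\Hom_A(V,W)$ via \eqref{canonical-triv-tensor-isomorphisms} — provided \eqref{eq:TensorDual2} holds in the present generality (it requires enough finiteness that the relevant duals behave, which is guaranteed by $W$ finite-dimensional). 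I would present the proof via this second, shorter route if \eqref{eq:TensorDual2} is genuinely available here, and fall back to the explicit $\Theta$ otherwise; the explicit construction is the safe default since it only uses Lemma~\ref{lem.hom-tensor-iso.gen} and Lemma~\ref{lem:AHom.gen}, both proved in this appendix.
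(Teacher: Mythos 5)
There is a genuine gap in your primary route, and it is exactly the pitfall the paper flags explicitly. You assert that the map
\[
\Theta:\ \Hom_\FF(V,W)\ \longrightarrow\ \Hom_\FF\left(W^*\otimes V,\,\epsilon\right),
\qquad \Theta(\varphi)(g\otimes v)=g\bigl(\varphi(v)\bigr),
\]
is an $A$-module isomorphism, and then conclude by taking $A$-fixed points via Lemma~\ref{lem:AHom.gen}. But $\Theta$ is \emph{not} $A$-equivariant in general. One way to see this structurally: by Lemma~\ref{lem.hom-tensor-iso.gen} the source is $\Hom_\FF(V,W)\cong W\otimes V^*$, while by Lemma~\ref{lem:tensor-dual.gen} the target is $(W^*\otimes V)^*\cong V^*\otimes W^{**}$. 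The tensor factors appear in opposite orders, and over a non-cocommutative Hopf algebra $U\otimes U'$ and $U'\otimes U$ are generally non-isomorphic as $A$-modules. A direct Sweedler check confirms it: $\Theta(a\varphi)(g\otimes v)=\sum g\bigl(a_1\varphi(\alpha(a_2)v)\bigr)$, whereas $\bigl(a\Theta(\varphi)\bigr)(g\otimes v)=\sum g\bigl(\alpha^2(a_2)\varphi(\alpha(a_1)v)\bigr)$; already for a skew-primitive $a$ in a Taft algebra these disagree. The paper's proof of Lemma~\ref{lem:AHom2.gen} uses exactly your $\Theta$ (there called $\phi$) as a plain vector-space isomorphism and explicitly states that it is \emph{not} an $A$-module isomorphism; the real work is then to show $\phi$ maps the subspace $\Hom_A(V,W)$ onto $\Hom_A(W^*\otimes V,\epsilon)$ by verifying both inclusions directly with Sweedler computations (for the backward direction one recovers the relation $\sum\alpha(a_1)f(a_2v)=\epsilon(a)f(v)$ and then unwinds it to get $A$-linearity of $f$). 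So ``take fixed points of an $A$-module isomorphism'' must be replaced by ``show a vector-space isomorphism restricts to a bijection on the two fixed-point subspaces,'' which is a strictly weaker hypothesis requiring a two-sided check.

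Your fallback (using \eqref{eq:TensorDual2} with $U=V$, $X=\epsilon$) would indeed give the statement in one line, but as written it is circular in this paper's logical order: the proof of Lemma~\ref{lem:tensor-dual-four-isos.gen}, which establishes \eqref{eq:TensorDual2.gen}, invokes Lemma~\ref{lem:AHom2.gen} for precisely this adjunction. The paper does sketch, in a remark, an alternative and independent proof of \eqref{eq:TensorDual2.gen} via an explicit coevaluation construction; if you routed through that you would have a correct non-circular argument, but you would have to say so and prove that version, rather than cite \eqref{eq:TensorDual2} as a black box.
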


\begin{vershort}
\begin{proof}
One has an $\FF$-vector space isomorphism
$\phi : \Hom(V, W) \rightarrow \Hom(W^* \otimes V,\FF)$
sending $f$ in $\Hom_\FF (V, W)$
to the functional $\phi(f)$ satisfying
$
\phi(f)(g \otimes v) = g\left(f(v)\right).
$
Indeed,
$\phi$ is the composition of the standard
isomorphisms 
$$\Hom_\FF (V, W) \to \Hom_\FF (V, (W^*)^*)
= \Hom_\FF (V, \Hom_\FF(W^*, \FF))
\to \Hom_\FF \left(W^* \otimes V, \FF\right)
= \left(W^* \otimes V\right)^*,$$ 
where the first arrow is induced
by the isomorphism $W \to (W^*)^*$ arising from the
finite-dimensionality of $W$.
This $\phi$ is not, in general, an $A$-module isomorphism,
but we claim that it restricts to an isomorphism
$\Hom_A (V, W) \to \Hom_A \left(W^* \otimes V, \epsilon\right)$,
which would prove this lemma.
That is, we will show $f \in \Hom_\FF (V, W)$ belongs to
$\Hom_A (V, W)$ if and only if its image $\phi(f)$ belongs to
$\Hom_A \left(W^* \otimes V, \epsilon\right)= \Hom\left(W^* \otimes V, \epsilon\right)^A$. 
First observe that, for each
$a \in A$, $g \in W^*$ and $v \in V$,
\begin{equation}
\phi(f)\left(a(g \otimes v)\right)
= \phi(f)\left(\sum a_1 g \otimes a_2 v\right)
= \sum (a_1 g)\left( f\left(a_2 v\right)\right) 
= \sum g(\alpha(a_1) f(a_2v)).
\label{pf.lem:AHom2.3}
\end{equation}

\vskip.1in
\noindent
{\sf The forward implication.}
Assuming $f \in \Hom_A (V, W)$, we check  
$\phi(f) \in \Hom_A \left(W^* \otimes V, \epsilon\right)$ as follows:
\[ 
\phi(f)\left(a(g \otimes v)\right)
= g\left( \sum \alpha(a_1) a_2  f(v) \right)
= \epsilon(a) g\left( f(v) \right) 
= \epsilon(a) \phi(f) \left(g \otimes v\right)
\]
where the first equality used \eqref{pf.lem:AHom2.3} and
$f \in \Hom_A (V, W)$ and the second equality
used the definition of $\alpha$.

\vskip.1in
\noindent
{\sf The backward implication.}
We show $f \in \Hom_A (V, W)$, assuming 
$\phi(f) \in \Hom_A \left(W^* \otimes V, \epsilon\right)$ so that
\begin{equation*}
\phi(f)\left(a(g \otimes v)\right)
= \epsilon(a) \phi(f) \left(g \otimes v\right)
= \epsilon(a) g\left( f\left(v\right) \right).
\end{equation*}
Comparing this with \eqref{pf.lem:AHom2.3}, we obtain 
$
g\left( \sum \alpha(a_1) f\left(a_2 v\right) \right)
= g \left( \epsilon(a) f\left(v\right) \right)
$
for \textbf{all} $g \in W^*$, and hence
\begin{equation}
\sum \alpha(a_1) f\left(a_2 v\right)
= \epsilon(a) f\left(v\right).
\label{pf.lem:AHom2.2}
\end{equation}
\begin{verlong}
(since an element $w \in W$ is uniquely determined by its images under
all $g \in W^*$).
\end{verlong}
We use this to calculate that, for any $b \in A$ and $v \in V$, one has
\begin{multline*}
f(bv)
= f\left(\sum \epsilon(b_1) b_2 v\right)
= \sum \epsilon(b_1) f\left(b_2 v\right)
= \sum \sum (b_1)_1 \alpha((b_1)_2) f\left(b_2 v\right) \\
= \sum b_1 \sum \alpha((b_2)_1) f\left((b_2)_2 v\right) 
= \sum b_1 \cdot \epsilon(b_2) f\left(v\right) 
= b f\left(v\right) 
\end{multline*}
where the third equality used the defining property of $\alpha$ applied to
$b_1$, the fourth equality used the coassociativity
$\sum \sum (b_1)_1 \otimes (b_1)_2 \otimes b_2 
= \sum b_1 \otimes \sum (b_2)_1 \otimes (b_2)_2$,
and the fifth applied 
\eqref{pf.lem:AHom2.2} with  $a = b_2$.
\end{proof}
\end{vershort}

\begin{verlong}
\begin{proof}[Proof of Lemma~\ref{lem:AHom2.gen}.]
For every $f \in \Hom_\FF (V, W)$, define a linear functional
$\phi(f) \in \left(W^* \otimes V \right)^*$ by setting
\[
\phi(f)(g \otimes v) = g\left(f(v)\right)
\qquad \text{for all } g \in W^* \text{ and } v \in V .
\]
Thus, we have defined an $\FF$-linear map
$\phi : \Hom_\FF (V, W) \to \left(W^* \otimes V \right)^*$. This
$\FF$-linear map is an isomorphism of vector spaces\footnote{Indeed,
this map is the composition of the standard
isomorphisms $\Hom_\FF (V, W) \to \Hom_\FF (V, (W^*)^*)
= \Hom_\FF (V, \Hom_\FF(W^*, \FF))
\to \Hom_\FF \left(W^* \otimes V, \FF\right)
= \left(W^* \otimes V\right)^*$. Here, the isomorphism
$W \to (W^*)^*$ (arising from the finite-dimensionality of $W$)
was used for the first arrow.}. It is not, in general, an
$A$-module isomorphism. Nevertheless, we claim that it restricts to an
isomorphism
$\Hom_A (V, W) \to \Hom_A \left(W^* \otimes V, \epsilon\right)$.
This claim (once proven) will immediately yield Lemma~\ref{lem:AHom2.gen};
thus, it suffices to prove this claim. In other words, it suffices to
prove that a map $f \in \Hom_\FF (V, W)$ belongs to
$\Hom_A (V, W)$ if and only if its image $\phi(f)$ belongs to
$\Hom_A \left(W^* \otimes V, \epsilon\right)$. We shall
prove the two directions of this equivalence separately:

$\Longrightarrow$: Assume that $f \in \Hom_A (V, W)$. Recall that
$\sum \alpha(a_1) a_2 = \epsilon(a)1$ for each $a \in A$. Now, for each
$a \in A$, $g \in W^*$ and $v \in V$, we have
\begin{align}
\phi(f)\left(a(g \otimes v)\right)
&= \phi(f)\left(\sum a_1 g \otimes a_2 v\right)
= \sum (a_1g) \left(f\left(a_2 v\right)\right)
= \sum g \left(\alpha(a_1) f\left(a_2 v\right)\right) \\
&= g \left( \sum \alpha(a_1)
        \underbrace{f\left(a_2 v\right)}_{\substack{
            = a_2 f\left(v\right) \\ \text{(since } f \in \Hom_A (V, W) \text{)} }}
     \right)
\label{pf.lem:AHom2.1} \\
&= g\left( \underbrace{\sum \alpha(a_1) a_2}_{= \epsilon(a)1}
  f\left(v\right) \right)
= \epsilon(a) \underbrace{g\left( f\left(v\right) \right)}_{=\phi(f) \left(g \otimes v\right)}
= \epsilon(a) \phi(f) \left(g \otimes v\right) .
\nonumber
\end{align}
Thus, $\phi(f)$ belongs to
$\Hom_A \left(W^* \otimes V, \epsilon\right)$.
This proves the $\Longrightarrow$ direction.

$\Longleftarrow$: Assume that
$\phi(f)$ belongs to $\Hom_A \left(W^* \otimes V, \epsilon\right)$.

Let $a \in A$, $g \in W^*$ and $v \in V$. Then, as in
\eqref{pf.lem:AHom2.1}, we find
\begin{equation}
\phi(f)\left(a(g \otimes v)\right)
= g \left(\sum \alpha(a_1) f\left(a_2 v\right)\right) .
\label{pf.lem:AHom2.3}
\end{equation}
But $\phi(f)$ belongs to
$\Hom_A \left(W^* \otimes V, \epsilon\right)$. Hence,
\begin{equation*}
\phi(f)\left(a(g \otimes v)\right)
= \epsilon(a) \underbrace{\phi(f) \left(g \otimes v\right)}_{=g\left( f\left(v\right)\right)}
= \epsilon(a) g\left( f\left(v\right) \right)
= g\left( \epsilon(a) f\left(v\right) \right) .
\end{equation*}
Comparing this with \eqref{pf.lem:AHom2.3}, we obtain
\[
g\left( \sum \alpha(a_1) f\left(a_2 v\right) \right)
= g \left( \epsilon(a) f\left(v\right) \right) .
\]
Since this holds for \textbf{all} $g \in W^*$, we thus have
\begin{equation}
\sum \alpha(a_1) f\left(a_2 v\right)
= \epsilon(a) f\left(v\right)
\label{pf.lem:AHom2.2}
\end{equation}
(since an element $w \in W$ is uniquely determined by its images under
all $g \in W^*$).

Now, let $b \in A$ and $v \in V$. Then, using the axiom
$b = \sum \epsilon(b_1) b_2$, we find
\begin{align*}
f(bv)
&= f\left(\sum \epsilon(b_1) b_2 v\right)
= \sum \epsilon(b_1) f\left(b_2 v\right)
= \sum \epsilon(b_1) 1 f\left(b_2 v\right)
= \sum \sum (b_1)_1 \alpha((b_1)_2) f\left(b_2 v\right) \\
& \qquad \left(\text{by the antipode axiom }
  \epsilon(a) 1 = \sum a_1 \alpha(a_2)\text{, applied to }
  a = b_1 \right) \\
&= \sum b_1 \underbrace{\sum \alpha((b_2)_1) f\left((b_2)_2 v\right)}_{\substack{= \epsilon(b_2) f\left(v\right) \\ \text{(by \eqref{pf.lem:AHom2.2}, applied to } a = b_2 \text{)}}} \\
& \qquad \left(\text{by the coassociativity law }
  \sum \sum (b_1)_1 \otimes (b_1)_2 \otimes b_2 = \sum b_1 \otimes \sum (b_2)_1 \otimes (b_2)_2 \right) \\
&= \sum b_1 \epsilon(b_2) f\left(v\right) = b f\left(v\right) .
\end{align*}
Hence, $f$ is $A$-linear, i.e., belongs to $\Hom_A (V, W)$.
This proves the $\Longleftarrow$ direction. Hence,
Lemma~\ref{lem:AHom2.gen} is proven.
\end{proof}
\end{verlong}



We next generalize Lemma~\ref{lem:tensor-dual}:

\begin{lemma}\label{lem:tensor-dual.gen}
Let $U$ and $V$ be $A$-modules, at least one of which is
finite-dimensional.
\begin{itemize}
\item[(i)] We have $(U\otimes V)^* \cong V^*\otimes U^*$.
\item[(ii)] Assume that $A$ is finite-dimensional. Then,
$\leftast{(U\otimes V)} \cong \leftast{V}\otimes \leftast{U}$.
\end{itemize}
\end{lemma}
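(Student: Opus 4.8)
The plan is to prove both parts by writing down a single explicit candidate isomorphism and checking its $A$-equivariance through a short Sweedler-notation computation; the underlying vector-space statement is classical. For part~(i), I would introduce the $\FF$-linear map $\Theta \colon V^* \otimes U^* \to (U \otimes V)^*$ determined by $\Theta(g \otimes f)(u \otimes v) = f(u)\,g(v)$ for all $g \in V^*$, $f \in U^*$, $u \in U$, $v \in V$ (this is well-defined because $(g,f) \mapsto \bigl(u \otimes v \mapsto f(u)g(v)\bigr)$ is $\FF$-bilinear). First I would note the standard fact that, since at least one of $U$ and $V$ is finite-dimensional, $\Theta$ is already an isomorphism of vector spaces: if, say, $U$ is finite-dimensional with basis $(e_i)$ and dual basis $(e_i^*)$ in $U^*$, then $h \mapsto \sum_i h(e_i \otimes -) \otimes e_i^*$ is a two-sided inverse (and the case where $V$ is finite-dimensional is symmetric). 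So everything reduces to checking the $A$-linearity of $\Theta$.

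For that, the one nontrivial input is that the antipode $\alpha$ is a coalgebra anti-endomorphism, so that $\sum \alpha(a)_1 \otimes \alpha(a)_2 = \sum \alpha(a_2) \otimes \alpha(a_1)$ in Sweedler notation. Unwinding the $A$-module structures on $V^*$, on $U^*$, on $V^* \otimes U^*$, and on $(U \otimes V)^* = \Hom_\FF(U \otimes V, \FF)$, I would check that for every $a \in A$, $g \in V^*$, $f \in U^*$, $u \in U$, $v \in V$ both $\Theta\bigl(a(g \otimes f)\bigr)(u \otimes v)$ and $\bigl(a\,\Theta(g \otimes f)\bigr)(u \otimes v)$ come out equal to $\sum f(\alpha(a_2)\,u)\, g(\alpha(a_1)\,v)$ — the first by expanding $a(g\otimes f) = \sum a_1 g \otimes a_2 f$, the second by expanding $\alpha(a)(u \otimes v) = \sum \alpha(a)_1 u \otimes \alpha(a)_2 v$ and then applying the displayed anti-endomorphism identity. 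Hence $\Theta$ is an $A$-module homomorphism, and being bijective it is an $A$-module isomorphism, which proves $(U \otimes V)^* \cong V^* \otimes U^*$.

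For part~(ii), I would use that $A$ is finite-dimensional, so $\alpha$ is bijective and $\alpha^{-1}$ is again a coalgebra anti-endomorphism (either quote this among the basic properties recalled earlier, or deduce $\sum \alpha^{-1}(a)_1 \otimes \alpha^{-1}(a)_2 = \sum \alpha^{-1}(a_2) \otimes \alpha^{-1}(a_1)$ by substituting $\alpha^{-1}(a)$ into the identity for $\alpha$). The very same formula $g \otimes f \mapsto \bigl(u \otimes v \mapsto f(u)g(v)\bigr)$ then defines a vector-space isomorphism $\leftast{V} \otimes \leftast{U} \to \leftast{(U \otimes V)}$, and its $A$-equivariance is verified by repeating verbatim the computation of part~(i) with $\alpha$ replaced everywhere by $\alpha^{-1}$. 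The only real obstacle here is clerical: keeping the Sweedler indices and the order reversal coming from the flip straight in the equivariance check, and making sure the finite-dimensionality hypothesis is invoked exactly where the vector-space isomorphism needs it.
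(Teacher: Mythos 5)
Your proof is correct and follows essentially the same path as the paper's: you write down the same candidate map $g \otimes f \mapsto (u \otimes v \mapsto f(u)g(v))$, invoke the classical vector-space isomorphism (using the finite-dimensionality hypothesis), and verify $A$-equivariance via the anti-comultiplicativity identity $\sum \alpha(a)_1 \otimes \alpha(a)_2 = \sum \alpha(a_2) \otimes \alpha(a_1)$, then handle (ii) by replacing $\alpha$ with $\alpha^{-1}$. The only cosmetic difference is that you compute both sides separately down to the common expression $\sum f(\alpha(a_2)u)\,g(\alpha(a_1)v)$, whereas the paper strings the equalities into a single chain.
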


\begin{vershort}
\begin{proof}
It is straightforward to check $A$-equivariance for
the usual $\FF$-vector space isomorphism
$V^*\otimes U^* \to (U\otimes V)^*$
(or $\leftast{V}\otimes \leftast{U} \to \leftast{(U\otimes V)}$)
that sends $g\otimes f$ to the functional
mapping $u\otimes v \mapsto f(u)g(v)$.
\end{proof}
\end{vershort}

\begin{verlong}
\begin{proof}[Proof of Lemma~\ref{lem:tensor-dual.gen}.]
(i)
There is a linear map $\phi: V^*\otimes U^*\to (U\otimes V)^*$
given by $\phi(g\otimes f)(u\otimes v) = f(u)g(v)$ for all $u\in U$,
$v\in V$, $f\in U^*$, and $g\in V^*$.
This linear map $\phi$ is an $\FF$-vector space isomorphism (since at
least one of $U$ and $V$ is finite-dimensional)\footnote{The quickest
way to prove this is to recall that both duals and tensor products
commute with finite direct sums, so we can reduce the proof to the
case when one of $U$ and $V$ is $\FF$; but this case is obvious.}.
We shall now show that this isomorphism is $A$-equivariant.
Indeed, for any $a\in A$, $g \in V^*$, $f \in U^*$, $u \in U$ and
$v \in V$, we have
\begin{align*}
\phi\left(\underbrace{a(g\otimes f)}_{= \sum a_1 g \otimes a_2 f} \right) (u\otimes v)
&= \sum \phi(a_1g \otimes a_2f)(u\otimes v)
= \sum \underbrace{(a_2f)(u)}_{= f(\alpha(a_2)u)} \underbrace{(a_1g)(v)}_{= g(\alpha(a_1) v)}
= \sum f(\alpha(a_2)u) g(\alpha(a_1)v) \\
&= \sum \phi( g \otimes f )(\alpha(a_2)u\otimes \alpha(a_1)v)
= \phi(g\otimes f)(\alpha(a)(u\otimes v))
= (a\phi(g\otimes f))(u\otimes v),
\end{align*}
where the fifth equality sign relied on the fact that
$\sum \alpha(a_2) \otimes \alpha(a_1) = \Delta (\alpha(a))$ (which is
part of what it means for $\alpha$ to be a coalgebra
anti-homomorphism).
Hence, $\phi\left(a (g \otimes f)\right) = a \phi(g \otimes f)$ for
all $a\in A$, $g \in V^*$ and $f \in U^*$
(because we have just shown that the two linear maps
$\phi\left(a (g \otimes f)\right)$ and
$a \phi(g \otimes f)$ are equal on all pure tensors).
Therefore, $\phi\left(a t\right) = a \phi(t)$ for all $a \in A$
and $t \in V^* \otimes U^*$ (by linearity).
In other words, the isomorphism
$\phi: V^*\otimes U^*\to (U\otimes V)^*$ is $A$-equivariant.
Therefore $(U\otimes V)^* \cong V^*\otimes U^*$.

(ii) The proof for
$\leftast{(U\otimes V)} \cong \leftast{V}\otimes \leftast{U}$
is similar.\footnote{
Indeed, the isomorphism is provided by the same map $\phi$. The
computation is also identical, except that $\alpha$ is replaced by
$\alpha^{-1}$ (which, too, is a coalgebra anti-homomorphism).
The finite-dimensionality of $A$ is used to ensure that $\alpha^{-1}$
exists (and the right-duals are well-defined).
}
\end{proof}
\end{verlong}

\begin{verlong}
Here is another basic property of Hopf algebras:

\begin{lemma}\label{lem:coevaluation}
Let $V$ be a finite-dimensional $A$-module.
Let $\{v_i\}$ be a basis for the $\FF$-vector space $V$,
and $\{v_i^*\}$ the dual basis for $V^*$.
Identify $V \otimes \epsilon$ with $V$ and $\epsilon \otimes V^*$ with
$V^*$ as usual.
Then
\begin{enumerate}
\item[(i)]
$\sum_i v_i\otimes v_i^*(v)=v$ for all $v\in V$,
\item[(ii)]
$\sum_i v^*(v_i)\otimes v_i^* = v^*$ for all $v^*\in V^*$, and
\item[(iii)]
$a\sum_i v_i\otimes v_i^*=\epsilon(a) \sum_i v_i\otimes v_i^*$ for all $a\in A$.
\end{enumerate}
\end{lemma}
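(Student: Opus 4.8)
The plan is to obtain parts (i) and (ii) as pure linear algebra over $\FF$, and then deduce part (iii) from part (i) together with the antipode axiom $\sum a_1 \alpha(a_2) = \epsilon(a) 1$.

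For part (i), recall that $\{v_i\}$ and $\{v_i^*\}$ being dual bases means $v = \sum_i v_i^*(v)\, v_i$ for every $v \in V$; since the identification $V \otimes \epsilon \cong V$ of \eqref{canonical-triv-tensor-isomorphisms} sends $w \otimes c$ to $cw$ for $w \in V$ and $c \in \FF$, the left-hand side $\sum_i v_i \otimes v_i^*(v)$ of (i) corresponds precisely to $\sum_i v_i^*(v)\, v_i = v$. For part (ii), expanding an arbitrary $v^* \in V^*$ in the dual basis gives $v^* = \sum_i v^*(v_i)\, v_i^*$, and under the identification $\epsilon \otimes V^* \cong V^*$ the left-hand side of (ii) corresponds to $\sum_i v^*(v_i)\, v_i^* = v^*$. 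Neither of these uses the Hopf structure.

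For part (iii), fix $a \in A$. By the definition of the $A$-action on $V \otimes V^*$, one has (in Sweedler notation) $a \sum_i v_i \otimes v_i^* = \sum_i \sum a_1 v_i \otimes a_2 v_i^*$. I would then expand each functional $a_2 v_i^*$ in the dual basis of $V^*$: by the definition of the $A$-action on $V^*$, $a_2 v_i^* = \sum_j \big((a_2 v_i^*)(v_j)\big)\, v_j^* = \sum_j v_i^*(\alpha(a_2) v_j)\, v_j^*$. Substituting this, and reorganizing the (finitely many) summations by pulling the first tensor leg $a_1$ out, turns the expression into $\sum_j \Big(\sum a_1 \big(\sum_i v_i^*(\alpha(a_2) v_j)\, v_i\big)\Big) \otimes v_j^*$; the inner sum $\sum_i v_i^*(\alpha(a_2) v_j)\, v_i$ is nothing but the expansion of $\alpha(a_2) v_j \in V$ in the basis $\{v_i\}$, hence equals $\alpha(a_2) v_j$ (this is again part (i)). So the whole thing collapses to $\sum_j \big(\sum a_1 \alpha(a_2)\big) v_j \otimes v_j^*$, and the antipode axiom $\sum a_1 \alpha(a_2) = \epsilon(a) 1$ finishes it as $\epsilon(a) \sum_j v_j \otimes v_j^*$.

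An even shorter alternative for (iii) is to push the statement through the $A$-module isomorphism $\Phi : V \otimes V^* \to \Hom_\FF(V, V)$ of Lemma~\ref{lem.hom-tensor-iso.gen}: by part (i) it sends $\sum_i v_i \otimes v_i^*$ to $\id_V$, a one-line computation gives $(a \cdot \id_V)(v) = \sum a_1 \alpha(a_2) v = \epsilon(a) v$, i.e.\ $a \cdot \id_V = \epsilon(a)\, \id_V$, and injectivity of $\Phi$ yields the claim. I do not expect any genuine difficulty; the only thing to watch is the bookkeeping with the Sweedler legs in part (iii) --- keeping $a_1$ outside the dual-basis manipulations, which touch only the second factor.
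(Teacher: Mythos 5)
Both of your arguments for part (iii) are correct, and parts (i), (ii) match the paper exactly (pure linear algebra). Your \emph{alternative} argument is essentially the paper's proof: the paper also sends $\sum_i v_i \otimes v_i^*$ through the $A$-module isomorphism $\Phi : V \otimes V^* \to \Hom_\FF(V,V)$, observes that the image is $\id_V$, and concludes from the fact that $\id_V$ is an $A$-fixed point of $\Hom_\FF(V,V)$ (the paper invokes $\Hom_A(V,V) = \Hom_\FF(V,V)^A$ from Lemma~\ref{lem:AHom.gen}, where you instead compute $a\cdot\id_V = \bigl(\sum a_1\alpha(a_2)\bigr)(-) = \epsilon(a)\id_V$ directly --- a minor and equally valid variation). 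Your \emph{main} argument is a genuinely different route: a direct Sweedler-calculus computation that expands $a_2 v_i^*$ in the dual basis, collapses the inner sum to $\alpha(a_2)v_j$, and applies the antipode axiom. This bypasses both Lemma~\ref{lem.hom-tensor-iso.gen} and Lemma~\ref{lem:AHom.gen} entirely and is therefore more self-contained, at the cost of some index bookkeeping; the paper's structural approach is shorter once those lemmas are in hand and makes visible the connection to coevaluation being an $A$-morphism $\epsilon \to V \otimes V^*$. Both computations check out, and the places that need care (keeping $a_1$ outside the dual-basis manipulation, pulling the scalar $v_i^*(\alpha(a_2)v_j)$ through $a_1$ by linearity) are all handled correctly.
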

\begin{proof}[Proof of Lemma~\ref{lem:coevaluation}.]
(i) and (ii) are just restatements of the identities
$\sum_i v_i^*(v) v_i = v$ and $\sum_i v^*(v_i) v_i^* = v^*$, which
are known facts from linear algebra.

(iii) The identity map $\id_V$ belongs to
$\Hom_A \left(V,V\right) = \Hom_\FF \left(V,V\right)^A$ (by
Lemma~\ref{lem:AHom.gen}). But
Lemma~\ref{lem.hom-tensor-iso.gen} (ii) provides an $A$-module
isomorphism $\Phi: V \otimes V^* \overset{\cong}{\rightarrow} \Hom_\FF(V,V)$.
The image of the element $\sum_i v_i\otimes v_i^* \in V \otimes V^*$
under this isomorphism $\Phi$ is $\id_V$ (because each $v \in V$
satisfies
$\left(\Phi\left(\sum_i v_i\otimes v_i^*\right)\right)\left(v\right)
= \sum_i v_i^*\left(v\right) v_i = v = \id_V \left(v\right)$),
which belongs to
$\Hom_A \left(V,V\right) = \Hom_\FF \left(V,V\right)^A$. Hence,
$\sum_i v_i\otimes v_i^*$ must belong to
$\left(V \otimes V^*\right)^A$. In other words,
$a\sum_i v_i\otimes v_i^*=\epsilon(a) \sum_i v_i\otimes v_i^*$ for all $a\in A$.
This proves part (iii).
\end{proof}
\end{verlong}

The next lemma generalizes Lemma~\ref{lem:tensor-dual-four-isos}:

\begin{lemma}
\label{lem:tensor-dual-four-isos.gen}
Let $U$, $V$, and $W$ be $A$-modules such that $V$ and $W$ are
finite-dimensional.
Then, one has isomorphisms
\begin{align}
\Hom_A(U\otimes V, W) \xrightarrow\sim \Hom_A(U, W\otimes V^*),
\label{eq:TensorDual1.gen}\\
\Hom_A(V^*\otimes U, W) \xrightarrow\sim \Hom_A(U, V\otimes W).
\label{eq:TensorDual2.gen}
\end{align}

Assume furthermore that $A$ is finite-dimensional.
Then, one has isomorphisms
\begin{align}
\Hom_A(U\otimes \leftast{V}, W) \xrightarrow\sim \Hom_A(U, W\otimes V),
\label{eq:TensorDual3.gen} \\
\Hom_A(V\otimes U, W) \xrightarrow\sim \Hom_A(U, \leftast{V}\otimes W).
\label{eq:TensorDual4.gen}
\end{align}
\end{lemma}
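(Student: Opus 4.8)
## Proof Plan for Lemma~\ref{lem:tensor-dual-four-isos.gen}

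The plan is to reduce everything to the adjunction-style isomorphism in Lemma~\ref{lem:AHom2.gen}, namely $\Hom_A(X, Y) \cong \Hom_A(Y^* \otimes X, \epsilon)$ for $Y$ finite-dimensional, together with Lemma~\ref{lem:tensor-dual.gen} on duals of tensor products and Lemma~\ref{lem.Vstarstar.gen} on double duals. The key observation is that each of the four claimed isomorphisms has both sides of the form $\Hom_A(-, W)$ (or can be brought to such a form), and applying Lemma~\ref{lem:AHom2.gen} to $W$ turns both sides into $\Hom_A(- \otimes \text{something}, \epsilon)$, at which point one only needs an $A$-module isomorphism between the two ``something-tensored'' objects, which is furnished by associativity of $\otimes$, the canonical isomorphisms \eqref{canonical-triv-tensor-isomorphisms}, and Lemma~\ref{lem:tensor-dual.gen}.

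First I would prove \eqref{eq:TensorDual1.gen}. Using Lemma~\ref{lem:AHom2.gen} (valid since $W$ is finite-dimensional), the left side becomes $\Hom_A(W^* \otimes (U \otimes V), \epsilon)$, and the right side becomes $\Hom_A\left((W \otimes V^*)^* \otimes U, \epsilon\right)$. By Lemma~\ref{lem:tensor-dual.gen}(i), $(W \otimes V^*)^* \cong (V^*)^* \otimes W^*$, and since $V$ is finite-dimensional, Lemma~\ref{lem.Vstarstar.gen}-type reasoning (or simply the canonical finite-dimensional double-dual isomorphism, which is $A$-linear by the argument in the proof of Lemma~\ref{lem.Vstarstar.gen}) gives $(V^*)^* \cong V$, so $(W \otimes V^*)^* \cong V \otimes W^*$. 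Hence the right side is $\Hom_A\left((V \otimes W^*) \otimes U, \epsilon\right)$; but $W^* \otimes (U \otimes V) \cong W^* \otimes V \otimes U \cong ?$ — here I must be careful, since $\otimes$ is not commutative. So instead I would pick a cleaner route: apply Lemma~\ref{lem:AHom2.gen} to $V$ (also finite-dimensional) \emph{first} on the right side, rewriting $\Hom_A(U, W \otimes V^*)$, and match directly using \eqref{eq:TensorDual2.gen}-style bookkeeping. Concretely, \eqref{eq:TensorDual1.gen} and \eqref{eq:TensorDual2.gen} are essentially dual to each other: replacing $V$ by $V^*$ in \eqref{eq:TensorDual2.gen} and using $(V^*)^* \cong V$ and $\leftast{(V^*)} \cong V$ converts one into the other, so it suffices to establish \eqref{eq:TensorDual2.gen} and then derive \eqref{eq:TensorDual1.gen} from it. For \eqref{eq:TensorDual2.gen}: by Lemma~\ref{lem:AHom2.gen}, the left side is $\Hom_A\left(W^* \otimes (V^* \otimes U), \epsilon\right) = \Hom_A\left((W^* \otimes V^*) \otimes U, \epsilon\right)$, and by Lemma~\ref{lem:tensor-dual.gen}(i), $W^* \otimes V^* \cong (V \otimes W)^*$, so this is $\Hom_A\left((V \otimes W)^* \otimes U, \epsilon\right)$, which by Lemma~\ref{lem:AHom2.gen} again (applied to the finite-dimensional module $V \otimes W$) equals $\Hom_A(U, V \otimes W)$. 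That proves \eqref{eq:TensorDual2.gen}.

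Next, for the right-dual versions \eqref{eq:TensorDual3.gen} and \eqref{eq:TensorDual4.gen}, now assuming $A$ finite-dimensional so that $\leftast{(-)}$ is defined and $\leftast{(V^*)} \cong V \cong (\leftast{V})^*$ by Lemma~\ref{lem.Vstarstar.gen}, I would substitute. In \eqref{eq:TensorDual1.gen}, replace $V$ by $\leftast{V}$: then $V^* = (\leftast{V})^*$ on the right becomes $V$ by Lemma~\ref{lem.Vstarstar.gen}, yielding exactly \eqref{eq:TensorDual3.gen}. Similarly, in \eqref{eq:TensorDual2.gen}, replace $V$ by $\leftast{V}$: then $V^*$ on the left becomes $(\leftast{V})^* \cong V$... wait — that would give $\Hom_A(V \otimes U, W) \cong \Hom_A(U, \leftast{V} \otimes W)$, which is precisely \eqref{eq:TensorDual4.gen}. (One must check that $\leftast{V}$ is still finite-dimensional — it is, being a subspace of $\Hom_\FF(V,\FF)$ for finite-dimensional $V$ — so the finite-dimensionality hypotheses needed to invoke \eqref{eq:TensorDual1.gen}--\eqref{eq:TensorDual2.gen} are met.)

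The main obstacle I anticipate is bookkeeping the non-commutativity of $\otimes$: one cannot freely swap tensor factors, so the matching of the two sides in each case must go through \emph{Lemma~\ref{lem:tensor-dual.gen}}, which reverses the order of factors under dualization, rather than through any naive commutativity. The reduction above is engineered so that each application of Lemma~\ref{lem:AHom2.gen} introduces a $W^*$ on the \emph{left}, and the order-reversal in Lemma~\ref{lem:tensor-dual.gen} exactly compensates; I would double-check in each of the four cases that the parenthesization and left/right placement work out, and that the composite isomorphism is indeed $A$-linear (which it is, being a composite of $A$-module isomorphisms from the cited lemmas). A secondary minor point is verifying the $A$-linearity of the canonical double-dual isomorphism $V \cong (V^*)^*$ for finite-dimensional $V$; this is established exactly as in the proof of Lemma~\ref{lem.Vstarstar.gen}, using that $\alpha$ is an algebra anti-endomorphism. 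Everything else is routine.
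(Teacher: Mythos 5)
Your plan for \eqref{eq:TensorDual2.gen} is exactly the paper's, and your derivation of \eqref{eq:TensorDual3.gen} and \eqref{eq:TensorDual4.gen} from the first two via $V \mapsto \leftast{V}$ also matches. But the argument for \eqref{eq:TensorDual1.gen} has a genuine gap that both of your proposed routes run into.

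The first route silently uses that $(V^*)^* \cong V$ as $A$-modules. That is \emph{not} in general true: for the left-dual action $(af)(v) = f(\alpha(a)v)$, the canonical map $v \mapsto \ev_v$ satisfies $(a\,\ev_v)(f) = f(\alpha^2(a)v)$, so it is $A$-linear only when $\alpha^2 = \id$. Lemma~\ref{lem.Vstarstar} gives only $\leftast{(V^*)} \cong V \cong (\leftast{V})^*$, which mixes the two duals precisely because the signs of $\alpha$ cancel there; it does \emph{not} give $(V^*)^* \cong V$. Your aside that the double-dual isomorphism ``is $A$-linear by the argument in the proof of Lemma~\ref{lem.Vstarstar.gen}'' is therefore wrong — that proof covers the mixed duals only.

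The fallback route — deriving \eqref{eq:TensorDual1.gen} from \eqref{eq:TensorDual2.gen} by the substitution $V \mapsto V^*$ (or $V \mapsto \leftast{V}$) — fails for a structural reason. Substituting $V \mapsto \leftast{V}$ in \eqref{eq:TensorDual2.gen} and applying $(\leftast{V})^* \cong V$ yields $\Hom_A(V \otimes U, W) \cong \Hom_A(U, \leftast{V}\otimes W)$, which is \eqref{eq:TensorDual4.gen}, not \eqref{eq:TensorDual1.gen}. There is no substitution that converts a statement where the dual sits on the left of $U$ and $V\otimes$ appears on the left, into one where $V$ sits on the right of $U$ and $\otimes V^*$ appears on the right — these are genuinely two different adjunctions of a (possibly non-braided) tensor category, and Lemma~\ref{lem:AHom2.gen} alone, which always inserts $W^*$ on the \emph{left}, only yields the ``left-handed'' one. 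The paper establishes \eqref{eq:TensorDual1.gen} by a distinct mechanism: the chain $\Hom(U\otimes V, W) \cong W \otimes (U\otimes V)^* \cong W\otimes V^*\otimes U^* \cong \Hom(U, W\otimes V^*)$ coming from Lemma~\ref{lem.hom-tensor-iso} (internal $\Hom$ as $W \otimes (-)^*$) together with Lemma~\ref{lem:tensor-dual}, followed by passing to $A$-fixed points via Lemma~\ref{lem:AHom}. You would need to bring in Lemma~\ref{lem.hom-tensor-iso} (or some equivalent tool) for this half of the lemma; reducing entirely to Lemma~\ref{lem:AHom2.gen} cannot work.
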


\begin{vershort}
\begin{proof}
One only needs to check \eqref{eq:TensorDual1.gen} and \eqref{eq:TensorDual2.gen},
since then replacing $V$ by $\leftast{V}$ yields \eqref{eq:TensorDual3.gen} and \eqref{eq:TensorDual4.gen}.

To verify \eqref{eq:TensorDual1.gen}, first note that one has
$A$-module isomorphisms
\begin{equation}
\label{some-A-module-isos-for-TensorDual1}
\Hom(U\otimes V, W)
\cong W \otimes (U \otimes V)^*
\cong W\otimes V^*\otimes U^*
\cong \Hom(U, W\otimes V^*)
\end{equation}
in which the two outer isomorphisms come from the appropriate
generalization of Lemma~\ref{lem.hom-tensor-iso},
and the middle from Lemma~\ref{lem:tensor-dual.gen}.
Now apply $X \mapsto X^A$ to the left and right side of
\eqref{some-A-module-isos-for-TensorDual1},
and use Lemma~\ref{lem:AHom} (in its generalized form),
yielding \eqref{eq:TensorDual1.gen}.

To verify \eqref{eq:TensorDual2.gen}, note that
$$
\Hom_A(V^*\otimes U, W)
\cong \Hom_A(W^*\otimes V^*\otimes U, \epsilon)
\cong \Hom_A((V\otimes W)^* \otimes U, \epsilon)
\cong \Hom_A(U, V\otimes W)
$$
where the two outer isomorphisms come from Lemma~\ref{lem:AHom2.gen},
and the middle from Lemma~\ref{lem:tensor-dual.gen}.
\end{proof}
\end{vershort}

\begin{verlong}
\begin{proof}[Proof of Lemma~\ref{lem:tensor-dual-four-isos.gen}.]
Notice that $V^*$, $W \otimes V^*$ and $V \otimes W$ are
finite-dimensional (since $V$ and $W$ are finite-dimensional).

One only needs to check \eqref{eq:TensorDual1.gen} and \eqref{eq:TensorDual2.gen},
since then replacing $V$ by $\leftast{V}$ yields
\eqref{eq:TensorDual3.gen} and \eqref{eq:TensorDual4.gen}
(thanks to Lemma~\ref{lem.Vstarstar.gen}).

We have
\begin{align*}
\Hom(U\otimes V, W)
&\cong W \otimes (U \otimes V)^*
\qquad \left(\text{by Lemma~\ref{lem.hom-tensor-iso.gen} (ii)}\right) \\
&\cong W\otimes V^*\otimes U^*
\qquad \left(\text{by Lemma~\ref{lem:tensor-dual.gen} (i)}\right) \\
&\cong \Hom(U, W\otimes V^*)
\qquad \left(\text{by Lemma~\ref{lem.hom-tensor-iso.gen} (ii)}\right) .
\end{align*}
Now, \eqref{eq:TensorDual1.gen} holds since
\begin{align*}
\Hom_A(U\otimes V, W)
&= \Hom(U\otimes V, W)^A
\qquad \left(\text{by Lemma~\ref{lem:AHom.gen}}\right) \\
&\cong \Hom(U, W\otimes V^*)^A
\qquad \left(\text{since } \Hom(U\otimes V, W) \cong \Hom(U, W\otimes V^*) \right) \\
&= \Hom_A(U, W\otimes V^*)
\qquad \left(\text{by Lemma~\ref{lem:AHom.gen}}\right) .
\end{align*}

Furthermore, \eqref{eq:TensorDual2.gen} holds since
\begin{align*}
\Hom_A(V^*\otimes U, W)
&\cong \Hom_A(W^*\otimes V^*\otimes U, \epsilon)
\qquad \left(\text{by Lemma~\ref{lem:AHom2.gen}}\right) \\
&\cong \Hom_A((V\otimes W)^* \otimes U, \epsilon)
\qquad \left(\text{since Lemma~\ref{lem:tensor-dual.gen} (i) yields } (V\otimes W)^* \cong W^* \otimes V^*\right) \\
&\cong \Hom_A(U, V\otimes W)
\qquad \left(\text{by Lemma~\ref{lem:AHom2.gen}}\right) .
\end{align*}

{\small \textbf{Remark:}
Here is an alternative proof of \eqref{eq:TensorDual2.gen}:
Fix a basis $\{v_i\}$
for the $\FF$-vector space $V$, and the corresponding dual
basis $\{v_i^*\}$ of $V^*$.

For each $f\in \Hom(V^*\otimes U, W)$, we define $\phi(f)\in\Hom(U, V\otimes W)$ by 
\[ \phi(f)(u)= \sum_i v_i\otimes f(v_i^*\otimes u), \quad \forall u\in U.\]
This defines a linear map $\phi : \Hom(V^*\otimes U, W) \to \Hom(U, V\otimes W)$.
(Its definition uses a choice of bases, but $\phi$ itself is independent
of this choice, since the tensor $\sum_i v_i \otimes v_i^*$
does not depend on the choice of basis $\{v_i\}$.)

Conversely, if $g\in\Hom(U, V\otimes W)$ then define $\psi(g)\in\Hom(V^*\otimes U, W)$ by 
\[ \psi(g)(v^*\otimes u) = (v^*\otimes\id)(g(u)), \quad \forall u\in U,\ \forall v^*\in V^*.\]
This defines a linear map $\psi : \Hom(U, V\otimes W) \to \Hom(V^*\otimes U, W)$.

The maps $\phi$ and $\psi$ are inverses of each other since Lemma~\ref{lem:coevaluation} implies
\[ \phi(\psi(g))(u) = \sum_i v_i\otimes \psi(g)(v_i^*\otimes u) = \sum_i v_i\otimes (v_i^*\otimes\id)(g(u)) = g(u), \]
\[ \psi(\phi(f))(v^*\otimes u) = (v^*\otimes\id)(\phi(f)(u)) = \sum_i v^*(v_i) f(v_i^*\otimes u) = f(v^*\otimes u). \]
It thus remains to show that
$\phi \left( \Hom_A(V^*\otimes U, W) \right)
\subseteq \Hom_A(U, V\otimes W)$
and
$\psi \left( \Hom_A(U, V\otimes W) \right)
\subseteq \Hom_A(V^*\otimes U, W)$.

Suppose $f \in \Hom(V^*\otimes U, W) $ is $A$-linear.
Let $a\in A$ and $u\in U$. Then
\[ a\phi(f)(u) = \sum_i \sum a_1v_i\otimes a_2f(v^*_i\otimes u) = \sum_i \sum a_1v_i\otimes f(a_2'v^*_i\otimes a_2''u).\]
Combining this with Lemma~\ref{lem:coevaluation} we have, for any $v^*\in V^*$, 
\[\begin{aligned}
(v^*\otimes\id)(a\phi(f)(u)) & = \sum_i \sum f(v^*(a_1v_i) a_2'v^*_i\otimes a_2''u) = \sum f(\epsilon(a_1)v^*\otimes a_2u) \\
&= f(v^*\otimes au) = \sum_i v^*(v_i) f(v^*_i\otimes au) = (v^*\otimes\id)(\phi(f)(au)).
\end{aligned}\]
This implies $a\phi(f)(u)=\phi(f)(au)$. Thus $\phi(f)$ is $A$-linear.
Hence,
$\phi \left( \Hom_A(V^*\otimes U, W) \right)
\subseteq \Hom_A(U, V\otimes W)$.

Now suppose $g \in \Hom(U, V \otimes W)$ is $A$-linear.
Let $a\in A$, $u\in U$, and $v^*\in V^*$.
Writing $g(u) = \sum_i v^{(i)}\otimes w^{(i)}$, we have
\[\begin{aligned}
\psi(g)(a(v^*\otimes u)) &= \sum \psi(g)(a_1v^*\otimes a_2u) = \sum (a_1v^*\otimes\id)(g(a_2u)) \\
& = \sum (a_1v^*\otimes\id)(a_2g(u)) = \sum (a_1v^*\otimes\id) \sum_i a_2'v^{(i)}\otimes a_2''w^{(i)} \\
& = \sum \sum_i v^*(\alpha(a_1)a_2'v^{(i)}) a_2''w^{(i)} = \sum \sum_i v^*(\epsilon(a_1) v^{(i)}) a_2w^{(i)} \\
& = \sum \sum_i v^*(v^{(i)}) \epsilon(a_1)a_2w^{(i)} = \sum_i v^*(v^{(i)}) aw^{(i)} \\
& = a \sum_i v^*(v^{(i)}) w^{(i)} =  a((v^*\otimes\id)(g(u))) = a\psi(g)(v^*\otimes u). 
\end{aligned}\]
This shows that $\psi(g)$ is $A$-linear.
Thus,
$\psi \left( \Hom_A(U, V\otimes W) \right)
\subseteq \Hom_A(V^*\otimes U, W)$.
Therefore \eqref{eq:TensorDual2.gen} holds.

The advantage of this alternative proof of \eqref{eq:TensorDual2.gen}
is that it gets by without using the finite-dimensionality of $W$.
Hence, the isomorphism \eqref{eq:TensorDual2.gen} holds even if $W$
is not finite-dimensional (as long as $V$ is finite-dimensional).

The isomorphism \eqref{eq:TensorDual1.gen} also holds in this
generality (i.e., requiring only $V$ to be finite-dimensional, not
$W$).
Here is an outline of how to prove this:
It clearly suffices to prove the isomorphism
$\Hom(U\otimes V, W) \cong \Hom(U, W\otimes V^*)$ (because this
isomorphism was the crucial step in our above proof).
In view of the isomorphism
$W\otimes V^* \cong \Hom(V, W)$
(a consequence of Lemma~\ref{lem.hom-tensor-iso.gen} (ii)),
this boils down to proving the isomorphism
$\Hom(U\otimes V, W) \cong \Hom(U, \Hom(V, W))$.
But this isomorphism is well-known:
It is provided by the $\FF$-vector space isomorphism
\begin{align*}
\Hom(U\otimes V, W) &\to \Hom(U, \Hom(V, W)) , \\
F &\mapsto \left(\text{the map } U \to \Hom(V, W) \text{ sending each }
                u \in U \text{ to the map } V \to W \right. \\
  & \qquad \qquad \left.
                \text{ sending each } v \in V \text{ to }
                F \left(u \otimes v\right) \right) ,
\end{align*}
whose $A$-equivariance can be proven by a somewhat tedious but
straightforward computation.\footnote{The computation uses the fact
that $\alpha$ is a coalgebra anti-homomorphism.}
}
\end{proof}
\end{verlong}

\begin{vershort}
Finally, we shall prove Proposition~\ref{prop.AA} as part of the
following fact:
\end{vershort}

\begin{verlong}
Let us now extend Proposition~\ref{prop.AA}.
To wit, Proposition~\ref{prop.AA} will follow from part (ii) of the
following fact:
\end{verlong}

\begin{proposition} \label{prop.AA.gen}
Let $A$ be a finite-dimensional Hopf algebra.
\begin{enumerate}
\item[(i)] We have $\dim \left(A^A\right) = 1$.
\item[(ii)] Let $V$ be a finite-dimensional $A$-module. Then,
$\dim \Hom_A\left(V, A\right) = \dim V$.
\end{enumerate}
\end{proposition}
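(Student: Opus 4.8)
The plan is to prove a single precise identity, namely that $\dim_\FF \Hom_A(V,A) = (\dim V)\cdot(\dim A^A)$ for every finite-dimensional $A$-module $V$, and then to extract both parts of the proposition from it: part (i) by specializing to $V = A$, and part (ii) by substituting the value $\dim A^A = 1$ obtained along the way.

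First I would assemble the identity from the Hopf-algebra lemmas available in the appendix. By Lemma~\ref{lem:AHom} (which holds in the present generality, as noted above), $\Hom_A(V,A) = \Hom_\FF(V,A)^A$. Since $A$ is finite-dimensional, Lemma~\ref{lem.hom-tensor-iso}, applied with $W = A$, gives an $A$-module isomorphism $\Hom_\FF(V,A)\cong A\otimes V^*$. Part (ii) of Lemma~\ref{lem.VtimesA}, applied to the $A$-module $V^*$, gives an $A$-module isomorphism $A\otimes V^*\cong A^{\oplus\dim V^*}$, and $\dim V^*=\dim V$ because $V$ is finite-dimensional. Composing these $A$-module isomorphisms and then passing to $A$-fixed points --- an operation that commutes with finite direct sums --- yields an $\FF$-vector space isomorphism $\Hom_A(V,A)\cong (A^A)^{\oplus\dim V}$, and hence the displayed identity.

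Next I would feed $V = A$ into this identity. On the one hand, every endomorphism of the left-regular $A$-module $A$ is right multiplication by its value at $1$, so $\dim_\FF \Hom_A(A,A)=\dim A$. On the other hand, the identity gives $\dim_\FF \Hom_A(A,A)=(\dim A)\cdot(\dim A^A)$. Since $\dim A\geq 1$, these force $\dim A^A = 1$, which is part (i). Substituting $\dim A^A = 1$ back into the identity then gives $\dim_\FF \Hom_A(V,A)=\dim V$ for every finite-dimensional $A$-module $V$, which is part (ii) (and specializes to Proposition~\ref{prop.AA}).

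I do not expect a serious obstacle here, since every step is a formal consequence of lemmas already available; the only points requiring care are (a) verifying that each isomorphism in the chain is an isomorphism of $A$-modules, not merely of $\FF$-vector spaces, so that passing to $A$-fixed points is legitimate, and (b) checking the finite-dimensionality hypotheses ($A$ finite-dimensional for Lemma~\ref{lem.hom-tensor-iso} with $W = A$; $V$ finite-dimensional so that $\dim V^* = \dim V$). It is worth noting that, organized in this order, the argument never invokes the Larson--Sweedler theorem: the one-dimensionality of the space $A^A$ of left integrals falls out of the computation at $V = A$ rather than being an input. (An alternative route would be to cite that $A$ is a Frobenius algebra and apply Frobenius duality together with the tensor--hom adjunction, but that merely relocates the nontrivial input.)
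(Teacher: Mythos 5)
Your proposal is correct and takes essentially the same approach as the paper: both establish the identity $\dim\Hom_A(V,A) = (\dim A^A)(\dim V)$ via the chain $\Hom_\FF(V,A)\cong A\otimes V^*\cong A^{\oplus\dim V}$ followed by taking $A$-fixed points (using Lemma~\ref{lem:AHom}), and both then extract $\dim A^A = 1$ by specializing to $V=A$ and using $\Hom_A(A,A)\cong A$. Your observation that this route avoids any independent appeal to the one-dimensionality of the space of integrals is accurate, and the paper makes the same point implicitly by deriving (i) as a corollary of the computation rather than as an input.
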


Proposition~\ref{prop.AA.gen} (i) is actually the well-known fact (see,
e.g., \cite[Thm. 10.2.2 (a)]{Radford}) that the vector space of
left integrals of the finite-dimensional Hopf algebra $A$ is
$1$-dimensional.
Nevertheless, we shall give a proof, as it is easy using what has
been done before.

\begin{vershort}
\begin{proof}
Let $V$ be an $A$-module.
Lemma~\ref{lem.hom-tensor-iso} applied with $W = A$ shows that
$A \otimes V^* \cong \Hom_\FF(V,A)$ as $A$-modules. But
Lemma~\ref{lem.VtimesA} (ii) (applied to $V^*$ instead of $V$) yields
$A \otimes V^* \cong A^{\oplus \dim (V^*)} = A^{\oplus \dim V}$ as
$A$-modules. Hence,
$\Hom_\FF(V,A) \cong A \otimes V^* \cong A^{\oplus \dim V}$ as
$A$-modules. Now, Lemma~\ref{lem:AHom} applied to $W=A$ yields
$\FF$-vector space isomorphisms
$$
\Hom_A (V, A)
= \Hom_\FF (V, A)^A \cong \left(A^{\oplus \dim V}\right)^A
\cong \left(A^A\right)^{\oplus \dim V}
$$
where the first isomorphism comes from
$\Hom_\FF(V,A) \cong A^{\oplus \dim V}$, and
the second holds because $W \mapsto W^A$ preserves direct sums.
Taking dimensions, we thus find
\begin{equation}
\dim \Hom_A (V, A)
= \dim \left(A^A\right) \dim V .
\label{pf.prop.AA.vic.1}
\end{equation}
It only remains then to check that
$\dim \left(A^A\right) = 1$,
since then substituting it into
\eqref{pf.prop.AA.vic.1} would give Proposition~\ref{prop.AA.gen} (ii)
(and Proposition~\ref{prop.AA.gen} (i) would be proven as well).
To this end, use the $\FF$-vector space isomorphism
$A \cong \Hom_A (A, A)$ to obtain
$$
\dim A = \dim \Hom_A (A, A) = \dim \left(A^A\right) \dim A ,
$$
where the second equality follows
from \eqref{pf.prop.AA.vic.1} applied to $V = A$. 
Canceling $\dim A$ from the ends gives
the desired equality $1 = \dim \left(A^A\right)$, completing the proof.
\end{proof}
\end{vershort}

\begin{verlong}
\begin{proof}[Proof of Proposition~\ref{prop.AA.gen}.]
Let $V$ be a finite-dimensional $A$-module.
Lemma~\ref{lem.hom-tensor-iso.gen} (ii) (applied to $W = A$) shows that
$A \otimes V^* \cong \Hom_\FF(V,A)$ as $A$-modules. But
Lemma~\ref{lem.VtimesA.gen} (ii) (applied to $V^*$ instead of $V$) yields
$A \otimes V^* \cong A^{\oplus \dim (V^*)} = A^{\oplus \dim V}$ as
$A$-modules. Hence,
$\Hom_\FF(V,A) \cong A \otimes V^* \cong A^{\oplus \dim V}$ as
$A$-modules. Now, Lemma~\ref{lem:AHom.gen} (applied to $W=A$) yields
\begin{align*}
\Hom_A (V, A)
&= \Hom_\FF (V, A)^A \cong \left(A^{\oplus \dim V}\right)^A
\qquad \left(\text{since } \Hom_\FF(V,A) \cong A^{\oplus \dim V}
       \right) \\
&\cong \left(A^A\right)^{\oplus \dim V}
\qquad \left(\text{since the functor } W \mapsto W^A
                \text{ preserves direct sums} \right)
\end{align*}
as $\FF$-vector spaces. Taking dimensions, we thus find
\begin{equation}
\dim \Hom_A (V, A)
= \dim \left( \left(A^A\right)^{\oplus \dim V} \right)
= \dim \left(A^A\right) \dim V .
\label{pf.prop.AA.1}
\end{equation}

Now, forget that we fixed $V$. It is well-known that
$\Hom_A (A, A) \cong A$ as $\FF$-vector spaces (indeed, the map
$\Hom_A (A, A) \to A, \  f \mapsto f\left(1\right)$ is an
isomorphism). Hence,
$\dim A = \dim \Hom_A (A, A) = \dim \left(A^A\right) \dim A$
(by \eqref{pf.prop.AA.1}, applied to $V = A$). We can cancel $\dim A$
from this equality (since $\dim A > 0$), and thus obtain
$1 = \dim \left(A^A\right)$. This proves
Proposition~\ref{prop.AA.gen} (i).

(ii) From \eqref{pf.prop.AA.1}, we obtain
$\dim \Hom_A (V, A)
= \underbrace{\dim \left(A^A\right)}_{=1} \dim V = \dim V$.
\end{proof}
\end{verlong}

\begin{verlong}
We record a curious corollary of Proposition~\ref{prop.AA.gen}, which we
will not use below but we find worth observing.

\begin{corollary} \label{cor.AA.Ae}
Let $e$ be an idempotent in the finite-dimensional Hopf algebra $A$.
Then, $\dim\left(Ae\right) = \dim\left(eA\right)$.
\end{corollary}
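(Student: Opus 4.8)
The plan is to realize $eA$ as a space of left $A$-module homomorphisms and then quote Proposition~\ref{prop.AA.gen}(ii). The starting point is the standard adjunction isomorphism $\Hom_A(Ae, V) \cong eV$, natural in the left $A$-module $V$, which sends $\varphi$ to $\varphi(e)$; this is exactly the isomorphism already recalled in the derivation of \eqref{Cartan-matrix-reformulation} (see \cite[Prop. 7.4.1 (3)]{Webb}). First I would specialize it to $V = A$, obtaining an $\FF$-vector space isomorphism $\Hom_A(Ae, A) \cong eA$, and hence $\dim \Hom_A(Ae, A) = \dim(eA)$.

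Next I would observe that $Ae$ is a finite-dimensional left $A$-module, being a subspace of the finite-dimensional algebra $A$. Thus Proposition~\ref{prop.AA.gen}(ii) applies with $V = Ae$ and gives $\dim \Hom_A(Ae, A) = \dim(Ae)$. Comparing this with the equality from the previous paragraph yields $\dim(eA) = \dim(Ae)$, which is the assertion of Corollary~\ref{cor.AA.Ae}.

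There is no genuine obstacle here beyond assembling these two ingredients; the one point worth flagging is that the argument really does use the Hopf structure on $A$, via Proposition~\ref{prop.AA.gen}(ii). For a general finite-dimensional algebra the conclusion is false: taking $A$ to be the algebra of upper-triangular $2 \times 2$ matrices and $e = E_{11}$, one has $Ae = \FF E_{11}$ but $eA = \FF E_{11} + \FF E_{12}$, so $\dim(Ae) = 1 \neq 2 = \dim(eA)$. Accordingly, the corollary is one more instance of the left--right symmetry imposed by the Hopf axioms (compare $A \cong A^{\opp}$ as algebras), and the two-line proof above is the cleanest way to see it; I would not attempt a direct dimension count.
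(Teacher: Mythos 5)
Your proof is correct and is essentially identical to the paper's: both apply the isomorphism $\Hom_A(Ae,A)\cong eA$ and Proposition~\ref{prop.AA.gen}(ii) with $V=Ae$, then compare dimensions. The counterexample with upper-triangular matrices is a nice sanity check but not part of the argument.
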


\begin{proof}
A well-known fact (see, e.g., \cite[Prop. 7.4.1 (3)]{Webb}) says that
$\dim \Hom_A\left(Ae, U\right) = \dim \left(eU\right)$
for any $A$-module $U$. Applying this to $U = A$, we obtain
$\dim \Hom_A\left(Ae, A\right) = \dim \left(eA\right)$.
But Proposition~\ref{prop.AA.gen} (ii) (applied to $V = Ae$) yields
$\dim \Hom_A\left(Ae, A\right) = \dim \left(Ae\right)$.
Hence,
$\dim \left(Ae\right)
= \dim \Hom_A\left(Ae, A\right) = \dim \left(eA\right)$.
\end{proof}
\end{verlong}

\begin{verlong}
\section{Appendix B: an elementary proof of Lemma~\ref{lem.coker}}
\label{sect.smith}
In this section, we shall give a second proof of
Lemma~\ref{lem.coker}, using nothing but basic linear algebra.
Besides its elementary nature, this proof has the additional
advantage of providing an explicit construction of the
isomorphism claimed in Lemma~\ref{lem.coker} under some circumstances
(e.g., if two entries of $\sss$ equal $1$).

We prepare by showing some lemmas about $\ZZ$-modules.

\begin{lemma}
\label{lem.coker.unimodular-row-GLm}
Let $m$ be a positive integer. Let $\left(e_1,e_2,\ldots,e_m\right)$
be the standard basis of the $\ZZ$-module $\ZZ^m$ (consisting of
column vectors). Let $w \in \ZZ^m$ be a column
vector. Let $g = \gcd(w)$. (Here and below, $\gcd(w)$ denotes
the greatest common divisor of the entries of $w$.) Then,
there exists some $B \in \GL_m\left(\ZZ\right)$ such that
$Bw = g e_1$.
\end{lemma}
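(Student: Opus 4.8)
The plan is to reduce this to the standard fact about Bézout's identity and elementary row operations. First I would dispose of the degenerate case $w = 0$: then $g = 0$ and we may take $B = I_m$, since $Bw = 0 = 0 \cdot e_1 = g e_1$. So assume $w \neq 0$, hence $g \geq 1$. Write $w = [w_1, w_2, \ldots, w_m]^T$. The claim is essentially that a column vector with gcd $g$ can be brought to $g e_1$ by a sequence of invertible integer row operations (adding an integer multiple of one row to another, and swapping rows), whose composite is the desired $B \in \GL_m(\ZZ)$.

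The cleanest route is induction on $m$. For $m = 1$, we have $w = [w_1]$ with $g = |w_1|$, and $B = [\operatorname{sign}(w_1)] \in \GL_1(\ZZ)$ works (or $B = [1]$ if one allows $g = w_1$ in place of $|w_1|$; here I would follow whatever sign convention the paper wants $-$ since $\gcd$ is taken nonnegative, use $B = [\pm 1]$). For the inductive step $m > 1$: if $w_1 = w_2 = \cdots = w_{m-1} = 0$, a single swap moves $w_m$ into the first coordinate and we are in the $m=1$ situation on that coordinate with the rest zero. Otherwise, let $g' = \gcd(w_1, \ldots, w_{m-1})$; by the inductive hypothesis applied to $[w_1, \ldots, w_{m-1}]^T \in \ZZ^{m-1}$, there is $B' \in \GL_{m-1}(\ZZ)$ with $B' [w_1, \ldots, w_{m-1}]^T = g' e_1'$ (the first standard basis vector of $\ZZ^{m-1}$); applying $\operatorname{diag}(B', 1) \in \GL_m(\ZZ)$ to $w$ gives a vector of the form $[g', 0, \ldots, 0, w_m]^T$. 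Now only the first and last coordinates are possibly nonzero, and $\gcd(g', w_m) = g$. By Bézout, choose $x, y \in \ZZ$ with $xg' + yw_m = g$; then a $2 \times 2$ integer matrix with determinant $\pm 1$, embedded via a suitable $(1,m)$-submatrix into $\GL_m(\ZZ)$, sends $[g', w_m]^T$ to $[g, 0]^T$ (this is the standard ``$\begin{smallmatrix} x & y \\ -w_m/g & g'/g \end{smallmatrix}$'' trick, which has determinant $1$ and integer entries since $g \mid g'$ and $g \mid w_m$). Composing these three matrices gives the required $B$.

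I do not expect any genuine obstacle here; this is entirely routine linear algebra over $\ZZ$, and the only thing to be careful about is the sign convention on $g$ in the base case and making sure each matrix built is genuinely in $\GL_m(\ZZ)$ (integer entries, determinant $\pm 1$), which the Bézout construction guarantees. An alternative one-line argument, if the paper is willing to invoke it, is to observe that $\frac{1}{g} w$ is a unimodular (primitive) vector in $\ZZ^m$ and hence extends to a $\ZZ$-basis of $\ZZ^m$; taking $B^{-1}$ to be the matrix whose first column is $\frac{1}{g}w$ and whose remaining columns complete it to a basis gives $B w = g e_1$ directly. I would present the inductive/Bézout version since it is self-contained and matches the ``elementary'' spirit announced at the start of the appendix.
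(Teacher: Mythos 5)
Your proof is correct. The paper's argument lives in the same elementary world---reducing $w$ to $g\,e_1$ by invertible integer row operations---but it is organized differently. Rather than inducting on $m$, the paper describes a single iterative procedure: negate entries until all are nonnegative, then repeatedly subtract the smaller of two nonzero entries from the larger until at most one entry is nonzero, then swap it into position $1$; termination is argued via the decreasing sum of entries, and the surviving entry is identified as $g$ by noting that elementary operations preserve $\gcd$. Your induction-plus-B\'ezout version is equally self-contained and makes the determinant-$1$ matrix
$\left(\begin{smallmatrix} x & y \\ -w_m/g & g'/g \end{smallmatrix}\right)$
and the identity $g = \gcd(g', w_m)$ explicit, which is arguably the cleaner bookkeeping; the paper's version trades that for a termination argument but avoids the $2\times 2$ computation. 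One small caveat: the ``alternative one-line argument'' you float at the end---that the primitive vector $\frac{1}{g}w$ extends to a $\ZZ$-basis of $\ZZ^m$---is essentially \emph{equivalent} to the lemma (it is precisely the $g=1$ case, restated), so invoking it would be circular unless one already has an independent proof; you were right to present the B\'ezout/induction argument instead.
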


\begin{proof}
This is a well-known fact, but let us sketch a proof. An
operation on vectors in $\ZZ^m$ (that is, a map $\ZZ^m \to \ZZ^m$)
is called an \textit{elementary operation} if and only if
\begin{itemize}
\item it is a \textit{negation operation}, which means that it
  multiplies an entry of the vector by $-1$; \textbf{or}
\item it is an \textit{addition operation}, which means that it
  adds an integer multiple of an entry of the vector to another entry;
  \textbf{or}
\item it is a \textit{swap operation}, which means that it swaps two
  entries of the vector.
\end{itemize}
It is well-known that each composition of elementary operations can
be rewritten as left multiplication by some matrix in
$\GL_m\left(\ZZ\right)$ (i.e., as the map $v \mapsto Bv$ for some
$B \in \GL_m\left(\ZZ\right)$). Hence, if we can prove that the vector
$g e_1$ can be obtained from $w$ by a sequence of
elementary operations, then we will be done.

But proving this is easy: Start with the vector $w$. Then, apply
negation operations to turn all its entries nonnegative. Then, apply
addition operations (specifically, subtracting entries from other
entries) to ensure that at most one of its entries is
nonzero\footnote{This can be done as follows: As long as our vector
has (at least) two nonzero entries, we can apply an addition operation
(namely, subtracting the smaller of these two entries from the
larger) to obtain a new vector, whose entries are still all
nonnegative, but whose sum of entries is smaller than that of the
previous vector. We can repeat this step until no two nonzero entries
remain (which is destined to happen, since the sum of entries cannot
keep decreasing forever).}.
Finally, apply a swap operation to ensure that this nonzero
entry (if it exists)
is the first entry (if it was not already). The resulting vector
has the form $p e_1$ for some $p \in \ZZ$. Consider this $p$.
But elementary
operations do not change the greatest common divisor of the entries of
a vector. Hence, $\gcd\left(p e_1\right) = \gcd(w)$ (since we obtained
$p e_1$ from $w$ by elementary operations). Since $p$ is nonnegative,
we have $\gcd\left(p e_1\right) = p$, so that
$p = \gcd\left(p e_1\right) = \gcd(w) = g$ and thus $p e_1 = g e_1$.
Hence, we have obtained the vector $g e_1$ from $w$ by a sequence of
elementary operations (since we have obtained the vector $p e_1$ in
this way). As we have said above, this proves the lemma.
\end{proof}

\begin{lemma}
\label{lem.coker.0}
Let $m$ be a positive integer. Let $d \in \ZZ$, and let $w \in \ZZ^m$
be a column vector. Let $\gamma = \gcd\left(d, \gcd(w)\right)$. Then,
\[
\ZZ^m / \left( d \ZZ^m + \ZZ w \right)
\cong \left( \ZZ / \gamma \ZZ \right)
       \oplus \left( \ZZ / d \ZZ \right)^{m-1} .
\]
\end{lemma}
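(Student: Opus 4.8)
The plan is to reduce the computation of the cokernel to a one-dimensional situation via a change of basis, then split off the remaining coordinates. First I would invoke Lemma~\ref{lem.coker.unimodular-row-GLm}: writing $g := \gcd(w)$, there exists $B \in \GL_m(\ZZ)$ with $Bw = g e_1$, where $(e_1, e_2, \ldots, e_m)$ is the standard basis of $\ZZ^m$. Since $B$ defines an automorphism of the $\ZZ$-module $\ZZ^m$ that carries $d\ZZ^m$ to $d\ZZ^m$ and $\ZZ w$ to $\ZZ(g e_1)$, it induces an isomorphism
\[
\ZZ^m / \left( d \ZZ^m + \ZZ w \right)
\;\cong\;
\ZZ^m / \left( d \ZZ^m + \ZZ (g e_1) \right) .
\]
So it suffices to analyze the right-hand side.

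Next I would use the direct-sum decomposition $\ZZ^m = \ZZ e_1 \oplus \ZZ^{m-1}$ (where $\ZZ^{m-1}$ denotes the span of $e_2, \ldots, e_m$). Under this decomposition, $d\ZZ^m = d\ZZ e_1 \oplus d\ZZ^{m-1}$, while $\ZZ(g e_1) \subseteq \ZZ e_1$; hence the submodule $d\ZZ^m + \ZZ(g e_1)$ respects the decomposition and equals $(d\ZZ + g\ZZ) e_1 \oplus d\ZZ^{m-1}$. Therefore the quotient splits as
\[
\ZZ^m / \left( d \ZZ^m + \ZZ (g e_1) \right)
\;\cong\;
\ZZ / (d\ZZ + g\ZZ) \;\oplus\; \left( \ZZ / d\ZZ \right)^{m-1} .
\]
Finally, $d\ZZ + g\ZZ = \gcd(d, g)\ZZ$, and $\gcd(d, g) = \gcd(d, \gcd(w)) = \gamma$, which gives the claimed isomorphism.

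I do not expect any genuine obstacle here: the only nontrivial input is Lemma~\ref{lem.coker.unimodular-row-GLm}, which is already available, and the rest is bookkeeping with direct sums of free $\ZZ$-modules. The one point requiring a line of care is checking that $B$ indeed sends $d\ZZ^m$ to itself (immediate, since $B$ commutes with multiplication by the scalar $d$) and that the submodule $d\ZZ^m + \ZZ(g e_1)$ is compatible with the coordinate splitting — which holds precisely because $g e_1$ has no component outside $\ZZ e_1$.
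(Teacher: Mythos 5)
Your argument is correct and is essentially the paper's own proof: both reduce via Lemma~\ref{lem.coker.unimodular-row-GLm} to the case $w = g e_1$, then split the quotient coordinate-by-coordinate along $\ZZ^m = \ZZ e_1 \oplus \ZZ e_2 \oplus \cdots \oplus \ZZ e_m$, and finally use $d\ZZ + g\ZZ = \gamma \ZZ$. No discrepancies.
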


\begin{proof}
Let $g = \gcd(w)$. Thus, $\gcd(d, g) = \gcd(d, \gcd(w)) = \gamma$.
Now, $\ZZ d + \ZZ g
= \ZZ \underbrace{\gcd(d, g)}_{= \gamma} = \ZZ \gamma$.

Let $\left(e_1,e_2,\ldots,e_m\right)$
be the standard basis of the $\ZZ$-module $\ZZ^m$ (consisting of
column vectors). Then,
$\ZZ d e_1 + \ZZ g e_1
= \underbrace{\left(\ZZ d + \ZZ g\right)}_{= \ZZ \gamma} e_1
= \ZZ \gamma e_1$.

Lemma~\ref{lem.coker.unimodular-row-GLm} shows that
there exists some $B \in \GL_m\left(\ZZ\right)$ such that
$Bw = g e_1$. Consider this $B$. Left multiplication by $B$ is an
automorphism of the $\ZZ$-module $\ZZ^m$ (since
$B \in \GL_m\left(\ZZ\right)$) and sends the submodule $d \ZZ^m$
to $d \ZZ^m$
while sending the submodule $\ZZ w$ to $\ZZ \underbrace{B w}_{= g e_1}
= \ZZ g e_1$. Hence, it induces an isomorphism
$\ZZ^m / \left( d \ZZ^m + \ZZ w \right)
\to \ZZ^m / \left( d \ZZ^m + \ZZ g e_1 \right)$. Thus,
\begin{align*}
\ZZ^m / \left( d \ZZ^m + \ZZ w \right)
&\cong \ZZ^m / \left( d \ZZ^m + \ZZ g e_1 \right)
= \ZZ^m / \left( \left(\ZZ d e_1 + \ZZ d e_2 + \cdots + \ZZ d e_m\right)
                    + \ZZ g e_1 \right) \\
&\qquad \left(\text{since } d \ZZ^m = \ZZ d e_1 + \ZZ d e_2 + \cdots + \ZZ d e_m \right) \\
&= \ZZ^m / \left( \underbrace{\left(\ZZ d e_1 + \ZZ g e_1\right)}_{= \ZZ \gamma e_1}
                    + \left(\ZZ d e_2 + \ZZ d e_3 + \cdots + \ZZ d e_m\right) \right) \\
&= \ZZ^m / \left( \ZZ \gamma e_1
                    + \left(\ZZ d e_2 + \ZZ d e_3 + \cdots + \ZZ d e_m\right) \right) \\
&= \left( \ZZ e_1 \oplus \ZZ e_2 \oplus \cdots \oplus \ZZ e_m \right)
    / \left( \ZZ \gamma e_1 \oplus \ZZ d e_2 \oplus \ZZ d e_3 \oplus
                \cdots \oplus \ZZ d e_m \right) \\
&\cong \underbrace{\left( \ZZ e_1 / \ZZ \gamma e_1 \right)}_{\cong \ZZ / \gamma \ZZ}
    \oplus \underbrace{\left( \ZZ e_2 / \ZZ d e_2 \right)
        \oplus \left( \ZZ e_3 / \ZZ d e_3 \right) \oplus \cdots
        \oplus \left( \ZZ e_m / \ZZ d e_m \right)}_{
                \cong \left( \ZZ / d \ZZ \right)^{m-1} } \\
&\cong \left( \ZZ / \gamma \ZZ \right)
       \oplus \left( \ZZ / d \ZZ \right)^{m-1} .
\qedhere
\end{align*}
\end{proof}

\begin{lemma}
\label{lem.coker.1}
Let $m > 1$ be an integer. Let $u \in \ZZ^m$ and $v \in \ZZ^m$
be two column vectors such that $v_1 = 1$.
Let $\gamma = \gcd(u)$, and set
$d = v^T u \in \ZZ$. Let
$L = d I_m - u v^T \in \ZZ^{m \times m}$. Then,
\[
\ZZ^m / \im L
\cong \ZZ \oplus \left(\ZZ / \gamma \ZZ\right)
      \oplus \left(\ZZ / d \ZZ\right)^{m-2} .
\]
\end{lemma}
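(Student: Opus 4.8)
The plan is to reduce Lemma~\ref{lem.coker.1} to Lemma~\ref{lem.coker.0} by performing one well-chosen unimodular change of basis that peels off the free rank-one summand. First I would describe $\im L$ via the columns of $L = dI_m - uv^T$: the $j$-th column is $d e_j - v_j u$, where $e_1,\ldots,e_m$ is the standard basis and $v_j$ the $j$-th entry of $v$; since $v_1 = 1$, the first column is $c_1 := d e_1 - u$. Applying the unimodular column operations $c_j \mapsto c_j - v_j c_1$ for $j = 2,\ldots,m$ (which leaves $\im L$ unchanged, being right multiplication by a matrix in $\GL_m(\ZZ)$) replaces $c_j$ by $d e_j - v_j u - v_j(d e_1 - u) = d(e_j - v_j e_1)$. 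Hence $\im L = \ZZ(d e_1 - u) + \sum_{j=2}^m \ZZ\, d(e_j - v_j e_1)$.

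Next I would pass to the $\ZZ$-basis $f_1 := e_1$, $f_j := e_j - v_j e_1$ ($2 \le j \le m$) of $\ZZ^m$; the change-of-basis matrix is unipotent upper-triangular, so it lies in $\GL_m(\ZZ)$ and induces an isomorphism of cokernels. Inverting, $e_j = f_j + v_j f_1$ for $j \ge 2$, so $u = \sum_i u_i e_i = \bigl(\sum_i v_i u_i\bigr) f_1 + \sum_{j=2}^m u_j f_j = d f_1 + \sum_{j=2}^m u_j f_j$, using $\sum_i v_i u_i = v^T u = d$ (here $v_1 = 1$ is what makes the $f_1$-coefficient come out to $d$). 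Therefore $d e_1 - u = -\sum_{j=2}^m u_j f_j$, while $d(e_j - v_j e_1) = d f_j$. Writing $W := \ZZ f_2 \oplus \cdots \oplus \ZZ f_m \cong \ZZ^{m-1}$ and identifying it so that $f_j$ corresponds to the $(j-1)$-st standard basis vector, we get $\im L \subseteq W$ with $\im L = d W + \ZZ w$, where $w = (-u_2, -u_3, \ldots, -u_m)$. Since $\ZZ^m = \ZZ f_1 \oplus W$, this gives
\[
\ZZ^m / \im L \;\cong\; \ZZ \;\oplus\; \bigl(\ZZ^{m-1} / (d\ZZ^{m-1} + \ZZ w)\bigr).
\]

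Finally I would apply Lemma~\ref{lem.coker.0} in dimension $m-1$ (which is $\ge 1$ since $m > 1$) to the pair $(d, w)$: it yields $\ZZ^{m-1}/(d\ZZ^{m-1} + \ZZ w) \cong (\ZZ/\gamma'\ZZ) \oplus (\ZZ/d\ZZ)^{m-2}$ with $\gamma' = \gcd\bigl(d, \gcd(w)\bigr) = \gcd(d, u_2, \ldots, u_m)$. The one thing left to check is $\gamma' = \gamma$: on the one hand $\gamma = \gcd(u)$ divides every $u_j$ and divides $d = v^T u$, so $\gamma \mid \gamma'$; on the other hand $u_1 = d - \sum_{j=2}^m v_j u_j$, so every common divisor of $d, u_2, \ldots, u_m$ divides $u_1$ as well and hence divides $\gamma$, giving $\gamma' \mid \gamma$. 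Combining this with the displayed isomorphism finishes the proof. The argument is essentially all bookkeeping; the only real decision is the choice of the basis $(f_j)$, which is precisely what moves $d e_1 - u$ off of the $f_1$-coordinate and thereby exposes the free $\ZZ$ summand, and the only mildly fiddly point is the gcd identification $\gamma' = \gamma$. (Note that $d = 0$ is not excluded anywhere above, so the statement holds as written; if desired one may instead carry over the standing hypothesis $d \ne 0$ from Lemma~\ref{lem.coker}.)
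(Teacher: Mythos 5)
Your proof is correct and takes essentially the same route as the paper's: the paper also introduces the vectors $q_i := e_i - v_i e_1$ (your $f_i$), shows that in this basis $\im L$ sits inside the rank-$(m-1)$ summand as $d\ZZ^{m-1} + \ZZ u'$ with $u' = (u_2,\ldots,u_m)$, applies Lemma~\ref{lem.coker.0} in dimension $m-1$, and checks $\gcd(d,\gcd(u')) = \gamma$. Your presentation via explicit column operations plus the change of basis is a slightly more compact packaging of the paper's isomorphism $\Phi(e_1)=e_1$, $\Phi(e_i)=q_i$, but the underlying argument is the same; your remark that $d=0$ is genuinely not needed also matches the paper, which imposes $d\neq 0$ only in the enclosing Lemma~\ref{lem.coker}.
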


\begin{proof}
Let $\left(e_1,e_2,\ldots,e_m\right)$
be the standard basis of the $\ZZ$-module $\ZZ^m$.
Recall that $L = d I_m - u v^T$. Hence, for each
$i \in \left\{1, 2, \ldots, m\right\}$, we have
\begin{align*}
L e_i &= \left(d I_m - u v^T\right) e_i
= d e_i - u \underbrace{v^T e_i}_{= v_i}
= d e_i - u v_i .
\end{align*}
Applying this to $i = 1$, we obtain
$L e_1 = d e_1 - u \underbrace{v_1}_{=1} = d e_1 - u$.

Now, for each $i \in \left\{1, 2, \ldots, m\right\}$, define a vector
$q_i \in \ZZ^m$ by $q_i = e_i - v_i e_1$. Then, each
$i \in \left\{1, 2, \ldots, m\right\}$ satisfies
\begin{align}
L q_i
&= L\left(e_i - v_i e_1\right)
= \underbrace{L e_i}_{= d e_i - u v_i}
    - v_i \underbrace{L e_1}_{= d e_1 - u}
= \left(d e_i - u v_i\right) - v_i \left(d e_1 - u\right)
\nonumber \\
& = d \underbrace{\left(e_i - v_i e_1\right)}_{= q_i} = d q_i .
\label{pf.lem.coker.1.3}
\end{align}
Note that the definition of $q_1$ yields
$q_1 = e_1 - \underbrace{v_1}_{= 1} e_1 = e_1 - e_1 = 0$.
Hence,
$\sum_{i=1}^m u_i q_i
= \sum_{i=2}^m u_i q_i + u_1 \underbrace{q_1}_{= 0}
= \sum_{i=2}^m u_i q_i$. Thus,
\begin{align}
\sum_{i=2}^m u_i q_i
&= \sum_{i=1}^m u_i \underbrace{q_i}_{= e_i - v_i e_1}
= \sum_{i=1}^m u_i \left(e_i - v_i e_1\right)
= \underbrace{\sum_{i=1}^m u_i e_i}_{= u}
     - \underbrace{\sum_{i=1}^m u_i v_i}_{= v^T u = d} e_1
\nonumber \\
&= u - d e_1 = - \underbrace{\left(d e_1 - u\right)}_{= L e_1}
= - L e_1 .
\label{pf.lem.coker.1.4}
\end{align}

Let $u'$ be the vector $\left[u_2, u_3, \ldots, u_m\right]^T
\in \ZZ^{m-1}$. Recall that
\[
d = v^T u
= \underbrace{v_1}_{= 1} u_1 + v_2 u_2 + v_3 u_3 + \cdots + v_m u_m
= u_1 + \underbrace{v_2 u_2 + v_3 u_3 + \cdots + v_m u_m}_{
                     \substack{\in \gcd(u') \ZZ \\
                       \text{(since } u_i \in \gcd(u') \ZZ
                       \text{ for all } i > 1 \text{)}}}
\in u_1 + \gcd(u') \ZZ .
\]
Hence, $d \equiv u_1 \mod \gcd(u') \ZZ$.
Thus, $\gcd(d, \gcd(u')) = \gcd(u_1, \gcd(u')) = \gcd(u) = \gamma$.

We shall identify the $\ZZ$-module $\ZZ^{m-1}$ with the
$\ZZ$-submodule $\spann\left(e_2, e_3, \ldots, e_m\right)$ of
$\ZZ^m$ by equating each vector $\left[p_2, p_3, \ldots, p_m\right]^T
\in \ZZ^{m-1}$ with $p_2 e_2 + p_3 e_3 + \cdots + p_m e_m
\in \spann\left(e_2, e_3, \ldots, e_m\right)$.
From this point of view, we have
$u' = u_2 e_2 + u_3 e_3 + \cdots + u_m e_m$ and
\begin{align}
\ZZ^m / \left(d \ZZ^{m-1} + \ZZ u'\right)
&\cong \ZZ \oplus
   \underbrace{\left(\ZZ^{m-1} / \left(d \ZZ^{m-1} + \ZZ u'\right)
                    \right)}_{\substack{
                  \cong \left( \ZZ / \gamma \ZZ \right)
                          \oplus \left( \ZZ / d \ZZ \right)^{m-2} \\
                  \text{(by Lemma~\ref{lem.coker.0}, applied to }
                    m-1 \text{ and } u' \text{ instead of } m
                    \text{ and } w \text{)}}} \nonumber \\
&\cong \ZZ \oplus \left( \ZZ / \gamma \ZZ \right)
            \oplus \left( \ZZ / d \ZZ \right)^{m-2} .
\label{pf.lem.coker.1.5}
\end{align}

Now, we define a $\ZZ$-linear map $\Phi : \ZZ^m \to \ZZ^m$ by setting
\[
\Phi\left(e_i\right)
= \begin{cases}
    e_1, & \text{if } i = 1; \\
    q_i, & \text{if } i > 1
  \end{cases}
\qquad \text{for all } i \in \left\{1,2,\ldots,m\right\} .
\]
This map $\Phi$ is tantamount to left multiplication by the matrix
$\left(\begin{array}{ccccc}
            1 &   -v_2 &   -v_3 & \cdots &   -v_m \\
            0 &      1 &      0 & \cdots &      0 \\
            0 &      0 &      1 & \cdots &      0 \\
       \vdots & \vdots & \vdots & \ddots & \vdots \\
            0 &      0 &      0 & \cdots &      1
       \end{array} \right)$
(since $q_i = e_i - v_i e_1$),
which is invertible over $\ZZ$ (since it is upper-unitriangular).
Hence, $\Phi$ is a $\ZZ$-module isomorphism.

Next, we are going to show that
$\Phi\left(d \ZZ^{m-1} + \ZZ u'\right) = \im L$.

Indeed, for each $i \in \left\{2, 3, \ldots, m\right\}$, we have
\begin{equation}
\Phi\left(d e_i\right)
= d \underbrace{\Phi\left(e_i\right)}_{\substack{= q_i \\
                    \text{(since } i > 1 \text{)}}}
= d q_i = L q_i
\qquad \left(\text{by \eqref{pf.lem.coker.1.3}}\right) .
\label{pf.lem.coker.1.5a}
\end{equation}
Furthermore, $u' = u_2 e_2 + u_3 e_3 + \cdots + u_m e_m
= \sum_{i=2}^m u_i e_i$, so that
\begin{equation}
\Phi\left(u'\right)
= \Phi\left(\sum_{i=2}^m u_i e_i\right)
= \sum_{i=2}^m u_i
        \underbrace{\Phi\left(e_i\right)}_{\substack{= q_i \\
                    \text{(since } i > 1 \text{)}}}
= \sum_{i=2}^m u_i q_i = - L e_1
\qquad \left(\text{by \eqref{pf.lem.coker.1.4}}\right) .
\label{pf.lem.coker.1.5b}
\end{equation}

But the map $\Phi$ is an isomorphism. Thus,
\begin{align*}
\ZZ^m
&= \Phi\left(\ZZ^m\right)
= \Phi \left( \spann \left(e_1, e_2, e_3, \ldots, e_m\right) \right) \\
&= \spann\left(\Phi\left(e_1\right), \Phi\left(e_2\right),
              \Phi\left(e_3\right), \ldots, \Phi\left(e_m\right)\right)
= \spann\left(e_1, q_2, q_3, \ldots, q_m\right)
\end{align*}
(since the definition of $\Phi$ yields
$\Phi\left(e_1\right) = e_1$ and $\Phi\left(e_i\right) = q_i$ for
each $i > 1$). Hence,
\begin{equation}
L \left( \ZZ^m \right)
= L \left( \spann\left(e_1, q_2, q_3, \ldots, q_m\right) \right)
= \spann\left(L e_1, L q_2, L q_3, \ldots, L q_m\right) .
\label{pf.lem.coker.1.7}
\end{equation}

On the other hand,
\begin{align*}
d \ZZ^{m-1} + \ZZ u'
&= \underbrace{\ZZ u'}_{= \spann\left(u'\right)}
    + \underbrace{d \ZZ^{m-1}}_{\substack{
          = d \spann\left(e_2, e_3, \ldots, e_m\right) \\
          = \spann\left(de_2, de_3, \ldots, de_m\right)}}
= \spann\left(u'\right) + \spann\left(de_2, de_3, \ldots, de_m\right)
\\
&= \spann\left(u', de_2, de_3, \ldots, de_m\right)
\end{align*}
and thus
\begin{align*}
\Phi\left(d \ZZ^{m-1} + \ZZ u'\right)
&= \Phi\left( \spann\left(u', de_2, de_3, \ldots, de_m\right) \right)
\\
&= \spann\left(\Phi\left(u'\right), \Phi\left(de_2\right),
                 \Phi\left(de_3\right), \ldots,
                 \Phi\left(de_m\right) \right) \\
&= \spann\left(-L e_1, L q_2, L q_3, \ldots, L q_m\right)
\qquad \left(\text{by \eqref{pf.lem.coker.1.5b} and \eqref{pf.lem.coker.1.5a}}\right)
\\
&= \spann\left(L e_1, L q_2, L q_3, \ldots, L q_m\right)
= L \left( \ZZ^m \right)
\qquad \left(\text{by \eqref{pf.lem.coker.1.7}}\right) \\
&= \im L .
\end{align*}
Thus, the isomorphism
$\Phi : \ZZ^m \to \ZZ^m$ induces an isomorphism
$\ZZ^m / \left(d \ZZ^{m-1} + \ZZ u'\right)
\to \ZZ^m / \im L$. Hence,
\[
\ZZ^m / \im L
\cong \ZZ^m / \left(d \ZZ^{m-1} + \ZZ u'\right)
\cong \ZZ \oplus \left( \ZZ / \gamma \ZZ \right)
            \oplus \left( \ZZ / d \ZZ \right)^{m-2}
\qquad \left(\text{by \eqref{pf.lem.coker.1.5}}\right) .
\qedhere
\]
\end{proof}

\begin{proof}[Proof of Lemma~\ref{lem.coker}.]
Lemma~\ref{lem.coker} would follow by applying
Lemma~\ref{lem.coker.1} to $m = \ell+1$, $u = \ppp$, $v = \sss$,
except for a minor inconvenience:
Lemma~\ref{lem.coker} assumes $\sss_{\ell+1} = 1$, whereas
Lemma~\ref{lem.coker.1} assumes $v_1 = 1$.
However, this is merely a notational distinction, and
can be straightened out by relabeling the indices
(we leave the details to the reader).
\end{proof}
\end{verlong}

\begin{verlong}
\section{Appendix C:  Some more observations on gcds}
\label{gcd-alg-appendix}

In this short appendix, we relate the gcds of the entries of the
vectors $\sss$ and
$\ppp$ to expansions of $[A]$ in the Grothendieck groups $K_0(A)$ and
$G_0(A)$. This relation (which does not require $A$ to be a Hopf
algebra) was found as a side result in our study of the critical
group, but did not turn out to be useful for the latter.
We record it here merely to avoid losing it.

\begin{proposition}
\label{Grothendieck-divisibility-prop}
Let $A$ be a finite-dimensional $\FF$-algebra,
with $\FF$ algebraically closed. (We do not require $A$ to be a
Hopf algebra here.) Let $P_i$, $S_i$, $\ppp$, $\sss$ and $C$ be as in
Subsection~\ref{subsect.findim-alg}.

\begin{enumerate}
\item[(i)]
The class $[A]$ of the left-regular $A$-module lies in
$\gcd(\sss) \cdot K_0(A)$.
\item[(ii)]
When $A \cong A^{\opp}$ as rings, 
the class $[A]$ of the left-regular $A$-module
lies in $\gcd(\ppp) \cdot G_0(A)$.
\end{enumerate}
\end{proposition}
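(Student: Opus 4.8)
The statement to prove is Proposition~\ref{Grothendieck-divisibility-prop}, asserting that $[A] \in \gcd(\sss)\cdot K_0(A)$ and, when $A \cong A^{\opp}$, that $[A] \in \gcd(\ppp)\cdot G_0(A)$.

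\medskip

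\noindent\textbf{Plan of proof.} For part (i), the plan is to use the expansion of $[A]$ in the basis $\{[P_1],\ldots,[P_{\ell+1}]\}$ of $K_0(A)$ given by Proposition~\ref{K0-expansion-of-regular-rep}, namely $[A] = \sum_{i=1}^{\ell+1} (\dim S_i)[P_i]$. Every coefficient $\dim S_i = \sss_i$ is divisible by $\gcd(\sss)$, so $[A] = \gcd(\sss)\cdot \sum_{i=1}^{\ell+1} \frac{\sss_i}{\gcd(\sss)}[P_i]$, and the sum on the right is an honest element of $K_0(A)$ since each $\frac{\sss_i}{\gcd(\sss)}$ is an integer. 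This gives $[A]\in\gcd(\sss)\cdot K_0(A)$ immediately; there is essentially no obstacle here, it is a one-line divisibility argument applied to the $K_0$-expansion.

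\medskip

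\noindent\textbf{Part (ii).} Here the plan is to run the dual argument using Corollary~\ref{cor.AA.cartan}(i), which expands $[A]$ in the basis $\{[S_1],\ldots,[S_{\ell+1}]\}$ of $G_0(A)$ as $[A] = \sum_{i=1}^{\ell+1} (\dim P_i)[S_i] = \sum_{i=1}^{\ell+1} \ppp_i [S_i]$. Again each coefficient $\ppp_i$ is divisible by $\gcd(\ppp)$, so $[A] = \gcd(\ppp)\cdot\sum_{i=1}^{\ell+1}\frac{\ppp_i}{\gcd(\ppp)}[S_i] \in \gcd(\ppp)\cdot G_0(A)$. The subtlety — and the reason the hypothesis $A\cong A^{\opp}$ appears — is that Corollary~\ref{cor.AA.cartan} was proved under the standing assumption that $A$ is a Hopf algebra, whereas part (ii) of this proposition only assumes $A\cong A^{\opp}$ as rings. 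So the real work is to show that the expansion $[A] = \sum_i (\dim P_i)[S_i]$ in $G_0(A)$ holds for any finite-dimensional algebra with $A\cong A^{\opp}$. The hypothesis $A\cong A^{\opp}$ is exactly what is needed to transport the identity $[A:S_i] = \dim\Hom_A(P_i,A) = \dim P_i$ (which in the Hopf case came from Proposition~\ref{prop.AA}) to the general case: one uses the anti-automorphism $A\to A^{\opp}$ to identify left $A$-modules with right $A$-modules, under which $P_i \leftrightarrow$ an indecomposable projective right module, and the relevant Hom/multiplicity count becomes symmetric.

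\medskip

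\noindent\textbf{Main obstacle.} The one step requiring care is establishing $[A:S_i] = \dim P_i$ for a general algebra with $A\cong A^{\opp}$, i.e.\ that the multiplicity of $S_i$ as a composition factor of the regular module equals $\dim P_i$. The cleanest route: $[A:S_i] = \dim\Hom_A(P_i, A)$ by \eqref{eq.alg-reps.V:Si}; choosing idempotents with $P_i \cong Ae_i$, this equals $\dim(e_i A)$ by the isomorphism $\Hom_A(Ae_i, A)\cong e_i A$; and an anti-automorphism $A \xrightarrow{\sim} A^{\opp}$ sends $e_i$ to an idempotent $f_i$ with $\dim(e_i A) = \dim(A f_i) = \dim(\text{projective cover of } S_i\text{'s opposite})$, which one then identifies with $\dim P_i$ using that the anti-automorphism permutes the indecomposable projectives in a way compatible with tops. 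Alternatively — and perhaps more transparently — one can simply invoke that $\dim(e_i A) = \dim(e_i A e_j)$ summed appropriately, or cite the standard fact (e.g.\ \cite[Prop. 7.4.1]{Webb} together with the opposite-ring identification) that for $A \cong A^{\opp}$ the Cartan matrix satisfies $C\sss = \ppp$, which is precisely Corollary~\ref{cor.AA.cartan}(ii) and yields $[A:S_i] = (C\sss)_i = \ppp_i = \dim P_i$. Once that identity is in hand, the divisibility conclusion is again a one-line argument.
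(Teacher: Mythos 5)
Part (i) of your proposal is exactly the paper's argument and is correct.

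Part (ii) has a genuine gap. Your plan is to establish the expansion $[A] = \sum_i (\dim P_i)[S_i]$, i.e.\ the identity $[A:S_i] = \dim P_i$ (equivalently $C\sss = \ppp$), for every finite-dimensional algebra with $A \cong A^{\opp}$, and both of your suggested routes to it fail. The ``compatible with tops'' step is the problem: an anti-automorphism $\phi\colon A \to A^{\opp}$ carries the right projective $e_i A$ to a left projective $A\phi(e_i)$, but the latter is $P_j$ for some index $j$ that need not equal $i$, and there is no reason $\dim P_j = \dim P_i$. Your fallback to Corollary~\ref{cor.AA.cartan}(ii) is also unjustified here, since that corollary was proved under the Hopf-algebra hypothesis (via Proposition~\ref{prop.AA}), which is exactly what part (ii) is \emph{not} allowed to assume. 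In fact the identity $C\sss = \ppp$ is simply false for general $A \cong A^{\opp}$: take $A = \FF Q$ for the quiver $Q = (1 \to 2)$, which satisfies $A \cong A^{\opp}$ via reversing the arrow and swapping the vertices; there $\sss = (1,1)^T$, $C = \left(\begin{smallmatrix}1&0\\1&1\end{smallmatrix}\right)$, so $C\sss = (1,2)^T$, while $\ppp = (2,1)^T$.

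The paper avoids this by proving only what is needed: $[A:S_i] = \dim(e_iA) = \dim(Ae_j) = \dim P_j$ for \emph{some} $j$ (the index of the primitive idempotent $\phi(e_i)$), and then observes that $\gcd(\ppp) \mid \dim P_j$ for any $j$. Your argument can be repaired by simply not insisting $j = i$; you have all the pieces in hand, but as written the intermediate claim is false and should not be asserted.
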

\begin{proof}
Assertion (i) follows since \eqref{left-regular-decomposition}
shows 
$
[A]= \sum_{i=1}^{\ell+1} \dim(S_i) [P_i]
$
in $K_0(A)$.

(ii) We have $[A] = \sum_{i=1}^{\ell+1} [A:S_i] [S_i]$ in $G_0(A)$.
Hence, it is enough to show that $\gcd(\ppp) \mid [A:S_i]$ for each
$i \in \left\{1,2,\ldots,\ell+1\right\}$. So let us fix $i$. Choose
a primitive idempotent $e_i \in A$ such that $P_i \cong A e_i$. The
hypothesis $A \cong A^{\opp}$ shows that there is a ring isomorphism
$\phi : A \to A^{\opp}$. The image of the primitive idempotent $e_i$
under this isomorphism $\phi$ must be another primitive idempotent
$e_j$ of $A$, and furthermore we have $\phi\left(e_iA\right) =
Ae_j$, whence $\dim (e_iA) = \dim (Ae_j)$. Now,
\eqref{eq.alg-reps.V:Si} yields
\[
[A:S_i]
=\dim \Hom_A(P_i,A)\\
=\dim \Hom_A(Ae_i,A)\\
=\dim (e_iA) = \dim (Ae_j) = \dim P_j\text{ for some }P_j.
\]
Here the third equality 
used the $\FF$-linear isomorphism $\Hom_A(Ae_i,V) \cong e_iV$
(defined for each $A$-module $V$)
which sends $\varphi$ to $\varphi(e_i)$.
Since $P_j$ is projective, we have $\gcd(\ppp) \mid \dim P_j
= [A:S_i]$, as desired.
\end{proof}

\begin{example}
The matrix algebra $A=\Mat_n(\FF)$ is semisimple, with only 
one simple $A$-module $S_1(=P_1)$ having $\dim(S_1)=n$.
Thus in this case, $n=\gcd(\sss)=\gcd(\ppp)$, and indeed, 
$[A]=n[S_1]=n[P_1]$ in $G_0(A)(=K_0(A))$.
\end{example}

Each finite-dimensional Hopf algebra $A$ satisfies $A \cong A^{\opp}$
as rings (via the antipode $\alpha : A \to A^{\opp}$); therefore,
Proposition~\ref{Grothendieck-divisibility-prop} (ii) can be applied
to any such $A$. For example, we obtain the following:

\begin{example}
Consider again the generalized Taft Hopf algebra $A=H_{n,m}$
from Example~\ref{Taft-algebra-example}. As we know from
Example~\ref{Taft-algebra-gcd-example}, we have $\gcd(\ppp)=m$.
Hence, Proposition~\ref{Grothendieck-divisibility-prop} (ii) shows that
$[A]$ lies in $m G_0(A)$.
\end{example}
\end{verlong}

\section*{Acknowledgements}
The authors thank Georgia Benkart, Sebastian Burciu, Pavel Etingof, Jim Humphreys, Radha Kessar, Peter J. McNamara, Hans-J\"urgen Schneider, Peter Webb, and Sarah Witherspoon 
for helpful conversations and references.

\end{document}